\setlist{leftmargin=*}
\newtheorem{thm}{Theorem}[section]
\newtheorem*{thm*}{Theorem}
\newtheorem{cor}[thm]{Corollary}
\newtheorem{lem}[thm]{Lemma}
\newtheorem{prop}[thm]{Proposition}
\newtheorem{prob}[thm]{Problem}
\newtheorem{claim}[thm]{Claim}
\newtheorem{fact}[thm]{Fact}
\newtheorem{ass}{Assumption}
\theoremstyle{definition}
\newtheorem{defn}[thm]{Definition}
\theoremstyle{remark}
\newtheorem{ntn}[thm]{Notation}
\newtheorem{rem}[thm]{Remark}
\newtheorem{sample}[thm]{Example} \numberwithin{equation}{section}
    {\medskip\begingroup\leftskip 0.5cm\rightskip 0.5cm\noindent\begin{small}{\bf Remark.}}
    {\end{small}\par\endgroup}
{\begin{list}{$\bullet$}
 {\settowidth{\labelwidth}{\textsf{$\bullet$}} \setlength{\leftmargin}{10pt}}}
{\end{list}}
\newtheorem{theorem}{Theorem}[section]
\newtheorem{lemma}[theorem]{Lemma}
\newcounter{ssample}[section]
\newcounter{insertcount}
\noindent\begin{small}{\color{blue} \stepcounter{insertcount}
          {
            \bf Insert \arabic{insertcount}. #1.}
            \addcontentsline{toc}{subsection}{{\ \ \small  Insert \arabic{insertcount}: #1}}
               \leavevmode  }
\newcommand{\mrmk}[1]
{{\tiny$^{\spadesuit}$}\marginpar{\fbox{\footnotesize #1}}}
\def\strutdepth{\dp\strutbox}%
\def\marginalnote#1{\strut\vadjust{\kern-\strutdepth\specialnote{#1}}}%
\def\specialnote#1{\vtop to \strutdepth{\baselineskip%
\strutdepth\vss\llap{\hbox{\scriptsize \bf #1}}\null}}%
\newcommand{\CN}{\mathcal N}
\newcommand{\RR}{\mathbb{R}}
\newcommand{\CC}{\mathbb{C}}
\def\CF{\mathcal F}
\newcommand{\NN}{\mathbb N}
\newcommand{\rarr}{\rightarrow}
\newcommand{\co}{\circ}
\def\tp{\mathrm{tp}}
\def\dcl{\mathrm{dcl}}
\newcommand{\sub}{\subseteq}
\def\CL{\mathcal{L}}
\def\CG{\mathcal G}
\newcommand{\0}{\emptyset}
\def\CM{\mathcal{M}}
\def\CN{\mathcal{N}}
\def\cL{\mathcal{L}}
\newcommand*\bbar[1]{%
  \hbox{%
    \vbox{%
      \hrule height 0.5pt 
      \kern0.5ex
      \hbox{%
        \kern-0.1em
        \ensuremath{#1}%
        \kern-0.1em
      }%
    }%
  }%
}
\gdef\eq{\mathrm{eq}}
\def\Ind#1#2{#1\setbox0=\hbox{$#1x$}\kern\wd0\hbox to 0pt{\hss$#1\mid$\hss}
\lower.9\ht0\hbox to 0pt{\hss$#1\smile$\hss}\kern\wd0}
\def\ind{\mathop{\mathpalette\Ind{}}}
\def\notind#1#2{#1\setbox0=\hbox{$#1x$}\kern\wd0
\hbox to 0pt{\mathchardef\nn=12854\hss$#1\nn$\kern1.4\wd0\hss}
\hbox to 0pt{\hss$#1\mid$\hss}\lower.9\ht0 \hbox to 0pt{\hss$#1\smile$\hss}\kern\wd0}
\title[Elekes-Szab\'o for stable and o-minimal hypergraphs]{Model-theoretic Elekes-Szab\'o for stable and o-minimal hypergraphs}
\renewcommand{\email}[2][]{%
  \ifx\emails\@empty\relax\else{\g@addto@macro\emails{,\space}}\fi%
  \@ifnotempty{#1}{\g@addto@macro\emails{\textrm{(#1)}\space}}%
  \g@addto@macro\emails{#2}%
}
\author{Artem Chernikov}
\address{Department of Mathematics, University of California, Los Angeles, Los Angeles, CA 90095, USA}
\email[Corresonding]{chernikov@math.ucla.edu}
\author{Ya'acov Peterzil}
\address{Department of Mathematics, University of Haifa, Haifa, Israel}
\email{kobi@math.haifa.ac.il}
\author{Sergei Starchenko}
\address{Department of Mathematics, University of Notre Dame, Notre Dame,
IN, 46656, USA}
\email{sstarche@nd.edu}
\subjclass[2010]{03C45, 52C10}
\begin{document}

\maketitle

 \gdef\tY{\tilde{Y}}
  \gdef\Th{\operatorname{Th}}
\gdef\tZ{\tilde{Z}}
\gdef\RM{\operatorname{RM}}
\gdef\mfp{{\mathfrak p}}
\gdef\dimp{\dim_\mfp}
\gdef\multp{\operatorname{mult}_\mfp}
\gdef\MR{\operatorname{MR}}
\gdef\ttimes{{\times}}
\gdef\MM{\mathbb{M}}
\gdef\tU{\widetilde{U}}
\gdef\tV{\widetilde{V}}
\gdef\vfs{{\varphi^*}}
\gdef\cp{\operatorname{cp}}
\gdef\acl{\operatorname{acl}}
\gdef\alg{\operatorname{alg}}
\gdef\dcl{\operatorname{dcl}}
\gdef\omin{\operatorname{o-min}}
\gdef\mrp{\mathrm{p}}
\gdef\al{\textcolor{red}}
\def\tdt{\times\dotsb\times}
\newcommand{\nospaceperiod}{\makebox[0pt][l]{\,.}}
\gdef\ACF{\operatorname{ACF}}

\begin{abstract}
A theorem of Elekes and Szab\'{o} recognizes algebraic groups among certain complex algebraic varieties with maximal size intersections with finite grids.
We establish a generalization to relations of any arity and dimension, definable in: 1) stable structures with distal expansions (includes algebraically and differentially closed fields of characteristic $0$); and 2) $o$-minimal expansions of groups.
Our methods provide explicit bounds on the power saving exponent in the non-group case. Ingredients of the proof include: a higher arity generalization of the abelian group configuration theorem in stable structures, along with a purely combinatorial variant characterizing Latin hypercubes that arise from abelian groups; and Zarankiewicz-style bounds for hypergraphs definable in distal structures.
\end{abstract}

\tableofcontents

\section{Introduction}
\subsection{History, and a special case of the main theorem}

Erd\H{o}s and Szemer\'edi \cite{erdHos1983sums} observed the following sum-product phenomenon: there exists $c \in \mathbb{R}_{>0}$ such that for any finite set $A \subseteq \mathbb{R}$,
$$\max\left\{|A+A|, |A\cdot A|\right\} \geq  |A|^{1+c}.$$
They conjectured that this holds with $c = 1- \varepsilon$ for an arbitrary $\varepsilon \in \mathbb{R}_{>0}$, and
by the work of Solymosi \cite{MR2538014} and Konyagin and Shkredov \cite{MR3488800} it is known to hold with $c = \frac{1}{3} + \varepsilon$ for some sufficiently small $\varepsilon$. Elekes and R\'onyai \cite{elekes2000combinatorial} generalized this by showing that for any polynomial $f(x,y) \in \mathbb{R}[x,y]$ there exists $c >0$ such that for every finite set $A \subseteq \mathbb{R}$ we have
$$|f(A \times A)|\geq |A|^{1+c},$$
 unless $f$ is either additive or multiplicative, i.e.~of the form $g(h(x) + i(y))$ or $g(h(x) \cdot i(y))$ for some univariate polynomials $g,h,i$ respectively. The bound was improved to $\Omega_{\deg f} \left( |A|^{\frac{11}{6}} \right)$ in \cite{raz}.

Elekes and Szab\'o \cite{ES} established a conceptual generalization of this result explaining the exceptional role played by the additive and multiplicative
forms: for any irreducible polynomial $Q(x,y,z)$ over $\mathbb{C}$ depending on all of its coordinates and such that its set zero set has dimension $2$, either there exists some $\varepsilon >0$ such that $F$ has at most $O(n^{2 - \varepsilon})$ zeroes on all finite $n\times n \times n$ grids, or $F$ is in a coordinate-wise finite-to-finite correspondence with the graph of multiplication of an algebraic group (see Theorem (B) below for a more precise statement). In the special Elekes-R\'onyai case above, taking $Q$ to be the graph of the polynomial function $f$, the resulting group is either the additive or the multiplicative group of the field.
 Several generalizations, refinements and variants of this influential result were obtained recently \cite{MR3577370, ES4d, jing2019minimal, MR4181764, bukh2012sum, tao2015expanding, hrushovski2013pseudo}, in particular for complex algebraic relations of higher dimension and arity by Bays and Breuillard \cite{Bays}.

In this paper we obtain a generalization of the Elekes-Szab\'o theorem to hypergraphs of any arity and dimension definable in stable structures admitting distal expansions (this class includes algebraically and differentially closed fields of characteristic $0$ and compact complex manifolds); as well as for arbitrary $o$-minimal structures.
Before explaining our general theorems, we state two very special corollaries.

\begin{thm*}[A](Corollary \ref{cor: 1-dim o-min})
Assume $s \geq 3$ and $Q \subseteq  \mathbb{R}^s$ is semi-algebraic, of description complexity $D$ (i.e.~given by at most $D$ polynomial (in-)equalities, with all polynomials of degree at most $D$, and $s \leq D$), such that the projection of $Q$ to any $s-1$ coordinates is finite-to-one. Then exactly one  of the
following  holds.
\begin{enumerate}
\item There exists a constant $c$, depending only on $s$ and $D$, such that: for any $n \in \mathbb{N}$ and  finite $A_i \subseteq \mathbb{R}$ with $|A_i| = n$ for $i \in [s]$ we have
$$|Q \cap (A_1 \times \ldots \times A_s)| \leq c n^{s - 1 - \gamma},$$
where $\gamma = \frac{1}{3}$ if $s \geq 4$, and $\gamma = \frac{1}{6}$ if $s=3$.

\item There exist open sets $U_i \subseteq \mathbb{R}, i \in [s]$, an open set $V \subseteq \mathbb{R}$ containing $0$, and analytic bijections with analytic inverses $\pi_i: U_i \to V$ such that
$$\pi_1(x_1)+\cdots+\pi_s(x_s)=0\Leftrightarrow Q(x_1,\ldots,x_s)$$
for all $x_i \in U_i, i \in [s]$.
\end{enumerate}
\end{thm*}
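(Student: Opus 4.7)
The plan is to derive this statement as a direct specialization of the general o-minimal Elekes--Szab\'o theorem of the paper, applied to the relation $Q$ viewed in the o-minimal structure $(\mathbb{R},+,\cdot,<)$. The hypothesis that every projection of $Q$ to $s-1$ of its coordinates is finite-to-one is exactly the ``generically finite-to-one'' Latin hypercube condition that the general theorem requires, so it delivers the expected dichotomy: either a power-saving bound of the form $|Q \cap (A_1 \times \cdots \times A_s)| = O(n^{s-1-\gamma})$, or $Q$ is in coordinate-wise finite-to-finite correspondence with the graph of the $s$-fold sum $x_1 + \cdots + x_s = 0$ in a one-dimensional definable group $G$.

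To pin down the explicit exponents $\gamma = 1/3$ for $s\ge 4$ and $\gamma = 1/6$ for $s=3$, I would trace through the distal Zarankiewicz-type incidence bounds that the paper singles out as one of its main ingredients. Because $Q$ is semi-algebraic of description complexity $D$ with one-dimensional coordinate fibers, the relevant dual VC-density is one, so the general distal hypergraph bound yields the uniform exponent $1/3$ for $s\ge 4$. The case $s=3$ must be treated separately: here the improvement from $1/3$ to $1/6$ comes from reformulating the grid-incidence count as a point--curve incidence problem and applying a Szemer\'edi--Trotter-type bound (the same source of the $11/6$ exponent in Elekes--R\'onyai \`a la Raz), which is sharper than the generic distal bound in this low-arity regime.

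For the group alternative, the abstract group $G$ obtained from the general theorem is one-dimensional (its dimension equals the coordinate dimension of $Q$, which is $1$) and definable in $(\mathbb{R},+,\cdot,<)$, hence abelian by the arity. By Pillay's theorem on definable groups in o-minimal expansions of real closed fields, $G$ admits a unique definable analytic manifold structure making multiplication analytic; in dimension one this forces $G$ to be locally isomorphic, via an analytic chart, to $(\mathbb{R},+)$ (possibly after composing with $\log$ in the multiplicative case). Translating so that a chosen regular base point of $Q$ lands at $0\in\mathbb{R}$, and restricting to a small o-minimal cell on which the finite-to-finite correspondence between each coordinate of $Q$ and $G$ unravels as the graph of a single-valued analytic bijection, produces the desired analytic $\pi_i \colon U_i \to V$ with $\pi_i(x_i) \in V \ni 0$ and $\pi_1(x_1)+\cdots+\pi_s(x_s)=0 \Leftrightarrow Q(x_1,\ldots,x_s)$.

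The main obstacle is the last step: passing from the abstract finite-to-finite correspondence with a group-multiplication graph to single-valued analytic bijections on open neighborhoods, with the sum equated to \emph{zero} on the right-hand side. This demands a careful choice of base point (to enforce the ``$=0$'' normalization), a shrinking to an o-minimal cell on which all the finite correspondences become graphs of analytic functions, and an appeal to the specific classification of one-dimensional abelian o-minimal groups to upgrade ``definable isomorphism'' to ``analytic bijection''. Mutual exclusivity of (1) and (2) is immediate: the relation in (2) has $n^{s-1}$ solutions on arithmetic-progression-like grids, which is incompatible with any $n^{s-1-\gamma}$ upper bound.
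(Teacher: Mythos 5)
Your overall strategy --- specialize the general $o$-minimal Elekes--Szab\'o theorem to $\dim(X_i)=1$ --- is the same as the paper's, and the starting point (fiber-algebraicity giving the hypotheses of the main theorem) and the mutual exclusivity sketch are fine. But there are two genuine gaps in the execution.

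First, the explanation of the $s=3$ exponent is backwards. You describe $\gamma=\frac{1}{6}$ as an ``improvement'' over $\frac{1}{3}$ coming from a sharper Szemer\'edi--Trotter-type point--curve incidence bound in the low-arity regime; in fact $\frac{1}{6}$ is \emph{weaker} than $\frac{1}{3}$, and it arises because the ternary case is reduced to the quaternary case. Concretely, the paper forms the doubled relation
\[
Q^*(x_2,x_2',x_3,x_3') \;:\Leftrightarrow\; \exists x_1\,\bigl(Q(x_1,x_2,x_3)\wedge Q(x_1,x_2',x_3')\bigr)
\]
and applies the $s=4$ result to $Q^*$; a Cauchy--Schwarz estimate (Lemma \ref{lem: bound Q from Q'}, essentially Raz's lemma) then transfers a $\gamma'$-power saving for $Q^*$ to a $\gamma'/2$-power saving for $Q$. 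So $\frac{1}{6}=\frac{1}{2}\cdot\frac{1}{3}$ is the price of the doubling, not a sharper incidence bound. No separate point--curve incidence argument for $s=3$ appears in the paper, and the $\gamma$-ST property as formulated (Definition \ref{def: gamma ST property}) is only stated for $s\ge 4$ grids, which is exactly why the reduction is needed.

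Second, the group alternative is not a direct consequence of Pillay's theorem on definable groups. The general $o$-minimal theorem (Theorem \ref{thm: main thm o-min1}) delivers a \emph{type-definable} group $G$ and bijections $\pi_i$ defined only on the infinitesimal neighborhoods $\mu_{\mathcal{M}_0}(a_i)$. Pillay's theorem applies to definable groups and does not immediately give you a chart on a type-definable group living in an $\aleph_0$-saturated extension. The paper bridges this in two steps you omit: Corollary \ref{thm:local o-min1} expands the type-definable group to a definable local group $\Gamma$ by logical compactness (after checking continuity of the pulled-back operations on the infinitesimal type), and only then, back over $\mathbb R$, applies Goldbring's solution of Hilbert's fifth problem for local groups (which is where ``Lie group'' enters), followed by the classification of connected one-dimensional Lie groups as $(\mathbb R,+)$ or $S^1$. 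Finally, the analyticity of the $\pi_i$ in clause (2) is obtained from analytic cell decomposition in the semialgebraic ($o$-minimal) setting (Remark \ref{rem: getting analytic}); invoking ``an analytic chart'' from a definable manifold structure on $G$ does not by itself make the coordinate maps $\pi_i\colon U_i\to V$ analytic without this additional input. If you want to follow your route you must supply the type-definable $\to$ local $\to$ Lie passage; as written the argument has a hole exactly there.
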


\begin{thm*}[B](Corollary \ref{cor: ES for 1 dim ACF})
	Assume $s \geq 3$, and let $Q \subseteq \mathbb{C}^{s}$  be an irreducible algebraic variety so that for each $i \in [s]$, the projection of $Q$ to any $s-1$ coordinates is generically finite.
	Then exactly one of the following holds.
	\begin{enumerate}
		\item There exist $c$ depending only on $s, \deg(Q)$ such that:
	 for any $n \in \mathbb{N}$ and $A_i \subseteq \CC_i$ with $|A_i| = n$ for $i \in [s]$ we have
	 $$|Q \cap (A_1 \times \ldots \times A_s)| \leq c n^{s-1 -\gamma}$$
	 where $\gamma = \frac{1}{11}$ if $s \geq 4$, and $\gamma =  \frac{1}{22}$ if $s=3$.
		\item For $G$ one of $(\mathbb{C}, +)$, $(\mathbb{C}, \times)$ or an elliptic curve group, $Q$ is in coordinate-wise correspondence (see Section \ref{sec: stab applics}) with
		$$Q' := \left\{(x_1, \ldots, x_s) \in G^s : x_1 \cdot \ldots \cdot x_s = 1_G \right\}.$$
	\end{enumerate}
	
\end{thm*}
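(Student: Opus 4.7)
The plan is to deduce Theorem~(B) as a direct specialization of the main model-theoretic Elekes--Szab\'o theorem of the paper to the algebraically closed field $\CC$ in characteristic zero. First I would recast $Q$ as a definable $s$-ary relation in $\CC$ viewed as a stable structure: the generic finite-to-one hypothesis on every $(s-1)$-coordinate projection means exactly that $Q$ has Morley rank $s-1$, that each coordinate is $\acl$-algebraic over the others, and that the underlying sort has Morley rank one. Since $\CC$ is interpretable in the o-minimal real field $(\RR,+,\cdot,<)$ via real and imaginary parts, $\ACF_0$ admits a distal expansion, so both hypotheses of the main theorem are satisfied.

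Next, I would apply the main dichotomy. On the combinatorial side, the distal Zarankiewicz bounds developed in the paper (applied through the distal expansion above) produce the power-saving alternative. To extract the explicit exponents I would feed in the best known semi-algebraic Zarankiewicz-type exponent for definable hypergraphs in distal structures and track it through the reductions: in arity three this directly gives $\gamma=1/22$, and for $s\ge 4$ one obtains $\gamma=1/11$ by a slicing argument that reduces the $s$-ary count to an arity-three count while retaining half of the saving. If instead the power-saving bound fails uniformly, the model-theoretic side of the main theorem produces a type-definable strongly minimal abelian group $G$ interpretable in $\CC$, together with generic finite-to-finite correspondences $\pi_i$ between the $i$-th coordinate of $Q$ and $G$ that convert $Q$ into the graph of the $s$-fold sum $\pi_1(x_1)+\cdots+\pi_s(x_s)=0_G$.

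To close the algebraic picture I would invoke the classical Weil--Hrushovski group chunk theorem to promote $G$ to a connected one-dimensional algebraic group over $\CC$; the classification of such groups then forces $G$ to be one of $(\CC,+)$, $(\CC,\times)$, or an elliptic curve. The $\pi_i$ are automatically Zariski-constructible and coordinate-wise finite-to-finite, yielding the coordinate-wise correspondence with $Q'$ required by the conclusion. I expect the principal obstacle to lie in the group-configuration step: the original Elekes--Szab\'o argument is arity-three and uses commuting families of definable functions, whereas here one must pass through the higher-arity abelian group configuration theorem and its combinatorial Latin-hypercube analogue announced in the abstract; carefully tracking the numerical exponent $\gamma$ across the arity reduction is the subtlest piece of bookkeeping, while the step from a type-definable abelian group in $\ACF_0$ to a one-dimensional algebraic group is essentially classical.
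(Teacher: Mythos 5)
Your high-level strategy coincides with the paper's: realize $\ACF_0$ as a stable reduct of the $o$-minimal real field, recast the generic finiteness of the projections as fiber-algebraicity of Morley rank $s-1$, apply the main dichotomy (Theorem C.1), and then use the classical facts that type-definable groups in an $\omega$-stable theory are definable and that groups definable in $\ACF$ are algebraic, finishing with the classification of one-dimensional connected complex algebraic groups. That part is all sound and is exactly the route the paper takes in deriving Corollary~\ref{cor: ES for 1 dim ACF} from Theorem~\ref{thm: stab main ineff} via Corollaries~\ref{cor: ES for any dim in ACF new} and~\ref{cor: ES for any dim in ACF}.

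However, your derivation of the exponents runs in the wrong direction and would not produce the stated constants. In the paper, $s\ge 4$ is the base case: there the $\gamma$-ST property is applied directly with $\gamma=\frac{1}{8d_1-5}$ (Corollary~\ref{fac: algebraic ST}(2)), and for ternary $Q$ ($d=1$, pairing two coordinates so $d_1=2$) this gives $\gamma=\frac{1}{11}$. The case $s=3$ is then \emph{reduced to} $s=4$ by replacing $Q$ with the quaternary relation $Q^*(x_2,x_2',x_3,x_3') \Leftrightarrow \exists x_1\, (Q(x_1,x_2,x_3)\land Q(x_1,x_2',x_3'))$ and a Cauchy--Schwarz argument (Lemma~\ref{lem: bound Q from Q'}); this halves the power-saving exponent, producing $\frac{1}{22}$. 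You have it backwards, asserting that arity three directly gives $\frac{1}{22}$ and that $s\ge 4$ then yields $\frac{1}{11}$ by slicing down to arity three ``while retaining half of the saving'' --- that arithmetic does not close (halving $\frac{1}{22}$ gives $\frac{1}{44}$, not $\frac{1}{11}$), and it mislocates where the loss occurs. Relatedly, be careful which Zarankiewicz exponent you feed in: the uniform decomposition into absolutely irreducible components (needed to apply Theorem~\ref{rem: power saving bound, main stable} to a definable family) forces one to work with \emph{constructible} rather than algebraic fibers, so one must use the constructible-family bound of Corollary~\ref{fac: algebraic ST}(2) ($\gamma=\frac{1}{8d_1-5}$), not the sharper algebraic-fiber bound of Fact~\ref{fac: stronger bounds for algebraic}(2) ($\gamma=\frac{1}{4d_1-1}$, which would give $\frac{1}{7}$, contradicting the stated $\frac{1}{11}$). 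You should also note that the general-position hypothesis in Theorem C.1 disappears for free in dimension one, since any proper definable subset of a one-dimensional set is finite and $o$-minimality bounds the threshold uniformly in the family --- this is what makes the conclusion in (1) unconditional on the $A_i$.
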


\begin{rem}
Theorem (B) is similar to the codimension $1$ case of \cite[Theorem 1.4]{Bays}, however our method provides an explicit bound on the exponent in Clause (1).
\end{rem}

\begin{rem}
Theorems (A) and (B) correspond to the $1$-dimensional case of Corollaries \ref{thm:real o-min1} and \ref{cor: ES for any dim in ACF}, respectively, which allow $Q \subseteq \prod_{i \in [s]} X_i$ with $\dim(X_i) = d$ for an arbitrary $d \in \mathbb{N}$.
\end{rem}

\begin{rem}
	Note the important difference --- Theorem (A) is \emph{local}, i.e.~we can only obtain a correspondence of $Q$ to a subset of a group after restricting to \emph{some} open subsets $U_i$. This is unavoidable in an ordered structure since the high count in Theorem (A.2) might be the result of a local phenomenon in $Q$. E.g.~when $Q$ is the union of $Q_1=\{\bar x: x_1+\cdots+x_s=0\} \cap (-\varepsilon, \varepsilon)^s$, for some $\varepsilon > 0$, and another set  $Q_2$ for which the count is low.
\end{rem}

\subsection{The Elekes-Szab\'o principle}
We now describe the general setting of our main results. We let $\CM = (M, \ldots)$ be an arbitrary first-order structure, in the sense of model theory, i.e.~a set $M$ equipped with some distinguished functions and relations. As usual, a subset of $M^d$ is definable if it is the set of tuples satisfying a formula (with parameters). Two key examples to keep in mind are $(\mathbb{C}, +, \times, 0,1)$ (in which definable sets are exactly the constructible ones, i.e.~boolean combinations of the zero-sets of polynomials, by Tarski's quantifier elimination) and $(\mathbb{R}, +, \times, <, 0,1)$ (in which definable sets are exactly the semialgebraic ones, by Tarski-Seidenberg quantifier elimination). We refer to \cite{MR1924282} for an introduction to model theory and the details of the aforementioned quantifier elimination results.

From now on, we fix a structure $\mathcal{M}$,  $s \in \mathbb{N}$, definable sets $X_i \subseteq M^{d_i}, i \in [s]$,
and a definable relation $Q\subseteq\bar{X}=X_{1}\times\ldots\times X_{s}$.
We write $A_{i}\subseteq_{n}X_{i}$ if $A_{i}\subseteq X_{i}$ with $\left|A_{i}\right| \leq n$. By a \emph{grid on $\bar{X}$ }we mean a set $\bar{A}\subseteq\bar{X}$
with $\bar{A}=A_{1}\times\ldots\times A_{s}$ and $A_{i}\subseteq X_{i}$.
By an \emph{$n$-grid on $\bar{X}$} we mean a grid $\bar{A}=A_{1}\times\ldots\times A_{s}$
with $A_{i}\subseteq_{n}X_{i}$.

\begin{defn}\label{def: fiber alg}
For $d\in \NN$,   we say that  a relation $Q\subseteq X_1\times X_2 \times \dotsc \times X_s$
is \emph{fiber-algebraic, of degree $d$}  if  for any
$i\in [s]$ we have
\begin{gather*}
	\forall x_1 \in X_1 \dotsc \forall x_{i-1} \in X_{i-1} \forall x_{i+1} \in X_{i+1} \dotsc \forall x_s \in X_s \\
\exists^{\leq d}
  x_i \in X_i   \, \, (x_1,\dotsc,x_s)\in Q.
\end{gather*}
We say that $Q\subseteq X_1\times X_2 \times \dotsc \times X_s$ is 
\emph{fiber-algebraic} if it is fiber-algebraic of degree  $d$ for some
$d\in \NN$.
\end{defn}

In other words, fiber algebraicity means that the projection of $Q$ onto any $s-1$ coordinates is finite-to-one.
For example, if $Q\subseteq X_{1}\times X_{2}\times X_{3}$ is fiber-algebraic of degree $d$,
then for any $A_{i}\subseteq_{n}X_{i}$ we have $\left|Q\cap A_{1}\times A_{2}\times A_{3}\right| \leq d n^{2}$. Conversely, let $Q\subseteq\mathbb{C}^{3}$ be given by $x_{1}+x_{2}-x_{3}=0$, and let $A_{1}=A_{2}=A_{3}=\left\{ 0,\ldots,n-1\right\} $. Then $\left|Q\cap A_{1}\times A_{2}\times A_{3}\right|=\frac{n\left(n+1\right)}{2}=\Omega\left(n^{2}\right)$. This indicates that the upper and lower bounds match for the graph of addition in an abelian group (up to a constant) --- and the Elekes-Szab\'o principle suggests that in many situations this is the only possibility. Before making this precise, we introduce some notation.

\subsubsection{Grids in general position.}
From now on we will assume that $\mathcal{M}$ is equipped with some notion of integer-valued dimension on definable sets, to be specified later. A good example to keep in mind is Zariski dimension on constructible subsets of $\mathbb{C}^d$, or the topological dimension on semialgebraic subsets of $\RR^d$.
\begin{defn}\label{defn: gen pos intro}
	\begin{enumerate}
		\item Let  $X$ be a definable set in $\mathcal{M}$, and  let $\mathcal{F}$ be a definable family of subsets of $X$. For $\nu \in \mathbb{N}$, we say that a set $A\subseteq X$ is \emph{in $(\mathcal{F},\nu)$-general position}
if  $|A\cap F|\leq \nu$ for every $F\in \mathcal{F}$ with $\dim(F) < \dim(X)$.

\item Let $X_i$,  $i=1, \ldots ,s$,  be definable sets in $\mathcal{M}$.
Let $\bar{\mathcal{F}} = (\mathcal{F}_1, \ldots, \mathcal{F}_s)$, where $\mathcal{F}_i$ is a definable family of subsets of $X_i$.
For  $\nu \in \mathbb{N}$ we say
that a grid $\bar{A}$ on $\bar{X}$
is in
\emph{$(\bar{\mathcal{F}}, \nu)$-general position} if each $A_i$ is in
  $(\mathcal{F}_i,\nu)$-general position.
	\end{enumerate}
\end{defn}

For example, when $\CM$ is the field $\CC$, a subset of $\mathbb{C}^d$ is in a $(\CF, \nu)$-general position if any variety of smaller dimension and bounded degree (determined by the formula defining $\CF$) can cut out only $\nu$ points from it (see the proof of  Corollary \ref{cor: ES for any dim in ACF}). Also, if  $\mathcal{F}$  is any definable family of subsets of $\CC$, then   for any large
  enough $\nu$,  every  $A\subseteq X$  is in $(\mathcal{F},\nu)$-general position.  On the other hand, let $X = \mathbb{C}^2$ and let $\mathcal{F}_d$ be the family of algebraic curves of degree less than $d$. If $\nu \leq d+1$, then any set $A \subseteq X$ with $|A| \geq \nu$ is not in $(\mathcal{F}_d, \nu-1 )$-general position.

\subsubsection{Generic correspondence with group multiplication.}
We assume that $\CM$ is a sufficiently saturated structure, and let $Q \subseteq \bar{X}$ be a definable relation and $(G, \cdot, 1_G)$ a connected type-definable group in $\CM^{\textrm{eq}}$. Type-definability means that the underlying set $G$  of the group is given by the intersection of a small (but possibly infinite) collection of definable sets, and the multiplication and inverse operations are relatively definable. Such a group is connected if it contains no proper type-definable subgroup of small index (see e.g.~\cite[Chapter 7.5]{MR1924282}). And $\CM^{\eq}$ is the structure obtained from $\CM$ by adding sorts for the quotients of definable sets by definable equivalence relations in $\CM$ (see e.g.~\cite[Chapter 1.3]{MR1924282}). In the case when $\CM$ is the field $\mathbb{C}$, connected type-definable groups are essentially just the complex algebraic groups connected in the sense of Zariski topology (see Section \ref{sec: stab applics} for a discussion and further references).

\begin{defn}\label{def: gen corresp intro}
We say that $Q$ is in a \emph{generic correspondence with multiplication in $G$} if there exist a small set $A \subseteq M$ and elements $g_1, \ldots, g_s \in G$ such that:
\begin{enumerate}
\item   $g_1\cdot
  \dotsc\cdot  g_s = 1_G$;
 \item  $g_1,\dotsc,g_{s-1}$ are independent generics in $G$ over $A$ (i.e.~each $g_i$ does not belong to any definable set of dimension smaller than $G$ definable over $A \cup \{g_1, \ldots, g_{i-1}, g_{i+1}, \ldots, g_{s-1} \}$);
 \item  For each $i=1,\dotsc, s$ there is a generic element $a_i \in X_i$   inter-algebraic with $g_i$ over $\mathcal{A}$ (i.e.~$a_i \in \acl(g_i,A)$ and $g_i \in \acl(a_i,A)$, where $\acl$ is the model-theoretic algebraic closure),  such that $(a_1, \ldots, a_s) \in Q$.
\end{enumerate}
\end{defn}

\begin{rem}
	There are several variants of ``generic correspondence with a group'' considered in the literature around the Elekes-Szab\'o theorem. The one that we use arises naturally at the level of generality we work with, and as we discuss in Sections \ref{sec: stab applics} and \ref{sec: apps in o-min} it easily specializes to the notions considered previously in several cases of interest (e.g.~the algebraic coordinate-wise finite-to-finite correspondence in the case of constructible sets in Theorem (B), or coordinate-wise analytic bijections on a neighborhood in the case of semialgebraic sets in Theorem (A)).
\end{rem}

\subsubsection{The Elekes-Szab\'o principle}
Let $s \geq 3, k \in \mathbb{N}$ and $X_1, \ldots, X_s$ be definable sets in a sufficiently saturated structure $\mathcal{M}$ with $\dim(X_i) = k$.
\begin{defn}
We say that $X_1, \ldots, X_s$ satisfy the \emph{Elekes-Szab\'o principle} if for any fiber-algebraic definable relation $Q \subseteq \bar{X}$, one of the following holds:
\begin{enumerate}
	\item $Q$ \emph{admits power saving}: there exist some $\gamma \in \mathbb{R}_{>0}$ and some definable families $\mathcal{F}_i$ on $X_i$ such that: for any $\nu \in \mathbb{N}$ and any $n$-grid $\bar{A} \subseteq \bar{X}$ in $(\bar{F},\nu)$-general position, we have $|Q \cap \bar{A}| = O_{\nu} \left( n^{(s-1) - \gamma} \right)$;
	\item there exists a type-definable subset of $Q$ of full dimension that is in a generic correspondence with multiplication in some type-definable \emph{abelian} group $G$  of dimension $k$.
\end{enumerate}
\end{defn}

The following are the previously known cases of the Elekes-Szab\'o principle:
\begin{enumerate}
	\item \cite{ES} $\mathcal{M} = \left(\mathbb{C}, +, \times \right)$, $s=3$, $k$ arbitrary (no explicit exponent $\gamma$ in power saving; no abelianity of the algebraic group for $k > 1$);
	\item \cite{MR3577370} $\mathcal{M} = \left(\mathbb{C}, +, \times \right)$, $s = 3$, $k = 1$ (explicit $\gamma$ in power saving);

	\item \cite{ES4d} $\mathcal{M} =  \left(\mathbb{C}, +, \times \right)$, $s = 4$, $k = 1$ (explicit $\gamma$ in power saving);
	\item  \cite{MR4181764} $\mathcal{M} =  \left(\mathbb{C}, +, \times \right)$, $k = 1$, $Q$ is the graph of an  $s$-ary polynomial function for an arbitrary $s$ (i.e.~this is a generalization of Elekes-R\'onyai to an arbitrary number of variables);
	\item \cite{Bays} $\mathcal{M} = \left(\mathbb{C}, +, \times \right)$, $s$ and $k$ arbitrary, abelianity of the group for $k > 1$ (they work with a more relaxed notion of general position and arbitrary codimension, however no bounds on $\gamma$);
	\item \cite{StrMinES} $\mathcal{M}$ is any strongly minimal structure interpretable in a distal structure (see Section \ref{sec: distal Zarank}), $s = 3$, $k=1$.
\end{enumerate}
In the first five cases the dimension is the Zariski dimension, and in the sixth case the Morley rank.

\subsection{Main theorem}

We can now state the main result of this paper.
\begin{thm*}[C]
The Elekes-Szab\'o principle holds in the following two cases:
\begin{enumerate}
\item (Theorem \ref{thm: stab main ineff}) $\mathcal{M}$ is a stable structure interpretable in a distal structure, with respect to $\mathfrak{p}$-dimension (see Section \ref{sec:notion-mfp-dimension}, and below).
	\item (Theorem \ref{thm: main thm o-min1}) $\mathcal{M}$ is an $o$-minimal structure expanding a group, with respect to the topological dimension. In this case, on  a type-definable generic subset of $\bar{X}$, we get a definable coordinate-wise bijection of $Q$ with the graph of multiplication of an abelian type-definable group $G$ (we stress that this $G$ is a priori unrelated to the underlying group that $\mathcal{M}$ expands).
\end{enumerate}
Moreover, the power saving bound is explicit in (2) (see the statement of Theorem \ref{thm: main thm o-min1}), and is explicitly calculated from a given distal cell decomposition for $Q$ in (1) (see Theorem  \ref{rem: power saving bound, main stable}).
\end{thm*}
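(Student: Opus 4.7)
The plan is to prove both parts of Theorem (C) via a common strategy: either directly establish power saving on grids in general position, or show that $Q$ is so dense on generic grids that it must come from an abelian group, and then recognize that group inside $\mathcal{M}^{\mathrm{eq}}$.

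First I would show that failure of power saving forces near-maximal incidences, in the following sense. Applying the distal Zarankiewicz-style bounds (alluded to in the abstract and developed earlier in the paper for hypergraphs definable in a distal structure) to a distal cell decomposition for $Q$ yields an a priori power saving bound of the form $|Q \cap \bar A| = O_\nu(n^{s-1-\gamma_0})$ on $(\bar{\mathcal{F}},\nu)$-general position grids unless the distal complexity parameters degenerate. Thus if Clause (1) fails, we obtain a pseudo-finite sequence of grids on which $|Q \cap \bar A|$ is $\Omega(n^{s-1})$. Taking an ultraproduct and extracting a Keisler measure / generic type, this translates into a statement about $\mathfrak{p}$-generic (respectively, topologically generic) $s$-tuples in $Q$: any $s-1$ coordinates algebraically determine (in the model-theoretic sense) the last one with positive density, i.e.\ $Q$ is, generically, a Latin hypercube on tuples of generic elements of the $X_i$.

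Second, I would invoke the purely combinatorial characterization of Latin hypercubes arising from abelian groups (the higher-arity ``combinatorial abelian group configuration''): a fiber-algebraic $s$-ary relation whose generic behaviour is that of a Latin hypercube and which satisfies the associativity/coherence constraints coming from restrictions to subcubes must be coordinate-wise inter-definable, on a generic subset, with the graph $\{g_1 \cdots g_s = 1\}$ of a commutative group operation on some abstract set. The coherence constraints here are exactly what the distal/pseudo-finite analysis provides: intersecting $Q$ with various lower-dimensional sub-grids must also be near-maximal, and fiber algebraicity together with the general-position hypothesis rules out degenerate configurations. The output is an abstract abelian group $G$ together with generic bijections $\pi_i$ from fibers of $Q$ to $G$ realising $Q$ as $\pi_1+\cdots+\pi_s=0$.

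Third, I would promote this abstract group to a type-definable group inside $\mathcal{M}^{\mathrm{eq}}$ using the higher-arity generalization of the abelian group configuration theorem in stable theories (for part (1)), applied in the $\mathfrak{p}$-dimension context since $\mathfrak{p}$-dimension interacts well with forking in the stable reduct obtained from the distal expansion. This produces a connected type-definable abelian group $G$ of $\mathfrak{p}$-dimension $k$, and the generic bijections $\pi_i$ become relatively definable inter-algebraicities, giving precisely the generic correspondence required by Definition \ref{def: gen corresp intro}. For part (2), the $o$-minimal setting, the same abstract group $G$ is produced by the combinatorial step, but now one must recognise it as a type-definable group in the $o$-minimal structure; here I would use the $o$-minimal group chunk / group configuration theorem (for topological dimension, with the $o$-minimal analogues of forking and genericity), and then upgrade coordinate-wise inter-algebraicity to coordinate-wise definable bijections on generic subsets, which is possible in $o$-minimal structures because generic finite-to-finite correspondences between one-dimensional sets can be refined to bijections on sufficiently small definable pieces.

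The main obstacle I anticipate is the group recognition step, and in particular ensuring \emph{abelianity} and the correct dimension $k$ of $G$. The abelian group configuration classically recognises only a group, not necessarily an abelian one; abelianity must be forced by the symmetric, higher-arity nature of the hypercube configuration (each pair of coordinates plays an interchangeable role in $Q$), and checking that the combinatorial commutativity survives the passage to the type-definable group is delicate. A secondary difficulty is controlling the interaction between the distal Zarankiewicz bound (which gives the dichotomy on general-position grids with a specific exponent, recorded in Theorem \ref{rem: power saving bound, main stable}) and the ultraproduct argument that extracts the generic Latin-hypercube structure, so that the explicit bound on $\gamma$ in (2) can actually be read off from the distal cell decomposition data.
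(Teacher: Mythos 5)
Your skeleton (distal Zarankiewicz $\Rightarrow$ dichotomy, then group configuration) is the right one, but the mechanism of the dichotomy and the way the group appears are both wrong in ways that would block the proof. The principal gap is the claim that failure of power saving makes $Q$ ``generically a Latin hypercube.'' In the stable setting $Q$ is only \emph{fiber-algebraic} (finite-to-finite), not fiber-definable, so there is no globally defined $(s-1)$-ary group operation to extract; what one actually gets from the failure of the pseudo-plane/incidence case is the \emph{forking-theoretic} condition $a_1a_2 \ind_{\acl(a_1a_2)\cap\acl(a_3\ldots a_s)} a_3\ldots a_s$ for a generic $\bar a\in Q$, i.e.\ the abelian $s$-gon condition. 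This condition is then fed \emph{directly} into the stable group configuration theorem (Theorem F) to produce the type-definable group; there is no intermediate ``abstract'' group coming from a combinatorial Latin-hypercube argument, and no ultraproduct step. Your proposed two-stage approach (combinatorial group first, then ``promote'' it to a type-definable one) would require you to already have bijections $\pi_i$ realizing $Q$ as a group relation before the model-theoretic recognition, which is exactly what you cannot have when $\dcl\neq\acl$.

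Conversely, the combinatorial Latin-hypercube theorem (Theorem E) is the right tool for the $o$-minimal case --- but not because you first build an abstract group and then apply a separate ``$o$-minimal group configuration.'' Rather, the crucial point is that with definable Skolem functions $\dcl=\acl$ on the relevant types, so after passing to infinitesimal neighbourhoods $\mu_{\CM_0}(a_i)$ of a generic $\bar a\in Q$, the relation genuinely satisfies properties (P1) and (P2) as a \emph{bijective} family; the ``moreover'' clause of Theorem E already yields type-definability over $\CM_0$, so no further recognition step is needed. Two further issues: (i) you never address $s=3$, yet both group-configuration theorems require $s\geq 4$, and the paper has to reduce $s=3$ to $s=4$ via the composed relation $Q^*\subseteq X_2^2\times X_3^2$ (at the cost of halving the exponent); (ii) abelianity in the stable case is \emph{not} forced by ``the symmetric higher-arity nature of the hypercube'' as you suggest --- it is proved separately (restricting to four coordinates of the $s$-gon and invoking the Abelian Group Configuration lemma), and this is independent of the symmetric role played by the coordinates.
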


Examples of structures satisfying the assumption of Theorem (C.1) include: algebraically closed fields of characteristic $0$, differentially closed fields of characteristic $0$ with finitely many commuting derivations, compact complex manifolds. In particular, Theorem (B) follows from Theorem (C.1) with $k=1$, combined with some basic model theory of algebraically closed fields (see Section \ref{sec: stab applics}). We refer to \cite{pillay1996geometric} for a detailed treatment of stability, and to \cite[Chapter 8]{tent2012course} for a quick introduction. See Section \ref{sec: distal Zarank} for a discussion of distality.

Examples of $o$-minimal structures include real closed fields (in particular, Theorem (A) follows from Theorem (C.2) with $k=1$ combined with some basic $o$-minimality, see Section \ref{sec: apps in o-min}), $\mathbb{R}_{\operatorname{exp}} = (\mathbb{R}, +, \times, e^x)$, $\RR_{\operatorname{an}} = \left( \RR, +, \times, f \restriction_{[0,1]^k} \right)$ for $k \in \mathbb{N}$ and $f$ ranging over all functions real-analytic on some neighborhood of $[0,1]^k$, or the combination of both $\RR_{\operatorname{an, exp}}$. We refer to \cite{MR1633348} for a detailed treatment of $o$-minimality, or to \cite[Section 3]{MR3728313} and reference there for a quick introduction.

\begin{rem}
	The assumption that $\mathcal{M}$ is an $o$-minimal expansion of a group in Theorem (C.2) can be relaxed to the more general assumption that $\mathcal{M}$ is an $o$-minimal structure with \emph{definable Skolem functions} (see e.g.~\cite{dinis2022definable} for a detailed discussion of Skolem functions and related notions), but possibly with 
	 a weaker bound on the power saving exponent than the one stated in Theorem \ref{thm: main thm o-min1}.
	 Indeed, the $\gamma$ in the $\gamma$-power saving stated in Theorem \ref{thm: main thm o-min1} depends on $\gamma$ in the $\gamma$-ST property, and hence on $t = 2d_1 - 2$, in Fact \ref{o-min cutting}(2) --- the proof of which uses that $\mathcal{M}$ is an $o$-minimal expansion of a group. However, Fact \ref{o-min cutting}(2) is known to  hold in an arbitrary $o$-minimal structure with (at least) the weaker bound $t = 2d_1 - 1$ (see \cite[Theorem 4.1]{DistCellDecompBounds}).
	   To carry out the rest of the arguments in the proof of Theorem \ref{thm: main thm o-min1} in Section \ref{sec: main thm omin} we only use the existence of definable Skolem functions. Thus any $o$-minimal structure with definable Skolem functions satisfies the conclusion of Theorem \ref{thm: main thm o-min1} with $\gamma = \frac{1}{8m-3}$ if $s \geq 4$ and $\gamma = \frac{1}{16m - 6}$ if $s=3$.
	    \end{rem}


%
%

\subsection{Outline of the paper}
In this section we outline the structure of the paper, and highlight some of the key ingredients of the proof of the main theorem. The proofs of (1) and (2) in Theorem (C) have similar strategy at the general level, however there are considerable technical differences.
In each of the cases, the proof consists of the following key ingredients.
\begin{enumerate}
	\item Zarankiewicz-type bounds for distal relations (Section \ref{sec: distal Zarank}, used for both Theorem (C.1) and (C.2)).
	\item A higher arity generalization of the abelian group configuration theorem (Section \ref{sec: group config omin} for the $o$-minimal case Theorem (C.2), and Section \ref{sec: group config stable} for the stable case Theorem (C.1)).
	\item The dichotomy between an incidence configuration, in which case the bounds from (1) give power saving, and existence of a family of functions (or finite-to-finite correspondences) associated to $Q$ closed under generic composition, in which case a correspondence of $Q$ to an abelian group is obtained using (2). This is Section \ref{sec: main thm stable} for the stable case (C.1) and Section \ref{sec: main thm omin} for the $o$-minimal case (C.2).
\end{enumerate}

We provide some further details for each of these ingredients, and discuss some auxiliary results of independent interest.

\subsubsection{Zarankiewicz-type bounds for distal relations (Section \ref{sec: distal Zarank})}

Distal structures constitute a subclass of purely unstable NIP structures \cite{simon2013distal} that contains all $o$-minimal structures, various expansions of the field $\mathbb{Q}_p$, and many other valued fields and related structures \cite{DistValFields} (we refer to the introduction of \cite{distal} for a general discussion of distality in connection to combinatorics and references). Distality of a graph can be viewed as a strengthening of finiteness of its VC-dimension retaining stronger combinatorial properties of semialgebraic graphs.
In particular, it is demonstrated in \cite{chernikov2015externally, distal, chernikov2016cutting} that many of the results in semialgebraic incidence combinatorics generalize to relations definable in distal structures. In Section \ref{sec: distal Zarank} we discuss distality, in particular proving  the following generalized ``Szemer\'edi-Trotter'' theorem:

\begin{thm*}[D](Theorem \ref{thm: distal incidence bound})
For every $d \in \mathbb{N}, t \in \mathbb{N}_{\geq 2}$ and $c \in \mathbb{R}$ there exists some $C = C(d,t,c) \in \mathbb{R}$ satisfying the following.

Assume that $E \subseteq X \times Y$ admits a distal cell decomposition $\mathcal{T}$ such that $|\mathcal{T}(B)| \leq c |B|^t$ for all finite $B \subseteq Y$. Then, taking $\gamma_1 := \frac{(t-1)d}{td-1}, \gamma_2 := \frac{td-t}{td-1}$ we have: for all $\nu \in \mathbb{N}_{\geq 2}$ and $A\subseteq_m X,B\subseteq_n Y$ such that $E \cap (A \times B)$ is $K_{d,\nu}$-free,
$$\left|E \cap (A \times B)\right| \leq C \nu \left(m^{\gamma_1} n^{\gamma_2} + m + n \right).$$
\end{thm*}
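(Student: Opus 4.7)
The plan is to combine the assumed distal cell decomposition with a Kővári--Sós--Turán (KST) bound applied inside each cell. The first step upgrades $\mathcal{T}$ to a \emph{distal cutting}: for any parameter $r \in [1, n]$, a standard random-sampling / $\varepsilon$-net argument applied to sub-samples $B' \subseteq B$ (invoking the hypothesis $|\mathcal{T}(B')| \leq c |B'|^t$) produces a partition of $X$ into at most $C_0 r^t$ ``cells'' $\Delta$ such that each $\Delta$ is crossed by at most $n/r$ of the fibers $E_b := \{x \in X : (x, b) \in E\}$ (meaning $\Delta \cap E_b \neq \emptyset$ and $\Delta \not\subseteq E_b$), with $C_0 = C_0(c,t)$.

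Given such a cutting, fix $A \subseteq X$ with $|A| = m$. For each cell $\Delta$, set $m_\Delta := |A \cap \Delta|$, $B^{\mathrm{full}}_\Delta := \{b \in B : \Delta \subseteq E_b\}$, and $B^{\mathrm{cross}}_\Delta := \{b \in B : \Delta \text{ is crossed by } E_b\}$; incidences decompose as
\[
|E \cap (A \times B)| = \sum_\Delta \Bigl( m_\Delta \cdot |B^{\mathrm{full}}_\Delta| + |E \cap ((A \cap \Delta) \times B^{\mathrm{cross}}_\Delta)| \Bigr).
\]
For cells with $m_\Delta \geq d$, the $K_{d,\nu}$-freeness forces $|B^{\mathrm{full}}_\Delta| \leq \nu - 1$ (else any $d$ points of $A \cap \Delta$ would have $\geq \nu$ common neighbors in $B$), so the full contribution from such cells totals at most $(\nu - 1) m$. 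For the crossings, the induced $K_{d,\nu}$-free subgraph on $(A \cap \Delta) \times B^{\mathrm{cross}}_\Delta$ has right side of size $\leq n/r$, hence KST bounds its edges by $C_1 \nu \bigl( m_\Delta (n/r)^{1-1/d} + n/r \bigr)$; summing over the at most $C_0 r^t$ cells yields a total crossing contribution of order $\nu \cdot \bigl( m n^{1-1/d} r^{-(1-1/d)} + n r^{t-1} \bigr)$.

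The two crossing terms are equated by the choice $r^* := m^{d/(td-1)} n^{-1/(td-1)}$, giving the main contribution of order $\nu \cdot m^{\gamma_1} n^{\gamma_2}$ from the statement. This $r^*$ lies in the admissible range $[1, (m/d)^{1/t}]$ exactly in the middle regime where, up to constants, $n^{1/d} \leq m \leq n^t$. In the low extreme $m \leq n^{1/d}$, no cutting is needed: direct KST yields $|E \cap (A \times B)| \leq C \nu ( m n^{1-1/d} + n) \leq C'(m + n)$. In the high extreme $m \geq n^t$, we take $r = \lfloor (m/d)^{1/t} \rfloor$; a short calculation confirms both that $m^{\gamma_1} n^{\gamma_2} \leq m$ in this range and that the crossing and full contributions combine to $O(\nu m)$.

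\textbf{Main obstacle.} The principal technical difficulty is controlling the ``small cells'' with $m_\Delta < d$, for which $|B^{\mathrm{full}}_\Delta|$ is not constrained by $K_{d,\nu}$-freeness. These are handled by bundling: set $A_{\mathrm{small}} := \bigcup_{m_\Delta < d} (A \cap \Delta)$, of size at most $(d-1) C_0 r^t$, and apply KST to the subgraph on $A_{\mathrm{small}} \times B$, taking whichever KST orientation is tighter (i.e.\ $C \nu ( n |A_{\mathrm{small}}|^{1-1/\nu} + n)$ versus $C \nu ( |A_{\mathrm{small}}| n^{1-1/d} + |A_{\mathrm{small}}|)$). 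A case analysis verifies that under $r = r^*$ this auxiliary bound is absorbed into $C \nu (m^{\gamma_1} n^{\gamma_2} + m + n)$, and the extreme regimes are handled analogously. All constants are explicit in $c, d, t$, yielding the $C = C(d, t, c)$ in the statement.
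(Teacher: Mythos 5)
Your one-shot decomposition has a genuine gap in the treatment of small cells, and it is not a constant-factor issue. Take $t=d=2$ (so $\gamma_1=\gamma_2=2/3$), $\nu=3$, and $m=n^{3/2}$. Your choice $r^*=m^{2/3}n^{-1/3}=n^{2/3}$ gives $|A_{\mathrm{small}}|\leq(d-1)C_0(r^*)^t\approx n^{4/3}$, while the target main term is $m^{\gamma_1}n^{\gamma_2}=n^{5/3}$. The first orientation of KST applied to $A_{\mathrm{small}}\times B$ (viewing the graph as $K_{\nu,d}$-free on the $B$ side) gives roughly $n\,|A_{\mathrm{small}}|^{1-1/\nu}\approx n\cdot n^{(4/3)(2/3)}=n^{17/9}$, and the second orientation gives roughly $|A_{\mathrm{small}}|\,n^{1-1/d}\approx n^{4/3}n^{1/2}=n^{11/6}$; both exceed $n^{5/3}$, and the discrepancy grows polynomially in $n$. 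More structurally: your two constraints are incompatible. To absorb the small-cell contribution into $m^{\gamma_1}n^{\gamma_2}$ you would need $r^t\cdot n\lesssim m^{\gamma_1}n^{\gamma_2}$, i.e.\ $r\lesssim r^*/n^{1/t}$ (since $(r^*)^t\,n=r^*\,m^{\gamma_1}n^{\gamma_2}$), but the crossing term $m\,n^{1-1/d}r^{-(1-1/d)}$ forces $r\gtrsim r^*$; there is no admissible $r$ that satisfies both once $r^*>1$. A cell containing a single point of $A$ imposes no $K_{d,\nu}$-constraint on the degree of that point, so nothing short of $n$ bounds its contribution, and this is exactly what your bundling fails to control.

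The paper's proof avoids this by a peeling argument rather than a one-shot sum over all cells. After producing the $1/r$-cutting, it only uses one cell: by pigeonhole some cell $C$ contains $\geq m/(\alpha_1 r^t)$ points of $A$. If this quantity is $\leq d$, then $n\lesssim m^{1/t}$, and a direct dual-VC-density bound (Fact \ref{VCBoundOnEdges}) already gives $O(\nu m)$. Otherwise $C$ contains a set $A'$ with $|A'|>d$, the crossing set $B'$ satisfies $|B'|\leq n/r\lesssim|A'|^d$, and applying K\H{o}v\'ari--S\'os--Tur\'an to $A'\times B'$ produces a vertex $a\in A'$ of degree $\lesssim\nu|A'|^{d-1}$ into $B'$; since $|A'|\geq d$ and $C$ is not crossed by any $E_b$ with $b\notin B'$, $K_{d,\nu}$-freeness caps $|E_a\cap(B\setminus B')|$ at $\nu-1$. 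One then deletes $a$ and iterates, and the telescoping sum of the per-step degree bounds gives $m^{\gamma_1}n^{\gamma_2}$ after integrating. The iteration is exactly the device that lets one sidestep the simultaneous global accounting for all small cells that sinks your version. If you want to salvage a one-shot argument you would need a different way to control the ``full'' contributions of sub-$d$ cells that does not treat $A_{\mathrm{small}}\times B$ as an opaque bipartite graph, but I do not see one; I recommend switching to the peeling scheme.
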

In particular, if $E \subseteq U \times V$ is a binary relation definable in a distal structure and $E$ is $K_{s,2}$-free for some $s \in \mathbb{N}$,
then there is some $\gamma>0$ such that: for
all $A\subseteq_n U,B\subseteq_n V$
we have $\left|E \cap A \times B\right| = O( n^{\frac{3}{2}-\gamma} )$. The exponent strictly less that $\frac{3}{2}$ requires distality, and is strictly better than e.g.~the optimal bound $\Omega(n^{\frac{3}{2}})$ for the point-line incidence relation on the affine plane over a field of positive characteristic. In the proof of Theorem (C), we will see how this  $\gamma$  translates to the power saving exponent in the non-group case.
More precisely, for our analysis of the higher arity relation $Q$, we introduce the so-called \emph{$\gamma$-Szemer\'edi-Trotter property}, or \emph{$\gamma$-ST property} (Definition \ref{def: gamma ST property}),  capturing an iterated variant of Theorem (D), and show in Proposition \ref{prop: ind ES} that Theorem (D) implies that every binary relation definable in a distal structure satisfies the $\gamma$-ST property for some $\gamma >0$ calculated in terms of its distal cell decomposition.
We conclude Section \ref{sec: distal Zarank} with a discussion of the explicit bounds on $\gamma$ for the $\gamma$-ST property in several particular structures of interest needed to deduce the explicit bounds on the power saving in Theorems (A) and (B).

\subsubsection{Reconstructing an abelian group from a family of bijections (Section \ref{sec: group config omin})}

Assume that $(G, +, 0)$ is an abelian group, and consider the $s$-ary relation $Q \subseteq \prod_{i \in [s]}G$ given by $x_1 + \ldots + x_s = 0$. Then $Q$ is easily seen to satisfy the following two properties, for any permutation of the variables of $Q$:
\begin{gather*}
	\tag{P1} \forall x_1, \ldots, \forall x_{s-1} \exists ! x_s Q(x_1, \ldots, x_s), \\
	\tag{P2} \forall x_1, x_2 \forall y_3, \ldots y_{s} \forall y'_3, \ldots, y'_s \Big( Q(\bar{x}, \bar{y}) \land Q(\bar{x}, \bar{y}') \rightarrow \\
	\big( \forall x'_1, x'_2 Q(\bar{x}', \bar{y}) \leftrightarrow Q(\bar{x}', \bar{y}') \big) \Big).
\end{gather*}
In Section  \ref{sec: group config omin} we show a converse, assuming $s \geq 4$:
\begin{thm*}[E](Theorem \ref{thm: main group config bijections})
Assume $s \in \mathbb{N}_{\geq 4}$, $X_1, \ldots, X_s$ and $Q \subseteq \prod_{i \in [s]} X_i$ are sets, so that $Q$ satisfies (P1) and (P2) for any permutation of the variables.
Then there exists an abelian group $(G,+,0_G)$ and bijections $\pi_i: X_i \to G$ such that for every $(a_1, \ldots, a_s) \in \prod_{i \in [s]} X_i$ we have
$$Q(a_1, \ldots, a_s) \iff \pi_1(a_1) + \ldots + \pi_s(a_s) = 0_G.$$
Moreover, if $Q$ is definable and $X_i$ are type-definable in a sufficiently saturated structure $\CM$, then we can take $G$ to be type-definable and the bijections $\pi_i$ relatively definable in $\CM$.
\end{thm*}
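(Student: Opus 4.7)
\emph{Overview.} The plan is to recover the abelian group from $Q$ in three stages: first, use (P1) to produce a family of bijections between the sorts $X_i$; second, use (P2) to give this family a ``sharply transitive'' structure; and third, use (P2) for several transversal coordinate splits --- this is where the assumption $s\geq 4$ is essential --- to show that the induced group of compositions exists and is abelian.

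\emph{The family of bijections and sharp transitivity.} For each $\bar c=(c_3,\ldots,c_s)\in X_3\times\cdots\times X_s$, (P1) provides a bijection $\phi_{\bar c}:X_1\to X_2$ defined by $Q(x_1,\phi_{\bar c}(x_1),\bar c)$. Let $\Phi=\{\phi_{\bar c}:\bar c\in\prod_{i\geq 3}X_i\}$. Property (P2), applied to the split $\{1,2\}\mid\{3,\ldots,s\}$, says exactly that $\phi_{\bar c}=\phi_{\bar c'}$ as soon as the two bijections agree at a single point. Combined with existence via (P1), this yields that $\Phi$ is a \emph{sharply transitive} family of bijections $X_1\to X_2$: for each $(a_1,a_2)\in X_1\times X_2$ there is a unique $\phi\in\Phi$ with $\phi(a_1)=a_2$. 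The same argument applied to every permutation of the variables produces sharply transitive families $\Phi_{ij}$ of bijections $X_i\to X_j$ for every pair $i\neq j$.

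\emph{Constructing the group.} Fix a basepoint $\bar e=(e_1,\ldots,e_s)\in Q$, produced by (P1). The group $G$ will be realized as a group of bijections $X_1\to X_1$, concretely as the set $\{\phi^{-1}\circ\psi:\phi,\psi\in\Phi\}$ under composition. Sharp transitivity of $\Phi$ immediately gives that this set contains the identity $(\phi^{-1}\circ\phi)$ and is closed under inverses. The crucial point is closure under composition, which reduces to the identity
\[
\phi\circ\psi^{-1}\circ\chi\in\Phi\qquad\text{for all }\phi,\psi,\chi\in\Phi.
\]
The plan to prove this is to define a candidate $\phi_{\bar c^*}$ by picking any point $a_1^\circ\in X_1$ and requiring $\phi_{\bar c^*}(a_1^\circ)=(\phi\circ\psi^{-1}\circ\chi)(a_1^\circ)$, choosing $\bar c^*$ via (P1) (with $s-3$ free coordinates, which exist because $s\geq 4$); one then propagates the agreement from the one point $a_1^\circ$ to all of $X_1$ using (P2) for a second coordinate split transversal to $\{1,2\}\mid\{3,\ldots,s\}$, exchanging the role of a free coordinate in $\bar c^*$ with that of the $X_1$-coordinate. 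Once closure is established, the resulting $(G,\circ)$ is a group acting freely and transitively on $X_1$.

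\emph{Abelianness, the maps $\pi_i$, and definability.} For abelianness, the same construction carried out with the roles of $X_1$ and $X_2$ swapped produces a second regular action on $X_1$; one checks using (P2) for the split swapping the two distinguished coordinates that this second action commutes with the first, and the standard fact that two commuting regular actions of groups on the same set force both groups to be abelian and (anti)isomorphic then concludes. Set $\pi_1:X_1\to G$ by $\pi_1(a_1):=$ the unique $g\in G$ with $g\cdot e_1=a_1$, and transport this to $\pi_i:X_i\to G$ via the sharply transitive family $\Phi_{1i}$, normalized so that $\pi_i(e_i)=0_G$. Unwinding the definitions, $Q(a_1,\ldots,a_s)$ becomes equivalent to a linear relation $\varepsilon_1\pi_1(a_1)+\cdots+\varepsilon_s\pi_s(a_s)=0_G$ with signs $\varepsilon_i\in\{\pm 1\}$, which can be turned into a pure sum by replacing $\pi_i$ with $-\pi_i$ where needed (permissible since $G$ is abelian). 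For the ``moreover'' clause, the equivalence relation from (P2), the candidate parameter $\bar c^*$, the group operation on $G$, and the maps $\pi_i$ are all given by uniform first-order formulas in $Q$ with finitely many parameters, so in a saturated $\CM$ the group $G$ is type-definable and the $\pi_i$ are relatively definable.

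\emph{Main obstacle.} The technical heart is the closure identity $\phi\circ\psi^{-1}\circ\chi\in\Phi$ (and the analogous identities for the families $\Phi_{ij}$ needed to establish abelianness), which is precisely where the hypothesis $s\geq 4$ is used through (P2) applied to a second, transversal coordinate split. The remainder of the argument --- from closure to abelianness to the construction of $\pi_i$ and the type-definable version --- is then relatively formal.
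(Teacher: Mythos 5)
Your architecture matches the paper's: sharp transitivity of the fiber family from (P1)+(P2) on the split $\{1,2\}\mid\{3,\ldots,s\}$, the closure identity $\phi\circ\psi^{-1}\circ\chi\in\Phi$ proved by propagating agreement at one point via (P2) for a transversal split (this is exactly the paper's Claim \ref{claim:comp}/Lemma \ref{lem: Q any arity ab}), the normalization of signs to turn a two-sided relation into $\pi_1(a_1)+\cdots+\pi_s(a_s)=0_G$, and the compactness argument for the ``moreover'' clause (cf.\ the step in the paper where saturation supplies definable $X_i'\supseteq X_i$ on which (P1$'$) and (P2) hold).

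However, there is a genuine gap in the abelianness step. You appeal to ``the standard fact that two commuting regular actions of groups on the same set force both groups to be abelian.'' That is not a fact. For any group $G$, left translations and right translations are two commuting regular actions on the underlying set of $G$; they force $H\cong G^{\mathrm{op}}$, but $G$ need not be abelian (take $G=S_3$). Commutativity of two regular actions only yields that the second sits inside the centralizer of the first, hence is anti-isomorphic to it; to conclude abelianness you would need the two actions to coincide as subgroups of $\operatorname{Sym}(X_1)$, which is essentially what needs proving. The paper instead proves abelianness directly: Claim \ref{claim:com} shows $f_3\circ f_2^{-1}\circ f_1=f_1\circ f_2^{-1}\circ f_3$ by a pointwise calculation using (P2) for \emph{two different} transversal partitions ($\{1,3\}\mid\{2,4\}$ and then $\{1,4\}\mid\{2,3\}$ in the quaternary reduction). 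This is where the hypothesis that (P2) holds under \emph{all} permutations of the variables is actually spent, and it is the content you would need to supply in place of the erroneous citation. The rest of your outline would then go through essentially as in the paper.
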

On the one hand, this can be viewed as a purely combinatorial  higher arity variant of the Abelian Group Configuration theorem (see below) in the case when the definable closure in $\CM$ is equal to the algebraic closure (e.g.~when $\CM$ is $o$-minimal). On the other hand, if $X_1 = \ldots = X_s$, property (P1) is equivalent to saying that the relation $Q$ is an  \emph{$(s-1)$-dimensional permutation} on the set $X_1$, or a \emph{Latin $(s-1)$-hypercube}, as studied by Linial and Luria in \cite{linial2014upper, linial2016discrepancy} (where Latin $2$-hypercube is just a Latin square).
Thus the condition (P2) in Theorem (E) characterizes, for $s \geq 3$, those Latin $s$-hypercubes that are given by the relation ``$x_1 + \ldots + x_{s-1} = x_s$'' in an abelian group. We remark that for $s=2$ there is a known ``quadrangle condition'' due to Brandt characterizing those Latin squares that represent the multiplication table of a group, see e.g.~\cite[Proposition 1.4]{gowers2020partial}.

\subsubsection{Reconstructing a group from an abelian $s$-gon in stable structures (Section \ref{sec: group config stable})}
Here we consider a generalization of the group reconstruction method from a fiber-algebraic $Q$ of degree $1$ to a fiber-algebraic $Q$ of arbitrary degree, which moreover only satisfies (P2) \emph{generically}, and restricting to $Q$ definable in a stable structure.

Working in a stable theory, it is convenient to formulate this in the language of generic points. By an \emph{$s$-gon} over a set of parameters $A$ we mean a tuple $a_1, \ldots, a_s$ such that any $s-1$ of its elements are (forking-) independent over $A$,  and any element in it is in the algebraic closure of the other ones and $A$.
We say that an $s$-gon is \emph{abelian} if, after any permutation of its elements, we have
$$a_1 a_2 \ind_{\acl_A(a_1 a_2) \cap \acl_A(a_3 \ldots a_m)} a_3 \ldots a_m.$$ Note that this condition corresponds to the definition of a \emph{$1$-based}  stable theory, but localized to the elements of the $s$-gon.

If $(G, +)$ is a type-definable abelian group, $g_1, \ldots, g_{s-1}$ are independent generics in $G$ and $g_s := g_1 + \ldots + g_{s-1}$, then $g_1, \ldots, g_s$ is an abelian $s$-gon (associated to $G$). In Section \ref{sec: group config stable} we prove a converse:
\begin{thm*}[F](Theorem \ref{thm: main ab mgon gives grp})
Let $a_1, \ldots, a_s$ be an abelian $s$-gon, $s \geq 4$, in a sufficiently saturated stable structure $\CM$. Then there is a type-definable (in $\mathcal{M}^{\textrm{eq}}$) connected abelian group $(G, +)$ and an abelian $s$-gon $g_1, \ldots, g_s$ associated to $G$, such that after a base change each $g_i$ is inter-algebraic with $a_i$.
\end{thm*}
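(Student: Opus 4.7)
The plan is to handle the base case $s=4$ using a variant of Hrushovski's abelian group configuration theorem, and then reduce the general case $s \geq 5$ to $s=4$ by enlarging the base of parameters.

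\textbf{Base case $s=4$.} An abelian $4$-gon $(a_1, a_2, a_3, a_4)$ over $A$ encodes essentially the data of an abelian group configuration in the sense of Hrushovski. From the independence of $a_1, a_2, a_3$ over $A$ and the algebraicity of $a_4$ in $\acl_A(a_1 a_2 a_3)$ (together with the symmetric algebraicities), one extracts a type-definable family of germs of invertible partial maps sending the locus of $a_1$ to that of $a_4$, parametrized by data inter-algebraic with $(a_2, a_3)$. The abelianity condition, which asserts $a_{i_1} a_{i_2} \ind_e a_{i_3} a_{i_4}$ for $e = \acl_A(a_{i_1} a_{i_2}) \cap \acl_A(a_{i_3} a_{i_4})$ after any permutation, is exactly what makes generic composition of these germs commute up to inter-algebraicity. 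The standard group chunk construction (see e.g.~\cite{pillay1996geometric}) then produces a type-definable connected abelian group $G$, together with generics $g_1, g_2, g_3, g_4 \in G$ (possibly over an auxiliary base) such that $g_1 + g_2 + g_3 + g_4 = 0_G$ and each $g_i$ is inter-algebraic with $a_i$.

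\textbf{Reduction $s \geq 5 \to s=4$.} Upon adjoining $a_4, \ldots, a_{s-1}$ to the base to form $A' = A \cup \{a_4, \ldots, a_{s-1}\}$, the tuple $(a_1, a_2, a_3, a_s)$ becomes an abelian $4$-gon over $A'$. The independence of $a_1, a_2, a_3$ over $A'$ follows from the joint independence of any $s-1$ elements of the original $s$-gon; the algebraicity of $a_s$ over $A' \cup \{a_1, a_2, a_3\}$ is immediate since the $a_i$'s are mutually algebraic; and the abelianity condition over $A'$ follows from the abelianity over $A$ by checking that the relevant intersections of algebraic closures behave well under the base extension (using transitivity of forking, plus the fact that the adjoined elements are independent from the pairs under consideration). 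Applying the base case yields $G$ and generics $g_1, g_2, g_3, g_s$ matching $a_1, a_2, a_3, a_s$ up to inter-algebraicity over $A'$ and a further auxiliary parameter.

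\textbf{Filling in $g_4, \ldots, g_{s-1}$ and coherence.} Repeating the reduction with sub-$4$-gons $(a_1, a_2, a_j, a_s)$ for each $j \in \{4, \ldots, s-1\}$ (adjusting the base accordingly) produces type-definable connected abelian groups which, by the uniqueness up to definable isomorphism of the connected stable group attached to a given generic type, are canonically identified with $G$. This yields $g_j$ inter-algebraic with $a_j$ for each $j$, and the relation $g_1 + \ldots + g_s = 0_G$ follows by combining the pairwise sum identities together with commutativity. The main obstacle is precisely this coherence step: showing that the different sub-$4$-gon reductions really yield the same group and that the $g_i$ assemble into a genuine abelian $s$-gon associated to this common $G$ — this rests on the canonicity of germs in stable theories and on the strong symmetry of the abelian $s$-gon hypothesis under arbitrary permutations. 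A secondary technical point is that the inter-algebraicities live over $A'$ rather than the original $A$, which is exactly the ``base change'' alluded to in the statement.
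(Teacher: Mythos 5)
Your base case ($s=4$) is fine — the paper itself notes that this follows from the Abelian Group Configuration theorem (cf.\ \cite[Theorem C.2]{bays2017model}). Your reduction step is also correct as far as it goes: one can check (by the forking calculus you sketch, extending each witness $\xi$ to the appropriate $\acl$-intersection over $A'$ and using that the adjoined coordinates are independent from the relevant pairs) that $(a_1,a_2,a_3,a_s)$ is indeed an abelian $4$-gon over $A' := A \cup \{a_4,\ldots,a_{s-1}\}$. But the third step, the ``coherence'' you flag as the main obstacle, is not a secondary technical point — it is precisely where the theorem lives, and your sketch does not close it. Two concrete problems:

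\textbf{The base is wrong.} Theorem~\ref{thm: main ab mgon gives grp} requires a finite $C$ with $\bar a \ind_A C$ so that the group and the inter-algebraicities live over $\acl(AC)$, a base independent from $\bar a$. Your $A' = A\cup\{a_4,\dots,a_{s-1}\}$ is emphatically \emph{not} independent from $\bar a$ over $A$; equating your use of $A'$ with ``the base change alluded to in the statement'' misreads the hypothesis $\bar a \ind_A C$, which is exactly what the paper's base-change operation is careful to preserve. Applying the $4$-gon theorem over $A'$ therefore produces a group $G'$ type-definable only over a set containing $a_4,\dots,a_{s-1}$; to get the conclusion as stated you would need a descent of $G'$ (with its germ structure, not just its generic type) to a smaller base independent from $\bar a$, and that is nontrivial — it is not implied by ``uniqueness of the connected stable group attached to a generic type,'' because the various reductions produce generic types over \emph{different} sets, with no canonical identification handed to you.

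\textbf{The $g_j$'s for $j\in\{4,\dots,s-1\}$ do not appear.} In the reduction $(a_1,a_2,a_3,a_s)$ over $A'$, the elements $a_4,\dots,a_{s-1}$ sit \emph{inside the base}, so they cannot be inter-algebraic with generics of $G'$ over that base. Running the reduction again with $(a_1,a_2,a_j,a_s)$ produces a different group over a different base, and the resulting identities $g_1+g_2+g_j+g_s=0$ are statements over incompatible parameter sets; ``combining the pairwise sum identities'' has no evident meaning. There is nothing in your sketch that promotes the $s-2$ adjoined constants to generics of a single common $G$ satisfying $g_1+\dots+g_s=0$.

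The paper's proof does not reduce to the $4$-gon case. It keeps all $m$ coordinates active throughout: Step~1 uses \emph{enhanced group configurations} (a $4$-gon decorated with extra parameter tuples $d,e$) precisely to carry the remaining coordinates through the interdefinability reduction without absorbing them into the base; Step~2 builds the group from germs of bijections $p_1 \to p_2$, and the key independence $c\ind r_i$ in Lemma~\ref{lem:compind} is obtained \emph{from} the full $m$-gon structure (the point $\tilde a = (a_5,\dots,a_m)$ appears as the extra parameter in the enhanced configuration, not in the base); and Step~3 (Propositions~\ref{prop:finding-r} and \ref{prop:1-2} via Lemma~\ref{lem:r-ind} and Corollary~\ref{cor:rrr}) is an inductive ``peeling off'' argument producing $r_3,\dots,r_m$, each inter-algebraic with $a_j$ over the auxiliary independent copy $\vec e$, satisfying $\pi(r_2)\cdot\pi(r_1) = \pi(r_m)\cdots\pi(r_3)$. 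This last step — establishing the multiplicative identity linking all $m$ elements simultaneously over a base independent from $\bar a$ — is the genuinely new content beyond the $4$-gon theorem and is exactly what your outline does not supply.
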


It is not hard to see that a $4$-gon is essentially equivalent to the usual abelian group configuration, so Theorem (F) is a higher arity generalization. After this work was completed, we have learned that independently Hrushovski obtained a similar (but incomparable) unpublished result \cite{HrushovskiUnpubl, HruOber}.

\subsubsection{Elekes-Szab\'o principle in stable structures with distal expansions --- proof of Theorem (C.1) (Section \ref{sec: main thm stable})}
We introduce and study the notion of \emph{$\mfp$-dimension} in Section \ref{sec:notion-mfp-dimension}, imitating the topological definition of dimension in $o$-minimal structures, but localized at a given tuple of commuting definable global types. Assume we are given \emph{$\mathfrak{p}$-pairs} $(X_i, \mathfrak{p}_i)$ for $1 \leq i \leq s$, i.e.~$X_i$ is an
$\mathcal{M}$-definable set and  $\mathfrak{p}_i \in S(\mathcal{M})$ is a complete stationary type on $X_i$ for each $1 \leq i \leq s$ (see Definition \ref{def: p-pairs}). We say that a definable set $Y \subseteq X_1 \times \ldots \times X_s$ is \emph{$\mathfrak{p}$-generic}, where $\mfp$ refers to the tuple $(\mfp_1, \ldots, \mfp_s)$, if $Y \in \left( \mathfrak{p}_1 \otimes \ldots \otimes \mathfrak{p}_s \right)|_{\mathbb{M}}$. Finally, we define the $\mathfrak{p}$-dimension via $\dim_{\mathfrak{p}}(Y) \geq k$ if for some projection $\pi$ of $\bar{X}$ onto $k$ components, $\pi(Y)$ is $\mathfrak{p}$-generic.
We show that $\mathfrak{p}$-dimension enjoys definability and additivity properties crucial for our arguments that may fail for Morley rank in general $\omega$-stable theories such as $\textrm{DCF}_0$. However, if $X$ is a definable subset of finite Morley rank $k$ and degree one, taking $\mathfrak{p}_X$ to be the unique type on $X$ of Morley rank $k$, we have that $k \cdot \dim_{\mathfrak{p}}  = \textrm{MR}$ (this is used to deduce Theorem (B) from Theorem (C.1)).

In Section \ref{sec: pirreducible} we consider the notion of irreducibility and show that every fiber-algebraic relation is a union of finitely many absolutely $\mfp$-irreducible sets.
In Section \ref{sec:general-position-1} we consider finite grids in general position with respect to $\mfp$-dimension and prove some preliminary power-saving bounds. In Section \ref{sec:setting-3} we state a more informative version of Theorem (C.1) (Theorem \ref{thm: stab main ineff} + Theorem  \ref{rem: power saving bound, main stable} concerning the bound $\gamma$ in power saving) and make some preliminary reductions.
In particular, we may assume $\dim(Q) = s-1$, and let $\bar{a} = (a_1, \ldots, a_s)$ be a generic tuple in $Q$. As $Q$ is fiber-algebraic, $\bar{a}$ is an $s$-gon. We then establish the following key structural dichotomy.
\begin{thm*}[G](Theorem \ref{thm:ser-main} and its proof)
	Assuming $s \geq 3$, one of the following is true:
	\begin{enumerate}
		\item For $u = (a_1,a_2)$ and $v=(a_3, \ldots, a_s)$ we have $u \ind_{\acl(u) \cap \acl(v)} v$.
		\item $Q$, as a binary relation on $U \times V$, for $U = X_1 \times X_2$ and $V=X_3 \times \ldots \times X_s$, is a ``pseudo-plane''. By which we mean here that, ignoring a smaller dimensional ($\dim_{\mathfrak{p}}<s-2$) set of $v \in V$, every fiber $Q_v \subseteq U$ has a zero-dimensional intersection $Q_v \cap Q_{v'}$ for all $v'\in V$ outside a smaller dimensional set (more precisely, the $\mfp$-dimension of the set $Z$ defined in terms of $Q$ in Section \ref{sec:proof-theor-ser-main} is $< s-2$).
	\end{enumerate}
\end{thm*}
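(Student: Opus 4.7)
The argument is a classical canonical base dichotomy from stability theory, transferred to the $\mfp$-dimension framework developed in Section~\ref{sec:notion-mfp-dimension}. I work in $\mathcal{M}^{\eq}$ so that canonical bases are available, and set $c := \mathrm{Cb}(\tp(u/v))$ for $u = (a_1,a_2)$ and $v = (a_3, \ldots, a_s)$. By stability, $c$ exists, lies in $\acl^{\eq}(v)$, and satisfies $u \ind_{c} v$. The whole dichotomy will be driven by whether or not $c \in \acl^{\eq}(u)$.

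If $c \in \acl^{\eq}(u)$, then $c$ lies in $\acl^{\eq}(u) \cap \acl^{\eq}(v)$, and combining this with $u \ind_{c} v$ immediately yields $u \ind_{\acl(u) \cap \acl(v)} v$, giving clause~(1). The passage from $\acl^{\eq}$ to the real $\acl$ used in the statement is handled by the standard coding of imaginaries as tuples, compatibly with the $s$-gon setup.

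Suppose instead $c \notin \acl^{\eq}(u)$. I would then establish the pseudo-plane property~(2) by contradiction. Using the definability of $\mfp$-dimension in definable families (established in Section~\ref{sec:notion-mfp-dimension}), the locus $Z' := \{(v, v') \in V \times V : \dim_\mfp(Q_v \cap Q_{v'}) \geq 1\}$ is definable, and~(2) reduces to $\dim_\mfp(Z') < 2(s-2)$. Assuming $\dim_\mfp(Z') = 2(s-2)$, pick $\mfp$-independent generic $v, v' \in V$ with $\dim_\mfp(Q_v \cap Q_{v'}) \geq 1$. Since each fiber $Q_v$ has $\mfp$-dimension exactly $1$ (because $\dim_\mfp(Q) = s-1$ and $\dim_\mfp(V) = s-2$), necessarily $\dim_\mfp(Q_v \cap Q_{v'}) = \dim_\mfp(Q_v) = 1$, so a point $u$ generic in $Q_v$ over $\{v,v'\}$ already lies in $Q_{v'}$. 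Equality of dimensions forces $u \ind_v v'$, so $\mathrm{Cb}(\tp(u/vv')) = \mathrm{Cb}(\tp(u/v)) = c$. The symmetric computation with the roles of $v$ and $v'$ exchanged shows that this canonical base also equals $\mathrm{Cb}(\tp(u/v'))$, hence lies in $\acl^{\eq}(v) \cap \acl^{\eq}(v')$. Finally, since $v \ind v'$, stability gives $\acl^{\eq}(v) \cap \acl^{\eq}(v') = \acl^{\eq}(\emptyset) \subseteq \acl^{\eq}(u)$, contradicting $c \notin \acl^{\eq}(u)$.

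The main obstacle I anticipate is correctly transferring between the $\mfp$-dimension (used to quantify genericity in $V$ and in the fibers) and the forking-theoretic genericity intrinsic to stability. Concretely, one must verify that $\mfp$-independent generic $v, v'$ are forking-independent over the appropriate small model, and that a $\mfp$-generic point of a fiber $Q_v$ realizes the unique generic type of $Q_v$ over $v$, so that the canonical base computation above is legitimate. These translations rest on the additivity, definability, and forking-compatibility properties of $\mfp$-dimension developed in Section~\ref{sec:notion-mfp-dimension}, together with the fact that after the reductions of Section~\ref{sec:setting-3} one may assume $Q$ is absolutely $\mfp$-irreducible in the sense of Section~\ref{sec: pirreducible}.
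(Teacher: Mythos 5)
Your opening move is sound: setting $c := \mathrm{Cb}(\tp(u/v))$ and observing that $c \in \acl^{\eq}(u)$ immediately yields clause (1) matches exactly what the paper ultimately needs, and your verification that $v \ind v'$ together with $c \in \acl^{\eq}(v) \cap \acl^{\eq}(v')$ forces $c \in \acl^{\eq}(\emptyset)$ is correct stability forking calculus. The forking computation inside the contradiction argument is also essentially right once one has the right starting configuration.

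The gap is in the setup of the contradiction argument, and it is fatal. You define $Z' := \{(v,v') \in V \times V : \dim_{\mfp}(Q_v \cap Q_{v'}) \geq 1\}$ and claim that the pseudo-plane alternative reduces to $\dim_{\mfp}(Z') < 2(s-2)$. But this inequality is \emph{always} true, so your contradiction never gets off the ground. Here is why. Since $Q$ is fiber-algebraic, for each fixed $v$ the fiber $Q_v \subseteq X_1 \times X_2$ is fiber-algebraic with $\dim_{\mfp}(Q_v) \leq 1$, and by the argument of Claim~\ref{claim-za-fiberalg-ser} (applied with the coordinates regrouped) the slice $Z'(v) := \{v' \in V : \dim_{\mfp}(Q_v \cap Q_{v'}) \geq 1\}$ is a finite union of fibers of $Q$ over points of $X_1 \times X_2$, hence itself fiber-algebraic in $\prod_{j=3}^{s}X_j$, hence $\dim_{\mfp}(Z'(v)) \leq s-3 < s-2 = \dim_{\mfp}(V)$. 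Consequently no $v'$ that is $\mfp$-generic over $v$ can lie in $Z'(v)$: the pair $(v,v')$ of $\mfp$-independent generics with $\dim_{\mfp}(Q_v \cap Q_{v'}) \geq 1$ that your argument picks simply does not exist. This is not a technicality you can route around with more care about $\mfp$-vs-forking genericity: it is precisely the content of fiber-algebraicity.

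The paper's actual pseudo-plane condition is two layers deep, and the second layer is exactly what your formulation loses. One defines, in the notation of Section~\ref{sec:proof-theor-ser-main}, the set $Z_\alpha(a)$ of $a'$ with one-dimensional intersection (always of $\mfp$-dimension $\leq s-3$), then the set $Z_\alpha$ of those $a$ for which $Z_\alpha(a)$ achieves the \emph{maximal} value $s-3$, and the dichotomy is whether $\dim_{\mfp}(Z_\alpha) < s-2$. When this fails, one gets an $\mfp$-generic $e$ such that $Z_\alpha(e)$ has dimension $s-3$; any $\beta \in Z_\alpha(e)$ is then necessarily \emph{not} $\mfp$-generic over $e$. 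Because of this, the paper does not take $\mathrm{Cb}(\tp(u/v))$ at all. Instead it fixes a small model $M_0$ containing a generic $e$ of $Z_\alpha$, chooses $\beta$ generic in $Z_\alpha(e)$ over $M_0$ and $\delta$ in the one-dimensional intersection, and works with the canonical bases $\xi_p = \mathrm{Cb}(\tp(\beta/M_0))$ and $\xi_q = \mathrm{Cb}(\tp(\delta/M_0))$ \emph{over the fixed small model}. Fiber-algebraicity is then used in Claim~\ref{claim:alpha-beta2} to show $\xi_q \in \acl(\beta)$, $\xi_p \in \acl(\delta)$ and that $\xi_p, \xi_q$ are inter-algebraic, after which $\xi := \xi_p\xi_q$ witnesses the independence. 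So the canonical-base idea is the same in spirit, but the objects whose canonical bases one takes, the base model, and the way fiber-algebraicity enters are all different; your proposal as written cannot be repaired by choosing a better pair $(v,v')$ because no pair with the properties you need exists.
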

This notion of a ``pseudo-plane'' generalizes the usual definition requiring that any two ``lines'' in $V$ intersect on finitely many ``points'' in $U$, viewing $Q$ as the incidence relation.

In case (2) the relation $Q$ satisfies the assumption on the intersection of its fibers in Definition \ref{def: gamma ST property}, hence the incidence bound from Theorem (D) can be applied inductively to obtain power saving for $Q$ (see Section \ref{sec:proof-theor-ser-main}).
Thus we may assume that for any permutation of $\{1, \ldots, s\}$ we have
$$a_1 a_2 \ind_{\acl(a_1 a_2) \cap \acl(a_3 \ldots a_s)} a_3 \ldots a_s,$$
i.e.~the $s$-gon $\bar{a}$ is abelian. Assuming that $s \geq 4$, Theorem (F) can be applied to establish a generic correspondence with a  type-definable abelian group (Section \ref{sec: MainThm for s geq 4}). The case $s=3$ of Theorem (C.1) is treated separately in Section \ref{sec: stable main thm ternary} by reducing it to the case $s=4$ (similar to the approach in \cite{MR3577370}).

In Section \ref{sec: stab applics} we combine Theorem (C.1) with some standard model theory of algebraically closed fields to deduce Theorem (B) and its higher dimensional version.

\subsubsection{Elekes-Szab\'o principle in $o$-minimal structures --- proof of Theorem (C.2) (Section \ref{sec: main thm omin})}

Our proof of the $o$-minimal case is overall similar to the stable case, but is independent from it. In Section \ref{sec: o-min main thm} we formulate a more informative version of Theorem (C.2) with explicit bounds on power saving (Theorem \ref{thm: main thm o-min1}) and reduce it to Theorem \ref{main} --- which is an  appropriate analog of Theorem (G): (1) either $Q$ is a ``pseudo-plane'', or (2) it contains a subset $Q^*$ of full dimension so that the property (P2) from Theorem (E) holds in a neighborhood of every point of $Q^*$. In Case (1), considered in Section \ref{sec: power saving case proof omin}, we show that $Q$ satisfies the required power saving using Theorem (D) (or rather, its refinement for $o$-minimal structures from Fact \ref{o-min cutting}). In Case (2), we show in Section \ref{sec: obt nice Q rel omin} that one can choose a generic tuple $(a_1, \ldots, a_s)$ in $Q$ and (type-definable) infinitesimal  neighborhoods $\mu_i$ of $a_i$ so that the relation $Q \cap (\mu_1 \times \ldots \times \mu_s)$ satisfies (P1) and (P2) from Theorem (E) --- applying it we obtain a generic correspondence with a type-definable abelian group, concluding the proof of Theorem (C.2) for $s \geq 4$. The case $s=3$ is reduced to $s = 4$ similarly to the stable case.

Finally, in Section \ref{sec: apps in o-min} we obtain a Corollary of Theorem (C.2) that holds in an arbitrary $o$-minimal structure, not necessarily a saturated one - replacing a type-definable group by a definable \emph{local} group (Theorem \ref{thm:local o-min1}). Combining this with the solution of the Hilbert's 5th problem for local groups \cite{goldbring2010hilbert} (in fact, only in the much easier abelian case, see Theorem 8.5 there), we can improve ``local group'' to a ``Lie group'' in the case when the underlying set of the $o$-minimal structure $\CM$ is $\mathbb{R}$ and deduce Theorem (A) and its higher dimensional analog (Theorem \ref{thm:real o-min1}, see also Remark \ref{rem: getting analytic}). We also observe that  for semi-linear relations, in the non-group case we have $(1-\varepsilon)$-power saving for any $\varepsilon > 0$ (Remark \ref{rem: optimal power save semilin}).
%
%

\subsection{Acknowledgements}
We are very grateful to the referees for their detailed
and insightful reports and many valuable suggestions on improving the paper.
We thank Saugata Basu, Martin Bays, Emmanuel Breuillard, Jim Freitag, Rahim Moosa, Tom Scanlon, Pierre Simon, Chieu-Minh Tran and Frank Wagner for some helpful conversations.
We thank Institut Henri Poincar\'e in Paris for its hospitality during
the ``Model theory, Combinatorics and valued fields'' term in the Spring trimester
of 2018.
Chernikov was partially supported by the NSF
CAREER grant DMS-1651321 and by a Simons Fellowship. He thanks Lior Pachter, Michael Kinyon, and Math Twitter for the motivation in the final effort of finishing this paper. Peterzil was partially supported by the Israel Science Foundation grant 290/19. Starchenko was partially supported by the NSF grant DMS-1800806. 
The results of this paper were announced in \cite{breuillard2021model}.

\section{Zarankiewicz-type bounds for distal relations}\label{sec: distal Zarank}

We begin by recalling some of the notions and results about distality and generalized ``incidence bounds'' for distal relations from \cite{chernikov2016cutting}, and refer to that article for further details. The following definition captures a combinatorial ``shadow'' of the existence of a nice topological cell decomposition (as e.g.~in $o$-minimal theories or in the $p$-adics).

 \begin{defn}\label{def: distal cell decomp}\cite[Section 2]{chernikov2016cutting} Let $X,Y$ be infinite sets, and $E\subseteq X\times Y$ a binary relation.
 \begin{enumerate}
 	\item Let $A\subseteq X$. For $b\in Y$, we say that $E_{b} = \left\{a \in X : (a,b) \in E \right\}$ \emph{crosses} $A$ if $E_{b}\cap A\neq\emptyset$ and $\left(X\setminus E_{b}\right)\cap A\neq\emptyset$.
 	\item A set $A\subseteq X$ is \emph{$E$-complete over $B\subseteq Y$} if $A$ is not crossed by any $E_{b}$ with $b\in B$.
 	\item A family $\mathcal{F}$ of subsets of $X$ is a \emph{cell decomposition for $E$ over $B\subseteq Y$} if $X\subseteq\bigcup\mathcal{F}$ and every $A\in\mathcal{F}$ is $E$-complete over $B$.
 	\item A \emph{cell decomposition for $E$} is a map $\mathcal{T}: B \mapsto \mathcal{T}(B)$ such that for each finite $B\subseteq Y$, $\mathcal{T}\left(B\right)$ is a cell decomposition for $E$ over $B$.
 	\item A cell decomposition $\mathcal{T}$ is \emph{distal} if there exist $k\in\mathbb{N}$ and a relation $D\subseteq X\times Y^{k}$ such that for all finite $B\subseteq Y$, $\mathcal{T}\left(B\right)=\{D_{\left(b_{1},\ldots,b_{k}\right)}:b_{1},\ldots,b_{k}\in B\mbox{ and }D_{\left(b_{1},\ldots,b_{k}\right)}\mbox{ is }E\mbox{-complete over }B\}$.
 	\item For $t \in \mathbb{R}_{>0}$, we say that a cell decomposition $\mathcal{T}$ has \emph{exponent $\leq t$} if there exists some $c \in \mathbb{R}_{>0}$ such that $|\mathcal{T}(B)| \leq c |B|^{t}$ for all finite sets $B \subseteq Y$.
 \end{enumerate}

 \begin{rem}
 	Note that if $\mathcal{T}$ is a distal cell decomposition, then it has exponent $\leq k$ for $k$ as in Definition \ref{def: distal cell decomp}(5).
 \end{rem}
 
 \begin{rem}\label{rem: dist cell decomp fibers}
Assume that the binary relation $E \subseteq X \times (Y \times Z)$ admits a distal cell decomposition $\mathcal{T}$ with $|\mathcal{T}(\hat{B})| \leq c |\hat{B}|^{t}$ for every finite $\hat{B} \subseteq Y \times Z$. Then for every $z \in Z$, the binary relation $E_z \subseteq X \times Y$ admits a distal cell decomposition $\mathcal{T}_z$ with $|\mathcal{T}_z(B)| \leq c |B|^{t}$ for all finite $B \subseteq Y$.
 \end{rem}
 \begin{proof}
 	Indeed, assume that $D \subseteq X \times (Y \times Z)^{k}$ is witnessing that $\mathcal{T}$ is distal, i.e.~for any finite $\hat{B} \subseteq Y \times Z$ we have 
 	$$\mathcal{T}\left(\hat{B}\right)=\{D_{\left(b_{1},\ldots,b_{k}\right)}:b_{1},\ldots,b_{k}\in \hat{B}\mbox{ and }D_{\left(b_{1},\ldots,b_{k}\right)}\mbox{ is }E\mbox{-complete over }\hat{B}\}.$$
 	 Fix $z \in Z$, and let 
 	$$D_z := \left\{ (x;y_1, \ldots, y_k) \in X \times Y^k : (x; y_1, z, \ldots, y_k,z) \in D \right\} \subseteq X \times Y^k.$$
 	Given a finite $B \subseteq Y$, we define $\mathcal{T}_z\left(B\right)$ as 
 	\begin{gather*}
 	\left\{\left(D_z\right)_{\left(b_{1},\ldots,b_{k}\right)}:b_{1},\ldots,b_{k}\in B \mbox{ and } \left(D_z \right)_{\left(b_{1},\ldots,b_{k}\right)}\mbox{ is }E_z\mbox{-complete over }B\right\}	.
 	\end{gather*}
 Then $\mathcal{T}_z\left(B\right) = \mathcal{T}\left(B \times \{z\}\right)$, hence $\mathcal{T}_z$ is a distal cell decomposition for $E_z$ and $|\mathcal{T}_z(B)| = |\mathcal{T}(B \times \{z\})| \leq   c |B|^{t}$.
 \end{proof}

Existence of ``strong honest definitions'' established in \cite{chernikov2015externally} shows that every relation definable in a distal structure admits a distal cell decomposition (of some exponent).
 	
 \end{defn}
\begin{fact}\label{fac: def in dis impl distal cell decomp}(see \cite[Fact 2.9]{chernikov2016cutting})
Assume that the relation $E$ is definable in a distal structure $\mathcal{M}$. Then $E$ admits a distal cell decomposition (of some exponent $t \in \mathbb{N}$).
Moreover, in this case the relation $D$ in Definition \ref{def: distal cell decomp}(5) is also definable in $\mathcal{M}$.
	
\end{fact}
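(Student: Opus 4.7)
The plan is to deduce the result from the characterization of distal structures by \emph{strong honest definitions}, established in \cite{chernikov2015externally}. Recall that this characterization states: for any formula $\varphi(x,y)$ definable in a distal structure $\mathcal{M}$, there exist $k \in \NN$ and a formula $\psi(x;y_1,\dotsc,y_k)$, also definable in $\mathcal{M}$, such that for every finite $B \subseteq M^{|y|}$ and every $a \in M^{|x|}$ one can find $b_1,\dotsc,b_k \in B$ with $\mathcal{M} \models \psi(a;b_1,\dotsc,b_k)$ and such that $\psi(x;b_1,\dotsc,b_k)$ isolates the complete $\varphi$-type of $a$ over $B$; equivalently, $\psi(x;b_1,\dotsc,b_k) \vdash \varphi(x,b)$ or $\psi(x;b_1,\dotsc,b_k) \vdash \neg\varphi(x,b)$ for every $b \in B$.

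First I would apply this to the formula $\varphi(x,y)$ defining $E \subseteq X \times Y$, thereby producing a definable $\psi(x;y_1,\dotsc,y_k)$. I would then set
\[
D := \{(a;b_1,\dotsc,b_k) \in X \times Y^k : \mathcal{M} \models \psi(a;b_1,\dotsc,b_k)\},
\]
which is $\mathcal{M}$-definable by construction, and define $\mathcal{T}(B)$ for finite $B \subseteq Y$ exactly as in Definition \ref{def: distal cell decomp}(5): the collection of fibers $D_{(b_1,\dotsc,b_k)}$ with $b_i \in B$ that are $E$-complete over $B$. This at once verifies the ``moreover'' clause on definability of $D$.

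Next I would check that $\mathcal{T}(B)$ is a genuine cell decomposition for $E$ over $B$. The only nontrivial point is that $\bigcup \mathcal{T}(B) \supseteq X$: given $a \in X$, the strong honest definition yields $b_1,\dotsc,b_k \in B$ with $a \in D_{(b_1,\dotsc,b_k)}$; the isolation clause then says precisely that no $E_b$ with $b \in B$ crosses $D_{(b_1,\dotsc,b_k)}$, so this cell is $E$-complete over $B$ and belongs to $\mathcal{T}(B)$. The exponent bound is immediate from the definition, since $\mathcal{T}(B)$ is indexed by a subset of $B^k$, giving $|\mathcal{T}(B)| \leq |B|^k$, hence exponent $t := k$ with constant $c := 1$.

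The genuine content of the argument is entirely absorbed into the strong honest definition theorem; once invoked, the translation to a distal cell decomposition is essentially bookkeeping. The one subtlety I would verify carefully is the uniformity of the partition of variables: strong honest definitions are formulated with a fixed choice of which variables play the role of ``points'' and which play the role of ``parameters'', and here we must instantiate the theorem with $x$ on the $X$-side and $y$ on the $Y$-side so that the resulting cells are subsets of $X$ in accordance with Definition \ref{def: distal cell decomp}. Since the distality hypothesis is symmetric in partitions, this causes no real difficulty.
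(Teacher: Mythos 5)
Your proposal is correct and follows exactly the route the paper intends: the fact is stated as a citation to \cite{chernikov2016cutting}, but the paragraph immediately preceding it already explains that the existence of strong honest definitions from \cite{chernikov2015externally} is what yields the distal cell decomposition, and your argument is the standard unwinding of that implication, with $D$ given by the graph of $\psi$ and the covering and $E$-completeness of cells both dropping out of the isolation property. The only edge case worth a word is that strong honest definitions are usually stated for finite $B$ with $|B|\geq 2$, so $\mathcal{T}(B)$ for $B=\emptyset$ (or singletons) has to be handled by convention, but this is a bookkeeping matter that does not affect the exponent or the definability of $D$.
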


The following definition abstracts from the notion of cuttings in incidence geometry (see the introduction of \cite{chernikov2016cutting} for an extended discussion).
\begin{defn}\label{def: r-cutting}
Let $X,Y$ be infinite sets, $E\subseteq X\times Y$.
We say that
  $E$ \emph{admits cuttings with exponent $t \in \mathbb{R}$} if there is some constant $c \in \mathbb{R}_{>0}$ satisfying the following. For any $B \subseteq Y$ with $\left| B \right| = n$ and any $r \in \mathbb{R}$ with $1 < r < n$ there are some sets $X_1, \ldots, X_s \subseteq X$ covering $X$ with $s \leq c r ^t$ and such that for each $i \in [s]$ there are at most $\frac{n}{r}$ elements $b \in B$ so that  $X_i$ is crossed by $E_b$.
  \end{defn}

In the case $r > n$ in Definition \ref{def: r-cutting}, an $r$-cutting is equivalent to a distal cell decomposition (sets in the covering are not crossed at all). And for $r$ varying between $1$ and $n$, $r$-cutting allows to control the trade-off between the number of cells in a covering and the number of times each cell is allowed to be crossed.

\begin{fact}(Distal cutting lemma, \cite[Theorem 3.2]{chernikov2016cutting})\label{fac: distal implies cutting}
Assume $E\subseteq X\times Y$ admits a distal cell decomposition $\mathcal{T}$ of exponent $ \leq t$. Then $E$ admits cuttings with exponent $ \leq t$ and with the constant coefficient depending only on $t$ and the constant coefficient of $\mathcal{T}$ (the latter is not stated there explicitly, but follows from the proof). Moreover, every set in this cutting is an intersection of at most two cells in $\mathcal{T}$.
\end{fact}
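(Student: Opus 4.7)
The plan is to adapt the random-sampling strategy of Clarkson--Shor and Chazelle--Friedman to the abstract distal setting. Write $n=|B|$ and fix $r$ with $1<r<n$. A preliminary observation: a distal cell decomposition of exponent $\leq t$ forces the dual shatter function of $E$ to be polynomial of degree $\leq t$. Indeed, if $A \in \mathcal{T}(B)$ is any cell, then for each $b \in B$ either $A \subseteq E_b$ or $A \cap E_b = \emptyset$, so all elements of $A$ have the same trace on $B$; hence the number of distinct $B$-traces of points of $X$ is bounded by $|\mathcal{T}(B)| \leq c|B|^t$. In particular $E$ has finite VC-dimension.

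By the Haussler--Welzl $\varepsilon$-net theorem applied to the dual set system of $E$, with $\varepsilon = 1/r$ one finds a subsample $B' \subseteq B$ of size $O(r \log r)$ which is an $\varepsilon$-net. The distal cell decomposition $\mathcal{T}(B')$ then produces $O((r \log r)^t)$ sets covering $X$, each $E$-complete over $B'$ and, by the $\varepsilon$-net property, each crossed by at most $n/r$ elements of $B$ (otherwise the $> n/r = \varepsilon n$ crossers would contain an element of $B'$, contradicting $E$-completeness over $B'$). This is already a cutting of exponent $t$, up to a polylogarithmic overhead.

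To eliminate the logarithmic factor, I would invoke a Clarkson--Shor moment bound: for a uniformly random subsample $B' \subseteq B$ of size $\Theta(r)$, the expected number of cells in $\mathcal{T}(B')$ crossed by more than $\alpha n/r$ elements of $B$ decays like $r^t e^{-\Omega(\alpha)}$. This inequality is proved by an exchangeability argument that relies only on the polynomial bound $|\mathcal{T}(B')| \leq c r^t$, so it transfers intact to the distal setting. I would then run a \emph{two-level} refinement: take one random sample producing a coarse decomposition; for each cell $D$ still crossed by more than $n/r$ elements of $B$, draw a second random sample from the local conflict list $B_D$ and apply $\mathcal{T}$ again inside $D$. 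The moment bound guarantees that the total count of fine cells is $O(r^t)$, and by construction every fine cell is an intersection of exactly two cells of $\mathcal{T}$.

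The main obstacle I anticipate is the precise bookkeeping in the two-level refinement, both to make the moment bound compatible with conditioning on the outer sample, and to ensure that the ``intersection of at most two cells'' property is preserved rather than degrading into longer chains under naive iteration. In particular one must check that the moment estimate applied to the local problem $(D, B_D)$ interacts correctly with the parameters of the outer decomposition, and that the constants propagate cleanly so that the final count is $O(r^t)$ with a constant depending only on $t$ and the constant coefficient of $\mathcal{T}$ --- matching the dependence promised in the statement.
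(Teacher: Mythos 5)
This is stated as a Fact citing \cite[Theorem~3.2]{chernikov2016cutting}, so the present paper offers no proof of its own to compare against; your outline reconstructs the Chazelle--Friedman / Clarkson--Shor two-level random-sampling argument used in that reference, and the ``intersection of at most two cells'' property does fall out of the two-level structure exactly as you describe. One imprecision worth flagging: the exponential-decay / moment lemma does not follow from the polynomial count $|\mathcal{T}(B')|\le c|B'|^t$ alone --- the exchangeability argument also uses the $k$-boundedness built into Definition~\ref{def: distal cell decomp}(5) (each cell is of the form $D_{(b_1,\ldots,b_k)}$ for some $k$-tuple from the sample), and without that a decomposition with few cells but arbitrary dependence on all of $B'$ would not admit the decay bound; since a distal cell decomposition supplies $k$-boundedness by definition, your plan goes through, but the phrase ``relies only on the polynomial bound'' would not survive a careful write-up.
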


\begin{rem}
	We stress that in the Definition \ref{def: r-cutting} of an $r$-cutting, some of the fibers $E_b, b \in B$ might be equal to each other. This is stated correctly on page $2$ of the introduction of \cite{chernikov2016cutting}, but is ambiguous in \cite[Definition 3.1]{chernikov2016cutting} (the family $\mathcal{F}$ there is allowed to have repeated sets, so it is a multi-set of sets) and in the statement of \cite[Theorem 3.2]{chernikov2016cutting} (again, the family $\left\{ \varphi(M;a) : a \in H \right\}$ there should be viewed as a family of sets with repetitions --- this is how it is understood in the proof of Theorem 3.2 there).
\end{rem}

%
%


The next theorem can be viewed as an abstract variant of the Szemer\'edi-Trotter theorem. It generalizes (and strengthens) the incidence bound
due to Elekes and Szab\'o \cite[Theorem 9]{ES} to arbitrary graphs admitting a distal cell decomposition, and is crucial to obtain power saving in the non-group case of our main theorem. Our proof below closely follows the proof of \cite[Theorem 2.6]{StrMinES} (which in turn is a generalization of \cite[Theorem 3.2]{fox2014semi} and \cite[Theorem 4]{pach1992repeated})  making the dependence on $s$ explicit. We note that the fact that the bound in Theorem \ref{thm: distal incidence bound} is sub-linear in $s$ was first observed in a special case in \cite{shefferincidence}.

As usual, given $d,\nu \in \mathbb{N}$ we say that a bipartite graph $E \subseteq U \times V$ is \emph{$K_{d,\nu}$-free} if it does not contain a copy of the complete bipartite graph $K_{d,\nu}$ with its parts of size $d$ and $\nu$, respectively.

\begin{thm}
\label{thm: distal incidence bound}
For every $d , t \in \mathbb{N}_{\geq 2}$ and $c \in \mathbb{R}_{>0}$ there exists some $C = C(d,t,c) \in \mathbb{R}$ satisfying the following.

Assume that $E \subseteq X \times Y$ admits a distal cell decomposition $\mathcal{T}$ such that $|\mathcal{T}(B)| \leq c |B|^t$ for all finite $B \subseteq Y$. Then, taking $\gamma_1 := \frac{(t-1)d}{td-1}, \gamma_2 := \frac{td-t}{td-1}$ we have: for all $\nu \in \mathbb{N}_{\geq 2}$ and $A\subseteq_m X,B\subseteq_n Y$ such that $E \cap (A \times B)$ is $K_{d,\nu}$-free,
$$\left|E \cap (A \times B)\right| \leq C \nu \left(m^{\gamma_1} n^{\gamma_2} + m + n \right).$$
\end{thm}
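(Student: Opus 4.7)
The plan is to follow the proof strategy of \cite[Theorem 2.6]{StrMinES}, which is itself an abstraction of \cite[Theorem 3.2]{fox2014semi} and the Pach--Sharir argument for the classical Szemer\'edi--Trotter theorem. I would apply Fact~\ref{fac: distal implies cutting} to the given distal cell decomposition to extract an $r$-cutting of exponent $\leq t$, for a parameter $r > 1$ to be optimized at the end: pairwise disjoint cells $X_1, \dotsc, X_s$ covering $X$ with $s \leq c' r^t$ (where $c' = c'(c,t)$), each crossed by at most $n/r$ of the fibers $\{E_b : b \in B\}$. Set $A_i := A \cap X_i$, $m_i := |A_i|$, so $\sum_i m_i = m$.

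For each cell $X_i$, partition the relevant $B$-side into the containing set $B_i^+ = \{b \in B : X_i \subseteq E_b\}$ and the crossing set $B_i^\times$ of size at most $n/r$. The $K_{d,\nu}$-freeness hypothesis enters crucially: the subgraph $A_i \times B_i^+$ is complete bipartite and contained in $E$, so either $m_i < d$ (call $X_i$ ``small'') or $|B_i^+| < \nu$ (call $X_i$ ``big''). For a big cell, the total number of fibers with an edge into $X_i$ is at most $\nu + n/r = O(n/r)$ (assuming $n/r \geq \nu$, which is either arranged by the choice of $r$ or handled by a trivial degree bound). Applying K\H{o}v\'ari--S\'os--Tur\'an inside each big cell and summing using $\sum m_i \leq m$ and $s \leq c' r^t$ gives
\[
\sum_{i\,\text{big}} I_i \;\leq\; C_1 \nu^{1/d}\, m\, (n/r)^{(d-1)/d} \;+\; C_2\, n\, r^{t-1}.
\]

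Balancing these two terms yields the optimal $r = (\nu m^d / n)^{1/(td-1)}$ (up to constants), and substitution reduces both to $C\nu^{(t-1)/(td-1)} m^{\gamma_1} n^{\gamma_2}$ with exactly the stated $\gamma_1 = (t-1)d/(td-1)$ and $\gamma_2 = t(d-1)/(td-1)$; absorbing $\nu^{(t-1)/(td-1)} \leq \nu$ yields the main bound. The small cells are handled separately, by an auxiliary K\H{o}v\'ari--S\'os--Tur\'an estimate applied to the restricted $K_{d,\nu}$-free graph on $A_{\text{small}} \times B$, together with the case split: wherever its naive contribution might dominate, the $+C\nu(m+n)$ additive term in the stated bound already absorbs it.

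The hard part will be the small-cell bookkeeping: the phenomenon of a fiber $E_b$ completely containing a cell $X_i$ has no analogue in the classical Szemer\'edi--Trotter setting (where lines are one-dimensional and cannot contain two-dimensional cells), so here one must carefully verify that the naive small-cell bound (which grows like $nr^t$ at the optimal $r$) either stays below the main term or is absorbed into $+C\nu(m+n)$ across every regime of $(m,n,\nu)$, together with a low-$n$ regime where the simple degree bound $|E \cap (A \times B)| \leq \nu m$ (coming from $K_{d,\nu}$-freeness via KST with $n$ small) suffices. The algebraic identity $t(1 - \gamma_1) = \gamma_2$, immediate from the formulas for $\gamma_1, \gamma_2$, is what makes the exponent arithmetic close and is the algebraic source of the values appearing in the theorem.
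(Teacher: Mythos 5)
Your proposal takes a genuinely different route from the paper's. You propose a \emph{partition-and-sum} argument (extract a full $r$-cutting, apply K\H{o}v\'ari--S\'os--Tur\'an within each cell, sum the contributions, balance $r$), whereas the paper uses a \emph{deletion} argument in the style of Pach--Sharir: at each step, locate the single cell of the cutting that captures the most points of $A$, use K\H{o}v\'ari on the restriction to that cell together with its crossing fibers, and the $K_{d,\nu}$-freeness to bound containment from non-crossing fibers, extract one low-degree point $a$, delete it, and recurse. The decisive advantage of the paper's deletion scheme is precisely that the densest cell automatically has more than $d$ points once $m$ is large enough (and otherwise one is in a trivial regime handled by Fact~\ref{VCBoundOnEdges}), so the phenomenon of a fiber completely containing a cell is neutralized at a single stroke: if $\nu$ fibers in $B\setminus B'$ all contain $a$, they all fully contain the $>d$-point set $A'$, producing a forbidden $K_{d,\nu}$.

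Your approach, by contrast, has a genuine gap in exactly the place you flag. Your ``naive small-cell bound'' $O(nr^t)$ is never below the main term in the relevant regime: at the optimal $r = (m^d/n)^{1/(td-1)}$ one computes
\[
n\,r^t \;=\; m^{\frac{td}{td-1}}\,n^{\frac{td-1-t}{td-1}} \;=\; r\cdot m^{\gamma_1}n^{\gamma_2},
\]
so it exceeds the main term by a factor of exactly $r$, and $r\geq 1$ precisely when $n\leq m^d$, which is the whole cutting regime. Nor is it absorbed into $C\nu(m+n)$: at the boundary $n=m^{1/t}$ one gets $n r^t = m^{1+1/t}$ while $\nu(m+n)\approx\nu m$, which fails for $m>\nu^t$. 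Replacing the naive count with plain K\H{o}v\'ari on $A_{\mathrm{small}}\times B$ (which is what you invoke) helps, giving $\nu^{1/d}|A_{\mathrm{small}}|n^{1-1/d}+dn \lesssim \nu^{1/d}r^t n^{1-1/d}+dn$, but this still exceeds the main term throughout the range $m^{1/t}\leq n < m^{d^2/((t+1)d-1)}$; e.g.~for $d=t=2$ this is the nonempty range $m^{1/2}\leq n<m^{4/5}$. So the small-cell accounting does not close with $K_{d,\nu}$-freeness alone. What is missing is that for the small cells you must invoke the bounded-VC-density estimate Fact~\ref{VCBoundOnEdges} (which the paper does use, though in a different place) rather than plain K\H{o}v\'ari: since $|A_{\mathrm{small}}|\lesssim r^t$, Fact~\ref{VCBoundOnEdges} gives the small-cell contribution $\lesssim \nu\bigl(|A_{\mathrm{small}}|^{1-1/t}n + |A_{\mathrm{small}}|\bigr) \lesssim \nu\bigl(r^{t-1}n + r^t\bigr)$, and at the optimal $r$ one has $r^{t-1}n = m^{\gamma_1}n^{\gamma_2}$ exactly and $r^t\leq m$ whenever $n\geq m^{1/t}$, while the regime $n<m^{1/t}$ is handled by Fact~\ref{VCBoundOnEdges} applied to the whole graph. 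The crucial point, and the one your proposal leaves as an IOU, is that the small-cell bound must exploit the distal cell decomposition hypothesis (via the shatter function bound it induces), not merely the $K_{d,\nu}$-freeness.
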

\noindent Before giving its proof we recall a couple of weaker general bounds  that will be used. First, a classical fact from \cite{kovari1954problem} giving a bound on the number of edges in $K_{d,\nu}$-free graphs without any additional assumptions (see e.g.~\cite[Chapter VI.2, Theorem 2.2]{MR2078877} for the stated version):
\begin{fact}\label{fac: Kovari} Assume $E \subseteq A\times B$ is $K_{d, \nu}$-free, for some $d,\nu \in \mathbb{N}_{\geq 1}$ and $A,B$ finite. Then $|E \cap A \times B| \leq \nu^{\frac{1}{d}} |A||B|^{1 - \frac{1}{d}} + d |B|$.
\end{fact}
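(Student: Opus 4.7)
\medskip

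\noindent\textbf{Proof proposal for Fact~\ref{fac: Kovari}.}

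The plan is a double-counting argument over $d$-element ``books'' common to a vertex of $B$, followed by a convexity step. Concretely, for each $b \in B$ let $d_b := |\{a \in A : (a,b) \in E\}|$, so that $|E \cap A \times B| = \sum_{b \in B} d_b$. I would count the set
\[
P := \{(S,b) : S \subseteq A,\ |S| = d,\ \text{and } (a,b) \in E \text{ for every } a \in S\}
\]
in two ways. From the $B$-side, fixing $b$, the number of admissible $S$'s is exactly $\binom{d_b}{d}$, so $|P| = \sum_{b \in B} \binom{d_b}{d}$. From the $A$-side, the $K_{d,\nu}$-freeness assumption gives, for each fixed $d$-subset $S \subseteq A$, at most $\nu - 1$ elements $b \in B$ with $S \subseteq E_b$ (otherwise we would have a copy of $K_{d,\nu}$); hence $|P| \leq (\nu-1)\binom{|A|}{d} < \nu \binom{|A|}{d}$.

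Next I would convert the inequality $\sum_{b \in B} \binom{d_b}{d} < \nu \binom{|A|}{d}$ into a bound on the average degree $\bar{d} := |E \cap A \times B|/|B|$. To apply Jensen's inequality cleanly, I would first dispose of the vertices $b$ with small degree: the number of edges incident to $\{b \in B : d_b < d\}$ is at most $(d-1)|B| \leq d|B|$, which is absorbed into the additive term of the target bound. On the remaining set $B' \subseteq B$ (with $|B'| \leq |B|$ and $d_b \geq d$), the function $x \mapsto \binom{x}{d}$ is convex on $[d-1,\infty)$, so Jensen gives $|B'|\binom{\bar d'}{d} \leq \sum_{b \in B'} \binom{d_b}{d}$ where $\bar d'$ is the average of $d_b$ over $B'$. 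Combining with the $P$-bound and using $\binom{x}{d} \geq (x-d+1)^d/d!$ on the left and $\binom{|A|}{d} \leq |A|^d/d!$ on the right yields $(\bar d' - d + 1)^d \leq \nu |A|^d / |B'|$, hence $\bar d' \leq \nu^{1/d} |A| |B'|^{-1/d} + d - 1$.

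Multiplying through by $|B'|$ and adding back the contribution from the low-degree vertices then gives
\[
|E \cap A \times B| \leq \nu^{1/d} |A| |B|^{1 - 1/d} + d|B|,
\]
as desired; here I use $|B'|^{1-1/d} \leq |B|^{1-1/d}$ and $(d-1)|B| + (d-1)|B'| \leq d|B|$ (after absorbing constants suitably, one may slightly tighten or loosen $d$'s coefficient --- the statement only requires $d|B|$). The main technical care point, and the only real obstacle, is the clean handling of the low-degree vertices and the convexity on a restricted domain; the rest is routine estimation of falling factorials. No deep input is needed --- neither distality, nor the cutting lemma, nor the ambient model-theoretic setup enters: the fact is a purely combinatorial counting inequality applicable to every bipartite graph.
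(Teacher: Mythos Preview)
Your argument is the standard double-counting proof of the K\H{o}v\'ari--S\'os--Tur\'an inequality and is correct (the final bookkeeping in fact yields the slightly better additive term $(d-1)|B|$, since the low-degree and high-degree contributions combine to $(d-1)|B'| + (d-1)(|B|-|B'|) = (d-1)|B|$; your line ``$(d-1)|B| + (d-1)|B'| \leq d|B|$'' is a harmless slip).

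As for comparison with the paper: there is nothing to compare. The paper does not prove this statement at all --- it is recorded as a classical fact with a citation to the original 1954 paper of K\H{o}v\'ari, S\'os and Tur\'an and to Bollob\'as's textbook for the precise form of the constants. Your write-up is exactly the textbook proof one would find in the cited reference.
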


Given a set $Y$ and a family $\CF$ of subsets of $Y$, the \emph{shatter function} $\pi_{\CF}: \mathbb{N} \to \mathbb{N}$ of $\CF$ is defined as
$$ \pi_{\CF}(z) := \max \{ |\CF \cap B| : B \subseteq Y, |B| = z \}, $$
where $\CF \cap B = \{ S \cap B : S \in \CF \}$.

 Second, the following bound for graphs of bounded VC-density is only stated in \cite{fox2014semi} for $K_{d,\nu}$-free graphs with $d=\nu$ (and with the sides of the bipartite graph exchanged), but the more general statement below, as well as the linear dependence of the bound on $\nu$, follow from its proof.
\begin{fact} \label{VCBoundOnEdges}
\cite[Theorem 2.1]{fox2014semi} For every $c \in \mathbb{R}$ and  $t, d  \in \mathbb{N}$ there is some constant $C = C(c,t,d)$ such that the following holds.

  Let $E \subseteq X \times Y$ be a bipartite graph such that the family $\CF = \{ E_a : a \in X \}$ of subsets of $Y$ satisfies $\pi_{\mathcal{F}}(z) \leq c z^t$ for all $z \in \mathbb{N}$ (where $E_a = \{ b \in Y : (a,b) \in E \}$). Then, for any $A \subseteq_m X, B \subseteq_n Y$ so that $E \cap (A \times B)$ is $K_{d, \nu}$-free, we have
$$|E \cap (A \times B)| \leq C \nu (m^{1- \frac{1}{t}} n +m).$$
\end{fact}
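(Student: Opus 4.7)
The plan is to establish Fact \ref{VCBoundOnEdges} via Haussler's packing lemma combined with a halving recursion on $|A|=m$. The multiplicative factor $\nu$ arises because $\nu$ enters only additively as a threshold governed by $K_{d,\nu}$-freeness.

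First I would invoke Haussler's packing lemma in the following form: given $\pi_\CF(z)\le cz^t$, for any finite $B\subseteq Y$ with $|B|=n$ and any integer $\delta\ge 1$, there exist sets $H_1,\ldots,H_s\subseteq B$ with $s\le c_1(n/\delta)^t$ (where $c_1=c_1(c,t)$) such that every restricted fiber $E_a\cap B$ lies within Hamming distance $\le\delta$ of some $H_i$; this is the standard consequence of Sauer--Shelah applied to the primal shatter function bound. I would fix a parameter $\delta$ and partition $A=\bigsqcup_{i=1}^{s}A_i$ by assigning each $a\in A$ to a closest $H_i$, so that $|E_a\cap B|\le|H_i\cap B|+\delta$ for all $a\in A_i$.

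The key use of $K_{d,\nu}$-freeness is for ``big'' classes with $|A_i|\ge d$: picking distinct $a_1,\ldots,a_d\in A_i$, each symmetric difference $(E_{a_j}\triangle H_i)\cap B$ has at most $\delta$ elements, so
$$|E_{a_1}\cap\cdots\cap E_{a_d}\cap B|\ge |H_i\cap B|-d\delta.$$
$K_{d,\nu}$-freeness forces this intersection to have size $\le\nu-1$, whence $|H_i\cap B|\le\nu-1+d\delta$ and therefore $|E_a\cap B|\le\nu+(d+1)\delta$ for every $a\in A_i$. Summing over big classes contributes at most $m\bigl(\nu+(d+1)\delta\bigr)$ edges. ``Small'' classes $|A_i|<d$ jointly contain at most $|A'|\le(d-1)s\le(d-1)c_1(n/\delta)^t$ vertices of $A$; the subgraph $E\cap(A'\times B)$ inherits both the shatter function bound and $K_{d,\nu}$-freeness, so I would bound its edge count by induction on $m$.

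Finally, I would close the recursion by choosing $\delta:=c_2\, n/m^{1/t}$ with $c_2:=(2c_1(d-1))^{1/t}$, which ensures $|A'|\le m/2$, and verify the ansatz $f(m,n)\le C\nu(m^{1-1/t}n+m)$: the inductive contribution $C\nu\bigl((m/2)^{1-1/t}n+m/2\bigr)=C\nu\cdot 2^{-(1-1/t)}m^{1-1/t}n+C\nu m/2$ is smaller than $C\nu(m^{1-1/t}n+m)$ by a factor strictly less than $1$ on the leading term (since $t\ge 2$ gives $2^{-(1-1/t)}<1$), so choosing $C=C(c,t,d)$ of order $(d+1)c_2/(1-2^{-(1-1/t)})$ absorbs the big-class contribution $m\nu+(d+1)c_2 nm^{1-1/t}$; the base case $m=O(1)$ is handled by the trivial bound $|E|\le mn$. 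The main obstacle is that a single application of Haussler's packing optimized by $m\delta\asymp n^{t+1}/\delta^t$ only produces the weaker exponent $t/(t+1)$ on $m$. The recursion repairs this: because $A'$ shrinks geometrically in $m$ while $B$ is held fixed, iterating the packing $O(\log m)$ times contributes only a convergent geometric series in the induction hypothesis, upgrading the exponent from $t/(t+1)$ to the claimed $1-1/t$.
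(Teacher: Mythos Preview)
The paper does not prove this statement: it is recorded as a \textbf{Fact} cited from \cite{fox2014semi}, with only the remark that the version stated (general $d,\nu$ rather than $d=\nu$, sides exchanged, and linear dependence on $\nu$) follows from the proof there. Your proposal --- a Haussler-type packing for a family with polynomial primal shatter function, partitioning $A$ by nearest packing center, using $K_{d,\nu}$-freeness to bound the degree contribution from classes of size $\ge d$, and a halving recursion on $m$ for the remaining small classes --- is precisely the Fox--Pach--Sheffer--Suk--Zahl argument the paper is invoking, so there is no separate in-paper proof to compare against.

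The outline is correct. Two minor points worth tightening: your geometric contraction needs $t\ge 2$ so that $2^{-(1-1/t)}<1$ (the paper only applies the Fact with $t\ge 2$, and the case $t=1$ is immediate directly from $\pi_\CF(z)\le cz$); and the choice $\delta=c_2 n/m^{1/t}$ tacitly assumes $\delta\ge 1$, i.e.\ $m\le (c_2 n)^t$ --- in the complementary range one simply takes $\delta=1$, which still yields $|A'|\le m/2$ and big-class contribution $O(\nu m)$.
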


\begin{rem}\label{rem: distal decomp bounds dual VC density}
If $E \subseteq X \times Y$ admits a distal cell decomposition $\mathcal{T}$ with $|\mathcal{T}(B)| \leq c |B|^t$ for all $B \subseteq Y$, then for $\CF = \{ E_a : a \in X \}$ we have $\pi_{\mathcal{F}}(z) \leq c z^t$ for all $z \in \mathbb{N}$.

Indeed, by Definition \ref{def: distal cell decomp}, given any finite $B \subseteq Y$ and $\Delta \in \mathcal{T}(B)$, $B \cap E_a = B \cap E_{a'}$ for any $a,a' \in \Delta$ (and the sets in $\mathcal{T}(B)$ give a covering of $X$), hence at most $|\mathcal{T}(B)|$ different subsets of $B$ are cut out by the fibers of $E$.
\end{rem}

\begin{proof}[Proof of Theorem \ref{thm: distal incidence bound}]
Let $A \subseteq_m X, B \subseteq_n Y$ so that $E \cap (A \times B)$ is $K_{d, \nu}$-free be given.

If $n \geq m^d$, then by Fact \ref{fac: Kovari}  we have
\begin{gather}
|E \cap (A \times B)| \leq \nu^{\frac{1}{d}}m n^{1 - \frac{1}{d}}  + d n \leq d \nu (n^{\frac{1}{d}} n^{1 - \frac{1}{d}} + n)  =  2 d \nu n. \label{eq: Zar1}	
\end{gather}
Hence we assume $n < m^d$ from now on.

Let $r := \frac{m^{\frac{d}{td-1}}}{n^{\frac{1}{td-1}}}$ (note that $r > 1$ as $m^d > n$), and consider the family $\Sigma = \left( E_b : b \in B \right)$ of subsets of $X$ (some of the sets in it might be repeated).
By assumption and Fact \ref{fac: distal implies cutting}, there is a family $\mathcal{C}$ of subsets of $X$ giving a $\frac{1}{r}$-cutting for the family $\Sigma$. That is, $X$ is covered by the union of the sets in $\mathcal{C}$, any of the sets $C \in \mathcal{C}$ is crossed by at most $|B|/r$ elements from $\Sigma$, and $|\mathcal{C}| \leq \alpha_1 r^t$ for some $\alpha_1 = \alpha_1(c,t)$.

Then there is a set $C \in \mathcal{C}$ containing at least $\frac{m}{\alpha_1 r^t} = \frac{n^{\frac{t}{td-1}}}{\alpha_1 m ^{\frac{1}{td-1}}}$ points from $A$. Let $A' \subseteq A \cap C$ be a subset of size exactly $\left\lceil \frac{n^{\frac{t}{td-1}}}{\alpha_1 m ^{\frac{1}{td-1}}} \right\rceil$.

If $|A'| \leq d$, we have $\frac{n^{\frac{t}{td-1}}}{\alpha_1 m ^{\frac{1}{td-1}}} \leq |A'| \leq d$, so $n \leq d^{\frac{td-1}{t}} \alpha_1^{\frac{td-1}{t}}m^{\frac{1}{t}}$.
By assumption, Remark \ref{rem: distal decomp bounds dual VC density} and Fact   \ref{VCBoundOnEdges}, for some $\alpha_2 = \alpha_2(c,t,d)$ we have
$$|E \cap (A \times B)| \leq \alpha_2 \nu (n m^{1 - \frac{1}{t}} + m) \leq \alpha_2 \nu (d^{\frac{td-1}{t}} \alpha_1^{\frac{td-1}{t}}m^{\frac{1}{t}} m^{1 - \frac{1}{t}} + m),$$
hence
\begin{gather}
|E \cap (A \times B)| \leq \alpha_3 \nu m	\textrm{ for some  }\alpha_3 = \alpha_3 (c, t,d).\label{eq: Zar2}	
\end{gather}

Hence from now on we assume that $|A'| > d$.
Let $B'$ be the set of all points $b \in B$ such that $E_b$ crosses $C$. We know that
$$|B'| \leq \frac{|B|}{r} \leq  \frac{n n^{\frac{1}{td-1}}}{m^{\frac{d}{td-1}}} = \frac{n^{\frac{td}{td-1}}}{m^{\frac{d}{td-1}}} \leq \alpha_1^d |A'|^d.$$
Again by Fact \ref{fac: Kovari} we get
\begin{gather*}
	|E \cap (A' \times B')| \leq   d\nu (|A'| |B'|^{1-\frac{1}{d}} + |B'|)\\
	\leq d\nu (|A'|\alpha_1^{d-1} |A'|^{d-1} + \alpha_1^d |A'|^d) \leq \alpha_4 \nu |A'|^d
\end{gather*}
for some $\alpha_4 = \alpha_4(c,t,d)$. Hence there is a point $a \in A'$ such that $|E_a \cap B'| \leq \alpha_4 \nu |A'|^{d-1}$.

Since $E \cap (A \times B)$ is $K_{d,\nu}$-free, there are at most $\nu-1$ points in $B\setminus B'$ from $E_a$ (otherwise, since none of the sets $E_b, b \in B \setminus B'$ crosses $C$ and $C$ contains $A'$, which is of size $\geq d$, we would have a copy of $K_{d,\nu}$). And we have $|A'| \leq  \frac{n^{\frac{t}{td-1}}}{\alpha_1 m ^{\frac{1}{td-1}}} + 1  \leq \frac{2}{\alpha_1} \frac{n^{\frac{t}{td-1}}}{ m ^{\frac{1}{td-1}}}$ as $|A|' > d \geq 1$. Hence
\begin{gather*}
	|E_a \cap B| \leq |E_a \cap B'| + |E_a \cap (B \setminus B')| \leq  \alpha_4 \nu |A'|^{d-1} + (\nu-1)\\
	 \leq \frac{\alpha_4 2^{d-1}}{\alpha_1^{d-1}} \nu \frac{n^{\frac{t(d-1)}{td-1}}}{m^{\frac{d-1}{td-1}}} + (\nu-1) \leq \alpha_5 \nu \frac{n^{\frac{t(d-1)}{td-1}}}{m^{\frac{d-1}{td-1}}} + (\nu-1)
\end{gather*}
for some $\alpha_5 := \alpha_5(c,t,d)$. We remove $a$ and repeat the argument until \eqref{eq: Zar1}	or \eqref{eq: Zar2}	applies. This shows:
%
\begin{gather*}
|E \cap (A \times B)| \leq (2d\nu + \alpha_3 \nu)(n+m) + \sum_{i=n^{\frac{1}{d}}}^{m} \left( \alpha_5 \nu \frac{n^{\frac{t(d-1)}{td-1}}}{i^{\frac{d-1}{td-1}}} + (\nu-1) \right)\\
\leq (2 d + \alpha_3)\nu(n+m) + \alpha_5 \nu n^{\frac{t(d-1)}{td-1}} \sum_{i=n^{\frac{1}{d}}}^{m} \frac{1}{i^{\frac{d-1}{td-1}}} + (\nu-1) m .
\end{gather*}
Note that
\begin{gather*}
	\sum_{i=n^{\frac{1}{d}}}^{m} \frac{1}{i^{\frac{d-1}{td-1}}} \leq \int_{n^{\frac{1}{d}} -1}^{m} \frac{dx}{x^{\frac{d-1}{td-1}}} = \frac{m^{1 - \frac{d-1}{td-1}}}{1-\frac{d-1}{td-1}} - \frac{\left(n^{\frac{1}{d}} -1 \right)^{1 - \frac{d-1}{td-1}}}{1 - \frac{d-1}{td-1}}\\
	\leq \frac{td-1}{(t-1)d} m^{1 - \frac{d-1}{td-1}}
\end{gather*}
using $d,t \geq 2$ and that the second term is non-negative for $n \geq 1$.

Taking $C := 3 \max \{2d+\alpha_3, \frac{td-1}{(t-1)d} \alpha_5 \}$ --- which only depends on $c,t,d$ --- we thus have
$$|E \cap (A \times B)| \leq \frac{C}{3} \nu (n+m) + \frac{C}{3}  \nu n^{\frac{t(d-1)}{td-1}} m^{1-\frac{d-1}{td-1}} + \frac{C}{3} \nu m$$
$$ \leq C \nu ( m^{\frac{(t-1)d}{td-1}} n^{\frac{td-t}{td-1}} + m + n)$$
for all $m,n$.
\end{proof}

For our applications to hypergraphs, we will need to consider a certain iterated variant of the bound in Theorem \ref{thm: distal incidence bound}.

\begin{defn}\label{def: gamma ST property}
Let $\mathcal{E}$ be a family of subsets of $X \times Y$ and $\gamma \in \mathbb{R}$. We say that $\mathcal{E}$ satisfies the \emph{$\gamma$-Szemer\'edi-Trotter property}, or \emph{$\gamma$-ST property}, if for any function $C: \mathbb{N} \to \mathbb{N}_{\geq 1}$ there exists a function $C': \mathbb{N} \to \mathbb{N}_{\geq 1}$ so that: for every $E \in \mathcal{E}$, $s \in \mathbb{N}_{\geq 4}, \nu \in \mathbb{N}_{\geq 2}, n \in \mathbb{N}$ and $A \subseteq X, B \subseteq Y$ with $|A| \leq n^{s-2}, |B| \leq n^2$, if for every $a \in A$ there are at most $C(\nu) n^{s-4}$ elements $a' \in A$ with $|E_a \cap E_{a'} \cap B| \geq \nu$, then  $|E \cap (A \times B)| \leq  C'(\nu) n^{(s-1) - \gamma}$.

We say that a relation $E \subseteq X \times Y$ satisfies the \emph{$\gamma$-ST property} if the family $\mathcal{E} := \{E\}$ does.
\end{defn}

\begin{lemma}\label{lem: gamma-ST prop props}
	Assume that $\mathcal{E}$ is a family of subsets of $X \times Y$ and $\gamma \in \mathbb{R}$.
	\begin{enumerate}
		\item Assume that $X',Y'$ are some sets and $f: X \to X', g: Y \to Y'$ are bijections. For $E \in \mathcal{E}$, let $E' := \left\{(x,y) \in X' \times Y' : \left(f^{-1}(x), g^{-1}(y) \right) \in E\right\}$, and let $\mathcal{E}' := \left\{E' : E \in \mathcal{E} \right\}$, a family of subsets of $X' \times Y'$. Then $\mathcal{E}$ satisfies the $\gamma$-ST property if and only if $\mathcal{E}'$ satisfies the $\gamma$-ST property.
		\item Assume that  for some $k, \ell \in \mathbb{N}$ we have $X = \bigsqcup_{i \in [k]}X_i, Y = \bigsqcup_{j \in [\ell]} Y_i$, and let  $E_{i,j}  := E \cap (X_i \times Y_j)$, $\mathcal{E}_{i,j} := \left\{ E_{i,j} : E \in \mathcal{E} \right\}$. Assume that each $\mathcal{E}_{i,j}$ satisfies the $\gamma$-ST property. Then $\mathcal{E}$ also satisfies the $\gamma$-ST property.
	\end{enumerate}
\end{lemma}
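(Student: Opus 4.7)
Both parts are formal consequences of the fact that the defining inequalities of the $\gamma$-ST property only involve cardinalities of fibers and of edge sets; the structure of $X$ and $Y$ enters only through these combinatorial counts. I expect no serious obstacle, only some bookkeeping about the function $C'$.

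For (1), the bijections $f,g$ set up an obvious correspondence between subsets $A \subseteq X$, $B \subseteq Y$ and subsets $A' = f(A)$, $B' = g(B)$, preserving cardinalities. I would first verify that for every $a \in X$ one has $E'_{f(a)} = g(E_a)$ (directly from the definition of $E'$), and consequently
\[
|E'_{f(a)} \cap E'_{f(a')} \cap B'| = |E_a \cap E_{a'} \cap B|
\quad\text{and}\quad
|E' \cap (A' \times B')| = |E \cap (A \times B)|.
\]
Thus the hypothesis and conclusion in Definition \ref{def: gamma ST property} for $E$, $A$, $B$ match exactly those for $E'$, $A'$, $B'$, and one can take the same witness function $C'$ on both sides. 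The roles of $\mathcal{E}$ and $\mathcal{E}'$ are symmetric via $f^{-1}$, $g^{-1}$, giving both directions.

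For (2), fix $C: \mathbb{N} \to \mathbb{N}_{\geq 1}$. For each pair $(i,j) \in [k] \times [\ell]$, use the $\gamma$-ST property of $\mathcal{E}_{i,j}$ to obtain a witness function $C'_{i,j}$ for the input $C$, and set
\[
C'(\nu) := k\ell \cdot \max_{(i,j) \in [k]\times[\ell]} C'_{i,j}(\nu),
\]
which is well-defined as the maximum is over finitely many values. Given $E \in \mathcal{E}$ and $A, B, s, \nu, n$ satisfying the hypotheses, decompose $A = \bigsqcup_i A_i$ with $A_i = A \cap X_i$ and $B = \bigsqcup_j B_j$ with $B_j = B \cap Y_j$. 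For any $a \in A_i$ and $a' \in A_i$, directly from the definitions we have $(E_{i,j})_a = E_a \cap Y_j$, hence
\[
(E_{i,j})_a \cap (E_{i,j})_{a'} \cap B_j \;=\; E_a \cap E_{a'} \cap B_j \;\subseteq\; E_a \cap E_{a'} \cap B.
\]
Therefore any $a' \in A_i$ witnessing $|(E_{i,j})_a \cap (E_{i,j})_{a'} \cap B_j| \geq \nu$ also witnesses $|E_a \cap E_{a'} \cap B| \geq \nu$, so the number of such $a'$ is at most $C(\nu)n^{s-4}$. Also $|A_i| \leq n^{s-2}$ and $|B_j| \leq n^2$.

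Applying the $\gamma$-ST property of $\mathcal{E}_{i,j}$ to $E_{i,j}$, $A_i$, $B_j$ yields $|E_{i,j} \cap (A_i \times B_j)| \leq C'_{i,j}(\nu)\, n^{(s-1)-\gamma}$ for each $(i,j)$. Summing the $k\ell$ pieces and using the partition $E \cap (A \times B) = \bigsqcup_{i,j} E_{i,j} \cap (A_i \times B_j)$ gives $|E \cap (A \times B)| \leq C'(\nu)\, n^{(s-1)-\gamma}$, as required. The only mild subtlety is ensuring that the same $C'$ works uniformly for all $E \in \mathcal{E}$, which is built into the quantifier order in Definition \ref{def: gamma ST property} and handled by taking the maximum above.
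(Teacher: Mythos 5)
Your proof is correct and takes essentially the same route as the paper: part (1) is handled by pushing sets through the bijections, and part (2) by applying the $\gamma$-ST property to each piece of the partition and summing (the paper uses $C' := \sum_{i,j} C'_{i,j}$ where you use $k\ell\cdot\max_{i,j} C'_{i,j}$, which is an equivalent choice). The verification that the degree hypothesis on $A$ is inherited by each $A_i$, via $(E_{i,j})_a \cap (E_{i,j})_{a'} \cap B_j \subseteq E_a \cap E_{a'} \cap B$, is the one point the paper leaves implicit and you spell out correctly.
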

\begin{proof}
	(1) is immediate from the definition. In (2), given $C: \mathbb{N} \to \mathbb{N}_{\geq 1}$, assume $C'_{i,j}: \mathbb{N} \to \mathbb{N}_{\geq 1}$ witnesses that $\mathcal{E}_{i,j}$ satisfies the $\gamma$-ST property. Then $C' := \sum_{(i,j) \in [k] \times [\ell]} C'_{i,j}$ witnesses that $\mathcal{E}$ satisfies the $\gamma$-ST property.
\end{proof}

\begin{lem}\label{prop: Holder iteration}
	Assume that $\mathcal{E} \subseteq \mathcal{P} \left( X \times Y \right)$, $\gamma_1, \gamma_2 \in \mathbb{R}_{>0}$ with $\gamma_1, \gamma_2 \leq 1, \gamma_1 + \gamma_2 \geq 1$ and $C_0: \mathbb{N} \to \mathbb{R}$ satisfy:
	\begin{itemize}
	\item[$(*)$] for every $E \in \mathcal{E}$, $\nu \in \mathbb{N}_{\geq 2}$ and finite $A \subseteq_m X, B \subseteq_n Y$, if $E \cap (A \times B)$ is $K_{2,\nu}$-free, then $|E \cap (A \times B)| \leq C_0(\nu) (m^{\gamma_1} n^{\gamma_2} + m + n)$.
	\end{itemize}

	Then $\mathcal{E}$ satisfies the $\gamma$-ST property with $\gamma := 3 - 2(\gamma_1 + \gamma_2) \leq 1$ and $C'(\nu) := 2 C_0(\nu) (C(\nu) + 2)$.
	\end{lem}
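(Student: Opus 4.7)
\medskip

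\noindent\textbf{Proof plan.} The goal is to reduce to a situation where the hypothesis $(*)$ applies, by partitioning $A$ into pieces on each of which $E$ becomes $K_{2,\nu}$-free, and then summing the resulting bounds using concavity.

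\emph{Step 1: Building a partition of $A$ via greedy coloring.} Fix $E \in \mathcal{E}$, $s \geq 4$, $\nu \geq 2$, $n \in \mathbb{N}$ and $A \subseteq X, B \subseteq Y$ as in the statement of the $\gamma$-ST property. Consider the auxiliary graph $H$ on vertex set $A$ in which $a \sim a'$ precisely when $|E_a \cap E_{a'} \cap B| \geq \nu$. By the hypothesis of Definition \ref{def: gamma ST property}, every vertex of $H$ has degree at most $C(\nu) n^{s-4}$. A standard greedy coloring thus partitions $A = A_1 \sqcup \dotsb \sqcup A_k$ into independent sets of $H$ with $k \leq C(\nu) n^{s-4} + 1 \leq (C(\nu)+2) n^{s-4}$ (valid for all $s \geq 4$ and $n \geq 1$; the cases $n \leq 1$ are trivial). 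By construction, for every $i \in [k]$ and every distinct $a,a' \in A_i$ we have $|E_a \cap E_{a'} \cap B| < \nu$, i.e.~$E \cap (A_i \times B)$ is $K_{2,\nu}$-free.

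\emph{Step 2: Applying $(*)$ on each piece and summing.} Writing $m_i := |A_i|$, hypothesis $(*)$ together with $|B| \leq n^2$ gives
\[
|E \cap (A_i \times B)| \leq C_0(\nu)\bigl(m_i^{\gamma_1} n^{2\gamma_2} + m_i + n^2\bigr).
\]
Summing over $i$, the three contributions are bounded by $\sum_i m_i^{\gamma_1} n^{2\gamma_2}$, $\sum_i m_i = |A| \leq n^{s-2}$, and $k n^2 \leq (C(\nu)+2) n^{s-2}$, respectively. For the first one, since $0 < \gamma_1 \leq 1$, the concavity of $x \mapsto x^{\gamma_1}$ (i.e.~Hölder's inequality) gives $\sum_i m_i^{\gamma_1} \leq k^{1-\gamma_1} (\sum_i m_i)^{\gamma_1} \leq ((C(\nu)+2) n^{s-4})^{1-\gamma_1} n^{(s-2)\gamma_1}$.

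\emph{Step 3: Matching the exponent.} Multiplying by $n^{2\gamma_2}$, the first term has $n$-exponent
\[
(s-4)(1-\gamma_1) + (s-2)\gamma_1 + 2\gamma_2 \;=\; (s-4) + 2\gamma_1 + 2\gamma_2 \;=\; (s-1) - \bigl(3 - 2(\gamma_1+\gamma_2)\bigr) \;=\; (s-1)-\gamma,
\]
as desired. The two remaining terms contribute at most $((C(\nu)+2)+1) n^{s-2}$, and $n^{s-2} \leq n^{(s-1)-\gamma}$ because $\gamma \leq 1$ (which follows from $\gamma_1+\gamma_2 \geq 1$). Since $(C(\nu)+2)^{1-\gamma_1} \leq C(\nu)+2$, combining everything yields a bound of the form $C'(\nu) n^{(s-1)-\gamma}$ for an appropriate $C'(\nu)$ of the advertised shape $2 C_0(\nu)(C(\nu)+2)$ (possibly with a harmless additive adjustment).

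\emph{Anticipated difficulty.} There is no serious obstacle: the argument is a clean two-step ``partition plus apply the hypothesis'' scheme. The only point that requires care is the bookkeeping to ensure the exponent coming out of Hölder matches $(s-1)-\gamma$ exactly, which is the algebraic identity displayed above, and to verify that the two ``lower-order'' terms $\sum_i m_i$ and $k n^2$ are also absorbed, which is precisely where the inequality $\gamma \leq 1$ (equivalently $\gamma_1+\gamma_2 \geq 1$) is used.
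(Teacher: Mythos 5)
Your proof is correct and takes essentially the same route as the paper: the auxiliary graph on $A$ with edges given by large fiber-intersections, coloring by bounded degree to split $A$ into pieces on which $E$ is $K_{2,\nu}$-free, applying the hypothesis $(*)$ to each piece, and using H\"older's inequality (with exponent $1/\gamma_1$) to collapse $\sum_i m_i^{\gamma_1}$. The only discrepancy is a small bookkeeping point: you bound the number of colors by $(C(\nu)+2)n^{s-4}$, whereas the slightly tighter $(C(\nu)+1)n^{s-4}$ (valid because $C(\nu)n^{s-4}+1 \le (C(\nu)+1)n^{s-4}$ for $n\ge 1$) is what makes the final constant land at the advertised $2C_0(\nu)(C(\nu)+2)$ rather than the marginally larger $C_0(\nu)(2C(\nu)+5)$ your bound yields --- a harmless slack that you already flagged.
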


\begin{proof}

Given $E \in \mathcal{E}$ and finite sets $A,B$ satisfying the assumption of the $\gamma$-ST property, we consider the finite graph with the vertex set $A$ and the edge relation $R$ defined by $aRa' \iff |E_a \cap E_{a'} \cap B| \geq \nu$ for all $a,a'\in A$. By the assumption of the $\gamma$-ST property, this graph has degree at most $r := C(\nu) n^{s-4}$, so it is $(r+1)$-colorable by a standard fact in graph theory. For each $ i \in  [r+1]$, let $A_i \subseteq A$ be the set of vertices corresponding to the $i$th color. Then the sets $A_i$ give a partition of $A$, and for each $ i \in [r+1]$ the restriction of $E$ to $A_i \times B$ is $K_{2, \nu}$-free.

 For any fixed $i$, applying the assumption on $E$ to $A_i \times B$, we have
$$|E \cap (A_i \times B)| \leq C_0(\nu) \left( |A_i|^{\gamma_1} |B|^{\gamma_2} + |A_i| + |B| \right).$$
Then we have
\begin{gather}
	|E \cap (A \times B)| \leq \sum_{i \in [r+1]} |E \cap (A_i \times B)| \nonumber \\
	\leq \sum_{i \in [r+1]}C_0(\nu) \left( |A_i|^{\gamma_1} |B|^{\gamma_2} + |A_i| + |B|  \right) \nonumber\\
	 \leq C_0(\nu) \left( \sum_{i \in  [r+1]}  |A_i|^{\gamma_1} |B|^{\gamma_2} + \sum_{i \in  [r+1]}|A_i| + \sum_{i \in  [r+1]}|B| \right). \label{eq: Zar3}	
\end{gather}

For the first sum, applying H\"older's inequality with $p= \frac{1}{\gamma_1}$, we have
\begin{gather*}
	\sum_{i \in  [r+1]} |A_i|^{\gamma_1} |B|^{\gamma_2}  = |B|^{\gamma_2} \sum_{i \in  [r+1]} |A_i|^{\gamma_1}\\
	 \leq |B|^{\gamma_2} \left( \sum_{i \in  [r+1]}|A_i| \right)^{\gamma_1} \left( \sum_{i \in [r+1]}1 \right)^{1 - \gamma_1}\\
	 = |B|^{\gamma_2} |A|^{\gamma_1} (r+1)^{1 - \gamma_1} \leq n^{2 \gamma_2} n^{(s-2)\gamma_1} \left( C(\nu) n^{s-4} + 1 \right)^{1 - \gamma_1}\\
	 \leq n^{2 \gamma_2} n^{(s-2)\gamma_1} \left( C(\nu) +1  \right)^{1 - \gamma_1} n^{(s-4)(1 - \gamma_1)} \\
	  \leq ( C(\nu) + 1) n^{(s-4) + 2(\gamma_1 + \gamma_2)} = ( C(\nu) + 1)  n^{(s-1) - \gamma}
\end{gather*}
for all $n$ (by definition of $\gamma$ and as $s \geq 4, C(\nu) \geq 1, 0< \gamma_1 \leq 1$).

For the second sum, we have
$$ \sum_{i \in [r+1]}|A_i| = |A| \leq n^{s-2}$$
for all $n$. For the third sum we have
$$\sum_{i \in  [r+1]}|B| \leq (r+1)|B| \leq (C({\nu}) n^{s-4} + 1) n^2 \leq (C(\nu)+1) n^{s-2}$$
for all $n$.
Substituting these bounds into \eqref{eq: Zar3}, as $\gamma \leq 1$ we get
$$|E \cap (A \times B)| \leq 2 C_0(\nu) ( C(\nu) + 2) n^{(s-1)-\gamma}. \qedhere$$
\end{proof}
We note that the $\gamma$-ST property is non-trivial only if $\gamma >0$. Lemma \ref{prop: Holder iteration} shows that if $\mathcal{E}$ satisfies the condition in Lemma \ref{prop: Holder iteration}$(*)$ with   $\gamma_1 + \gamma_2 < \frac{3}{2}$, then $\mathcal{E}$ satisfies the $\gamma$-ST property for some $\gamma > 0$. By Theorem \ref{thm: distal incidence bound} this condition on $\gamma_1 + \gamma_2$ is satisfied for any relation admitting a distal cell decomposition, leading to the following.

\begin{prop}\label{prop: ind ES}
\begin{enumerate}
	\item Assume that $t \in \mathbb{N}_{\geq 2}$ and $E \subseteq X \times Y$ admits a distal cell decomposition $\mathcal{T}$ such that $|\mathcal{T}(B)| \leq c |B|^t$ for all finite $B \subseteq Y$. Then $E$ satisfies the $\gamma$-ST property  with $\gamma := \frac{1}{2t-1} > 0 $ and $C': \mathbb{N} \to \mathbb{N}_{\geq 1}$ depending only on $t,c,C$.
	\item In particular, if the binary relation $E \subseteq X \times (Y \times Z)$ admits a distal cell decomposition $\mathcal{T}$ of exponent $t$, then the family of fibers 
$$\mathcal{E} := \left\{ E_z \subseteq X \times Y : z \in Z \right\} \subseteq \mathcal{P}(X \times Y)$$
 satisfies the $\gamma$-ST property $\gamma := \frac{1}{2t-1}$.
\end{enumerate}

\end{prop}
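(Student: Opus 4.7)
The plan is to deduce both parts from the two already-established ingredients: the incidence bound Theorem \ref{thm: distal incidence bound} and the iteration Lemma \ref{prop: Holder iteration}. The whole argument is essentially a computation matching the exponents.

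For part (1), first I would invoke Theorem \ref{thm: distal incidence bound} with $d = 2$ (which is the relevant case, since the $\gamma$-ST property packages a $K_{2,\nu}$-free hypothesis via the condition on $|E_a \cap E_{a'} \cap B|$). This gives a constant $C_0 = C_0(\nu) = C(2,t,c) \cdot \nu$ such that for every $A \subseteq_m X$, $B \subseteq_n Y$ with $E \cap (A \times B)$ being $K_{2,\nu}$-free,
\begin{equation*}
|E \cap (A \times B)| \leq C_0(\nu) \bigl( m^{\gamma_1} n^{\gamma_2} + m + n \bigr),
\end{equation*}
where $\gamma_1 = \frac{2(t-1)}{2t-1}$ and $\gamma_2 = \frac{t}{2t-1}$. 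A direct check shows $\gamma_1, \gamma_2 \le 1$ and $\gamma_1 + \gamma_2 = \frac{3t-2}{2t-1} \geq 1$ for every $t \geq 2$, so the hypotheses of Lemma \ref{prop: Holder iteration} are met.

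Next I would plug into the formula $\gamma = 3 - 2(\gamma_1 + \gamma_2)$ of Lemma \ref{prop: Holder iteration} and compute
\begin{equation*}
\gamma \;=\; 3 - \frac{2(3t-2)}{2t-1} \;=\; \frac{3(2t-1) - (6t-4)}{2t-1} \;=\; \frac{1}{2t-1},
\end{equation*}
which is strictly positive (and at most $1$ for $t \geq 2$). The conclusion of Lemma \ref{prop: Holder iteration} then says that for the singleton family $\mathcal{E} = \{E\}$, given any $C : \mathbb{N} \to \mathbb{N}_{\geq 1}$, the function $C'(\nu) := 2 C_0(\nu) (C(\nu) + 2)$ witnesses the $\gamma$-ST property for $E$; note $C'$ depends only on $t$, $c$, and $C$, as required.

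For part (2), I would appeal to Remark \ref{rem: dist cell decomp fibers}: for each $z \in Z$, the fiber $E_z \subseteq X \times Y$ admits a distal cell decomposition $\mathcal{T}_z$ with $|\mathcal{T}_z(B)| \leq c |B|^t$, where the constants $c$ and $t$ do not depend on $z$. Hence part (1) applies uniformly to every $E_z$, producing the \emph{same} function $C'$ (depending only on $t, c, C$) witnessing the $\gamma$-ST property. Since the same $C'$ works for each member of $\mathcal{E} = \{E_z : z \in Z\}$, the family itself satisfies the $\gamma$-ST property, proving (2).

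The argument contains no real obstacle; the only substantive point is to identify $d = 2$ as the correct value for the Kővári–Sós–Turán parameter in Theorem \ref{thm: distal incidence bound} in order to align with the $K_{2,\nu}$-free hypothesis baked into the $\gamma$-ST property, and to verify that the resulting $\gamma_1, \gamma_2$ land in the permitted range for Lemma \ref{prop: Holder iteration}.
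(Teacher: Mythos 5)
Your proof is correct and follows the paper's own argument essentially verbatim: Theorem \ref{thm: distal incidence bound} with $d=2$ feeds hypothesis $(*)$ of Lemma \ref{prop: Holder iteration} with the same $\gamma_1 = \frac{2t-2}{2t-1}$, $\gamma_2 = \frac{t}{2t-1}$, yielding $\gamma = \frac{1}{2t-1}$ and $C'(\nu) = 2C_0(\nu)(C(\nu)+2)$, and part (2) reduces to part (1) via Remark \ref{rem: dist cell decomp fibers}. The only addition is your explicit verification that $\gamma_1, \gamma_2 \leq 1$ and $\gamma_1 + \gamma_2 \geq 1$, which the paper takes for granted.
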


\begin{proof}

(1)	By assumption and Theorem \ref{thm: distal incidence bound} with $d := 2$, there exists some $c' = c'(t,c) \in \mathbb{R}$ such that, taking $\gamma_1 := \frac{2t-2}{2t-1}, \gamma_2 := \frac{t}{2t-1}$, for all $\nu \in \mathbb{N}_{\geq 2}, m,n \in \mathbb{N}$ and $A\subseteq_m X,B\subseteq_n Y$ with $E \cap (A \times B)$ is $K_{2,\nu}$-free we have
$$\left|E \cap (A \times B)\right| \leq c' \nu \left(m^{\gamma_1} n^{\gamma_2} + m + n \right).$$
Then, by Lemma \ref{prop: Holder iteration}, $E$ satisfies the $\gamma$-ST property $\gamma := 3 - 2(\gamma_1 + \gamma_2) = 3 - 2 \frac{3t-2}{2t-1} = \frac{1}{2t-1} > 0$ and $C'(\nu) := 2 c' \nu (C(\nu) + 2)$.

(2) Combining (1) and Remark \ref{rem: dist cell decomp fibers}.
\end{proof}

The $\gamma$ in  Proposition \ref{prop: ind ES} will correspond to the power saving in the main theorem.
Stronger upper bounds on $\gamma_1, \gamma_2$ in Lemma \ref{prop: Holder iteration}$(*)$ (than the ones given by Theorem \ref{thm: distal incidence bound}) are known in some particular distal structures of interest and can be used to improve the bound on $\gamma$ in Proposition \ref{prop: ind ES}, and hence in the main theorem. We summarize some of these results relevant for our applications.

\begin{fact} \label{o-min cutting}Let $\mathcal{M} = (M, <, \ldots)$ be an $o$-minimal expansion of a group.
\begin{enumerate}
\item 
Let $\mathcal{E}$ be a definable family of subsets of $M^2 \times M^{d_2}, d_2 \in \mathbb{N}$, i.e.~$\mathcal{E} = \{E_b : b \in Z\}$ for some $d_3 \in \mathbb{N}$ and definable sets $E \subseteq M^{2} \times M^{d_2} \times M^{d_3}, Z \subseteq M^{d_3}$. 
The definable relation $E$ viewed as a binary relation on $M^2 \times M^{d_2+d_3}$ admits a distal cell decomposition with exponent $t=2$ by \cite[Theorem 4.1]{chernikov2016cutting}. 
Then Proposition \ref{prop: ind ES}(2) implies that $\mathcal{E}$ satisfies the $\gamma$-ST property with $\gamma := \frac{1}{3}$. (See also \cite{basu2017minimal} for an alternative approach.)

\item For general $d_1,d_2 \in \mathbb{N}_{\geq 2}$, every definable relation $E \subseteq M^{d_1}\times M^{d_2 + d_3}$ admits a distal cell decomposition with exponent $t=2d_1-2$ by \cite{DistCellDecompBounds} (this improves on the weaker bound in \cite[Section 4]{barone2013some} and generalizes the semialgebraic case in \cite{chazelle1991singly}). As in (1), Proposition \ref{prop: ind ES}(2) implies that any definable family $\mathcal{E}$ of subsets of $M^{d_1} \times M^{d_2}$ satisfies the $\gamma$-ST property with $\gamma := \frac{1}{4d_1-5}$.
\end{enumerate}
\end{fact}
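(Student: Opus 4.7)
The plan is a two-step deduction in each case: combine the cited distal cell decomposition bounds for o-minimal expansions of groups with Proposition \ref{prop: ind ES}(2). For both parts, I first view the definable family $\mathcal{E} = \{E_b : b \in Z\}$ as a single definable binary relation $E \subseteq M^{d_1} \times (M^{d_2} \times M^{d_3})$, where the parameter $b$ ranges over $Z \subseteq M^{d_3}$ and $(a, b_1, b_2) \in E$ iff $(a, b_1) \in E_{b_2}$. In the setup of Proposition \ref{prop: ind ES}(2) the family of $M^{d_3}$-fibers of this $E$ is precisely $\mathcal{E}$, so any distal cell decomposition for $E$ of exponent $t$ yields the $\gamma$-ST property for $\mathcal{E}$ with $\gamma = \frac{1}{2t-1}$.

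For part (1), the key input is \cite[Theorem 4.1]{chernikov2016cutting}, which provides a distal cell decomposition of exponent $t = 2$ for every definable binary relation on $M^2 \times M^{d}$ in an o-minimal expansion of a group. Feeding $t = 2$ into Proposition \ref{prop: ind ES}(2) gives the claimed $\gamma = \frac{1}{3}$. For part (2), the input is a distal cell decomposition of exponent $t = 2d_1 - 2$ for arbitrary definable relations $E \subseteq M^{d_1} \times M^{d_2+d_3}$, established in \cite{DistCellDecompBounds} (which strengthens the earlier bounds of \cite{barone2013some} in the general o-minimal setting and of \cite{chazelle1991singly} in the semialgebraic case). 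Feeding $t = 2d_1 - 2$ into Proposition \ref{prop: ind ES}(2) yields $\gamma = \frac{1}{2(2d_1 - 2) - 1} = \frac{1}{4d_1 - 5}$.

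The substantive work is external to this paper, namely the construction of distal cell decompositions with the stated polynomial exponents in o-minimal structures; these I will quote as black boxes. The only obstacle within the present proof is routine bookkeeping, verifying that the definable parametrized family $\mathcal{E}$ matches, up to a harmless extension of $E$ outside $Z$, the fiber family of a single binary definable relation on $M^{d_1} \times M^{d_2+d_3}$ to which Proposition \ref{prop: ind ES}(2) directly applies.
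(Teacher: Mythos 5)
Your proposal is correct and follows exactly the same route the paper takes: cite the external distal cell decomposition bound for $E \subseteq M^{d_1} \times M^{d_2+d_3}$ ($t=2$ via \cite[Theorem 4.1]{chernikov2016cutting} for $d_1=2$, and $t=2d_1-2$ via \cite{DistCellDecompBounds} in general), plug $t$ into Proposition \ref{prop: ind ES}(2), and read off $\gamma = \frac{1}{2t-1}$. The one place where you wave your hands, noting that $\mathcal{E}$ is indexed by $Z \subseteq M^{d_3}$ rather than all of $M^{d_3}$, is indeed harmless: a subfamily of a family satisfying the $\gamma$-ST property trivially satisfies it as well, so one may either apply Proposition \ref{prop: ind ES}(2) to the full fiber family or first replace $E$ by $E \cap (M^{d_1} \times M^{d_2} \times Z)$; either way the bookkeeping is as routine as you claim.
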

In particular this implies the following bounds for semialgebraic and constructible sets of bounded description complexity:
\begin{cor}\label{fac: algebraic ST}
\begin{enumerate}
\item If $d_1,d_2, D \in \mathbb{N}_{\geq 2}, $ and $\mathcal{E}_D$ is the family of semialgebraic subsets of $ \mathbb{R}^{d_1} \times \mathbb{R}^{d_2}$ of description complexity $D$ (i.e.~every $E \in \mathcal{E}$ is defined by a Boolean combination of at most $D$ polynomial (in-)equalities with real coefficients, with all polynomials of degree at most $D$), then $\mathcal{E}_D$ satisfies the $\gamma$-ST property with $\gamma := \frac{1}{4d_1-5}$ (noting that for a fixed $D$, the family $\mathcal{E}_D$ is definable in the $o$-minimal structure $\left(\mathbb{R}, + ,\times, < \right)$ and  using Fact \ref{o-min cutting}(2)).

\item If $d_1,d_2, D \in \mathbb{N}_{\geq 2}$ and $\mathcal{E}_D$ is the family of constructible subsets of $ \mathbb{C}^{d_1} \times \mathbb{C}^{d_2}$ of description complexity $D$  (i.e.~every $E \in \mathcal{E}$ is defined by a Boolean combination of at most $D$ polynomial equations with complex coefficients, with all polynomials of degree at most $D$), then $\mathcal{E}_D$ satisfies the $\gamma$-ST property with $\gamma := \frac{1}{8d_1-5}$ (noting that for a fixed $D$, every $E \in \mathcal{E}_{D}$ can be viewed as a constructible, and hence semialgebraic, subset of $\mathbb{R}^{2d_1} \times \mathbb{R}^{2d_2}$ of description complexity $D$, and using (1)).
%
%
\end{enumerate}
\end{cor}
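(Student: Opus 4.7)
For part (1), the key idea is to realize $\mathcal{E}_D$ uniformly as the family of fibers of a single relation definable in the $o$-minimal structure $\mathcal{R} := (\mathbb{R}, +, \times, <)$. Fixing $d_1, d_2, D$, each $E \in \mathcal{E}_D$ is cut out by a Boolean combination of at most $D$ polynomial (in-)equalities of degree at most $D$ in $d_1 + d_2$ real variables. The coefficients of all such polynomials, together with the choice of Boolean structure, range over a parameter space of some dimension $d_3 = d_3(d_1, d_2, D)$. Since there are only finitely many possible Boolean shapes, one can either encode them all into a single formula by introducing additional dummy parameters, or partition $\mathcal{E}_D$ into finitely many parameterized subfamilies and invoke Lemma \ref{lem: gamma-ST prop props}(2) at the end. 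Either way, one obtains a formula $\varphi(x, y; c)$ (with $x \in \mathbb{R}^{d_1}$, $y \in \mathbb{R}^{d_2}$, $c \in \mathbb{R}^{d_3}$) in the language of ordered rings such that $\mathcal{E}_D \subseteq \{\tilde{E}_c : c \in \mathbb{R}^{d_3}\}$, where
$$\tilde{E} := \{(x; y, c) : \varphi(x, y; c)\} \subseteq \mathbb{R}^{d_1} \times \mathbb{R}^{d_2 + d_3}$$
is definable in $\mathcal{R}$.

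By Fact \ref{o-min cutting}(2), the binary relation $\tilde{E}$ admits a distal cell decomposition of exponent $t = 2d_1 - 2$. Proposition \ref{prop: ind ES}(2) then yields the $\gamma$-ST property for the family of fibers $\{\tilde{E}_{(y_0, c)} : (y_0, c) \in \mathbb{R}^{d_2 + d_3}\}$ with $\gamma = \frac{1}{2t - 1} = \frac{1}{4d_1 - 5}$. Since every $E \in \mathcal{E}_D$ appears as such a fiber, the subfamily $\mathcal{E}_D$ inherits the same $\gamma$-ST property.

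For part (2), I reduce to (1) via the standard identification $\mathbb{C}^n \cong \mathbb{R}^{2n}$ sending $z_j = x_j + i y_j$ to $(x_j, y_j)$. Under this identification, a complex polynomial equation $p(z) = 0$ of degree $\leq D$ in $n$ complex variables is equivalent to the conjunction $\Re(p)(x, y) = 0 \wedge \mathrm{Im}(p)(x, y) = 0$ of two real polynomial equations of degree $\leq D$ in $2n$ real variables. Hence every $E \in \mathcal{E}_D$ in $\mathbb{C}^{d_1+d_2}$ corresponds to a semialgebraic subset $E'$ of $\mathbb{R}^{2d_1+2d_2}$ of description complexity bounded by some $D' = D'(D)$ (e.g.\ $D' = 2D$ suffices). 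By part (1) applied to the family of semialgebraic subsets of $\mathbb{R}^{2d_1} \times \mathbb{R}^{2d_2}$ of description complexity $D'$, this family satisfies the $\gamma$-ST property with $\gamma = \frac{1}{4(2d_1) - 5} = \frac{1}{8d_1 - 5}$. Finally, Lemma \ref{lem: gamma-ST prop props}(1), applied to the coordinate bijection $\mathbb{C}^{d_1} \times \mathbb{C}^{d_2} \cong \mathbb{R}^{2d_1} \times \mathbb{R}^{2d_2}$, transfers the $\gamma$-ST property back to $\mathcal{E}_D$.

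The only real subtlety is the bookkeeping in part (1): one must confirm that the dimension $d_3$ of the parameter space needed to encode a semialgebraic set of description complexity $D$ depends only on $D$ and the ambient dimensions, which is what permits the uniform parameterization inside $\mathcal{R}$. Everything else is a mechanical invocation of Fact \ref{o-min cutting}(2), Proposition \ref{prop: ind ES}(2), and Lemma \ref{lem: gamma-ST prop props}, so I expect no conceptual obstacle beyond this packaging step.
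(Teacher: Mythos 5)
Your proof is correct and takes essentially the same approach as the paper: realize $\mathcal{E}_D$ as a definable family in $(\mathbb{R},+,\times,<)$, invoke Fact \ref{o-min cutting}(2) for part (1), and pass to real coordinates plus part (1) for part (2). One small slip: Lemma \ref{lem: gamma-ST prop props}(2) partitions the \emph{spaces} $X,Y$, not the \emph{family} $\mathcal{E}$, so it is not quite the right reference for your ``partition into subfamilies'' alternative (though the closure of the $\gamma$-ST property under finite unions of families is immediate from the definition, and your first alternative of introducing dummy parameters works directly).
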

We note that a stronger bound is known for algebraic sets over $\mathbb{R}$ and $\mathbb{C}$, however in the proof of the main theorem over $\mathbb{C}$ we require a bound for more general families of constructible sets:
\begin{fact}\label{fac: stronger bounds for algebraic}
\begin{enumerate}
\item (\cite[Theorem 1.2]{fox2014semi}, \cite[Corollary 1.7]{walsh2018polynomial}) If $d_1,d_2 \in \mathbb{N}_{\geq 2}$ and $E \subseteq \mathbb{R}^{d_1} \times \mathbb{R}^{d_2}$ is algebraic with each $E_b, b \in \mathbb{R}^{d_2}$ an algebraic variety of degree $D$ in $\mathbb{R}^{d_1}$, then $E$ satisfies the condition in Lemma \ref{prop: Holder iteration}$(*)$ with $\gamma_1 = \frac{2(d_1-1)}{2d_1-1}, \gamma_2 = \frac{d_1}{2d_1-1}$ and some function $C_0$ depending on $d_2, D$. Hence,  by Lemma \ref{prop: Holder iteration}, $E$ satisfies the $\gamma$-ST property with $\gamma := \frac{1}{2d_1-1}$.
\item If $d_1,d_2 \in \mathbb{N}_{\geq 2}$ and $E \subseteq \mathbb{C}^{d_1} \times \mathbb{C}^{d_2}$ is algebraic with each $E_b, b \in \mathbb{C}^{d_2}$ an algebraic variety of degree $D$, it can be viewed as an algebraic subset of $\mathbb{R}^{2d_1} \times \mathbb{R}^{2d_2}$ with all fibers algebraic varieties of fixed degree, which implies by (1) that $E$ satisfies the $\gamma$-ST property with $\gamma := \frac{1}{4d_1-1}$. (This improves the bound in \cite[Theorem 9]{ES}.)
\end{enumerate}
\end{fact}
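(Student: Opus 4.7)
The plan is to derive both parts from the cited incidence bounds by invoking Lemma \ref{prop: Holder iteration}, essentially doing the arithmetic to match the hypotheses.

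For (1), the referenced results of Fox--Pach--Sheffer--Suk--Zahl (\cite[Theorem 1.2]{fox2014semi}) and Walsh (\cite[Corollary 1.7]{walsh2018polynomial}) are precisely point--variety incidence bounds of the form required by the lemma. Concretely, if $A \subseteq_m \mathbb{R}^{d_1}$ is a set of points and $B \subseteq_n \mathbb{R}^{d_2}$ indexes algebraic varieties $E_b \subseteq \mathbb{R}^{d_1}$ of degree at most $D$, and the incidence graph $E \cap (A \times B)$ is $K_{2,\nu}$-free, then both references yield a count of the form
\[ |E \cap (A \times B)| \leq C_0(D, d_2, \nu)\bigl(m^{\gamma_1} n^{\gamma_2} + m + n\bigr), \]
with $\gamma_1 = \frac{2(d_1-1)}{2d_1-1}$ and $\gamma_2 = \frac{d_1}{2d_1-1}$. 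This is exactly hypothesis $(*)$ of Lemma \ref{prop: Holder iteration}, so the lemma applies. Computing
\[ \gamma_1 + \gamma_2 \;=\; \frac{2(d_1-1) + d_1}{2d_1-1} \;=\; \frac{3d_1 - 2}{2d_1 - 1}, \]
the lemma then gives the $\gamma$-ST property with
\[ \gamma \;=\; 3 - 2(\gamma_1 + \gamma_2) \;=\; \frac{3(2d_1-1) - 2(3d_1-2)}{2d_1-1} \;=\; \frac{1}{2d_1 - 1}. \]
A quick sanity check with $d_1 = 2$ returns $\gamma_1 = \gamma_2 = 2/3$ and $\gamma = 1/3$, matching the classical Szemer\'edi--Trotter theorem in the plane.

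For (2), the plan is to pass from $\mathbb{C}$ to $\mathbb{R}$ via restriction of scalars. The $\mathbb{R}$-linear isomorphism $\mathbb{C}^k \cong \mathbb{R}^{2k}$ sending $z_j = x_j + i y_j$ to $(x_j, y_j)$ converts each complex polynomial $P \in \mathbb{C}[z_1,\ldots, z_k]$ of degree $\leq D$ into two real polynomials $\operatorname{Re}(P), \operatorname{Im}(P) \in \mathbb{R}[x_1, y_1, \ldots, x_k, y_k]$ of degree $\leq D$. Hence a complex algebraic variety of degree $\leq D$ in $\mathbb{C}^{d_1}$ is transported to a real algebraic variety in $\mathbb{R}^{2d_1}$ cut out by polynomials of degree bounded in terms of $D$ only, so itself of real degree at most some $D' = D'(D)$. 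Applied fiberwise, this presents $E$ as a real algebraic subset of $\mathbb{R}^{2d_1} \times \mathbb{R}^{2d_2}$ all of whose fibers are real algebraic varieties of degree at most $D'$. Invoking part (1) with $d_1$ replaced by $2 d_1$ and $d_2$ replaced by $2 d_2$ delivers the $\gamma$-ST property with
\[ \gamma \;=\; \frac{1}{2 \cdot 2 d_1 - 1} \;=\; \frac{1}{4 d_1 - 1}, \]
as claimed, noting that the $\gamma$-ST property is insensitive to an $\mathbb{R}$-linear change of coordinates on $X$ and $Y$ by Lemma \ref{lem: gamma-ST prop props}(1).

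The only non-trivial step is (1), which we import as a black box; everything else is routine bookkeeping. The main point to verify carefully would be that the cited references formulate their incidence bounds with the correct $K_{2,\nu}$-freeness hypothesis (as opposed to, say, $K_{d_1, d_1}$-freeness) and that the exponents are stated in the claimed form. Once the cited statements are matched, both (1) and (2) are immediate consequences of Lemma \ref{prop: Holder iteration} and, in (2), of the standard complex-to-real scalar-restriction argument.
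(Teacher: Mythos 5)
Your proposal is correct and follows exactly the route the paper takes for this Fact (which is stated essentially without proof beyond the citations and the scalar-restriction remark): for (1), import the incidence bound from the cited references as hypothesis $(*)$ and compute $\gamma = 3 - 2(\gamma_1+\gamma_2) = \frac{1}{2d_1-1}$ via Lemma \ref{prop: Holder iteration}; for (2), restrict scalars $\mathbb{C}^{k}\cong\mathbb{R}^{2k}$ and apply (1) with $d_1$ replaced by $2d_1$. Your closing caveat about matching the $K_{2,\nu}$-freeness hypothesis against the exact statements in the references is a sensible point of diligence but is indeed the intended reading.
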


\begin{prob}
We expect that the same bound on $\gamma$ as in Fact \ref{fac: stronger bounds for algebraic}(2)  should hold  for an arbitrary constructible family $\mathcal{E}_D$ over $\mathbb{C}$ in  Corollary \ref{fac: algebraic ST}(2), and the same bound on $\gamma$ as in Fact \ref{fac: stronger bounds for algebraic}(1)  should hold  for an arbitrary definable family $\mathcal{E}$ in an $o$-minimal structure in Fact \ref{o-min cutting}(2). However, the polynomial method used to obtain these stronger bounds for high dimensions in the algebraic case does not immediately generalize to constructible sets, and is not available for general $o$-minimal structures (see \cite{basu2018zeroes}).
\end{prob}
\begin{fact}\label{fac: semilin}
	Assume that $d_1,d_2,s \in \mathbb{N}$ and $\mathcal{E}$ is a family of semilinear subsets of $\mathbb{R}^{d_1} \times \mathbb{R}^{d_2}$  so that each $E \in \mathcal{E}$ is defined by a Boolean combination of $s$ linear equalities and  inequalities (with real coefficients). Then by \cite[Theorem (C)]{basit2020zarankiewicz}, for every $\varepsilon \in \mathbb{R}_{>0}$ the family $\mathcal{E}$ satisfies the condition in Lemma \ref{prop: Holder iteration}$(*)$ with $\gamma_1 + \gamma_2 \leq 1 + \varepsilon$ (and some function $C_0$ depending on $s$ and $\varepsilon$). It follows that $\mathcal{E}$ satisfies the $(1-\varepsilon)$-ST property for every $\varepsilon > 0$ (which is the best possible bound up to $\varepsilon$).
\end{fact}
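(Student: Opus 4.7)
The plan is to invoke the Zarankiewicz-type bound of \cite[Theorem (C)]{basit2020zarankiewicz} as a black box and then feed it into Lemma \ref{prop: Holder iteration}. Concretely, the cited result says that for every $\varepsilon > 0$ there is a constant $C_0 = C_0(\nu, s, \varepsilon)$ such that every $K_{2,\nu}$-free restriction of a semilinear bipartite graph $E \subseteq \mathbb{R}^{d_1} \times \mathbb{R}^{d_2}$ of description complexity at most $s$ admits the bound
$$|E \cap (A \times B)| \leq C_0\left(m^{\gamma_1} n^{\gamma_2} + m + n\right)$$
for $A \subseteq_m \mathbb{R}^{d_1}$, $B \subseteq_n \mathbb{R}^{d_2}$, with exponents $\gamma_1, \gamma_2 \in (0,1]$ satisfying $\gamma_1 + \gamma_2 \leq 1 + \varepsilon$. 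If necessary, one slightly inflates the exponents so that also $\gamma_1 + \gamma_2 \geq 1$; since $m, n \geq 1$ this only weakens the bound, and keeps the sum at most $1 + \varepsilon$.

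With this in hand, hypothesis $(*)$ of Lemma \ref{prop: Holder iteration} holds for the family $\mathcal{E}$ with these $\gamma_1, \gamma_2$, so the lemma yields the $\gamma$-ST property with
$$\gamma = 3 - 2(\gamma_1 + \gamma_2) \geq 3 - 2(1 + \varepsilon) = 1 - 2\varepsilon.$$
As $\varepsilon > 0$ was arbitrary, replacing $\varepsilon$ by $\varepsilon/2$ and renaming yields the $(1 - \varepsilon)$-ST property for every $\varepsilon > 0$, which is the claimed statement.

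There is essentially no obstacle on the present side of the argument: all of the genuine work is contained in the cited theorem, whose proof uses a recursive cutting argument tailored to semilinear sets (exploiting the bounded combinatorial complexity of arrangements of hyperplanes) to push the Zarankiewicz exponent arbitrarily close to the linear bound. The ``best possible up to $\varepsilon$'' remark follows from a standard lower-bound construction: e.g.~taking $A$ to be a grid of points and $B$ to be a set of lines through many of them gives $\Omega(n^{s-1})$ incidences on an $n$-grid, ruling out the $\gamma$-ST property for any $\gamma > 1$.
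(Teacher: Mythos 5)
Your main argument is correct and follows exactly the intended route: quote the semilinear Zarankiewicz bound from \cite[Theorem (C)]{basit2020zarankiewicz} to get hypothesis $(*)$ of Lemma \ref{prop: Holder iteration} with $\gamma_1+\gamma_2\le 1+\varepsilon$, inflate the exponents if necessary so that $\gamma_1+\gamma_2\ge 1$ and $\gamma_1,\gamma_2\le 1$ (as the lemma requires), apply the lemma to get $\gamma = 3-2(\gamma_1+\gamma_2)\ge 1-2\varepsilon$, and rename $\varepsilon$. This is exactly the content the paper intends for this Fact, which it states without proof as an immediate consequence of the citation and the lemma.

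The only weak point is your justification of the parenthetical ``best possible up to $\varepsilon$.'' Your sketch (``a grid of points and a set of lines through many of them gives $\Omega(n^{s-1})$ incidences'') does not match what the $\gamma$-ST property actually demands: the property constrains sets with $|A|\le n^{s-2}$, $|B|\le n^2$, subject to the sparse-common-fiber hypothesis, and asks for a bound $O_\nu(n^{(s-1)-\gamma})$. An incidence count of $\Omega(n^{s-1})$ would rule out \emph{every} $\gamma>0$, not merely $\gamma>1$, and in any case you would need to verify that the construction respects the fiber hypothesis. The optimality claim requires the more careful semilinear lower-bound constructions of \cite{makhul2020constructions}, which the paper simply cites; your informal line is not a substitute for that reference. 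If you want to keep the optimality remark self-contained, you should either omit the sketch or state the construction precisely enough to check the $\gamma$-ST hypotheses.
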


\begin{fact}\label{fac: DCF0 dist exp}
It has been shown in \cite{DistValFields} that every differentially closed field (with one or several commuting derivations) of characteristic $0$ admits a distal expansion. Hence by Fact \ref{fac: def in dis impl distal cell decomp}, every definable relation admits a distal cell decomposition of some finite exponent $t$, hence  by Proposition \ref{prop: ind ES}(2) any definable family $\mathcal{E}$ of subsets of $ \subseteq M^{d_1} \times M^{d_2}$ in a differentially closed field $\mathcal{M}$ of characteristic $0$ satisfies the $\gamma$-ST property for some $\gamma > 0$.
\end{fact}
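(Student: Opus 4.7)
The plan is to chain together three ingredients: the external result from \cite{DistValFields} producing a distal expansion, the existence of distal cell decompositions for definable relations in distal structures (Fact \ref{fac: def in dis impl distal cell decomp}), and the Zarankiewicz-style bound packaged as the $\gamma$-ST property (Proposition \ref{prop: ind ES}(2)). The only genuinely new input is the first one; the rest is a routine concatenation of previously stated results.

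First, I would cite \cite{DistValFields} to obtain, for an arbitrary differentially closed field $\mathcal{M}$ of characteristic $0$ with one or several commuting derivations, a distal expansion $\mathcal{M}^*$ on the same base set. Any $\mathcal{M}$-definable set or relation is then also $\mathcal{M}^*$-definable, so definability in $\mathcal{M}$ passes automatically into the distal setting. Given a definable family $\mathcal{E} = \{E_z : z \in Z\}$ of subsets of $M^{d_1} \times M^{d_2}$ with index set $Z \subseteq M^{d_3}$, I view the family as the single definable relation $E \subseteq M^{d_1} \times (M^{d_2} \times M^{d_3})$ defined by $(x;y,z) \in E \iff (x,y) \in E_z$.

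Next, I would apply Fact \ref{fac: def in dis impl distal cell decomp} to the $\mathcal{M}^*$-definable relation $E$ to extract a distal cell decomposition $\mathcal{T}$ of some finite exponent $t = t(E) \in \mathbb{N}$, where the witnessing relation $D$ is itself $\mathcal{M}^*$-definable. I then feed this decomposition into Proposition \ref{prop: ind ES}(2), which directly yields that the family of fibers $\mathcal{E} = \{E_z : z \in Z\}$ satisfies the $\gamma$-ST property with $\gamma := \frac{1}{2t-1} > 0$. This gives the conclusion.

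The only step that requires substantive work is the invocation of \cite{DistValFields}: concretely, one embeds the differential field into a valued differential field (for instance, a suitable Hahn-type construction) and uses an Ax--Kochen/Ershov-style transfer to reduce distality of the expansion to distality of the residue field and the value group, both of which can be arranged to be o-minimal and hence distal. I would treat this entire construction as a black box from \cite{DistValFields}. Once that is granted, the remaining steps --- producing a distal cell decomposition and turning it into the $\gamma$-ST property --- are purely mechanical applications of results already set up earlier in Section \ref{sec: distal Zarank}, with no case distinctions or calculations required.
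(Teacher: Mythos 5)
Your proposal matches the paper's argument exactly: cite \cite{DistValFields} for the distal expansion, apply Fact \ref{fac: def in dis impl distal cell decomp} to get a distal cell decomposition of some finite exponent $t$, then apply Proposition \ref{prop: ind ES}(2) to the relation $E \subseteq M^{d_1} \times (M^{d_2} \times M^{d_3})$ encoding the family to obtain the $\gamma$-ST property with $\gamma = \frac{1}{2t-1} > 0$. Your closing sketch of how \cite{DistValFields} proceeds is extra context correctly treated as a black box; the chain of references is the whole proof.
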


\begin{fact}\label{fac: CCM dist exp}
The theory of compact complex manifolds, or CCM, is the theory of the structure containing a separate sort for each compact complex variety, with each Zariski closed subset of the cartesian products of the sorts named by a predicate (see \cite{moosa} for a survey). This is an $\omega$-stable theory of finite Morley rank, and it is interpretable in the $o$-minimal structure $\mathbb{R}_{\textrm{an}}$. Hence, by Fact \ref{fac: def in dis impl distal cell decomp} and Proposition \ref{prop: ind ES}(2), every definable family $\mathcal{E}$ admits a distal cell decomposition of some finite exponent $t$, and hence satisfies the $\gamma$-ST property for some $\gamma > 0$.
\end{fact}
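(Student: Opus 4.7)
The plan is to deduce this directly from the cited interpretability of CCM in $\mathbb{R}_{\textrm{an}}$ together with the two results cross-referenced in the statement. Let $\mathcal{M}_0$ denote the CCM structure (with a sort $S_X$ for each compact complex variety $X$ and predicates for Zariski-closed subsets of products) and let $\mathcal{M}_1 := \mathbb{R}_{\textrm{an}}$. The first step is to recall, from the survey \cite{moosa} and the underlying work of Peterzil-Starchenko, that $\mathcal{M}_0$ is interpretable in $\mathcal{M}_1$: each sort $S_X$ corresponds to a quotient of an $\mathbb{R}_{\textrm{an}}$-definable set by an $\mathbb{R}_{\textrm{an}}$-definable equivalence relation (coming from the real-analytic atlas on $X$), and each basic predicate is the image of an $\mathbb{R}_{\textrm{an}}$-definable relation. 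Consequently, every $\mathcal{M}_0$-definable set is, up to this interpretation, a definable set in $\mathcal{M}_1^{\textrm{eq}}$.

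Next, I would observe that $\mathcal{M}_1$ is $o$-minimal and hence distal, and that distality is preserved in $\mathcal{M}_1^{\textrm{eq}}$ (this is standard: imaginaries in an NIP/distal theory remain NIP/distal, and the honest definitions used to establish Fact \ref{fac: def in dis impl distal cell decomp} go through at the level of $T^{\textrm{eq}}$). Therefore any relation definable in $\mathcal{M}_1^{\textrm{eq}}$ is definable in a distal structure in the sense required by Fact \ref{fac: def in dis impl distal cell decomp}.

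Now suppose $\mathcal{E} = \{E_z : z \in Z\}$ is a definable family of subsets of $X \times Y$ in $\mathcal{M}_0$, witnessed by a definable relation $E \subseteq X \times Y \times Z$. Pulling back along the interpretation, $E$ is a definable relation in $\mathcal{M}_1^{\textrm{eq}}$; viewing it as a binary relation on $X \times (Y \times Z)$, Fact \ref{fac: def in dis impl distal cell decomp} supplies a distal cell decomposition $\mathcal{T}$ of some finite exponent $t \in \mathbb{N}$. Since distal cell decompositions are defined purely in terms of the combinatorics of the fibers $E_b$ and the complete cells over finite parameter sets, this cell decomposition descends to $\mathcal{M}_0$ --- the cells and the auxiliary relation $D$ in Definition \ref{def: distal cell decomp}(5) are definable in $\mathcal{M}_1^{\textrm{eq}}$, so their images under the interpretation are definable in $\mathcal{M}_0$. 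Finally, applying Proposition \ref{prop: ind ES}(2) to $E$ with exponent $t$ yields the $\gamma$-ST property for $\mathcal{E}$ with $\gamma := \frac{1}{2t-1} > 0$.

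The only non-routine ingredient is the interpretability of CCM in $\mathbb{R}_{\textrm{an}}$, which we import as a black box; verifying that distality (and, more concretely, the distal cell decomposition) transfers cleanly across the interpretation is the main bookkeeping step but presents no genuine obstacle, since the property is defined in terms of definable sets and is preserved by adding imaginary sorts.
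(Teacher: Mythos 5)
Your overall approach matches what the paper intends: CCM is interpretable in $\mathbb{R}_{\textrm{an}}$, which is distal, so by Fact \ref{fac: def in dis impl distal cell decomp} and Proposition \ref{prop: ind ES}(2) every CCM-definable family satisfies a $\gamma$-ST property. The paper presents this as a fact without spelling out the bookkeeping, and your outline is essentially that bookkeeping.

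However, the step you flag as the ``main bookkeeping step'' contains a genuine error, even though it happens not to affect the conclusion. You claim that the distal cell decomposition ``descends to $\mathcal{M}_0$'' because ``the cells and the auxiliary relation $D$\ldots are definable in $\mathcal{M}_1^{\textrm{eq}}$, so their images under the interpretation are definable in $\mathcal{M}_0$.'' This is backwards: the interpretation sends CCM-definable sets to $\mathbb{R}_{\textrm{an}}^{\textrm{eq}}$-definable sets, not vice versa. An $\mathbb{R}_{\textrm{an}}^{\textrm{eq}}$-definable subset of (the image of) a CCM sort is generally \emph{not} CCM-definable --- indeed, if every such set were, CCM would inherit the $o$-minimal order and fail to be stable, contradicting that CCM is $\omega$-stable. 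So there is no reason to expect the relation $D$ to be CCM-definable, and in general it will not be.

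Fortunately, this step is unnecessary. Look at what Proposition \ref{prop: ind ES}(2) actually asserts: if the binary relation $E$ admits a distal cell decomposition of exponent $t$ (as a purely combinatorial object --- no ambient structure is mentioned), then the family of fibers satisfies the $\gamma$-ST property. The $\gamma$-ST property in Definition \ref{def: gamma ST property} is likewise a statement about finite subsets of $X$ and $Y$, with no reference to any structure. So once you have pulled $E$ back to $\mathbb{R}_{\textrm{an}}^{\textrm{eq}}$ and obtained a distal cell decomposition there via Fact \ref{fac: def in dis impl distal cell decomp}, you can apply Proposition \ref{prop: ind ES}(2) directly to conclude the $\gamma$-ST property for $\mathcal{E}$. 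No ``descent'' of the decomposition to CCM is needed, and the paper does not claim one. The same pattern appears in Fact \ref{fac: DCF0 dist exp}: the distal cell decomposition lives in the distal expansion, not in the stable reduct, and that is fine.
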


We remark that in differentially closed fields it is not possible to bound $t$ in terms of $d_1$ alone. Indeed, the dp-rank of the formula ``$x=x$'' is $\geq n$ for all $n \in \mathbb{N}$ (since the field of constants is definable, and $M$ is an infinite dimensional vector space over it, see \cite[Remark 5.3]{chernikov2014valued}). This implies that the VC-density of a definable relation $E\subseteq M \times M^{n}$ cannot be bounded  independent of $n$ (see e.g.~\cite{kaplan2013additivity}), and since $t$ gives an upper bound on the VC-density (see Remark \ref{rem: distal decomp bounds dual VC density}), it cannot be bounded either.

\begin{prob}\label{prob: cell decomp DCF0}
Obtain explicit bounds on the distal cell decomposition and incidence counting for relations $E$ definable in DCF$_0$ (e.g., are they bounded in terms of the Morley rank of the relation $E$?).
\end{prob}

\section{Reconstructing an abelian group from a family of bijections}\label{sec: group config omin}
In this and the following sections we provide two higher arity variants of the group configuration theorem of Zilber-Hrushovski (see e.g~\cite[Chapter 5.4]{pillay1996geometric}).
From a model-theoretic point of view, our result can be viewed as a construction of a type-definable abelian group in the non-trivial \emph{local} locally modular case, i.e.~local modularity is only assumed for the given relation, as opposed to the whole theory, based on a relation of arbitrary arity $\geq 4$.

In this section, as a warm-up, we begin with a purely combinatorial abelian group configuration for the case of bijections as opposed to finite-to-finite correspondences. It  illustrates some of the main ideas and is sufficient for the application in the $o$-minimal case of the main theorem in Section \ref{sec: main thm omin}.

 In the next Section \ref{sec: group config stable}, we generalize the construction to allow finite-to-finite correspondences instead of bijections (model-theoretically, algebraic closure instead of the definable closure) in the stable case, which requires additional forking calculus arguments.

\subsection{$Q$-relations or arity $4$}\label{sec: dcl Q-rels of arity 4}
Throughout this section, we fix some sets $A,B,C,D$ and a quaternary relation $Q \subseteq A\ttimes B \ttimes C  \ttimes D$. We assume that $Q$ satisfies the following two properties.
\begin{itemize}
\item[(P1)]  If we fix any $3$ variables,
  then there is exactly one value for the 4th variable satisfying $Q$.
  \item[(P2)] If
\[ (\alpha,\beta;\gamma,\delta), (\alpha',\beta'; \gamma,\delta),
  (\alpha',\beta'; \gamma',\delta') \in Q, \]
then
\[ (\alpha,\beta;\gamma',\delta')\in Q ;\]
and the same is true under any other partition of the variables into two groups each of size two.
\end{itemize}
Intuitively, the first condition says that $Q$ induces a family of bijective functions between any two of its coordinates, and the second condition says that this family of bijections satisfies  the ``abelian group configuration'' condition in a strong sense. Our goal is to show that under these assumption there exist an  abelian group for which $Q$ is in a coordinate-wise bijective correspondence with the relation defined by $\alpha \cdot \beta = \gamma \cdot \delta$.

First, we can view the relation $Q$ as a $2$-parametric family of bijections as follows. Note that for every pair $(c,d)\in C\times D$, the corresponding fiber $\{(a,b) \in A \times B : (a,b,c,d) \in Q\}$ is the graph of a function from $A$ to $B$ by (P1). Let $\CF$ be the  set of all functions from $A$ to $B$ whose graph
is a fiber of $Q$.

Similarly, let $\CG$ be the  set of all functions from $C$ to $D$ whose graph is a fiber of $Q$ (for some $(a,b) \in A \times B$). Note that all functions in $\CF$ and in $\CG$ are bijections, again by (P1).

\begin{claim}\label{claim:regul}
  For every $(a,b)\in A\times B$  there is a unique $f\in \CF$ with
  $f(a)=b$, and similarly for $\CG$.
\end{claim}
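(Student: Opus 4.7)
The plan is to prove existence and uniqueness separately using properties (P1) and (P2). For existence, given $(a,b) \in A \times B$, I will pick any $c \in C$ and invoke (P1) (in the form that fixing $a,b,c$ determines $d$ uniquely) to produce a $d$ with $(a,b,c,d) \in Q$. The fiber of $Q$ at $(c,d)$ is, again by (P1), the graph of a function $f \in \CF$, and by construction $f(a) = b$.

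The main content is uniqueness, and this is where (P2) enters. Suppose $f, f' \in \CF$ with $f(a) = f'(a) = b$, where $f$ is the function whose graph is the fiber of $Q$ over $(c,d)$ and $f'$ is the function whose graph is the fiber over $(c',d')$; so $(a,b,c,d) \in Q$ and $(a,b,c',d') \in Q$. I will show $f = f'$: fix an arbitrary $a' \in A$ and set $b' := f(a')$, so $(a',b',c,d) \in Q$. Applying (P2) under the partition $\{1,2\}\mid\{3,4\}$ with
\[
(\alpha,\beta;\gamma,\delta) := (a',b';c,d), \quad (\alpha',\beta';\gamma,\delta) := (a,b;c,d), \quad (\alpha',\beta';\gamma',\delta') := (a,b;c',d'),
\]
all of which are in $Q$, yields $(a',b';c',d') \in Q$. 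Hence $f'(a') = b' = f(a')$, and since $a'$ was arbitrary, $f = f'$.

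The ``similarly for $\CG$'' clause follows by the symmetric argument: the hypotheses (P1) and (P2) are invariant under permuting the roles of the four coordinates, so the same reasoning applied after swapping $(A,B) \leftrightarrow (C,D)$ gives the analogous statement for $\CG$.

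I do not anticipate a genuine obstacle: existence is an immediate consequence of (P1), and uniqueness is a single direct application of (P2) once one identifies the correct assignment of the six parameters $\alpha,\beta,\gamma,\delta,\alpha',\beta',\gamma',\delta'$. The only thing to be careful about is to match the triple used to trigger (P2) with the conclusion one wants, which is why the choice of partition $\{1,2\}\mid\{3,4\}$ (rather than one of the other two) is forced here.
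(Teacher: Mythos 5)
Your proof is correct and follows essentially the same route as the paper: existence by (P1) (pick any $c \in C$ and use (P1) to produce $d$), and uniqueness by a single application of (P2) under the default partition $\{1,2\}\mid\{3,4\}$, showing the fibers of $Q$ over $(c,d)$ and $(c',d')$ coincide. You merely spell out the assignment of parameters in (P2) that the paper leaves implicit.
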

\begin{proof}  We only check this for $\CF$, the argument for $\CG$ is analogous.  Let $(a,b) \in A \times B$ be fixed. Existence: let $c \in C$ be arbitrary, then by (P1) there exists some $d \in D$ with $(a,b,c,d) \in Q$, hence the function corresponding to the fiber of $Q$ at $(c,d)$ satisfies the requirement.
Uniqueness follows from (P2) for the appropriate partition of the variables: if $(a,b; c,d), (a,b;
  c_1,d_1)\in Q$ for some $(c,d), (c_1,d_1) \in C \times D$, then for all $(x,y) \in A \times B$ we have $(x,y, c,d) \in Q \iff  (x,y; c_1,d_1) \in Q$.
\end{proof}

\begin{claim}\label{claim:perp}

For every $f\in \CF$ and $(x,u)$ in $A \times C$ there exists a unique $g\in \CG$ such that $(x,f(x), u,g(u) ) \in Q$ (which then satisfies $(x',f(x'),u',g(u')) \in Q$ for all $(x',u') \in A \times C$).

And similarly exchanging the roles of $\CF$ and $\CG$.
\end{claim}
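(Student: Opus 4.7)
The plan is to read off $g$ directly from the fiber of $Q$ at $(x,f(x))$, and then use (P2) to propagate the membership to arbitrary $(x',u')$.

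For existence, observe that by (P1), fixing the first two coordinates at $(x,f(x))$ yields, for each $c\in C$, a unique $d\in D$ with $(x,f(x),c,d)\in Q$. Hence the fiber of $Q$ at $(x,f(x))$ is the graph of a function $g:C\to D$, and by definition $g\in\CG$. In particular $(x,f(x),u,g(u))\in Q$. For uniqueness, suppose $g'\in\CG$ also satisfies $(x,f(x),u,g'(u))\in Q$. Then by (P1) applied with the first three coordinates fixed at $(x,f(x),u)$, we have $g(u)=g'(u)$. The $\CG$-analog of Claim \ref{claim:regul} then forces $g=g'$.

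Next I verify the ``which then satisfies'' clause. For any $u'\in C$, membership $(x,f(x),u',g(u'))\in Q$ is immediate from the choice of $g$ as the function whose graph is the fiber of $Q$ at $(x,f(x))$. For any $x'\in A$, consider the fiber of $Q$ at $(u,g(u))$; by (P1) it is the graph of some $f^*\in\CF$, and $(x,f(x),u,g(u))\in Q$ shows $f^*(x)=f(x)$, so by Claim \ref{claim:regul} we get $f^*=f$, hence $(x',f(x'),u,g(u))\in Q$.

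Finally, for arbitrary $(x',u')\in A\times C$, I apply (P2) with the partition $\{A,B\}\cup\{C,D\}$ to the three tuples just established: $(x',f(x'),u,g(u))$, $(x,f(x),u,g(u))$ and $(x,f(x),u',g(u'))$ all lie in $Q$, so (P2) yields $(x',f(x'),u',g(u'))\in Q$. The symmetric statement (with the roles of $\CF$ and $\CG$ exchanged) is obtained by applying the argument verbatim after swapping the pairs $(A,B)\leftrightarrow(C,D)$, since the hypotheses (P1) and (P2) are symmetric under this exchange. No step here looks like a genuine obstacle; the only subtlety is remembering to invoke Claim \ref{claim:regul} (rather than just (P1)) for the uniqueness of $g$ as an element of $\CG$.
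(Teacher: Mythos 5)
Your proof is correct and takes essentially the same approach as the paper: both get uniqueness from (P1) plus Claim \ref{claim:regul} for $\CG$, and your construction of $g$ as the function whose graph is the fiber of $Q$ at $(x,f(x))$ is the same function the paper obtains (via the unique $d$ with $(x,f(x),u,d)\in Q$ and then Claim \ref{claim:regul}). You also spell out the parenthetical assertion, which the paper leaves implicit; that part is also fine. One small remark: the final appeal to (P2) is not needed — once you know $(x,f(x),u',g(u'))\in Q$ for every $u'$, running your fiber argument with Claim \ref{claim:regul} for $\CF$ at each $u'$ already gives $(x',f(x'),u',g(u'))\in Q$ for all $x'$, so the (P2) step is a harmless redundancy rather than a genuinely different route.
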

\begin{proof}
  As $x,f(x),u$ are given, by (P1) there is a unique choice for the fourth coordinate of a tuple in $Q$ determining the image of $g$ on $u$. There is only one such $g \in \CG$ by  Claim \ref{claim:regul} with respect to $\CG$.
\end{proof}

For $f\in \CF$, we will denote by $f^\perp$ the unique $g\in \CG$ as in Claim \ref{claim:perp}. Similarly, for $g\in \CG$, we will denote  by $g^\perp$
the unique $f\in \CF$ as in Claim \ref{claim:perp}.

\begin{rem}\label{rem: perp is a bijection}
Note that $(f^\perp)^\perp=f$ and
$(g^\perp)^\perp=g$ for all $f \in \CF, g \in \CG$.
\end{rem}

\begin{claim}\label{claim:comp}
  Let  $f_1 ,f_2,f_3 \in \CF$,  and $g_i := f_i^\perp \in \CG$ for $i \in [3]$.  Then
$f_3\co f_2^{-1}
  \co f_1 \in \CF$,  $g_3\co g_2^{-1}
  \co g_1 \in \CG$ and $(f_3\co f_2^{-1}\co f_1)^\perp = g_3\co g_2^{-1}
  \co g_1$.
\end{claim}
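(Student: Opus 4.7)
The plan is to prove, in a single step, the identity $Q(x, h(x), u, k(u))$ for all $x \in A$ and $u \in C$, where $h := f_3\co f_2^{-1}\co f_1$ and $k := g_3\co g_2^{-1}\co g_1$. From this identity all three conclusions of the claim will follow at once: by (P1) the set $\{(a,b)\in A\times B : Q(a,b,u_0,k(u_0))\}$ is the graph of a (bijective) function in $\CF$ for any fixed $u_0\in C$, and the identity forces that function to be $h$; exchanging the roles of $A\times B$ and $C\times D$ shows that $k\in \CG$; finally, by the definition of $h^\perp$ from Claim~\ref{claim:perp}, the identity is precisely the statement $h^\perp = k$.

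To establish the identity, fix arbitrary $x\in A$ and $u\in C$, and introduce the two intermediate points $z := f_2^{-1}(f_1(x))\in A$ and $w := g_2^{-1}(g_1(u))\in C$, so that $f_2(z)=f_1(x)$ and $g_2(w)=g_1(u)$. Three applications of Claim~\ref{claim:perp} (the defining property of $\cdot^\perp$) then place the following tuples in $Q$: from $g_1 = f_1^\perp$ at $(x,u)$ we get $T_1 = (x, f_1(x), u, g_1(u))$; from $g_2 = f_2^\perp$ at $(z,w)$ together with $f_2(z)=f_1(x)$ and $g_2(w)=g_1(u)$ we get $T_2 = (z, f_1(x), w, g_1(u))$; and from $g_3 = f_3^\perp$ at $(z,w)$, unwinding the definitions of $h$ and $k$, we get $T_3 = (z, h(x), w, k(u))$.

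The final step is to apply (P2) under the partition of the four coordinates into $I = \{1,3\}$ and $J = \{2,4\}$. Observe that $T_1$ and $T_2$ agree on the $J$-coordinates (both equal $(f_1(x), g_1(u))$), while $T_2$ and $T_3$ agree on the $I$-coordinates (both equal $(z,w)$). Hence (P2) for this partition delivers the tuple obtained by combining the $I$-coordinates of $T_1$ with the $J$-coordinates of $T_3$, which is exactly $(x, h(x), u, k(u))$. The only conceptual content of the argument is identifying the correct partition of (P2) and the right intermediate tuples $T_1,T_2,T_3$; once these are set up, the verification is a direct unwinding of the definitions and should not require any further ingredient beyond (P1), (P2), and Claim~\ref{claim:perp}.
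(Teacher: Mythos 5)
Your proof is correct and is essentially the same argument as the paper's: the three tuples $T_1,T_2,T_3$ you construct are precisely the paper's $(a,b_1,c,d_1)$, $(a_2,b_1,c_2,d_1)$, $(a_2,b,c_2,d)$, and both proofs apply (P2) with the partition $\{1,3\}\cup\{2,4\}$ to conclude $(a,b,c,d)\in Q$. You spell out the derivation of the three final conclusions from this single identity a bit more explicitly than the paper, but the core idea is identical.
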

\begin{proof}
  We first observe the following: given any $a\in A$ and $c\in C$, if we take $b := (f_3\co f_2^{-1}
  \co f_1) (a) \in B$ and $ d := (g_3\co g_2^{-1}
  \co g_1) (c) \in D$, then   $(a,b,c,d) \in Q$. Indeed, let $b_1 :=f_1(a)$, $a_2 :=f_2^{-1} (b_1)$, then $b=f_3(a_2)$.
Similarly, let $d_1 :=g_1(c)$, $c_2 :=g_2^{-1} (d_1)$, then $d=g_3(c_2)$.
By the definition of $\perp$  we then have
\[ (a,b_1, c,d_1) \in Q, (a_2,b_1,c_2,d_1) \in Q, (a_2,b, c_2,d) \in Q. \]
Applying (P2)  for the partition $\{1,3\} \cup \{2,4\}$, this implies
$(a,b, c,d) \in Q$, as wanted.

Now fix an arbitrary $c \in C$ and take the corresponding $d$, varying $a \in A$ the observation implies that the graph of $f_3\co f_2^{-1}\co f_1$ is given by the fiber $Q_{(c,d)}$. Similarly, the graph of $g_3\co g_2^{-1}
  \co g_1$ is given by the fiber $Q_{(a,b)}$ for an arbitrary $a \in A$ and the corresponding $b$; and $(f_3\co f_2^{-1}\co f_1)^\perp = g_3\co g_2^{-1}
  \co g_1$ follows.
\end{proof}

\begin{claim}\label{claim:com}
  For any $f_1,f_2, f_3\in \CF$ we have $f_3\co f_2^{-1}\co f_1=f_1\co
  f_2^{-1}\co f_3$, and similarly for $\CG$.
 \end{claim}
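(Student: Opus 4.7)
Plan. By Claim~\ref{claim:comp}, both $F := f_3 \co f_2^{-1} \co f_1$ and $F' := f_1 \co f_2^{-1} \co f_3$ lie in $\CF$, and by Claim~\ref{claim:regul} the identity $F = F'$ will follow as soon as $F(a) = F'(a)$ for a single $a \in A$. Fix such an $a$ together with an auxiliary $c \in C$, and set $g_i := f_i^{\perp}$, $G := g_3 \co g_2^{-1} \co g_1$, and $G' := g_1 \co g_2^{-1} \co g_3$. Running the construction in the proof of Claim~\ref{claim:comp} once for the triple $(f_1,f_2,f_3)$ and once for $(f_3,f_2,f_1)$ produces $(a,F(a),c,G(c)) \in Q$ and $(a,F'(a),c,G'(c)) \in Q$; (P1) applied to the triple of coordinates $(a,c,G(c))$ then gives $F(a) = F'(a)$ as soon as we establish $G(c) = G'(c)$.

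The remaining task is commutativity of the analogous ternary composition in $\CG$; by the $\perp$-bijection (Remark~\ref{rem: perp is a bijection}) and Claim~\ref{claim:comp} this is equivalent to the statement we began with, so a naive $\perp$-transfer does not close the loop. The plan is to invoke (P2) for a pair-partition of $\{1,2,3,4\}$ other than the partition $\{1,3\}\cup\{2,4\}$ used in Claim~\ref{claim:comp}: the partition $\{1,2\}\cup\{3,4\}$ is automatic from Claim~\ref{claim:regul} (two points on a common $\CF$-fiber lie on all the same fibers), so the genuinely new information sits in the third partition $\{1,4\}\cup\{2,3\}$. Setting $b_i := f_i(a)$, $d_i := g_i(c)$, $a_{ij} := f_j^{-1}(b_i)$, and $c_{ij} := g_j^{-1}(d_i)$, Claim~\ref{claim:perp} populates a collection of $Q$-tuples of the form $(a_{ij},b_i,c_{ij},d_i)$, together with the three base tuples $(a,b_i,c,d_i)$ and their analogs at the auxiliary vertices. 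From among these one then selects three tuples forming a valid premise for (P2) with partition $\{1,4\}\cup\{2,3\}$, whose fourth-corner tuple is forced by (P1) to coincide with a vertex of the configuration witnessing $G(c) = G'(c)$.

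The hard part is the configurational bookkeeping: pinning down which three $Q$-tuples of the above collection feed into the (P2) application so that the resulting fourth corner matches an existing vertex, without introducing any new undetermined parameter. Conceptually, Claim~\ref{claim:com} is the abelian-coherence (``hexagon-closure'') condition for the three fiber foliations of $A \times B \times C \times D$ cut out by $Q$; the three non-trivial pair-partitions of $\{1,2,3,4\}$ label these foliations, and (P2) for partition $\{1,4\}\cup\{2,3\}$ is the extra ingredient, beyond the existence of the composition (Claim~\ref{claim:comp}), that makes the induced group operation abelian. The companion statement for $\CG$ follows by the same argument with the roles of $(A,B)$ and $(C,D)$ exchanged.
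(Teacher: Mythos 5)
Your strategy is aligned with the paper's: the proof does rest on a second application of (P2), for the partition $\{1,4\}\cup\{2,3\}$, on top of the $\{1,3\}\cup\{2,4\}$ application underlying Claim~\ref{claim:comp}, and you correctly observe that the naive $\perp$-transfer is circular. However, you stop exactly where the proof has to happen: you flag the ``configurational bookkeeping'' --- which three $Q$-tuples feed the second (P2) instance and why the fourth corner closes --- as the hard part and then do not supply it. That bookkeeping \emph{is} the proof, so what you have is a plan, not an argument. Your opening reduction to ``$G(c)=G'(c)\Rightarrow F(a)=F'(a)$'' is also a red herring, as you yourself note; the paper never passes through commutativity in $\CG$, and your later sentence about the fourth corner ``matching a vertex of the configuration witnessing $G(c)=G'(c)$'' suggests the circularity has not been fully discharged in your head.

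What the paper actually does, and what your plan is missing, is a \emph{coordinated} choice of fibers in $C\times D$. Fix $a\in A$ and set $b_1:=f_1(a)$, $a_2:=f_2^{-1}(b_1)$, $b_3:=f_3(a_2)=F(a)$, and on the other side $b_4:=f_3(a)$, $a_4:=f_2^{-1}(b_4)$, $b_5:=f_1(a_4)=F'(a)$. Pick $c_1\in C$ arbitrary, then choose $d_1$ with $(a,b_1,c_1,d_1)\in Q$, then $c_2$ with $(a_2,b_1,c_2,d_1)\in Q$, then $d_2$ with $(a_2,b_3,c_2,d_2)\in Q$; this makes $Q_{(c_1,d_1)}$, $Q_{(c_2,d_1)}$, $Q_{(c_2,d_2)}$ the graphs of $f_1$, $f_2$, $f_3$ respectively, with the essential feature that $(c_1,d_1)$, $(c_2,d_1)$, $(c_2,d_2)$ share coordinates pairwise (an ``L-shape''). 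Applying (P2) with partition $\{1,3\}\cup\{2,4\}$ to $(a,b_1,c_1,d_1)$, $(a_2,b_1,c_2,d_1)$, $(a_2,b_3,c_2,d_2)$ gives $(a,b_3,c_1,d_2)\in Q$. Since the same three fibers cut out $f_1,f_2,f_3$, Claim~\ref{claim:regul} also yields $(a,b_4,c_2,d_2),(a_4,b_4,c_2,d_1),(a_4,b_5,c_1,d_1)\in Q$, and (P2) with partition $\{1,4\}\cup\{2,3\}$ applied to these three gives $(a,b_5,c_1,d_2)\in Q$. Now (P1) in the second coordinate at $(a,c_1,d_2)$ forces $b_3=b_5$, i.e.\ $F(a)=F'(a)$. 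In your setup the analogue of this L-shaped linkage between the $c$'s and $d$'s --- the fact that $d_1$ and $c_2$ are shared between consecutive fibers --- is exactly the undetermined parameter you acknowledge leaving open, and without it the (P2) premises do not assemble.
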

 \begin{proof}   Let $a\in A$ be arbitrary. We define  $b_1 :=f_1(a)$, $a_2 :=f_2^{-1}(b_1)$
   and $b_3 :=f_3(a_2)$, so we have $(f_3\co f_2^{-1}\co f_1)(a)=b_3$.
Let also $b_4 := f_3(a)$, $a_4 := f_2^{-1}(b_4)$
   and $b_5 := f_1(a_4)$, so we have $(f_1\co f_2^{-1}\co f_3)(a)=b_5$.

We need to show that $b_5=b_3$.

Let $c_1 \in C$ be arbitrary. By (P1) there exists some $d_1 \in D$ such that
\begin{gather}
	(a,b_1, c_1, d_1) \in Q.   \label{eq: GrpConfDcl 1}
\end{gather}
 Applying (P1) again, there exists some $c_2 \in C$ such that
\begin{gather}
	(a_2,b_1, c_2, d_1) \in Q,   \label{eq: GrpConfDcl 2}
\end{gather}
and then some $d_2 \in D$ such that
\begin{gather}
	(a_2,b_3, c_2, d_2) \in Q.   \label{eq: GrpConfDcl 3}
\end{gather}
Using (P2) for the partition $\{1,3\} \cup \{2,4\}$, it follows from \eqref{eq: GrpConfDcl 1}, \eqref{eq: GrpConfDcl 2}, \eqref{eq: GrpConfDcl 3}  that
\begin{equation}
  \label{eq: GrpConfDcl 4}  (a,b_3,c_1,d_2) \in Q.
\end{equation}

 On the other hand, by the choice of $b_1, a_2, b_3$, \eqref{eq: GrpConfDcl 1}, \eqref{eq: GrpConfDcl 2}, \eqref{eq: GrpConfDcl 3} and Claim \ref{claim:regul} we have:  $Q_{
(c_1,d_1)}$ is the graph of $f_1$, $Q_{(
c_2,d_1)}$ is the graph of $f_2$ and $Q{(
c_2,d_2)}$ is the graph of $f_3$. Hence we also have
\[ (a,b_4, c_2, d_2) \in Q,   (a_4,b_4, c_2, d_1) \in Q,  (a_4,b_5, c_1, d_1) \in Q.  \]
Applying (P2) for the partition $\{1,4\} \cup \{2,3\}$ this implies
\[ (a,b_5,c_1,d_2) \in Q, \]
and combining with \eqref{eq: GrpConfDcl 4} and (P1) we obtain $b_3=b_5$.
 \end{proof}

\begin{claim} \label{cla: Q 4 F is an ab grp}
	
Given an arbitrary element $f_0\in \CF$, for every pair $f,f' \in \CF$ we define
$$f + f' := f\co f_0^{-1}\co f'.$$
Then $(\CF,+)$ is an abelian group, with the identity element $f_0$.
\end{claim}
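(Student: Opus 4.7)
My plan is to verify the four group axioms plus commutativity directly, using only Claims \ref{claim:regul}--\ref{claim:com} that have already been established. Each axiom reduces to a short manipulation of compositions of bijections in $\CF$.

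First I would check \emph{closure}: for $f,f'\in\CF$, the element $f+f' = f\co f_0^{-1}\co f'$ belongs to $\CF$ by Claim \ref{claim:comp} applied with $f_1:=f'$, $f_2:=f_0$, $f_3:=f$. Next, for the \emph{identity}, a direct computation gives $f_0+f = f_0\co f_0^{-1}\co f = f$ and $f+f_0 = f\co f_0^{-1}\co f_0 = f$, so $f_0$ is a two-sided identity.

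For \emph{associativity}, given $f_1,f_2,f_3\in\CF$, both $(f_1+f_2)+f_3$ and $f_1+(f_2+f_3)$ unwind, using closure and associativity of composition of functions, to the single expression
\[
f_1\co f_0^{-1}\co f_2\co f_0^{-1}\co f_3,
\]
so the two sides agree. \emph{Commutativity} is exactly the content of Claim \ref{claim:com}: taking $f_1:=f$, $f_2:=f_0$, $f_3:=f'$ there yields $f\co f_0^{-1}\co f' = f'\co f_0^{-1}\co f$, i.e.\ $f+f' = f'+f$.

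The only step requiring slightly more care is the existence of \emph{inverses}. Given $f\in\CF$, solving $f+h = f_0$ formally forces $h = f_0\co f^{-1}\co f_0$, so I would define $-f$ by this expression and verify it lies in $\CF$ by one more application of Claim \ref{claim:comp}, this time with $f_1=f_3:=f_0$ and $f_2:=f$. A direct computation then gives $f+(-f) = f\co f_0^{-1}\co f_0\co f^{-1}\co f_0 = f_0$, and commutativity, already proved, yields the other side. This is the main (though still routine) obstacle in the argument, since it is the only place where one must produce a new element of $\CF$ rather than just manipulate given ones; everything rests on the fact that Claim \ref{claim:comp} is strong enough to generate inverses as well as products. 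Once all five properties are checked, $(\CF,+)$ is an abelian group with identity $f_0$, as desired. The analogous argument, with $\CG$ and $g_0 := f_0^\perp$ in place of $\CF$ and $f_0$, will make $(\CG,+)$ an abelian group as well, and Claim \ref{claim:comp} together with Remark \ref{rem: perp is a bijection} shows $f\mapsto f^\perp$ is a group isomorphism $\CF\to\CG$, which is what will be needed in the subsequent construction of the coordinate-wise bijection with $\alpha\cdot\beta = \gamma\cdot\delta$.
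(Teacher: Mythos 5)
Your proof is correct and follows essentially the same route as the paper: closure and the existence of inverses both come from Claim \ref{claim:comp}, associativity from associativity of composition, and commutativity from Claim \ref{claim:com}. The only cosmetic difference is that you verify the identity and inverse laws on both sides directly, whereas the paper checks only the right-sided versions and lets commutativity supply the rest.
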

\begin{proof}
	Note that for every $f,f' \in \CF$, $f+f' \in \CF$ by Claim \ref{claim:comp}.
	Associativity follows from the associativity of the composition of functions. For any $f \in \CF$ we have $f + f_0 = f \co f^{-1}_0 \co f_0 = f$, $f_0 \co f^{-1} \co f_0 \in \CF$ by Claim \ref{claim:comp} and $f + (f_0 \co f^{-1} \co f_0) = f \co f_0^{-1} \co ( f_0 \co f^{-1} \co f_0) = f_0$, hence $f_0$ is the right identity and $f_0 \co f^{-1} \co f_0$ is the right inverse of $f$. Finally, by Claim \ref{claim:com} we have $f + f' = f' + f$ for any $f,f' \in \CF$, hence $(\CF, +)$ is an abelian group.
\end{proof}

\begin{rem}\label{rem: perp is an iso}
	Moreover, if we also fix $g_0 := f_0^\perp$ in $\CG$, then similarly we
obtain an abelian group on $\CG$ with the identity element $g_0$, so that $(\mathcal{F},+)$ is isomorphic to $(\CG,+)$ via the map $f \mapsto f^{\perp}$ (it is a homomorphism as for any $f_1, f_2 \in \CF$ we have $(f_1 \co f_0^{-1} \co f_2)^\perp = f_1^{\perp} \co g_0^{-1}
  \co f_2^{\perp}$ by Claim \ref{claim:comp}, and its inverse is $g \in \CG \mapsto g^{\perp}$ by Remark \ref{rem: perp is a bijection}).
\end{rem}

Next we establish a connection of these groups and the relation $Q$. We fix arbitrary $a_0\in A$, $b_0\in B$, $c_0\in C$ and $d_0\in D$ with
$(a_0,b_0,c_0,d_0) \in Q$.  By Claim \ref{claim:regul}, let $f_0\in \CF$ be unique with
$f_0(a_0)=b_0$, and let $g_0\in \CG$ be unique with
$g_0(c_0)=d_0$. Then $g_0 = f_0^\perp$ by Claim \ref{claim:perp}, and by Remark \ref{rem: perp is an iso} we have isomorphic
groups on $\CF$ and on $\CG$. We will denote this common group by $G := (\mathcal{F},+)$.

We consider the following bijections between each of
$A,B,C,D$ and $G$, using our identification of $G$ with both $\CF$ and $\CG$ and  Claim \ref{claim:regul}:
\begin{itemize}
	\item let $\pi_A \colon A \to \CF$ be the bijection that assigns to $a\in
A$ the unique $f_a\in \CF$ with $f_a(a)=b_0$;
\item let $\pi_B \colon B \to \CF$ be the bijection that assigns to $b\in
B$ the unique $f_b\in \CF$ with $f_b(a_0)=b$;
\item let $\pi_C \colon C \to \CG$ be the bijection that assigns to $c\in
C$ the unique $g_c\in \CG$ with $g_c(c)=d_0$;
\item let $\pi_D \colon D \to \CG$ be the bijection that assigns to $d\in
D$ the unique $g_d\in \CG$ with $g_d(c_0)=d$.
\end{itemize}

\begin{claim}\label{claim:ab}   For any $a\in A$ and $b\in B$,
  $\pi_A(a)+\pi_B(b)$ is the unique function $f\in \CF$ with
$f(a)=b$.

Similarly, for any $c\in C$ and $d\in D$,  $\pi_C(a)+\pi_D(b)$ is the unique function $g \in \CG$ with
$g(c)=d$.
\end{claim}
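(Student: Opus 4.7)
The plan is to unwind the definition of $+$ on $\CF$ and exploit the commutativity established in Claim~\ref{claim:com}. Recall that $\pi_A(a) = f_a$ and $\pi_B(b) = f_b$, where $f_a, f_b \in \CF$ are uniquely determined by $f_a(a)=b_0$ and $f_b(a_0)=b$, while the identity $f_0 \in \CF$ satisfies $f_0(a_0)=b_0$. By the definition of the group operation,
\[
\pi_A(a) + \pi_B(b) \;=\; f_a \co f_0^{-1} \co f_b,
\]
which lies in $\CF$ by Claim~\ref{claim:comp}. Since by Claim~\ref{claim:regul} an element of $\CF$ is determined by its value at the single point $a$, it suffices to verify that this composite sends $a$ to $b$.

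The naive evaluation $f_a(f_0^{-1}(f_b(a)))$ does not simplify, because we do not have direct control over $f_b(a)$. The key move is therefore to first apply Claim~\ref{claim:com} (commutativity) with $f_3 := f_a$, $f_2 := f_0$, $f_1 := f_b$, rewriting the composite as $f_b \co f_0^{-1} \co f_a$. Evaluating at $a$ now telescopes: $f_a(a) = b_0$ by the definition of $\pi_A(a)$, then $f_0^{-1}(b_0) = a_0$ since $f_0(a_0)=b_0$, and finally $f_b(a_0) = b$ by the definition of $\pi_B(b)$. Hence $(\pi_A(a)+\pi_B(b))(a) = b$, and uniqueness follows from Claim~\ref{claim:regul}.

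For the statement about $C, D$, the symmetric computation goes through verbatim in $\CG$ using the analogues of Claims~\ref{claim:regul}, \ref{claim:comp}, \ref{claim:com} for $\CG$ (which hold by the symmetry of properties (P1) and (P2) in the partition of the variables), and with the group structure on $\CG$ transported from $(\CF,+)$ via the isomorphism $f \mapsto f^\perp$ of Remark~\ref{rem: perp is an iso}: namely, $\pi_C(c) + \pi_D(d) = g_c \co g_0^{-1} \co g_d = g_d \co g_0^{-1} \co g_c$, whose value at $c$ is $g_d(g_0^{-1}(g_c(c))) = g_d(g_0^{-1}(d_0)) = g_d(c_0) = d$. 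Since all the ingredients are already in hand, I anticipate no real obstacle; the only point requiring care is to carry out the commuting swap \emph{before} evaluating, so that the $\co f_0^{-1} \co$ factor lands on an input it can simplify.
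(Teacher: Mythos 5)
Your proof is correct and follows essentially the same route as the paper: both unwind $\pi_A(a)+\pi_B(b)$ to a threefold composition, commute the two outer factors so that the composition telescopes when evaluated at $a$, and then invoke Claim~\ref{claim:regul} for uniqueness. The only cosmetic difference is that you invoke Claim~\ref{claim:com} directly, whereas the paper appeals to the abelianity of $(\CF,+)$ established in Claim~\ref{cla: Q 4 F is an ab grp} (which itself rests on Claim~\ref{claim:com}); your observation that the swap must be performed \emph{before} evaluating is exactly the point of the paper's rewriting $\pi_A(a)+\pi_B(b)=\pi_B(b)+\pi_A(a)$.
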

\begin{proof}
  Let $(a,b) \in A \times B$ be arbitrary, and let $f := \pi_A(a)+\pi_B(b) = \pi_B(b) + \pi_A(a) = \pi_B(b)\co f_0^{-1} \co \pi_A(a)$.
Note that, from the definitions, $\pi_A(a)\colon
a\mapsto b_0$, $f_0^{-1}\colon b_0 \mapsto a_0$ and $\pi_B(b)\colon
a_0\mapsto b$, hence $f(a) = b$. The second claim is analogous.
\end{proof}

\begin{prop}\label{prop: quaternary dcl group} For any $(a,b,c,d) \in A \times B \times C \times D$, $(a,b,c,d) \in Q$ if and only if $\pi_A(a) +\pi_B(b)=
  \pi_C(c)^{\perp}+\pi_D(d)^{\perp}$ (in $G$).
\end{prop}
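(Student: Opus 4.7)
The plan is to chase the definitions through, using Claim \ref{claim:ab} to describe the sums $\pi_A(a)+\pi_B(b)$ and $\pi_C(c)+\pi_D(d)$ as the unique elements of $\CF$ and $\CG$ respectively picking out $(a,b)$ and $(c,d)$, and then to show that these two elements correspond to each other under $\perp$ exactly when $(a,b,c,d)\in Q$. The role of the isomorphism $f\mapsto f^\perp$ noted in Remark \ref{rem: perp is an iso} is to convert the equation in the proposition, which mixes images under $\perp$, into a statement purely about $\CF$ (or purely about $\CG$).

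Concretely, fix $(a,b,c,d)\in A\times B\times C\times D$ and set $f:=\pi_A(a)+\pi_B(b)\in\CF$ and $g:=\pi_C(c)+\pi_D(d)\in\CG$. By Claim \ref{claim:ab}, $f$ is the unique function in $\CF$ with $f(a)=b$ and $g$ is the unique function in $\CG$ with $g(c)=d$. Since $\perp:\CF\to\CG$ is a group isomorphism (Remark \ref{rem: perp is an iso}) and an involution (Remark \ref{rem: perp is a bijection}), the equation
\[
\pi_A(a)+\pi_B(b)=\pi_C(c)^\perp+\pi_D(d)^\perp
\]
is equivalent, after applying $\perp$ to both sides, to $f^\perp=g$.

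For the forward direction, assume $(a,b,c,d)\in Q$. By Claim \ref{claim:perp}, $f^\perp$ is characterised as the unique element of $\CG$ satisfying $(x,f(x),u,f^\perp(u))\in Q$ for all $x\in A$, $u\in C$; applying this to $(x,u)=(a,c)$ gives $f^\perp(c)=d$. By the uniqueness half of Claim \ref{claim:regul} for $\CG$, $f^\perp=g$, as required. Conversely, assume $f^\perp=g$. Then Claim \ref{claim:perp} with $(x,u)=(a,c)$ yields $(a,f(a),c,f^\perp(c))\in Q$, and since $f(a)=b$ and $f^\perp(c)=g(c)=d$ this is exactly $(a,b,c,d)\in Q$.

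The main conceptual point is just the identification of $\perp$ with the bridge between the group structures on $\CF$ and $\CG$; once that is in hand, the proof is essentially a one-line verification via Claims \ref{claim:regul}, \ref{claim:perp} and \ref{claim:ab}, and I do not anticipate any technical obstacle.
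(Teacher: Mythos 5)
Your proposal is correct and follows essentially the same route as the paper: both reduce to the characterization of $f:=\pi_A(a)+\pi_B(b)$ and $g:=\pi_C(c)+\pi_D(d)$ from Claim \ref{claim:ab}, then invoke Claim \ref{claim:perp} to relate ``$(a,b,c,d)\in Q$'' to ``$f$ and $g$ are $\perp$-paired,'' and finally use that $\perp$ is an isomorphism (Remark \ref{rem: perp is an iso}). The only cosmetic difference is that you apply the isomorphism to the group equation at the outset and then split the biconditional into two implications, whereas the paper keeps the biconditional intact and applies the isomorphism last.
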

\begin{proof}
	Given $(a,b,c,d)$, by Claim \ref{claim:ab} we have: $\pi_A(a) +\pi_B(b)$ is the function
  $f\in \CF$ with $f(a)=b$, and $ \pi_C(c)+\pi_D(d)$ is the function $g\in
  \CG$ with $g(c)=d$. Then, by Claim \ref{claim:perp}, $(a,b,c,d) \in Q$ if and only if $f = g^{\perp}$, and since $\perp$ is an isomorphism this  happens if and only $f = \pi_C(c)^{\perp}+\pi_D(d)^{\perp}$.
\end{proof}

\subsection{$Q$-relation of any arity for dcl}\label{sec: bij Q any arity}

Now we extend the construction of an abelian group to relations of arbitrary arity $\geq 4$. Assume that we are given $m \in \mathbb{N}_{\geq 4}$, sets $X_1, \ldots, X_m$ and a relation $Q \subseteq X_1 \times \cdots \times X_m$ satisfying the following two conditions (corresponding to the conditions in Section \ref{sec: dcl Q-rels of arity 4} when $m=4$).
\begin{itemize}
\item[(P1)] For any permutation of the variables of $Q$ we have:
 $$\forall x_1, \ldots, \forall x_{m-1} \exists ! x_m Q(x_1, \ldots, x_m).$$
\item[(P2)] For any permutation of the variables of $Q$ we have:
\begin{gather*}
	\forall x_1, x_2 \forall y_3, \ldots y_{m} \forall y'_3, \ldots, y'_m \Big( Q(\bar{x}, \bar{y}) \land Q(\bar{x}, \bar{y}') \rightarrow \\
	\big( \forall x'_1, x'_2 Q(\bar{x}', \bar{y}) \leftrightarrow Q(\bar{x}', \bar{y}') \big) \Big),
	\end{gather*}
	where $\bar{x} = (x_1, x_2), \bar{y} = (y_3, \ldots, y_m)$, $Q(\bar{x},\bar{y})$ evaluates $Q$ on the concatenated tuple $(x_1, x_2, y_3, \ldots, y_m)$, and similarly for $\bar{x}', \bar{y}'$.
\end{itemize}

\noindent We let $\mathcal{F}$ be the set of all functions $f: X_1 \to X_2 $ whose graph is given by the set of pairs $(x_1, x_2) \in X_1 \times X_2$ satisfying $Q(x_1, x_2, \bar{b})$ for some $\bar{b} \in X_3 \times \ldots \times X_m$.

\begin{rem}\label{rem: Q-rel any arity uniq}
\begin{enumerate}
\item Every $f \in \mathcal{F}$ is a bijection, by (P1).
\item For every $a_1 \in X_1, a_2 \in X_2$ there exists a unique $f \in \mathcal{F}$ such that $f(a_1) = a_2$ (existence by (P1), uniqueness by (P2)). We will denote it as $f_{a_1, a_2}$.
\end{enumerate}
\end{rem}

\begin{lem}\label{lem: dcl Q any f}
For every $c_i \in X_i, 4 \leq i \leq m$ and $f \in \mathcal{F}$ there exists some $c_3 \in X_3$ such that $Q(x_1, x_2, c_3, c_4, \ldots, c_m)$ is the graph of $f$.
\end{lem}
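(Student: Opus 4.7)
The plan is to apply property (P1) to find the required $c_3$ and then invoke the uniqueness part of Remark \ref{rem: Q-rel any arity uniq}(2) to verify it works. Essentially, we reparametrize the function $f$, currently given by some tuple $(b_3, b_4, \ldots, b_m)$, to a tuple sharing the prescribed last $m-3$ coordinates $(c_4, \ldots, c_m)$.

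More concretely, pick any $a_1 \in X_1$ and set $a_2 := f(a_1) \in X_2$. Since $f \in \mathcal{F}$, the pair $(a_1, a_2)$ lies on the graph of $f$, so there exist $b_3, \ldots, b_m$ such that $Q(a_1, a_2, b_3, \ldots, b_m)$ holds. This step is mainly to confirm that the ``target'' element $a_2$ is consistent with the relation $Q$ — but actually we do not even need this, since what we really want is to produce $c_3$ using (P1) directly.

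Next, apply (P1) with the $x_3$-coordinate as the unknown and the other coordinates fixed as $x_1 = a_1, x_2 = a_2, x_4 = c_4, \ldots, x_m = c_m$. This yields a unique $c_3 \in X_3$ with $Q(a_1, a_2, c_3, c_4, \ldots, c_m)$. Now the fiber $\{(x_1, x_2) : Q(x_1, x_2, c_3, c_4, \ldots, c_m)\}$ is, by (P1) applied to the $x_2$-coordinate, the graph of some function $g \in \mathcal{F}$. By construction $g(a_1) = a_2 = f(a_1)$, and by Remark \ref{rem: Q-rel any arity uniq}(2) there is only one element of $\mathcal{F}$ sending $a_1$ to $a_2$; hence $g = f$, as required.

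The main conceptual content lies in Remark \ref{rem: Q-rel any arity uniq}(2), whose uniqueness clause is really what (P2) is purchasing for us. Once uniqueness of functions in $\mathcal{F}$ through a given point is available, this lemma is a one-line application of (P1). So I do not expect any real obstacle; the only thing to be careful about is that the invocation of (P1) is for the appropriate variable (namely $x_3$), which is permitted since (P1) is assumed for every permutation of the variables.
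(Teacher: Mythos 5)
Your proof is correct and follows essentially the same route as the paper's: pick $a_1$, set $a_2 = f(a_1)$, use (P1) in the $x_3$-variable to find $c_3$, and then use the uniqueness part of Remark \ref{rem: Q-rel any arity uniq}(2) to conclude the resulting fiber coincides with the graph of $f$. The brief detour through $b_3, \ldots, b_m$ is, as you observe, unnecessary, but everything else matches the paper word for word.
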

\begin{proof}
Choose any $a_1 \in X_1$, let $a_2 := f(a_1)$. Choose $c_3 \in X_3$ such that $Q(a_1, a_2, c_3, \ldots, c_m)$ holds by (P1). Then $Q(x_1, x_2, c_3, c_4, \ldots, c_m)$ defines the graph of $f$ by Remark \ref{rem: Q-rel any arity uniq}(2).
\end{proof}

\begin{lem}\label{lem: Q any arity ab}
For any $f_1, f_2, f_3 \in \mathcal{F}$ there exists some $f_4 \in \mathcal{F}$ such that $f_1  \circ f_2^{-1} \circ f_3 = f_3 \circ f_2^{-1} \circ f_1 = f_4$.
\end{lem}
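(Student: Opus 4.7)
My plan is to reduce the general $m$-ary situation back to the quaternary case already treated in Section \ref{sec: dcl Q-rels of arity 4}. Fix arbitrary elements $c_5^*, \ldots, c_m^*$ in $X_5, \ldots, X_m$ (an empty list when $m=4$) and define a quaternary relation $Q' \subseteq X_1 \times X_2 \times X_3 \times X_4$ by freezing the last $m-4$ coordinates of $Q$:
\[
Q'(x_1,x_2,x_3,x_4) \iff Q(x_1,x_2,x_3,x_4,c_5^*,\ldots,c_m^*).
\]
The first step is to verify that $Q'$ satisfies the quaternary versions of (P1) and (P2) from Section \ref{sec: dcl Q-rels of arity 4}. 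For (P1): fixing any three of the variables $x_1,x_2,x_3,x_4$ together with the $m-4$ frozen coordinates amounts to fixing $m-1$ of the arguments of $Q$, so (P1) for $Q$ delivers a unique value for the remaining one. For (P2): each partition of $\{1,2,3,4\}$ into two pairs extends to a partition of $\{1,\ldots,m\}$ of type $2 / (m-2)$ by appending $\{5,\ldots,m\}$ to the second block, and since $c_5^*, \ldots, c_m^*$ appear on both sides of each implication, (P2) for $Q$ under that extended partition specialises to (P2) for $Q'$ under the original one.

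The second step is to identify the family of bijections produced by $Q'$ with the original $\mathcal{F}$. Every fiber of $Q'$ in $X_1 \times X_2$ is (by definition of $Q'$) also a fiber of $Q$, hence gives an element of $\mathcal{F}$. Conversely, given $f \in \mathcal{F}$, Lemma \ref{lem: dcl Q any f} applied to the chosen $c_4^*, c_5^*, \ldots, c_m^*$ supplies some $c_3 \in X_3$ so that
\[
Q(x_1,x_2,c_3,c_4^*,c_5^*,\ldots,c_m^*) \;=\; Q'(x_1,x_2,c_3,c_4^*)
\]
is the graph of $f$, exhibiting $f$ as a fiber of $Q'$.

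With these two reductions in hand, the lemma becomes a direct application of the quaternary results to $Q'$: Claim \ref{claim:comp} (applied to the $\mathcal{F}$-part of its conclusion) shows that $f_1 \circ f_2^{-1} \circ f_3 \in \mathcal{F}$, which provides the sought element $f_4$, while Claim \ref{claim:com} gives the commutation $f_1 \circ f_2^{-1} \circ f_3 = f_3 \circ f_2^{-1} \circ f_1$. I do not anticipate any genuine obstacle; the only point requiring care is the bookkeeping that checks (P2) for $Q'$ under each of the three two-and-two partitions of $\{1,2,3,4\}$ really does descend from a single $2/(m-2)$ partition of $\{1,\ldots,m\}$ for $Q$, but this is routine once the extension of partitions is written out.
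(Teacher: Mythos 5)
Your proposal is correct and takes essentially the same approach as the paper: freeze coordinates $5,\ldots,m$ to produce a quaternary relation $Q'$, check that (P1) and (P2) descend, identify the resulting family of bijections with $\mathcal F$ via Lemma \ref{lem: dcl Q any f}, and invoke Claims \ref{claim:comp} and \ref{claim:com}. (Minor slip: you refer to ``the chosen $c_4^*$'' when applying Lemma \ref{lem: dcl Q any f}, but $c_4$ was never frozen --- any $c_4\in X_4$ works, which is in fact what makes the identification of $\mathcal F$ with the $Q'$-fibers go through.)
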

\begin{proof}
Choose any $c_i \in X_i, 5 \leq i \leq m$ and consider the quaternary relation $Q' \subseteq X_1 \times \cdots \times X_4$ defined by $Q'(x_1, \ldots, x_4) := Q(x_1, \ldots, x_4, \bar{c})$. Hence $Q'$ also satisfies  (P1) and (P2), and the graph of every $f \in \CF$ is given by $Q'(x_1,x_2,b_3,b_4)$ for some $b_3 \in X_3, b_4 \in X_4$, by Lemma \ref{lem: dcl Q any f}.
Then the conclusion of the lemma follows from Claims \ref{claim:comp} and \ref{claim:com} applied to $Q'$.
\end{proof}

\begin{defn}\label{def: choice of ei comb group constr}
	We fix arbitrary elements $e_i \in X_i, i=1, \ldots, m$ so that $Q(e_1, \ldots, e_m)$ holds.
Let $f_0 \in \mathcal{F}$ be the function whose graph is given by $Q(x_1, x_2, e_3, \ldots, e_m)$, i.e.~$f_0 = f_{e_1, e_2}$.
We define $+: \mathcal{F} \times \mathcal{F} \to \mathcal{F}$ by taking $f_1 + f_2 := f_1 \circ f_0^{-1} \circ f_2$.
\end{defn}
As in Claim \ref{cla: Q 4 F is an ab grp}, from Lemma \ref{lem: Q any arity ab} we get:

\begin{lem}
$G := (\mathcal{F}, +)$ is an abelian group with the identity element $f_0$.
\end{lem}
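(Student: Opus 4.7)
My plan is to prove this exactly as in Claim \ref{cla: Q 4 F is an ab grp}, with Lemma \ref{lem: Q any arity ab} playing the combined role of Claims \ref{claim:comp} and \ref{claim:com} from the arity-$4$ case. The key observation is that Lemma \ref{lem: Q any arity ab} already packages everything we need: closure under the operation $f \circ g^{-1} \circ h$ and the commutation identity.

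First, I would verify closure of $+$: for any $f_1, f_2 \in \mathcal{F}$, applying Lemma \ref{lem: Q any arity ab} with middle term $f_0$ gives $f_1 + f_2 = f_1 \circ f_0^{-1} \circ f_2 \in \mathcal{F}$. Associativity is immediate from associativity of function composition. For the identity, $f + f_0 = f \circ f_0^{-1} \circ f_0 = f$ and $f_0 + f = f_0 \circ f_0^{-1} \circ f = f$, so $f_0$ is a two-sided identity.

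For inverses, I would take $f^* := f_0 \circ f^{-1} \circ f_0$, which lies in $\mathcal{F}$ again by Lemma \ref{lem: Q any arity ab} (with $f_1 = f_3 = f_0$, middle $f$). Then
\[ f + f^* = f \circ f_0^{-1} \circ f_0 \circ f^{-1} \circ f_0 = f \circ f^{-1} \circ f_0 = f_0, \]
and symmetrically $f^* + f = f_0$. Finally, commutativity is the content of Lemma \ref{lem: Q any arity ab}: $f_1 + f_2 = f_1 \circ f_0^{-1} \circ f_2 = f_2 \circ f_0^{-1} \circ f_1 = f_2 + f_1$.

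There is no real obstacle here, since all the work has already been done in Lemma \ref{lem: Q any arity ab}; the proof is essentially a transcription of Claim \ref{cla: Q 4 F is an ab grp}, with the quaternary-case Claims \ref{claim:comp} and \ref{claim:com} replaced by the single lemma available in the general arity setting.
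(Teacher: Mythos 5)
Your proof is correct and follows essentially the same route as the paper, which simply remarks that the argument of Claim \ref{cla: Q 4 F is an ab grp} carries over with Lemma \ref{lem: Q any arity ab} replacing Claims \ref{claim:comp} and \ref{claim:com}; you have merely spelled out the details that the paper leaves implicit.
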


\begin{defn}
We define the map $\pi_1: X_1 \to G$ by $\pi_1(a) := f_{a, e_2}$ for all $a \in X_1$, and the map $\pi_2: X_2 \to G$ by $\pi_2(b) := f_{e_1, b}$ for all $b \in X_2$.
\end{defn}

Note that both $\pi_1$ and $\pi_2$ are bijections by Remark \ref{rem: Q-rel any arity uniq}.

\begin{lem}\label{lem: FirstTwoCoords}
For any $a \in X_1$ and $b \in X_2$ we have $\pi_1(a) + \pi_2(b) = f_{a,b}$.
\end{lem}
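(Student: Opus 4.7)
The goal is to show $\pi_1(a) + \pi_2(b) = f_{a,b}$, which by Remark \ref{rem: Q-rel any arity uniq}(2) is equivalent to showing that the function $\pi_1(a) + \pi_2(b) \in \mathcal{F}$ sends $a$ to $b$. So the plan is simply to evaluate.

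By the definition of $+$ in Definition \ref{def: choice of ei comb group constr} we have
\[ \pi_1(a) + \pi_2(b) = \pi_1(a) \circ f_0^{-1} \circ \pi_2(b) = f_{a,e_2} \circ f_0^{-1} \circ f_{e_1,b}. \]
I want to compute this at $a$, but the natural evaluation sequence for $f_{a,e_2} \circ f_0^{-1} \circ f_{e_1,b}$ uses input $e_1$ (where $\pi_2(b)$ sends it to $b$), not $a$. So the first thing I do is swap the order using commutativity, which was established when we verified $G$ is an abelian group:
\[ \pi_1(a) + \pi_2(b) = \pi_2(b) + \pi_1(a) = f_{e_1,b} \circ f_0^{-1} \circ f_{a,e_2}. \]
(Alternatively, one can appeal directly to Lemma \ref{lem: Q any arity ab} applied to the triple $\pi_1(a), f_0, \pi_2(b)$ to justify this equality without going through commutativity of $G$.)

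Now I evaluate at $a$, reading right to left: $f_{a,e_2}(a) = e_2$ by definition of $f_{a,e_2}$; $f_0^{-1}(e_2) = e_1$ because $f_0 = f_{e_1,e_2}$ sends $e_1$ to $e_2$; and $f_{e_1,b}(e_1) = b$ by definition of $f_{e_1,b}$. Therefore $\bigl(\pi_1(a) + \pi_2(b)\bigr)(a) = b$, and by the uniqueness clause of Remark \ref{rem: Q-rel any arity uniq}(2) this forces $\pi_1(a) + \pi_2(b) = f_{a,b}$.

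There is no real obstacle: the entire content of the lemma reduces to chasing the chosen base point $(e_1, \ldots, e_m)$ through the definitions, with commutativity (or equivalently Lemma \ref{lem: Q any arity ab}) used only to align the order of composition with the evaluation sequence. The analogous Claim \ref{claim:ab} in the quaternary warm-up Section \ref{sec: dcl Q-rels of arity 4} is proved by exactly this unwinding.
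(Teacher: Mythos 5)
Your proof is correct and is essentially identical to the paper's: both unwind the definition of $+$, use commutativity (or Lemma \ref{lem: Q any arity ab}) to rewrite $\pi_1(a)+\pi_2(b)$ as $f_{e_1,b}\circ f_0^{-1}\circ f_{a,e_2}$, evaluate at $a$ through the chain $a\mapsto e_2\mapsto e_1\mapsto b$, and invoke the uniqueness in Remark \ref{rem: Q-rel any arity uniq}(2).
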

\begin{proof}
Let $f_1 := \pi_1(a), f_2 := \pi_2(b)$. Note that $f_1(a) = e_2$, $f_0^{-1}(e_2) = e_1$ and $f_2(e_1) = b$, hence $(f_1 + f_2 )(a) = (f_2 + f_1 )(a) = f_2 \circ f_0^{-1}\circ f_1 (a) = b$, so $f_1 + f_2 = f_{a,b}$.
\end{proof}

\begin{defn}
For any set $S \subseteq \{3, \ldots, m \}$, we define the map $\pi_S: \prod_{i \in S} X_i \to G$ as follows: for $\bar{a} = (a_i : i \in S) \in \prod_{i \in S} X_i$, let $\pi_S(\bar{a})$ be the function in $\CF$ whose graph is given by $Q(x_1, x_2, c_3, \ldots, c_m)$ with $c_j := a_j$ for $j \in S$ and $c_j := e_j$ for $j \notin S$. We write $\pi_j$ for $\pi_{\{j \}}$.
\end{defn}

\begin{rem}
	For each $i \in \{3, \ldots, m\}$, the map $\pi_i : X_i \to G$ is a bijection (by (P2)).
\end{rem}

\begin{lem}\label{lem: TheOtherCoords}
Fix some $S \subsetneq \{3, \ldots, m \}$ and $j_0 \in \{3, \ldots, m \} \setminus S$. Let $S_0 := S \cup \{j_0 \}$. Then for any $\bar{a} \in \prod_{i \in S} X_i$ and $a_{j_0} \in X_{j_0}$ we have $\pi_S(\bar{a}) + \pi_{j_0}(a_{j_0}) = \pi_{S_0}(\bar{a}^{\frown}a_{j_0})$.
\end{lem}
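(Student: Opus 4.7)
Write $f := \pi_S(\bar{a})$, $g := \pi_{j_0}(a_{j_0})$, $h := \pi_{S_0}(\bar{a}^\frown a_{j_0})$. By the definition of $+$ in $G$, the statement $f + g = h$ amounts to $f \circ f_0^{-1} \circ g = h$ as functions $X_1 \to X_2$. So I fix an arbitrary $\alpha \in X_1$ and set
\[
\beta := g(\alpha), \quad \gamma := f_0^{-1}(\beta), \quad \epsilon := f(\gamma),
\]
and I must verify $h(\alpha) = \epsilon$. By the definition of the maps $\pi_T$, these four choices unfold into three instances of $Q$ holding:
\begin{itemize}
\item[(G1)] $Q(\alpha, \beta, e_3, \ldots, e_{j_0-1}, a_{j_0}, e_{j_0+1}, \ldots, e_m)$ (because $\beta = g(\alpha)$ and $g$'s graph is $\pi_{j_0}(a_{j_0})$),
\item[(G2)] $Q(\gamma, \beta, e_3, \ldots, e_m)$ (because $\gamma = f_0^{-1}(\beta)$),
\item[(G3)] $Q(\gamma, \epsilon, d_3, \ldots, d_m)$ where $d_j = a_j$ for $j \in S$ and $d_j = e_j$ for $j \notin S$ (note $j_0 \notin S$, so $d_{j_0} = e_{j_0}$); this is because $\epsilon = f(\gamma)$ and $f$'s graph is $\pi_S(\bar{a})$.
\end{itemize}
The target equality $h(\alpha) = \epsilon$ would follow, by (P1) and the definition of $h$, from
\begin{itemize}
\item[(G4)] $Q(\alpha, \epsilon, d''_3, \ldots, d''_m)$, where $d''_j = a_j$ for $j \in S_0 = S \cup \{j_0\}$ and $d''_j = e_j$ otherwise.
\end{itemize}

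The key step is to apply (P2) with a carefully chosen permutation of the coordinates. I take the partition that groups the positions $\{1, j_0\}$ as the $\bar{x}$-block (size $2$) and all the remaining positions $\{2, 3, \ldots, m\} \setminus \{j_0\}$ as the $\bar{y}$-block (size $m-2$). In this partition, (G2) and (G3) share the same $\bar{x}$-entries $(\gamma, e_{j_0})$ (for (G3) this uses precisely that $j_0 \notin S$), while (G1) has $\bar{x}$-entries $(\alpha, a_{j_0})$ and its $\bar{y}$-entries agree with those of (G2) (both have $\beta$ in position $2$ and $e_j$ everywhere else). Thus (P2) applies with $\bar{x} = (\gamma, e_{j_0})$, $\bar{y}$ coming from (G2), $\bar{y}'$ coming from (G3), and $\bar{x}' = (\alpha, a_{j_0})$ coming from (G1); it yields $Q(\bar{x}', \bar{y}')$, which on rewriting is exactly (G4), observing that the $j_0$-th slot is $a_{j_0}$ and every other slot beyond position $2$ carries $d_j = d''_j$.

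The only real subtlety is bookkeeping: choosing the right permutation so that the $\bar{x}$-block isolates position $j_0$ (the coordinate where $g$ differs from $f_0$) from $S$ (where $f$ differs from $f_0$), which makes (G2) and (G3) share a common $\bar{x}$. Once the partition is chosen, a single application of (P2) closes the argument, and no induction on $|S|$ or further appeal to (P1) beyond uniqueness is needed.
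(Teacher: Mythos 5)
Your proof is correct. Both you and the paper set up three $Q$-instances and apply (P2) once with a two-element $\bar{x}$-block, but the bookkeeping differs in a way worth noting. The paper (after WLOG taking $S = \{3,\ldots,k\}$, $j_0 = k+1$) picks the \emph{specific} point $c_1 := f_1^{-1}(e_2)$, uses the partition $\{2, j_0\}$ vs.~the rest with the three instances coming from $f_1(c_1)=e_2$, the base tuple $Q(\bar{e})$, and $f_2(e_1)=c_2$, and crucially invokes commutativity of $+$ (Lemma \ref{lem: Q any arity ab}) to rewrite $(f_1+f_2)(c_1)$ as $(f_2+f_1)(c_1)$ before computing; it then concludes via the uniqueness in Remark \ref{rem: Q-rel any arity uniq}(2). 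You instead take an \emph{arbitrary} $\alpha \in X_1$, unwind $f \circ f_0^{-1} \circ g$ directly, and use the partition $\{1, j_0\}$ vs.~the rest — which is what lets (G2) and (G3) share an $\bar{x}$-block without any reordering of the sum. The effect is that you establish the pointwise equality $(f+g)(\alpha) = h(\alpha)$ for all $\alpha$ directly, and so you need neither commutativity of $+$ nor the uniqueness remark. Your version is slightly more self-contained; both are fine.
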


\begin{proof}
Without loss of generality we have $S = \{3, \ldots, k \}$ and $j_0 = k+1 \leq m$ for some $k$.
Take any $\bar{a} = (a_3, \ldots, a_k) \in \prod_{3 \leq i \leq k} X_i$ and $a_{k+1} \in X_{k+1}$. Then, from the definitions:
\begin{itemize}
\item the graph of $f_1 := \pi_S(\bar{a})$ is given by $Q(x_1, x_2, a_3, \ldots, a_k, e_{k+1}, \bar{e}')$, where $\bar{e}' := (e_{k+2}, \ldots, e_{m})$;
\item the graph of $f_2 := \pi_{k+1}(a_{k+1})$ is given by $Q(x_1, x_2, e_3, \ldots, e_k, a_{k+1}, \bar{e}')$;
\item the graph of $f_3 := \pi_{S_0}(\bar{a}^{\frown}a_{k+1})$ is given by $Q(x_1, x_2, a_3, \ldots, a_k, a_{k+1}, \bar{e}')$.
\end{itemize}

Let $c_1 \in X_1$ be such that $f_1(c_1) = e_2$ and let $c_2 \in X_2$ be such that $f_2(e_1) = c_2$. Then $(f_1+f_2)(c_1) = (f_2+f_1)(c_1) = f_2 \circ f_0^{-1} \circ f_1 (c_1) = c_2$. On the other hand, the following also hold:

\begin{itemize}
\item $Q(c_1, e_2, a_3, \ldots, a_k, e_{k+1}, \bar{e}')$;
\item $Q(e_1, e_2, e_3, \ldots, e_k, e_{k+1}, \bar{e}')$;
\item $Q(e_1, c_2, e_3, \ldots, e_k, a_{k+1}, \bar{e}')$.
\end{itemize}

Applying (P2) with respect to the coordinates $(2, k+1)$ and the rest, this implies that $Q(c_1, c_2, a_3, \ldots, a_k, a_{k+1}, \bar{e}')$ holds, i.e.~$f_3(c_1) = c_2$. Hence $f_1 + f_2 = f_3$ by Remark \ref{rem: Q-rel any arity uniq}(2), as wanted.

\end{proof}

\begin{prop}\label{prop: almost final of bij group constr}
	For any $\bar{a} = (a_1, \ldots, a_m) \in \prod_{i \in [m]} X_i$ we have
	$$Q(a_1, \ldots, a_m) \iff \pi_1(a_1) + \pi_2(a_2) = \pi_3(a_3) + \ldots + \pi_m(a_m).$$
\end{prop}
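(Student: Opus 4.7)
The plan is to chain the two preparatory lemmas and then read off the equivalence from the uniqueness statement in Remark \ref{rem: Q-rel any arity uniq}(2).

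First I would handle the left-hand side. By Lemma \ref{lem: FirstTwoCoords}, for any $a_1 \in X_1, a_2 \in X_2$ the sum $\pi_1(a_1) + \pi_2(a_2)$ is exactly $f_{a_1,a_2}$, i.e.\ the unique element of $\CF$ sending $a_1$ to $a_2$. So the equation on the right of the proposition rewrites as
\[
f_{a_1,a_2} \;=\; \pi_3(a_3) + \pi_4(a_4) + \cdots + \pi_m(a_m).
\]

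Next I would evaluate the right-hand side by induction on $k$ from $3$ up to $m$, showing that
\[
\pi_3(a_3) + \pi_4(a_4) + \cdots + \pi_k(a_k) \;=\; \pi_{\{3,\dots,k\}}(a_3,\dots,a_k).
\]
The base case $k=3$ is immediate from the definitions, since $\pi_3 = \pi_{\{3\}}$. The inductive step is a direct application of Lemma \ref{lem: TheOtherCoords} with $S = \{3,\dots,k\}$ and $j_0 = k+1$. Taking $k=m$ gives
\[
\pi_3(a_3)+\cdots+\pi_m(a_m) \;=\; \pi_{\{3,\dots,m\}}(a_3,\dots,a_m),
\]
which by the definition of $\pi_{\{3,\dots,m\}}$ is the unique element of $\CF$ whose graph is cut out by the formula $Q(x_1,x_2,a_3,\dots,a_m)$.

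Finally, the equivalence falls out of Remark \ref{rem: Q-rel any arity uniq}(2): both $f_{a_1,a_2}$ and $\pi_{\{3,\dots,m\}}(a_3,\dots,a_m)$ are elements of $\CF$, and they coincide iff $(a_1,a_2)$ lies on the graph of the latter, i.e.\ iff $Q(a_1,a_2,a_3,\dots,a_m)$ holds. There is no real obstacle here; the content of the proposition has already been absorbed into Lemmas \ref{lem: FirstTwoCoords} and \ref{lem: TheOtherCoords}, and the remaining step is a one-line bookkeeping argument combining them with the uniqueness clause of Remark \ref{rem: Q-rel any arity uniq}(2).
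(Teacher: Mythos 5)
Your proof is correct and follows the same route as the paper: apply Lemma \ref{lem: FirstTwoCoords} to identify $\pi_1(a_1)+\pi_2(a_2)$ with $f_{a_1,a_2}$, iterate Lemma \ref{lem: TheOtherCoords} to identify $\pi_3(a_3)+\cdots+\pi_m(a_m)$ with $\pi_{\{3,\dots,m\}}(a_3,\dots,a_m)$, and then conclude via the uniqueness clause of Remark \ref{rem: Q-rel any arity uniq}(2). The only difference is that you spell out the induction a bit more explicitly, which is fine.
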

\begin{proof}
	Let $\bar{a} = (a_1, \ldots, a_m) \in \prod_{i \in [m]} X_i$ be arbitrary. 	By Lemma \ref{lem: FirstTwoCoords}, $\pi_1(a_1) + \pi_2(a_2) = f_{a_1, a_2}$. Applying Lemma \ref{lem: TheOtherCoords} inductively, we have
	$$\pi_{3, \ldots, m}(a_3, \ldots, a_m) = \pi_3(a_3) + \ldots + \pi_m(a_m).$$
 And by definition, the graph of the function $\pi_{3, \ldots, m}(a_3, \ldots, a_m)$ is given by $Q(x_1, x_2, a_3, \ldots, a_m)$. Combining and using Remark \ref{rem: Q-rel any arity uniq}(2), we get $Q(a_1, \ldots, a_m) \iff \pi_1(a_1) + \pi_2(a_2) = \pi_{3, \ldots, m}(a_3, \ldots, a_m) \iff \pi_1(a_1) + \pi_2(a_2) = \pi_3(a_3) + \ldots + \pi_m(a_m)$.
\end{proof}

We are ready to prove the main theorem of the section.
\begin{thm}\label{thm: main group config bijections}
Given $m \in \mathbb{N}_{\geq 4}$, sets $X_1, \ldots, X_m$ and $Q \subseteq \prod_{i \in [m]} X_i$ satisfying (P1) and (P2), there exists an abelian group $(G,+,0_G)$ and bijections $\pi'_i: X_i \to G$ such that for every $(a_1, \ldots, a_m) \in \prod_{i \in [m]} X_i$ we have
$$Q(a_1, \ldots, a_m) \iff \pi'_1(a_1) + \ldots + \pi'_m(a_m) = 0_G.$$
Moreover, if we have first-order structures $\CM \preceq \CN$ so that $\CN$ is $|\CM|^+$-saturated, each $X_i, i \in [m]$ is type-definable (respectively, definable) in $\CN$ over $\CM$ and $Q = F \cap \prod_{i \in [m]} X_i$ for a relation $F$ definable in $\CN$ over  $\CM$, then given an arbitrary tuple $\bar{e} \in Q$, we can take $G$ to be type-definable (respectively, definable) and the bijections $\pi'_i, i \in [m]$ to be definable in $\CN$, in both cases only using parameters from $\CM$ and $\bar{e}$, so that $\pi'_i(e_i) = 0_G$ for all $i\in [m]$.
\end{thm}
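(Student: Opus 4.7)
The plan is to derive Theorem~\ref{thm: main group config bijections} almost directly from Proposition~\ref{prop: almost final of bij group constr} by a rearrangement in the abelian group $G$. By that proposition,
$$Q(a_1, \ldots, a_m) \iff \pi_1(a_1) + \pi_2(a_2) - \pi_3(a_3) - \cdots - \pi_m(a_m) = 0_G,$$
so I define $\pi'_1 := \pi_1$, $\pi'_2 := \pi_2$, and $\pi'_i := -\pi_i$ (inversion in $G$ composed with $\pi_i$) for $3 \leq i \leq m$. Each $\pi'_i$ is a bijection $X_i \to G$, and the equivalence $Q(a_1, \ldots, a_m) \iff \sum_i \pi'_i(a_i) = 0_G$ holds by construction. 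The base-point condition $\pi'_i(e_i) = 0_G$ follows once I observe that for every $i \in [m]$, $\pi_i(e_i)$ is the function in $\CF$ whose graph is $Q(x_1, x_2, e_3, \ldots, e_m)$, which is exactly $f_0 = 0_G$ by Definition~\ref{def: choice of ei comb group constr}, and inversion fixes $0_G$.

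For the moreover part, my plan is to transport the group law from $\CF$ to $X_1$ along the bijection $\pi_1^{-1}$; then $G = X_1$ with identity $e_1$ is type-definable (resp.\ definable), as $X_1$ is. It remains to show that the transported addition on $X_1$ and each bijection $\pi'_i : X_i \to X_1$ (for $i \geq 2$) are relatively definable in $\CN$ over $\CM \cup \bar{e}$. The key auxiliary observation is that the ``coordinate translation'' bijection $h_i : X_1 \to X_i$ characterized by $\pi_i(h_i(g)) = \pi_1(g)$ admits a first-order definition using only $Q$ and $\bar{e}$: since two elements of $\CF$ agree iff they agree at a single point (Remark~\ref{rem: Q-rel any arity uniq}(2)), for $i \geq 3$ one has $h_i(g) = x_i$ iff $Q(g, e_2, e_3, \ldots, x_i, \ldots, e_m)$, while for $i = 2$ one has $h_2(g) = x_2$ iff $\exists c_3 \, [Q(g, e_2, c_3, e_4, \ldots, e_m) \land Q(e_1, x_2, c_3, e_4, \ldots, e_m)]$. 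Using these, each $\pi'_i$ becomes a definable composition, and unwinding the equivalence shows that $a +_{X_1} b = c$ iff $Q(a, h_2(b), h_3(c), e_4, \ldots, e_m)$ holds --- a first-order formula in $a, b, c$ over $\CM \cup \bar{e}$.

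The main obstacle, to the extent that there is one, is this definability bookkeeping in the moreover part; all the conceptual content --- constructing the group $\CF$, verifying its abelianity, and establishing the equivalence with $Q$ --- was already carried out in the preceding sections. Notably, no passage to $\CN^{\eq}$ is required, since $X_1$ already furnishes a (type-)definable realization of $G$.
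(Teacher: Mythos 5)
Your derivation of the main clause is correct and identical in approach to the paper's: apply Proposition~\ref{prop: almost final of bij group constr} and move the right side over by negation. The base-point observation $\pi_i(e_i)=f_0$ for every $i$ is also correct.

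For the ``moreover'' clause, your strategy (transport $G$ to one of the coordinate sets and write the group law and the bijections as explicit formulas in $Q$ and $\bar e$) is the same as the paper's, up to the cosmetic choice of $X_1$ rather than $X_2$. Your formula for $i\geq 3$ is genuinely quantifier-free in $F$ and $\bar e$, so there is no issue there, and your identity $a +_{X_1} b = c \iff Q(a, h_2(b), h_3(c), e_4, \ldots, e_m)$ checks out against Proposition~\ref{prop: almost final of bij group constr}. But there is a real gap in the case $i=2$ and in the resulting addition formula: the existential ``$\exists c_3$'' must range over some \emph{definable} set for the relation to be definable in $\CN$, yet $X_3$ is only type-definable. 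If you let $c_3$ range over all of $\CN$, you have no control over spurious witnesses, since the functionality and compatibility properties (P1), (P2) are only guaranteed when all coordinates lie in the $X_i$; if you restrict $c_3$ to $X_3$, you have left first-order logic. The paper's proof of the ``moreover'' clause resolves exactly this by invoking $|\CM|^+$-saturation to extract definable supersets $X'_i \supseteq X_i$ on which (P2) and a weakened uniqueness form (P1$'$) of (P1) still hold (this is where the hypothesis that (P1) and (P2) are \emph{for-all} conditions intersected with partial types is used), and then ranging the existentials over the $X'_i$. Without this compactness step your formulas don't literally define anything, so this is not mere bookkeeping --- it is the one nontrivial point of the ``moreover'' clause, and your proposal omits it. Once you add it, the rest of your plan goes through.
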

\begin{proof}

By Proposition \ref{prop: almost final of bij group constr}, for any $\bar{a} = (a_1, \ldots, a_m) \in \prod_{i \in [m]} X_i$ we have
\begin{gather}
	Q(a_1, \ldots, a_m) \iff\\
	\pi_1(a_1) + \pi_2(a_2) = \pi_3(a_3) + \ldots + \pi_m(a_m) \iff \nonumber \\
	\pi_1(a_1) + \pi_2(a_2) + (-\pi_3(a_3)) + \ldots + (-\pi_m(a_m))= 0_G, \nonumber
\end{gather}
hence the bijections $\pi'_1 := \pi_1, \pi'_2 := \pi_2$ and $\pi'_i: X_i \to G, \pi'_i(x) := - \pi_i(x)$ for $3 \leq i \leq m$ satisfy the requirement.

Assume now that, for each $i \in [m]$, $X_i$ is type-definable in $\CN$ over $\CM$, i.e.~$X_i$ is the set of solutions in $\CN$ of some partial type $\mu_i(x_i)$ over $\CM$; and that $Q = F \cap \prod_{i \in [m]}X_i$ for some $\CM$-definable relation $F$.
Then from (P1) and (P2) for $Q$, for any permutation of the variables of $Q$ we have in $\CN$:
\begin{gather*}
	\mu_m(x_m) \land \mu_m(x'_m) \land \bigwedge_{1 \leq i \leq m-1} \mu_{i} \left(x_{i} \right) \land \\
	\land F(x_1, \ldots, x_{m-1}, x_m)
	 \land F(x_1, \ldots, x_{m-1}, x'_m) \rightarrow x_m = x'_m,\\
	\bigwedge_{i \in [m]} \mu_{i} \left(x_{i} \right) \land \bigwedge_{i \in [m]} \mu_{i} \left(x'_{i} \right) \land F(x_1, x_2, x_3, \ldots, x_m) \land F(x_1, x_2, x'_3, \ldots, x'_m) \land \\
	\land F(x'_1, x'_2, x_3, \ldots, x_m) \rightarrow F(x'_1, x'_2, x'_3, \ldots, x'_m).
\end{gather*}

By $|\CM|^+$-saturation of $\CN$, in each of these implications $\mu_i$ can be replaced by a finite conjunction of formulas in it. Hence, taking a finite conjunction over all permutations of the variables, we conclude that there exist some $\CM$-definable sets $X'_i \supseteq X_i, i \in [m]$ so that $Q' := F \cap \prod_{i \in [m]} X'_i$ satisfies (P2) and
\begin{itemize}
\item[(P1$'$)] For any permutation of the variables of $Q'$, for any $x_i \in X'_i, 1 \leq i \leq m-1$, there exists at most one (but possibly none) $x_m \in X'_m$ satisfying $Q'(x_1, \ldots, x_m)$.
 \end{itemize}
We proceed to type-definability of $G$. Let $(e_1, \ldots, e_m) \in Q$ (so in $\CN$) be as above (see Definition \ref{def: choice of ei comb group constr}). We identify $X_2$ with $\CF$, the domain of $G$, via the bijection $\pi_2$ above mapping $a_2 \in X_2$ to $f_{e_1,a_2}$ (in an analogous manner we could identify the domain of $G$ with any of the type-definable sets $X_i, i \in [s]$).
Under this identification, the graph of addition in $G$ is given by
\begin{gather*}
	R_{+} := \left\{(a_2, a'_2, a''_2) \in X_2 \times X_2 \times X_2 : a''_2 =  f_{e_1,a_2} \circ f_{e_1, e_2}^{-1}\circ f_{e_1, a'_2}(e_1)\right\}\\
	= \left\{(a_2, a'_2, a''_2) \in X_2 \times X_2 \times X_2 : a''_2 =  f_{e_1,a_2} \circ f_{e_1, e_2}^{-1}(a'_2) \right\}.
\end{gather*}

\noindent We have the following claim.
\begin{claim}\label{cla: type-def of bij grp}
\begin{itemize}
	\item For any $a_1 \in X_1, a_2 \in X_2$ and $\bar{b} \in \prod_{3 \leq i \leq m} X'_i$, if $F(a_1,a_2, \bar{b})$ holds then $F_{\bar{b}} \restriction_{X_1 \times X_2}$ defines the graph of $f_{a_1,a_2}$ (since $Q'$ satisfies (P2)).
	\item For any $\bar{b} \in \prod_{3 \leq i \leq m} X'_i$, if $F_{\bar{b}} \restriction_{X_1 \times X_2}$ coincides with the graph of some function $f \in \CF$, then using that $Q'$ satisfies (P1$'$) we have:
	\begin{itemize}
		\item for any $a_1 \in X_1$, $f(a_1)$ is the unique element in $X'_2$ satisfying $F(a_1, x_2, \bar{b})$;
		\item  for any $a_2 \in X_2$, $f^{-1}(a_2)$ is the unique element in $X'_1$ satisfying $F(x_1, a_2, \bar{b})$.
	\end{itemize}
\end{itemize}
\end{claim}

Using Claim \ref{cla: type-def of bij grp}, we have
$$R_{+} = R'_{+} \restriction \prod_{i \in [m]} X_i,$$
where $R'_{+}$ is a definable relation in $\CN$ (with parameters in $\CM \cup \{e_1, e_2\}$) given by
\begin{gather*}
	R'_{+}(x_2, x'_2, x''_2) :\iff
	\exists \bar{y}, \bar{y}', z \Big(\bar{y} \in \prod_{3 \leq i \leq m} X'_i \land \bar{y}' \in \prod_{3 \leq i \leq m} X'_i \land z \in X'_1  \land \\
	  F(e_1, e_2, \bar{y}')
\land  F(z,x'_2, \bar{y}') \land F(e_1,x_2, \bar{y}) \land F(z,x''_2,\bar{y}) \Big).
\end{gather*}
This shows that $(G,+)$ is type-definable over $\CM \cup \{e_1, e_2\}$. It remains to show definability of the bijections $\pi'_i: X_i \to \CF$, where $\CF$ is identified with $X_2$ as above (i.e.~to show that the graph of $\pi'_i$ is given by some $\CN$-definable relation $P_i(x_i,x_2)$ intersected with $X_i \times X_2$).

We have $\pi'_1: a_1 \in X_1 \mapsto f_{a_1, e_2} \in \CF$, hence we need to show that the relation
$$\left\{(a_1,a_2) \in X_1 \times X_2 : f_{a_1, e_2}(e_1) = a_2\right\}$$
is of the form $P_1(x_1, x_2) \restriction X_1 \times X_2$ for some relation $P_1$ definable in $\CN$. Using Claim \ref{cla: type-def of bij grp}, we can take
\begin{gather*}
	P_1(x_1,x_2) :\iff \exists \bar{y} \left( \bar{y} \in \prod_{3 \leq i \leq m} X'_i \land  F(x_1, e_2, \bar{y}) \land F(e_1, x_2, \bar{y}) \right).
\end{gather*}

We have $\pi'_2: a_2 \in X_2 \mapsto f_{e_1,a_2} \in \CF$, hence the corresponding definable relation $P_2(x_2,x_2)$ is just the graph of the equality.

Finally, given $3 \leq i \leq m$, $\pi_i$ maps $a_i \in X_i$ to the function in $\CF$ with the graph given by $Q(x_1,x_2, e_3, \ldots, e_{i-1}, a_i, e_{i+1}, \ldots, e_m)$. Hence, remembering that the identity of $G$ is  $f_{e_1,e_2}$, which corresponds to $e_2 \in X_2$, and using Claim \ref{cla: type-def of bij grp}, the graph of $\pi'_i: a_i \in X_i \mapsto - \pi_i(a_i)(e_1) \in X_2$ is given by the intersection of $X_i \times X_2$ with the definable relation
\begin{gather*}
	P_i(x_i, x_2) :\iff \exists z \Big( z \in X'_2 \land  F(e_1, z, e_3, \ldots, e_{i-1}, x_i, e_{i+1}, \ldots, e_m) \land \\
	R'_{+}(x_2, z, e_2) \Big).\qedhere
\end{gather*}
\end{proof}

\section{Reconstructing an abelian group from an abelian $m$-gon}\label{sec: group config stable}
Let $T= T^{\eq}$ be a stable theory in a language $\cL$ and $\mathbb{M}$ a monster model of $T$. By ``independence'' we mean independence in the sense of forking, unless stated otherwise, and write $a \ind_c b$  to denote that $\tp(a/bc)$ does not fork over $c$. We assume some familiarity with the properties of forking in stable theories (see e.g.~\cite{MR3888974} for a concise introduction to model-theoretic stability, and \cite{pillay1996geometric} for a detailed treatment). We say that a subset $A$ of $\CM$ is \emph{small} if $|A| \leq |\CL|$.

\subsection{Abelian $m$-gons}

For a small set $A$, as usual  by its  \emph{$\acl_A$-closure} we mean the algebraic closure over $A$,
i.e.~for a set $X$ its $\acl_A$-closure is $\acl_A(X) := \acl(A\cup X)$.


\begin{defn}\footnote{An analogous notion in the context of geometric theories was introduced in \cite{berenstein2016geometric} under the name of an \emph{algebraic $m$-gon}, and it was also  used in \cite[Section 7]{chernikov2019n}.}
We say that a tuple $(a_1,\ldots, a_m)$ is \emph{an $m$-gon over a set $A$}  if
each type $\tp(a_i/A)$ is not algebraic,  any $m-1$ elements of the tuple are
independent over $A$, and  every element is in the $\acl_A$-closure of the rest.
 We refer to a $3$-gon as a \emph{triangle}.
\end{defn}

\begin{defn}\label{def: m-gon}
  We say that an $m$-gon $(a_1,\ldots,a_m)$ over $A$ with $m\geq 4$ is \emph{abelian}  if   for any $i{\neq j} \in[m]$, taking $\bar a_{ij} :=(a_k)_{k\in [m]\setminus \{i,j\}}$, we have
\[  a_i a_j \ind_{\acl_A(a_i a_j)\cap \acl_A(\bar a_{ij})} \bar a_{ij}. \]
\end{defn}

\begin{sample} Let $A$ be a small set and
let $(G, \cdot, 1_G)$ be an abelian group type-definable over $A$.
Let  $g_1, \dotsc, g_{m-1} \in G$ be independent generic elements over $A$, and let $g_m$ be such that $g_1{\cdot} \dotsc  {\cdot g_m} = 1_G$.
Then $(g_1,\dotsc, g_m)$ is an abelian $m$-gon over $A$  \emph{associated to $G$}.

Indeed, by assumption we have $g_1 \cdot g_2 \in \dcl(g_1, g_2) \cap \dcl(g_3, \ldots, g_m)$. Also $g_1 g_2 \ind_{A} g_3 \ldots g_{m-1}$, hence $g_1 g_2 \ind_{A, g_1 \cdot g_2} g_3 \ldots g_{m-1}$, which together with $g_m \in \dcl(g_1 \cdot g_2, g_3, \ldots, g_{m-1})$ implies $g_1 g_2 \ind_{A, g_1 \cdot g_2} g_3 \ldots g_m$. As  the group $G$ is abelian, the same holds for any $i \neq j \in [m]$ instead of $i=1, j=2$.
\end{sample}

\begin{defn}
  Given two tuples $\bar a=(a_1,\dots,a_m)$, $\bar
  a'=(a_1,\dotsc,a_m)$ and a small set $A$ we say that $\bar a$ and
  $\bar a'$ are \emph{$\acl$-equivalent  over $A$} if
  $\acl_A(a_i)=\acl_A(a_i')$  for all $i\in [m]$. As usual if
  $A=\emptyset$ we omit it.
\end{defn}

\begin{rem}
	Note that the condition ``$\bar{a}, \bar{a}'$ are $\acl$-equivalent'' is stronger than ``the tuples $\bar{a}, \bar{a}'$ are inter-algebraic'', as it
requires inter-algebraicity component-wise.
\end{rem}

In this section we prove the following theorem.

\begin{thm}\label{thm: main ab mgon gives grp}
Let $\bar a=(a_1, \ldots, a_m)$ be an abelian $m$-gon, over some small set
$A$.  Then there is a finite  set $C$ with $\bar a
\ind_A{C}$, a type-definable (in $\MM^{\eq}$) over $\acl(C\cup A)$
connected (i.e.~$G = G^0$) abelian group $(G, \cdot)$ and an abelian $m$-gon $\bar g=(g_1, \ldots,
g_m)$ over $\acl(C\cup A)$ associated to $G$  such that $\bar a$ and $\bar g$ are
$\acl$-equivalent over $\acl(C\cup A)$.
\end{thm}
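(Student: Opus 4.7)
The plan is to induct on $m \geq 4$. The base case $m=4$ reduces to the classical Hrushovski--Zilber abelian group configuration theorem, and the inductive step is obtained by absorbing $a_m$ into the base and applying the inductive hypothesis to a resulting abelian $(m-1)$-gon.

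For the base case $m = 4$, given an abelian $4$-gon $(a_1, a_2, a_3, a_4)$ over $A$, for each partition $\{\{i,j\},\{k,\ell\}\}$ of $\{1,2,3,4\}$ into two pairs the abelianity condition of Definition~\ref{def: m-gon} provides a canonical element $e_{\{i,j\},\{k,\ell\}} \in \acl_A(a_i a_j) \cap \acl_A(a_k a_\ell)$. Together with $a_1, a_2, a_3$ (and the inter-algebraicity $a_4 \in \acl_A(a_1 a_2 a_3)$), these canonical elements exhibit a classical abelian group configuration in the sense of \cite[Definition~5.4.1]{pillay1996geometric}. Applying \cite[Theorem~5.4.5]{pillay1996geometric} produces, after base change by a finite $C$ with $\bar{a} \ind_A C$, a type-definable connected abelian group $(G,\cdot)$ over $\acl(C \cup A)$ together with group elements $g_i$ that are $\acl$-equivalent to $a_i$ over $\acl(C \cup A)$ and satisfy $g_1 \cdot g_2 \cdot g_3 \cdot g_4 = 1_G$.

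For the inductive step $m \geq 5$, set $A' := A \cup \{a_m\}$. The tuple $(a_1, \ldots, a_{m-1})$ is then an $(m-1)$-gon over $A'$: any $m-2$ of its elements are independent over $A'$ (obtained by shifting $a_m$ into the base of an appropriate $(m-1)$-element subsequence of $\bar{a}$ that is independent over $A$), and each $a_i$ for $i \leq m-1$ is algebraic over the others together with $A'$, using $a_i \in \acl_A(\bar{a}\setminus\{a_i\})$. The abelianity of $\bar{a}$ over $A$ transfers to abelianity of $(a_1,\ldots,a_{m-1})$ over $A'$: for each $\{i,j\} \subseteq [m-1]$, the new intersection of algebraic closures is $\acl_A(a_i a_j a_m) \cap \acl_A(\bar{a}'_{ij})$ (where $\bar{a}'_{ij}$ is $\bar{a}$ with $a_i, a_j$ removed, and contains $a_m$), and a forking-calculus argument --- beginning from $a_i a_j \ind_e \bar{a}'_{ij}$ with $e := \acl_A(a_i a_j) \cap \acl_A(\bar{a}'_{ij})$, absorbing $a_m \in \bar{a}'_{ij}$ into the base, and then identifying the resulting intersection with the canonical base of $\tp(a_i a_j / \bar{a}''_{ij} a_m A)$ --- establishes the abelianity condition for the new $(m-1)$-gon over $A'$. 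Applying the inductive hypothesis yields a finite set $C_0$ with $(a_1,\ldots,a_{m-1}) \ind_{A'} C_0$, a type-definable connected abelian group $(G,\cdot)$ over $\acl(C_0 \cup A')$, and an associated $(m-1)$-gon $(g_1,\ldots,g_{m-1})$ that is $\acl$-equivalent to $(a_1,\ldots,a_{m-1})$ over $\acl(C_0 \cup A')$.

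Setting $g_m := (g_1 \cdots g_{m-1})^{-1}$, the tuple $(g_1,\ldots,g_m)$ is an abelian $m$-gon associated to $G$; moreover, since $g_m \in \dcl(g_1,\ldots,g_{m-1}) \subseteq \acl(a_1,\ldots,a_{m-1}, C_0 \cup A)$ and $a_m \in \acl_A(a_1,\ldots,a_{m-1})$, a dimension count shows that $g_m$ is $\acl$-equivalent to $a_m$ over $\acl(C_0 \cup A)$. Finally, the group $G$ a priori depends on the choice of $a_m$ via the base $A'$, but only up to isogeny: a canonicity argument --- the group attached to an abelian configuration is unique up to isogeny, and a connected type-definable group whose isomorphism type is $\Aut(\MM/\acl(C \cup A))$-invariant is itself type-definable over such a base after possibly enlarging $C_0$ to a finite $C$ with $\bar{a} \ind_A C$ --- realises $G$ as type-definable over $\acl(C \cup A)$. \emph{The main obstacle} is the verification of abelianity for the base-changed $(m-1)$-gon: the delicate interaction between abelianity viewed as intersections of algebraic closures and as canonical bases under the base change $A \mapsto A \cup \{a_m\}$ is what makes the inductive step non-trivial; the secondary issue of descending the definition of $G$ from $\acl(C_0 \cup A')$ to $\acl(C \cup A)$ is routine but still requires care with finite indices and connectedness.
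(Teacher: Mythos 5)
Your plan to induct on $m$ is an understandable instinct, and the reductions you carry out at the start are sound: the base case $m=4$ does reduce to the classical Abelian Group Configuration Theorem (the paper makes exactly this observation in a remark citing \cite[Theorem~C.2]{bays2017model}), and the transfer of abelianity from $\bar a$ over $A$ to $(a_1,\ldots,a_{m-1})$ over $A':=Aa_m$ is correct. In fact that step, which you flag as the ``main obstacle,'' is just base monotonicity: writing $D := \acl_A(a_ia_j)\cap\acl_A(\bar a_{ij})$ and $D':= \acl_A(a_ia_ja_m)\cap\acl_A(\bar a_{ij})$, we have $D\subseteq D'\subseteq \acl_A(\bar a_{ij})$, and $a_ia_j\ind_D\bar a_{ij}$ implies $a_ia_j\ind_{D'}\bar a_{ij}$ since we only enlarged the base within the algebraic closure of the right-hand side.

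The fatal gap is later, in how you propose to recover the $m$-gon. The inductive hypothesis hands you an abelian $(m-1)$-gon $(g_1,\ldots,g_{m-1})$ over $\acl(C_0\cup A')$ \emph{associated to $G$}, which by Definition~\ref{def: m-gon} and the example following it means $g_1,\ldots,g_{m-2}$ are independent generics of $G$ over that base and $g_1\cdot g_2\cdots g_{m-1}=1_G$. You then set $g_m:=(g_1\cdots g_{m-1})^{-1}$; but by the very closure relation the inductive hypothesis gave you, this is just $g_m = 1_G$. So $(g_1,\ldots,g_m)$ is not an $m$-gon at all: $g_m$ is algebraic (indeed definable) over the base, $\tp(g_m)$ is algebraic, and there is no hope of $g_m$ being inter-algebraic with $a_m$ (which is generic). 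Your subsequent ``dimension count'' cannot repair this. The deeper reason is structural: passing to the base $A'=Aa_m$ collapses the information carried by $a_m$, and the $(m-1)$-gon that the inductive hypothesis produces has already ``used up'' $a_m$ to close itself up. Informally, in the model case $G=(\ZZ,+)$ with $\sum_i a_i=0$, the inductive output looks like $(a_1,\ldots,a_{m-2},-(a_1+\cdots+a_{m-2}))$ over base $a_m$; one cannot read off $a_{m-1}$ and $a_m$ separately from this. Relatedly, your claim that descending $G$ from base $\acl(C_0\cup A')$ (which contains $a_m$, hence is \emph{not} independent from $\bar a$ over $A$) to some base $\acl(C\cup A)$ with $\bar a\ind_A C$ is ``routine'' is not justified; even granting it, the problem with $g_m$ remains.

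For comparison, the paper does not induct on $m$: it works directly with the $m$-gon together with a fixed $\xi\in\acl(a_1a_2)\cap\acl(a_3\cdots a_m)$, reduces (via ``enhanced group configurations'') to the case where $a_1$ and $a_2$ are interdefinable over $\xi$, constructs the group from germs of definable bijections as in Hrushovski's Group Configuration Theorem, and then in Step~3 uses a careful induction \emph{on the index $j\in\{3,\ldots,m\}$} to produce elements $r_3,\ldots,r_m$ (and then $r_1,r_2$) whose images under a canonical map $\pi$ multiply to $\pi(\xi)$. That bottom-up construction is precisely what avoids the trivialisation of the last coordinate.
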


\begin{rem}
	After this work was completed, we have learned that independently Hrushovski   obtained a similar (but incomparable) result \cite{HrushovskiUnpubl, HruOber}.
\end{rem}

\begin{rem}
	In the case $m=4$, Theorem~\ref{thm: main ab
  mgon gives grp} follows from the Abelian Group Configuration Theorem (see
\cite[Theorem C.2]{bays2017model}).
\end{rem}

In the rest of the section we prove Theorem \ref{thm: main ab mgon gives grp}, following the  presentation of Hrushovski's Group Configuration Theorem in \cite[Theorem 6.1]{bays2018geometric} with appropriate modifications.

First note that, adding to the language new constants naming the elements of $\acl(A)$, we may assume without loss of generality that $A=\emptyset$ in Theorem~\ref{thm: main ab mgon gives grp}, and that all types over the
empty set are stationary.

	Given a
tuple $\bar a=(a_1,\dotsc a_m)$ we will often modify it  by applying
the following two operations:
\begin{itemize}
\item  for a \textbf{finite} set $B$ with $\bar a \ind B$  we expand
  the language by  constants  for the elements of $\acl(B)$, and refer to this as \emph{``base change to $B$''}.
\item  we replace $\bar a$ with an $\acl$-equivalent tuple $\bar a'$ (over $\emptyset$), and refer
  to this as \emph{``inter-algebraic replacing''}.
\end{itemize}
It is not hard to see that these two operations transform an (abelian)
$m$-gon to an (abelian) $m$-gon, and we will freely apply them to the
$m$-gon $\bar a$ in the proof of  Theorem~\ref{thm: main ab mgon gives
  grp}.
\begin{defn}
We say that a tuple $(a_1,\dotsc, a_m, \xi)$ is an
\emph{expanded abelian $m$-gon} if $(a_1,\dotsc, a_m)$ is an
abelian $m$-gon, $\xi \in \acl(a_1,a_2)\cap  \acl(a_3,\dotsc,a_m)$ and
$a_1a_2 \ind_\xi a_3\dotsc a_m$. 
\end{defn}
\noindent We remark that the tuple $\xi$ might be infinite even if all of the tuples $a_i$'s are finite. Similarly, base change and inter-algebraic
replacement transform an expanded abelian
$m$-gon to an expanded abelian $m$-gon.

\medskip

From now on, we fix an abelian $m$-gon $\vec{a} = (a_1,\dotsc,a_m)$. We also fix
 $\xi \in \acl(a_1,a_2)\cap  \acl(a_3,\dotsc,a_m)$ such that
$a_1a_2 \ind_\xi a_3\dotsc a_m$ (exists by the definition of abelianity).

\begin{claim}\label{claim:both-gons}
  $(a_1,a_2,\xi)$ is a triangle and $(\xi,a_3,\dotsc,a_m)$ is an $(m-1)$-gon.
\end{claim}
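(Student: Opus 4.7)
The plan is to verify the defining conditions of a triangle and an $(m-1)$-gon by unwinding the hypotheses through the forking calculus. Recall that we have reduced to $A=\emptyset$, all types are stationary, $m\geq 4$, and the hypothesis $\xi\in\acl(a_1,a_2)\cap\acl(a_3,\dotsc,a_m)$ with $a_1a_2\ind_\xi a_3\dotsc a_m$.

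First I would show that $\xi\notin\acl(\emptyset)$. Otherwise the independence $a_1a_2\ind_\xi a_3\dotsc a_m$ collapses to $a_1a_2\ind a_3\dotsc a_m$; transitivity of non-forking then yields $a_1\ind_{a_2}a_3\dotsc a_m$, and combined with the $m$-gon hypothesis $a_1\in\acl(a_2,a_3,\dotsc,a_m)$, the standard fact that non-forking does not enlarge the algebraic locus forces $a_1\in\acl(a_2)$. Since $a_1\ind a_2$ (any $m-1\geq 3$ of the $a_i$ are mutually independent), we would get $a_1\in\acl(\emptyset)$, contradicting non-algebraicity of $\tp(a_1)$.

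Next I verify that $(a_1,a_2,\xi)$ is a triangle. Non-algebraicity holds for $a_1,a_2$ by hypothesis and for $\xi$ by the previous paragraph. Pairwise independence: $a_1\ind a_2$ as just noted, and $a_j\ind\xi$ for $j=1,2$ follows from $\xi\in\acl(a_3,\dotsc,a_m)$ together with $a_j\ind a_3\dotsc a_m$ (a consequence of the $m$-gon data). For algebraic closure, $\xi\in\acl(a_1,a_2)$ is given, while $a_1\in\acl(a_2,\xi)$ is obtained by applying transitivity to $a_1a_2\ind_\xi a_3\dotsc a_m$ to get $a_1\ind_{a_2\xi}a_3\dotsc a_m$, and then combining with $a_1\in\acl(a_2,a_3,\dotsc,a_m)$ via the locus principle; the case $a_2\in\acl(a_1,\xi)$ is symmetric.

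For the $(m-1)$-gon $(\xi,a_3,\dotsc,a_m)$: non-algebraicity of each entry is clear. For independence of any $m-2$ of them, removing $\xi$ leaves $\{a_3,\dotsc,a_m\}$, which sits inside the $(m-1)$-independent tuple $\{a_2,a_3,\dotsc,a_m\}$; removing $a_i$ for some $i\geq 3$ leaves $\{\xi\}\cup\{a_3,\dotsc,\hat a_i,\dotsc,a_m\}$, where the second set is independent as a subset of $\{a_1,\dotsc,\hat a_i,\dotsc,a_m\}$, and $\xi\in\acl(a_1,a_2)$ is independent of it because $\{a_1,a_2\}\ind\{a_3,\dotsc,\hat a_i,\dotsc,a_m\}$. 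Finally, $\xi\in\acl(a_3,\dotsc,a_m)$ is given, and for $a_i\in\acl(\xi,a_3,\dotsc,\hat a_i,\dotsc,a_m)$ with $i\geq 3$ I set $b:=(a_j)_{3\leq j\leq m,\,j\neq i}$ and repeat the triangle argument: from $a_1a_2\ind_\xi a_3\dotsc a_m$ transitivity yields $a_1a_2\ind_{\xi b}a_i$, hence by symmetry $a_i\ind_{\xi b}a_1a_2$; since $a_i\in\acl(a_1,a_2,b)$, the locus principle concludes $a_i\in\acl(\xi,b)$.

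No step presents a serious obstacle — the proof is a bookkeeping exercise in forking calculus. The two moves one has to make consistently are (i) in a stable theory, $a\in\acl(Bc)$ combined with $a\ind_{B}c$ implies $a\in\acl(B)$, and (ii) correctly decomposing the abelianity witness $a_1a_2\ind_\xi a_3\dotsc a_m$ via transitivity and symmetry of non-forking to extract precisely the independence statement needed at each verification.
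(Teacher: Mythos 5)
Your proof is correct and follows essentially the same route as the paper's: pairwise independence of $\{a_1,a_2,\xi\}$ from $\xi\in\acl(a_3,\dotsc,a_m)$ plus the $m$-gon data, and the algebraic-closure conditions by applying base monotonicity to $a_1a_2\ind_\xi a_3\dotsc a_m$ together with the principle that $a\in\acl(Bc)$ and $a\ind_B c$ imply $a\in\acl(B)$. The only differences are that you explicitly isolate the non-algebraicity of $\xi$ as a preliminary step (the paper leaves it implicit in the triangle argument) and you spell out the $(m-1)$-gon verification, which the paper dismisses as ``similar''; both elaborations are correct and in the spirit of the original.
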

\begin{proof} For $i=1,2$, since $a_i\ind a_3,\dotsc,a_m$ and $\xi\in
  \acl(a_3,\dotsc,a_m)$  we have $a_i \ind \xi$. Also $a_1\ind
  a_2$. Thus the set $\{a_1,a_2,\xi\}$ is pairwise independent.
We also have $\xi\in \acl(a_1,a_2)$.  From $a_1a_2\ind_\xi a_3\dotsc a_m$
we obtain $a_1\ind_{\xi a_2} a_3\dotsc a_m$.  Since $a_1\in
\acl(a_2,\dotsc,a_m)$ we obtain $a_1\in \acl(\xi, a_2)$. Similarly
$a_2\in \acl(\xi,a_1)$, thus $(a_1,a_2,\xi)$ is a triangle.

The proof that $(\xi,a_3,\dotsc,a_m)$ is an $(m-1)$-gon is similar.
\end{proof}

\subsection{Step 1. Obtaining a pair of interdefinable elements}
\noindent \emph{After applying finitely many base changes and  inter-algebraic
  replacements  we may assume that $a_1$ and $a_2$ are interdefinable
  over  $\xi$, i.e.~$a_1\in \dcl(\xi , a_2)$ and $a_2\in \dcl(\xi, a_1).$}

\medskip

Our proof of Step~1 follows closely the proof of the corresponding
step in the proof of \cite[Theorem 6.1]{bays2018geometric}, but in order to keep track of the additional parameters we work with enhanced  group configurations.

\begin{defn}
\emph{An enhanced group configuration} is a tuple
$$(a,b,c,x,y,z,d,e)$$
satisfying the following diagram.
\[
\begin{tikzpicture}[scale=0.9]
\draw[black,line  width=0.2mm] (0,0) -- (1,0.3);
\draw[black,line  width=0.2mm] (1,0.3) -- (2,0.6);
\draw[black,line  width=0.2mm] (2,0.6) -- (1,0.9);
\draw[black,line  width=0.2mm] (1,0.9) -- (0,1.2);
\draw[black,line  width=0.2mm] (0,0) -- (1, 0.9);
\draw[black,line  width=0.2mm] (0,1.2) -- (1, 0.3);

\draw[fill=black] (2,0.6) circle (1.5pt);
\node[scale=0.7, above] at (2, 0.6) {$a$};
\draw[fill=black] (1,0.9) circle (1.5pt);
\node[scale=0.7, above] at (1, 0.9) {$b$};
\draw[fill=black] (0,1.2) circle (1.5pt);
\node[scale=0.7, above] at (0, 1.2) {$c$};
\draw[fill=black] (1,0.3) circle (1.5pt);
\node[scale=0.7, below] at (1, 0.3) {$x$};
\draw[fill=black] (0,0) circle (1.5pt);
\node[scale=0.7, below left] at (0, 0) {$y$};
\draw[fill=black] (0.66,0.6) circle (1.5pt);
\node[scale=0.7, left] at (0.62, 0.6) {$z$};

\draw[black,line  width=0.4mm, dotted] (0, 1.2) -- (-0.5, 1.33);
\draw[fill=black] (-0.5,1.35) circle (1.33pt);
\node[scale=0.7, left] at (-0.5, 1.35) {$d$};

\draw[black,line  width=0.4mm,dotted] (2,0.6) -- (2.6, 0.6);
\draw[fill=black] (2.6,0.6) circle (1.5pt);
\node[scale=0.7,right] at (2.6, 0.6) {$e$};

\end{tikzpicture}
\]
That is,
\begin{itemize}
\item $(a,b,c)$ is a triangle over $de$;
\item $(c, z, x)$ is a triangle over $d$;
\item $(y,x,a)$ is a triangle over $e$;
\item $(y,z,b)$  is a triangle;
\item for any non-collinear triple in $(a,b,c,x,y,z)$, the set given
  by it and $de$ is independent over $\emptyset$.
\end{itemize}

\medskip
If $e=\emptyset$ we omit it from the diagram:
\[
 \begin{tikzpicture}[scale=0.9]
\draw[black,line  width=0.2mm] (0,0) -- (1,0.3);
\draw[black,line  width=0.2mm] (1,0.3) -- (2,0.6);
\draw[black,line  width=0.2mm] (2,0.6) -- (1,0.9);
\draw[black,line  width=0.2mm] (1,0.9) -- (0,1.2);
\draw[black,line  width=0.2mm] (0,0) -- (1, 0.9);
\draw[black,line  width=0.2mm] (0,1.2) -- (1, 0.3);

\draw[fill=black] (2,0.6) circle (1.5pt);
\node[scale=0.7, right] at (2, 0.6) {$a$};
\draw[fill=black] (1,0.9) circle (1.5pt);
\node[scale=0.7, above] at (1, 0.9) {$b$};
\draw[fill=black] (0,1.2) circle (1.5pt);
\node[scale=0.7, above] at (0, 1.2) {$c$};
\draw[fill=black] (1,0.3) circle (1.5pt);
\node[scale=0.7, below] at (1, 0.3) {$x$};
\draw[fill=black] (0,0) circle (1.5pt);
\node[scale=0.7, below left] at (0, 0) {$y$};
\draw[fill=black] (0.66,0.6) circle (1.5pt);
\node[scale=0.7, left] at (0.62, 0.6) {$z$};

\draw[black,line  width=0.4mm, dotted] (0, 1.2) -- (-0.5, 1.33);
\draw[fill=black] (-0.5,1.35) circle (1.33pt);
\node[scale=0.7, left] at (-0.5, 1.35) {$d$};

\end{tikzpicture}
\]
\end{defn}

\

In order to complete Step 1 we first show a few  lemmas.

\begin{lem} \label{lem: conj is interalg}
  Let $(a,b,c,x,y,z,d,e)$ be an enhanced group configuration.
  Let $\tilde{z} \in \MM^{\eq}$ be the  imaginary representing
  the finite set $\{z_1, \ldots, z_k\}$ of all conjugates of $z$ over
  $bcxyd$. Then $\tilde{z}$ is inter-algebraic with $z$.
\end{lem}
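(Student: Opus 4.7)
\emph{Plan.} I would prove the two directions of interalgebraicity separately. The direction $z\in\acl(\tilde{z})$ is immediate from the construction: $\tilde{z}$ is a canonical code for the finite set $\{z_{1},\dotsc,z_{k}\}$, of which $z$ is a member, so any automorphism fixing $\tilde{z}$ permutes this finite set and thus moves $z$ within a finite orbit.

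For the converse $\tilde{z}\in\acl(z)$, observe that by construction $\tilde{z}\in\dcl(z_{1},\dotsc,z_{k})$, so it suffices to show that each $z_{i}\in\acl(z)$. My plan is to exploit the structural constraints of the enhanced group configuration. From the triangles $(y,z,b)$ and $(c,z,x)$ over $d$ we have $z\in\acl(b,y)\cap\acl(c,x,d)$, which in particular confines each $z_{i}$ to the (finite) $by$-orbit and the (finite) $cxd$-orbit of $z$; and the pairwise independence of non-collinear triples from $de$ over $\emptyset$ controls how these parameters interact with $z$.

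To actually prove $z_{i}\in\acl(z)$, I would slide the configuration: pick $(b',c',x',y',d')$ realizing the same type as $(b,c,x,y,d)$ over $z$ but with $bcxyd\ind_{z}b'c'x'y'd'$, which is possible by stationarity of types over $\emptyset$ (achieved after naming $\acl(\emptyset)$). Using the forking calculus together with the independence clauses of the enhanced configuration, I would argue that any $bcxyd$-conjugate $z_{i}$ of $z$ is also a $b'c'x'y'd'$-conjugate of $z$, hence a conjugate over the combined parameter set $bcxyd\cup b'c'x'y'd'$. Iterating this over an independent sequence of copies shrinks the orbit of $z$ to something algebraic over $z$ alone, yielding $z_{i}\in\acl(z)$.

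The main obstacle will be executing the sliding step rigorously, especially verifying that $bcxyd$-conjugacy between $z_{i}$ and $z$ is genuinely preserved under independent adjunction of the copy $b'c'x'y'd'$ --- here one has to use both the non-forking extension properties of algebraic types and the concrete independence data supplied by the configuration. This step is the technical core of the argument and closely parallels the corresponding move in Hrushovski's group configuration theorem as presented in \cite[Theorem 6.1]{bays2018geometric}, adapted to keep track of the extra parameters $d,e$ in the enhanced setting.
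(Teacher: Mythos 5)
Your easy direction (that $z\in\acl(\tilde z)$, since $z$ lies in the finite set coded by $\tilde z$) is correct and matches the paper. Your plan for the hard direction, however, differs substantially from what the paper does and contains a genuine gap at the sliding step.

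The paper's argument is direct and does not slide. From the configuration the non-collinear triple $(c,y,z)$ together with $de$ is independent, so $cd\ind yz$, hence $cd\ind_z y$; since $x\in\acl(cdz)$ and $b\in\acl(yz)$ (from the triangles $(c,z,x)$ over $d$ and $(y,z,b)$) this upgrades to $cdx\ind_z by$. Setting $B:=\acl(cdx)\cap\acl(by)$, this independence gives $B\ind_z B$, so $B\subseteq\acl(z)$; since $z$ lies on both lines we have $z\in B$, whence $B=\acl(z)$. The punchline is that $B$ is defined from $b,c,d,x,y$ alone, so every automorphism $\sigma$ over $bcxyd$ fixes $B$ setwise; taking such a $\sigma$ with $\sigma(z)=z_i$ yields $\acl(z_i)=\sigma(\acl(z))=\sigma(B)=B=\acl(z)$. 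Thus all the $z_i$ have the same algebraic closure, and the lemma follows.

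The sliding step in your proposal is not justified. Given $b'c'x'y'd'\equiv_z bcxyd$ with $bcxyd\ind_z b'c'x'y'd'$, you claim that every $bcxyd$-conjugate $z_i$ of $z$ is also a $b'c'x'y'd'$-conjugate of $z$. But an automorphism $\rho$ over $z$ sending $bcxyd$ to $b'c'x'y'd'$ carries the orbit of $z$ over $bcxyd$ onto the orbit over $b'c'x'y'd'$ by sending $z_i\mapsto\rho(z_i)$, and you have no control on $\rho(z_i)$ being $z_i$; in general the two orbits need not agree as subsets of $\MM$. Indeed, the claim you want is essentially equivalent to the conclusion (once $\acl(z_i)=\acl(z)$ for all $i$, the orbit is canonically attached to $z$ and your equality would hold), so it cannot serve as the engine of the proof without circularity. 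The missing structural ingredient is exactly the identification $\acl(z)=\acl(cdx)\cap\acl(by)$, a set visibly invariant under automorphisms over $bcxyd$; with that in hand no sliding or iteration over independent copies is needed, and without it the iteration argument has no clear reason to terminate inside $\acl(z)$.
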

\begin{proof}
It suffices to show that $\acl(z_i) = \acl(z_j)$ for all $1\leq i,j
\leq k$.
Indeed, then $\tilde{z}\in\acl(z_1, \ldots, z_k) = \acl(z)$, and $z \in \acl(\tilde{z})$ as it satisfies the algebraic formula ``$z \in \tilde{z}$''.

We have $c d \ind y z$, so $c d  \ind_z y $, so $c d x \ind_z b y$.
Let $B := \acl(cdx) \cap \acl(by)$, then $B \ind_z B$, so $B \subseteq
\acl(z)$. But $z \in B$, so $B = \acl(z)$.
Then we also have $\acl(z_i) = B$ since for each $z_i$
there is an automorphism $\sigma$ of $\MM$ with $\sigma(z)=z_i$ and
$\sigma(B)=B$.
\end{proof}

\begin{lem}\label{lem: getting one dcl}
  Assume that $(a,b,c,x,y,z,d,e)$ is an enhanced group configuration.
  Then after a base change it is $\acl$-equivalent to an enhanced group
  configuration
  $(a,b_1,c,x,y_1,z_1,d,e)$ such that $z_1 \in
  \dcl(b_1y_1)$. Moreover,
   $b\in\dcl(b_1)$ and $y\in \dcl(y_1)$.
\end{lem}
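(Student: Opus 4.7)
The goal is to replace $b, y, z$ by inter-algebraic elements $b_1, y_1, z_1$ with $b \in \dcl(b_1)$ and $y \in \dcl(y_1)$, such that $z_1$ becomes $\dcl$-definable over $b_1 y_1$, while the rest of the configuration and all the triangle/independence conditions are preserved. Since in general $z \in \acl(by) \setminus \dcl(by)$, the naive choice $b_1 = b, y_1 = y$ fails, so I must genuinely enrich $b$ and $y$ by imaginary components algebraic over $b$ and $y$ respectively.

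First, I would enumerate the finite orbit $\{z = z^{(1)}, \ldots, z^{(n)}\}$ of $z$ over $by$ and consider the imaginary $\widehat z \in \dcl(by)$ coding this orbit. In the spirit of Lemma \ref{lem: conj is interalg}, one shows that $\widehat z$ becomes inter-algebraic with $z$ once enough of the surrounding configuration is absorbed into the base, because the enhanced-configuration independences at $a, c, x, d, e$ prevent $\widehat z$ from ``leaking'' outside $\acl(z)$. Guided by this, I would perform a base change to an auxiliary, independently chosen copy of (part of) the configuration so that, over the enlarged base, $\widehat z$ and $z$ are inter-algebraic. Using the parameters of this base change together with conjugates of $b$ and $y$, I would produce imaginaries $\beta \in \acl(b) \setminus \dcl(b)$ and $\eta \in \acl(y) \setminus \dcl(y)$ whose joint definable closure singles out $z$ from within its orbit $\{z^{(1)}, \ldots, z^{(n)}\}$. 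Setting $b_1 := (b, \beta)$, $y_1 := (y, \eta)$ and $z_1 := z$, we then have $z_1 \in \dcl(b_1 y_1)$; the conditions $b \in \dcl(b_1)$, $y \in \dcl(y_1)$, $\acl(b_1) = \acl(b)$ and $\acl(y_1) = \acl(y)$ hold by construction.

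The verification step is to check that $(a, b_1, c, x, y_1, z_1, d, e)$ is still an enhanced group configuration: the four triangles $(a,b_1,c)$, $(c,z_1,x)$, $(y_1,x,a)$, $(y_1, z_1, b_1)$, and the independence of every non-collinear triple together with $d,e$, all transfer from the original by standard forking calculus, since $b_1, y_1, z_1$ are inter-algebraic with $b, y, z$ over the new base and forking-independence is invariant under inter-algebraic substitution and under independent base extensions.

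\textbf{Main obstacle.} The delicate point is the simultaneous construction of $\beta$ and $\eta$: one must extract from the finite Galois action of $\mathrm{Aut}(\acl(by)/\dcl(by))$ on the orbit of $z$ two pieces of information algebraic over $b$ and over $y$ separately whose combination pins down $z$, without at the same time creating new algebraic dependencies elsewhere in the configuration. The base change is what makes this possible — it supplies the external parameters needed to witness these Galois-theoretic data — and the enhanced-configuration independences from $d, e$ are what prevent the base change from collapsing any of the surrounding triangles.
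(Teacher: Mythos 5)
Your general direction — pass to an independent copy of part of the configuration and use it to extract Galois-theoretic data — is aligned with the paper's approach. But two of the concrete steps in your outline do not work as stated.

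First, the claim that the code $\widehat z$ of the orbit of $z$ over $by$ ``becomes inter-algebraic with $z$ once enough of the surrounding configuration is absorbed into the base'' is not correct. A legal base change here is to a finite set $P$ with $\bar a \ind P$, so in particular $(b,y,z,\widehat z) \ind P$, hence $\widehat z \ind_z P$, and therefore $\widehat z \in \acl(Pz)$ if and only if $\widehat z \in \acl(z)$. So a base change cannot create the desired inter-algebraicity; it either already holds or it never will. Lemma \ref{lem: conj is interalg} gives inter-algebraicity of $z$ with the code $\tilde z$ of its conjugates over the \emph{larger} set $bcxyd$, which is a finer code with a possibly strictly smaller orbit; nothing in that lemma controls the code over $by$ alone. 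Second, setting $z_1 := z$ asks for $z \in \dcl(b_1 y_1)$, which is strictly stronger than what the lemma needs and is not achievable in general: $z$ sits in $\acl(by) \setminus \dcl(by)$, and adjoining $\beta \in \acl(b)$ and $\eta \in \acl(y)$ over a base independent from $by$ does not, in general, put $z$ into definable closure (the Galois action separating $z$ from its conjugates need not factor through $\acl(b)$ and $\acl(y)$ separately). The lemma only asks for $z_1$ \emph{inter-algebraic} with $z$, and the paper uses this freedom crucially: $z_1$ is taken to be a canonical imaginary (a code), not $z$ itself.

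Concretely, the paper's construction is: choose $a'd'e' \models \tp(ade)|_{abcdexyz}$, use $ade \ind yzb$ and stationarity to extend to $a'd'e'x'c' \equiv_{yzb} adexc$, so that $(a',b,c',x',y,z,d',e')$ is again an enhanced group configuration sharing $y,z,b$ with the original. Apply Lemma \ref{lem: conj is interalg} to this new configuration to get the code $\tilde z'$ of conjugates of $z$ over $ybx'c'd'$, which is inter-algebraic with $z$ and lies in $\dcl(ybx'c'd')$. Now base change to $\acl(a'd'e')$: over the new base, $x' \in \acl(y)$ (triangle $(y,x',a')$ over $e'$) and $c' \in \acl(b)$ (triangle $(a',b,c')$ over $d'e'$), so $y_1 := yx'$, $b_1 := bc'$, $z_1 := \tilde z'$ satisfies $z_1 \in \dcl(y_1 b_1)$ together with all the stated $\acl$/$\dcl$ conditions. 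Your $\beta$ and $\eta$ would have to be precisely $c'$ and $x'$, but the crucial mechanism — taking an independent copy of the \emph{whole} $adexc$ part, applying Lemma \ref{lem: conj is interalg} to the \emph{new} configuration, and swapping $z$ for the canonical code $\tilde z'$ — is what the proposal is missing.
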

\begin{proof}
Recall  that by our assumption all types over the empty set are stationary.

Let $a' d' e' \models \tp(ade)|_{abcdexyz}$. We have $a d e\ind y z $,
hence $ad e\ind y z  b$. Then by stationarity we have  $a' d' e'
\equiv_{yzb} ade$. Let $x', c'$ be such that $a' d' e' x' c'
\equiv_{yzb} a d e x c$. So $(a', b, c', x', y, z,d',e')$ is also an
enhanced group configuration. Applying Lemma \ref{lem: conj is
  interalg} to it, the set $\tilde z'$ of conjugates of $z$ over
$ybx'c'd'$ is inter-algebraic with $z$, and $\tilde z' \in \dcl(ybx'c'd')$.

We add $\acl(a'd'e')$ to the base, and take
$y_1 := yx'$,  $b_1 := bc'$,  $z_1 := \tilde z'$.
Then $(a,b_1,c,x,y_1,z_1,d,e)$ is an enhanced group
configuration
satisfying the conclusion of the lemma.
\end{proof}

\begin{lem}\label{lem:enhanc-group}
Let $(a,b,c,x,y,z,d,e)$ be an enhanced group configuration with $e\in \dcl(\emptyset)$.
Then, applying finitely many base changes and inter-algebraic replacements, it can be
transformed to
a configuration 
$$(a_1,b_1,c_1,x_1,y_1,z_1,d,e)$$
such that $y_1$ and $z_1$ are \emph{interdefinable} over
$b_1$. (Notice that $d$  and $e$ remain unchanged.)
\end{lem}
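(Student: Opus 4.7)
The plan is to apply Lemma~\ref{lem: getting one dcl} twice, once to the given configuration and once to its mirror image under the natural involution $\sigma\colon(a,b,c,x,y,z,d,e)\mapsto(c,b,a,x,z,y,e,d)$ swapping $(a,d)\leftrightarrow(c,e)$ and $y\leftrightarrow z$; one checks directly that $\sigma$ preserves the axioms of an enhanced group configuration. First, applying Lemma~\ref{lem: getting one dcl} directly yields an $\acl$-equivalent EGC $(a,b',c,x,y',z',d,e)$ with $z'\in\dcl(b',y')$ and $b\in\dcl(b')$, $y\in\dcl(y')$. Second, applying Lemma~\ref{lem: getting one dcl} to the mirrored configuration $\sigma(a,b',c,x,y',z',d,e)=(c,b',a,x,z',y',e,d)$ and translating the conclusion back via $\sigma$ yields an EGC $(a_1,b_1,c_1,x_1,y_1,z_1,d,e)$ with $y_1\in\dcl(b_1,z_1)$, $b'\in\dcl(b_1)$, and $z'\in\dcl(z_1)$.

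The hypothesis $e\in\dcl(\emptyset)$ is essential here: the mirrored application of Lemma~\ref{lem: getting one dcl} would ordinarily introduce a generic triple modeling $\tp(a^\sigma,d^\sigma,e^\sigma)$ independent of everything, which in the original labeling is $(c^*,e^*,d^*)\models\tp(c,e,d)$; since $e\in\dcl(\emptyset)$ forces $e^*=e$, only $c^*$ and $d^*$ actually enter the base change, so $d$ and $e$ are left unchanged in the output, matching the statement. It remains to verify that the first property $z\in\dcl(b,y)$ survives the second application, thereby giving full interdefinability over $b_1$. Writing $z'=f(b',y')$ for the $\emptyset$-definable function supplied by Step~1, the mirror of the conjugate-taking construction from Lemma~\ref{lem: conj is interalg} (inside Lemma~\ref{lem: getting one dcl}) produces $y_1$ as the imaginary representing a finite set of conjugates of $y'$ over parameters including $b'$ and $z'$. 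Since $f(b',\cdot)$ is constant (equal to $z'$) on this conjugate class, one obtains $z'\in\dcl(b',y_1)\subseteq\dcl(b_1,y_1)$. The additional factor by which $z_1$ differs from $z'$ (the mirror analogue of the element $x'$ adjoined to $y$ in Lemma~\ref{lem: getting one dcl}) lies in $\acl(z')$ via the triangle $(c^*,z',x^*)/d^*$ inside the intermediate EGC, after $c^*,d^*$ are absorbed into the base; tracking automorphisms that fix $b_1$ and the set $y_1$ then promotes this from $\acl$ to $\dcl$, yielding $z_1\in\dcl(b_1,y_1)$ and hence interdefinability.

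The main obstacle is this last verification: precisely accounting for $\dcl$ versus $\acl$ through the two iterations, in particular upgrading algebraicity to definability for the extra factor introduced by the mirror construction. This upgrade relies on the interplay between the triangle structure of the intermediate EGC and the invariance of $x^*$ under automorphisms permuting the conjugates that make up $y_1$, which is where the symmetry of the configuration and the stationarity of types both come into play.
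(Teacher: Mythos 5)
Your two applications of Lemma~\ref{lem: getting one dcl} are the paper's first and third; you have skipped the middle one, and this creates a genuine gap exactly at the step you yourself flagged as ``the main obstacle.''

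After your first application you have $z'\in\dcl(b',y')$. The paper then observes (using $e\in\dcl(\emptyset)$) that $(b,a,c,z,y,x,d,e)$ is \emph{also} an enhanced group configuration, and applies Lemma~\ref{lem: getting one dcl} to it to obtain, after inter-algebraic replacement, \emph{both} $z\in\dcl(by)$ \emph{and} $x\in\dcl(ay)$ simultaneously. Only then does it mirror via $(c,b,a,x,z,y,e,d)$. The reason the middle step is indispensable is precisely the $\acl$-to-$\dcl$ upgrade that your last paragraph asserts without proof. In the mirrored construction, $b_1=b'a^{*}$, $z_1=z'x^{*}$, and $y_1=\tilde{y}'$ is the imaginary for the conjugates of $y'$ over $z'b'x^{*}a^{*}$. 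As you observe, the function $f$ with $z'=f(b',y')$ is constant on the conjugate class, giving $z'\in\dcl(b',y_1)$. But for $z_1\in\dcl(b_1,y_1)$ you also need $x^{*}\in\dcl(b_1,y_1)$, and the only thing the triangle structure hands you is $x^{*}\in\acl(z')$ (or $\acl(a^{*}y')$, depending on which triangle you use) over the expanded base. An automorphism fixing $b'a^{*}$ and the \emph{set} $\tilde{y}'$ necessarily fixes $z'$, but it may permute the conjugates $y'_i$, and $x^{*}\in\acl(y')$ does not force it to fix $x^{*}$: it could move $x^{*}$ to another realization of the algebraic formula over $z'$, $a^{*}$, $y'_j$. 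There is no automorphism-tracking argument that closes this without a $\dcl$-dependence. What the paper's middle step supplies is exactly $x\in\dcl(ay)$, say $x=g(a,y)$; then $x^{*}=g(a^{*},y')$, and since $g(a^{*},\cdot)$ is constant on $\tilde{y}'$ (by the same argument you used for $f$), one gets $x^{*}\in\dcl(a^{*}\tilde{y}')\subseteq\dcl(b_1,y_1)$ and hence $z_1\in\dcl(b_1,y_1)$.

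So the fix is to insert the paper's intermediate application of Lemma~\ref{lem: getting one dcl} to the $a\leftrightarrow b$, $x\leftrightarrow z$ permutation $(b,a,c,z,y,x,d,e)$ before mirroring. Your remaining observations — that $\sigma$ preserves the EGC axioms, that $e\in\dcl(\emptyset)$ keeps $d,e$ fixed, that $z'\in\dcl(b',y_1)$ passes through — are all correct and match the paper's argument.
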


\begin{proof}
Applying Lemma \ref{lem: getting one dcl}, after a base change and
an inter-algebraic replacement  we  may
assume $z \in \dcl(by)$.

Next observe that, since $e \in \dcl(\emptyset)$, the tuple $(b, a, c, z, y, x, d, e)$ is also an enhanced group configuration.

\[
\begin{tikzpicture}[scale=0.9]
\draw[black,line  width=0.2mm] (0,0) -- (1,0.3);
\draw[black,line  width=0.2mm] (1,0.3) -- (2,0.6);
\draw[black,line  width=0.2mm] (2,0.6) -- (1,0.9);
\draw[black,line  width=0.2mm] (1,0.9) -- (0,1.2);
\draw[black,line  width=0.2mm] (0,0) -- (1, 0.9);
\draw[black,line  width=0.2mm] (0,1.2) -- (1, 0.3);

\draw[fill=black] (2,0.6) circle (1.5pt);
\node[scale=0.7, above] at (2, 0.6) {$b$};
\draw[fill=black] (1,0.9) circle (1.5pt);
\node[scale=0.7, above] at (1, 0.9) {$a$};
\draw[fill=black] (0,1.2) circle (1.5pt);
\node[scale=0.7, above] at (0, 1.2) {$c$};
\draw[fill=black] (1,0.3) circle (1.5pt);
\node[scale=0.7, below] at (1, 0.3) {$z$};
\draw[fill=black] (0,0) circle (1.5pt);
\node[scale=0.7, below left] at (0, 0) {$y$};
\draw[fill=black] (0.66,0.6) circle (1.5pt);
\node[scale=0.7, left] at (0.62, 0.6) {$x$};

\draw[black,line  width=0.4mm, dotted] (0, 1.2) -- (-0.5, 1.33);
\draw[fill=black] (-0.5,1.35) circle (1.33pt);
\node[scale=0.7, left] at (-0.5, 1.35) {$d$};

\draw[black,line  width=0.4mm,dotted] (2,0.6) -- (2.6, 0.6);
\draw[fill=black] (2.6,0.6) circle (1.5pt);
\node[scale=0.7,right] at (2.6, 0.6) {$e$};

\end{tikzpicture}
\]

By  Lemma \ref{lem: getting one dcl}, after a base change,  it  is $\acl$-equivalent
to a configuration $(b,a_1,c,z,y_1,x_1,d,e)$ with
$x_1\in \dcl(a_1,y_1)$ and $y\in \dcl(y_1)$.
Thus after an inter-algebraic replacement we may assume that
$x \in \dcl(ay)$  and $z \in \dcl(by)$.

Finally, observe that $(c,b,a,x,z,y,e,d)$ is an enhanced group  configuration.

\[
\begin{tikzpicture}[scale=0.9]
\draw[black,line  width=0.2mm] (0,0) -- (1,0.3);
\draw[black,line  width=0.2mm] (1,0.3) -- (2,0.6);
\draw[black,line  width=0.2mm] (2,0.6) -- (1,0.9);
\draw[black,line  width=0.2mm] (1,0.9) -- (0,1.2);
\draw[black,line  width=0.2mm] (0,0) -- (1, 0.9);
\draw[black,line  width=0.2mm] (0,1.2) -- (1, 0.3);

\draw[fill=black] (2,0.6) circle (1.5pt);
\node[scale=0.7, above] at (2, 0.6) {$c$};
\draw[fill=black] (1,0.9) circle (1.5pt);
\node[scale=0.7, above] at (1, 0.9) {$b$};
\draw[fill=black] (0,1.2) circle (1.5pt);
\node[scale=0.7, above] at (0, 1.2) {$a$};
\draw[fill=black] (1,0.3) circle (1.5pt);
\node[scale=0.7, below] at (1, 0.3) {$x$};
\draw[fill=black] (0,0) circle (1.5pt);
\node[scale=0.7, below left] at (0, 0) {$z$};
\draw[fill=black] (0.66,0.6) circle (1.5pt);
\node[scale=0.7, left] at (0.62, 0.6) {$y$};

\draw[black,line  width=0.4mm, dotted] (0, 1.2) -- (-0.5, 1.33);
\draw[fill=black] (-0.5,1.35) circle (1.33pt);
\node[scale=0.7, left] at (-0.5, 1.35) {$e$};

\draw[black,line  width=0.4mm,dotted] (2,0.6) -- (2.6, 0.6);
\draw[fill=black] (2.6,0.6) circle (1.5pt);
\node[scale=0.7,right] at (2.6, 0.6) {$d$};

\end{tikzpicture}
\]

Applying the proof of Lemma \ref{lem: getting one dcl} to it, after
base change to an independent copy $c'd'e'$ of $cde$, let $a'x'c' d'
e' \equiv_{ybz} axcd e$, let $\tilde y'$ be the set of conjugates of
$y$ over $ba'zx'e'$, equivalently over $ba'zx'$ since $e' \in \dcl(\emptyset)$. So $y' \in \dcl(ba'zx')$.

Now since $x' \in \dcl(a'y)$ and $z \in \dcl(by)$ (since this was
satisfied on the previous step), we have $zx' \in \dcl(ba'y)$. But
then $zx' \in \dcl(ba' y')$ for any $y'$ a conjugate of $y$ over
$ba'zx'$, and so $zx' \in \dcl(ba'\tilde y')$. We take $b_1 := ba'$,
$z_1 := zx'$ and
$y_1 := \tilde y'$. Then $y_1 \in \dcl(b_1z_1)$, and also $z_1 \in
\dcl(b_1 y_1)$, and the tuple $(a,b_1,c,x,y_1,z_1,d,e)$  satisfies the
conclusion of the lemma.
\end{proof}

We can now  finish Step 1.

Let $(a_1, \dotsc, a_m,\xi)$ be an expanded
abelian $m$-gon.
Let $\tilde{a} := a_5 \ldots a_m$ and
$\eta := \acl(a_1a_3)\cap\acl(a_2a_4\dotsc a_m)$

It is easy to check that  $(a_3, \xi, a_4, \eta, a_1, a_2,
\tilde{a},\emptyset)$
is an enhanced group configuration.

\[
\begin{tikzpicture}[scale=0.9]
\draw[black,line  width=0.2mm] (0,0) -- (1,0.3);
\draw[black,line  width=0.2mm] (1,0.3) -- (2,0.6);
\draw[black,line  width=0.2mm] (2,0.6) -- (1,0.9);
\draw[black,line  width=0.2mm] (1,0.9) -- (0,1.2);
\draw[black,line  width=0.2mm] (0,0) -- (1, 0.9);
\draw[black,line  width=0.2mm] (0,1.2) -- (1, 0.3);

\draw[fill=black] (2,0.6) circle (1.5pt);
\node[scale=0.7, right] at (2, 0.6) {$a_3$};
\draw[fill=black] (1,0.9) circle (1.5pt);
\node[scale=0.7, above] at (1, 0.9) {$\xi$};
\draw[fill=black] (0,1.2) circle (1.5pt);
\node[scale=0.7, above] at (0, 1.2) {$a_4$};
\draw[fill=black] (1,0.3) circle (1.5pt);
\node[scale=0.7, below] at (1, 0.3) {$\eta$};
\draw[fill=black] (0,0) circle (1.5pt);
\node[scale=0.7, below left] at (0, 0) {$a_1$};
\draw[fill=black] (0.66,0.6) circle (1.5pt);
\node[scale=0.7, left] at (0.62, 0.6) {$a_2$};

\draw[black,line  width=0.4mm, dotted] (0, 1.2) -- (-0.5, 1.33);
\draw[fill=black] (-0.5,1.35) circle (1.33pt);
\node[scale=0.7, left] at (-0.5, 1.35) {$\tilde a$};

\end{tikzpicture}
\]

Applying Lemma \ref{lem:enhanc-group}, after a base change it is
$\acl$-equivalent to an enhanced group configuration  $(a_3',
\xi', a_4', \eta', a_1', a_2', \tilde{a},\emptyset)$  such that
$a'_1$ and $a'_2$ are interdefinable over $\xi'$.
Replacing $a_1, a_2, a_3, a_4$ with $a_1', a_2', a_3', a_4'$,
respectively,
and $\xi$ with $\xi'$ we complete Step 1.

\medskip
\noindent\textbf{Reduction 1.} \emph{ From now on we assume that in the
  expanded abelian $m$-gon $(a_1, \ldots, a_m,  \xi\,)$ we have that
  $a_1$ and $a_2$ are interdefinable over
$\xi$. }
\medskip

\subsection{Step 2. Obtaining a group from an expanded
  abelian $m$-gon.}

As in Hrushovski's  Group Configuration Theorem, we will construct a group
using germs of definable functions. We begin by recalling some definitions (see e.g.~\cite[Section 5.1]{bays2018geometric}).

Let $p(x)$ be a stationary type over a set $A$.  By
 \emph{a definable function on $p(x)$} we mean
 a (partial) function $f(x)$   definable  over a set $B$ such that every element
 $a\models p|_{ A B}$ is  in the domain of $f$.

If $f$ and $g$ are two definable functions on $p(x)$, defined over sets $B$
and $C$ respectively,  then we say that \emph{they have the same germ
  at $p(x)$}, and
write $f\sim_p g$, if for all (equivalently, some) $a\models p | _{ABC}$ we have $f(a)=g(a)$.
We may omit $p$ and write $f\sim g$ if no confusion arises.

\emph{The germ of a definable function $f$ at
$p$} is the equivalence class of $f$ under this
equivalence relation, and we denote it by $\tilde f$.

If $p(x)$ and $q(y)$ are stationary types over $\emptyset$, we write $\tilde{f}: p \to q$ if for some (any) representative $f$ of $\tilde{f}$ definable over $B$ and $a \models p|_{B}$ we have $f(a) \models q$. We say that $\tilde{f}$ is \emph{invertible} if  there exists a germ $\tilde{g}: q \to p$ and for some (any) representative $g$ definable over $C$ and $a  \models p|_{BC}$ we have $g(f (a)) = a$. We denote $\tilde{g}$ by $\tilde{f}^{-1}$.

By a \emph{type-definable family of functions from $p$ to $q$} we mean an
$\emptyset$-definable family of functions $f_z$ and a stationary type $s(z)$ over $\emptyset$
such that for any $c\models s(z)$ the function $f_c$  is a definable function on
$p$, and  for any $a\models p|_c$ we have $f_c(a)\models q(y)|_c$.
We will denote such a family as $f_s\colon p\to q$, and the family of
the corresponding germs as $\tilde f_s\colon p\to q$.

Let $p,q,s$ be stationary types over  $\emptyset$ and $
f_s\colon p\to q$ a type-definable family of functions.
This family is \emph{generically transitive} if $f_c(a)\ind a$
for any (equivalently, some) $c\models s$ and $a\models p|c$.
This family is \emph{canonical} if for any $c,c'\models s$ we have
$f_c\sim f_{c'} \Leftrightarrow c=c'$.

\medskip
We now return to our expanded abelian $m$-gon $(\vec a,\xi)$.

Let     $p_i(x_i): = \tp(a_i/\emptyset) $ for $i \in \{1,2 \}$, and let
$q(y) := \tp(\xi/\emptyset)$.

Since $a_1$ and $a_2$ are interdefinable over $\xi$ and $\xi \in \acl(a_1, a_2)$, there exists a formula $\varphi(x_1, x_2, y) \in \tp \left(a_1, a_2, \xi \right)$ such that
\begin{gather}
\nonumber \models \forall y \forall x_1 \exists^{\leq 1}x_2 \varphi(x_1,x_2, y),
 \models \forall y \forall x_2 \exists^{\leq 1}x_1 \varphi(x_1,x_2, y), \\
   \nonumber \models \forall x_1 \forall x_2 \exists^{\leq d}  \varphi(x_1,x_2, y),
\end{gather}
for some $d \in \NN$, and also
\begin{equation}
\varphi(a_1,a_2,y) \vdash \tp(\xi/a_1a_2).\nonumber
\end{equation}
It follows  that $\varphi(x_1, x_2, r), r \models q$  gives a type-definable family of invertible germs
$\tilde f_{q}\colon p_1\to p_2$ with $f_{\xi}(a_1)=a_2$.
\begin{rem}\label{rem: trans inv technical}
Let $r\models q$, $b_1\models p_1|r$ and $b_2 := f_r(b_1)$. By stationarity of types over $\emptyset$ we then have $b_1 r \equiv a_1 \xi$, and as $\varphi(b_1, x_2, r)$ has a unique solution this implies $b_1 b_2 r \equiv a_1 a_2 \xi$, so
$b_1\ind b_2$, $b_1\in \dcl(b_2,r)$ and
$r\in \acl(b_1,b_2)$.	
\end{rem}
 In particular  $\tilde f_{q}\colon
p_1\to p_2$ is a generically transitive invertible  family.

Consider the equivalence relation $E(y,y')$ on the set of realizations of $q$ given by
$r E r' \Leftrightarrow f_{r}\sim f_{r'}$.  By
the definability of types  it is relatively definable, i.e.~it is
an intersection of an $\emptyset$-definable equivalence relation with $q(y)\cup q(y')$.
Assume  $\xi'\models q$ with $\xi E \xi'$.
We choose $b_1\models p_1|\xi\xi'$
and let
$b_2 := f_{\xi}(b_1)=f_{\xi'}(b_1)$.
By the choice of $\varphi$ we have $\xi, \xi' \in \acl(b_1, b_2)$, hence
 $\xi$ and $\xi'$ are inter-algebraic over $b_1$.
Since $b_1\ind \xi \xi'$
it follows that $\xi$ and $\xi'$ are inter-algebraic over $\emptyset$: as $b_1\ind_{\xi} \xi'$ and $\xi' \in \acl(b_1 \xi)$ implies  $\xi' \ind_{\xi} \xi'$, hence $\xi' \in \acl(\xi)$; and similarly $\xi \in \acl(\xi')$.
Hence
the $E$-class of $\xi$  is finite.
Replacing $\xi$ by $\xi/E$, if needed, we will assume that
the family $\tilde f_{q}\colon p_1\to p_2$ is canonical.
\medskip

 \medskip
We now consider the type-definable family of germs $\tilde f_{r_1}^{-1} {\circ}
\tilde f_{r_2} \colon p_1\to p_1$, $(r_1,r_2)\models
q^{(2)}$. Again let $E$ be a relatively definable equivalence
relation on
$q^{(2)}$ defined as $(r_1,r_2)E(r_3,r_4)$ if and only if
$ f_{r_1}^{-1} {\circ}
f_{r_2} \sim  f_{r_3}^{-1} {\circ}
\ f_{r_4}$.  Let $s(z)$ be the type $q^{(2)}/E$.
We then have (by e.g.~\cite[Remark 3.3.1(1)]{MR2264318}) a canonical family of germs $\tilde h_s\colon p_1\to p_1$ such that
for every $(r_1,r_2)\models q^{(2)}$ there is unique $c\models s(z)$
with $\tilde h_c=\tilde f_{r_1}^{-1} {\circ} \tilde f_{r_2}$.
We will denote this $c$ as $c=\lceil  f_{r_1}^{-1} {\circ}
f_{r_2} \rceil$. Clearly $c\in \dcl(r_1,r_2)$,
$r_1\in \dcl(c,r_2)$ and $r_2\in \dcl(c,r_1)$.

\begin{lem}\label{lem:compind}
For  any $(r_1,r_2)\models q^{(2)}$ and $c := \lceil
f_{r_1}^{-1} {\circ} f_{r_2} \rceil$  we have $r_1\ind c$
and  $r_2\ind c$.
\end{lem}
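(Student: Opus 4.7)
The plan is to prove this in three steps.  First, I record the basic $\dcl$-relations among $r_1$, $r_2$, and $c$.  By construction $c\in \dcl(r_1,r_2)$.  Rewriting $\tilde h_c\sim \tilde f_{r_1}^{-1}\circ \tilde f_{r_2}$ as $\tilde f_{r_2}\sim \tilde f_{r_1}\circ \tilde h_c$, the canonicity of the family $\tilde f_q$ forces $r_2\in \dcl(c,r_1)$; symmetrically $r_1\in \dcl(c,r_2)$.

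Next, I will evaluate the germs at a generic point to convert the abstract germ data into concrete independences.  Pick $b_1\models p_1|r_1 r_2$ and set $b_2:=f_{r_2}(b_1)$, $b_3:=f_{r_1}^{-1}(b_2)=h_c(b_1)$.  From $b_1\ind r_1 r_2$ and $r_1\ind r_2$ one obtains $b_1 r_2\ind r_1$, and then $b_2\in \dcl(b_1,r_2)$ gives $b_2\ind r_1$; so Remark~\ref{rem: trans inv technical} applies both to $(r_2,b_1,b_2)$ and (in its inverted form) to $(r_1,b_3,b_2)$, yielding $b_1\ind b_2$, $b_2\ind b_3$, $b_3\ind r_1$, $r_2\in \acl(b_1,b_2)$, and $r_1\in \acl(b_2,b_3)$.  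Moreover $b_1\ind c$ (since $c\in \dcl(r_1 r_2)$ and $b_1\ind r_1 r_2$), and then the canonicity of the family $\tilde h_s$ upgrades the pointwise equation $h_c(b_1)=b_3$ to $c\in \acl(b_1,b_3)$: any $c'\equiv_{b_1}c$ with $h_{c'}(b_1)=b_3$ makes the germs $\tilde h_{c'}$ and $\tilde h_c$ agree at a point generic over $cc'$, forcing $c'=c$ by canonicity of $\tilde h_s$.

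Finally, I will deduce $r_1\ind c$ as follows.  Since $c\in \acl(b_1,b_3)$ it suffices to prove $r_1\ind b_1 b_3$.  The algebraic-closure identities $\acl(b_1,b_2)=\acl(b_1,r_2)$, $\acl(b_2,b_3)=\acl(b_2,r_1)$, and $\acl(b_1,b_3)=\acl(b_1,c)$ --- all immediate from step~2 --- together with $b_2\in \dcl(b_3,r_1)$ yield $\acl(b_1,r_1,b_3)=\acl(b_1,r_1,r_2)$.  Using that $b_1,r_1,r_2$ are mutually independent, a short rank computation then finishes the proof: from $r_1\in \dcl(c,r_2)$ and subadditivity of $U$-rank one gets $U(c)\ge U(q)$, while $c\in \acl(b_1,b_3)$ with $b_1\ind c$ gives $U(c)\le U(b_3)=U(p_1)=U(q)$, so $U(c)=U(q)$ and $U(b_1 b_3)=2U(q)$; comparing $U(b_1 b_3 r_1)$ computed via the two equal algebraic closures then forces $U(r_1/b_1 b_3)=U(r_1)$, i.e.\ $r_1\ind b_1 b_3$, and hence $r_1\ind c$.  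The symmetric claim $r_2\ind c$ follows by applying the whole argument to the pair $(r_2,r_1)\models q^{(2)}$, which produces the code $\lceil \tilde h_c^{-1}\rceil$ (interdefinable with $c$) independent of $r_2$.  I expect the final rank computation to be the technical heart of the proof; everything else follows routinely from canonicity of $\tilde f_q$ and $\tilde h_s$ together with Remark~\ref{rem: trans inv technical}.
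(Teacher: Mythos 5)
Your argument is a genuine departure from the paper's: you attempt a direct forking/rank computation, whereas the paper instead observes that $(a_3,\xi,a_4,\eta,a_1,a_2)$ is a group configuration over $\tilde a=a_5\dots a_m$ and simply invokes the corresponding independence from the proof of Hrushovski's Group Configuration Theorem (Step~II in \cite[Theorem~6.1]{bays2018geometric}), followed by a short base-change calculation. Your steps 1 and 2 are sound (in particular, the preparatory forking calculus is fine and, pushed a bit further, it in fact yields $b_3\ind b_1 r_1$ and hence $r_1\ind b_1 b_3$ without invoking any rank). But there are two genuine gaps.

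The essential gap is the claim $c\in\acl(b_1,b_3)$. Your justification says that if $c'\equiv_{b_1}c$ with $h_{c'}(b_1)=b_3$, then $\tilde h_{c'}$ and $\tilde h_c$ ``agree at a point generic over $cc'$'' and hence coincide by canonicity. The point $b_1$ is generic over $c$ and (since $c'\equiv_{b_1}c$) over $c'$ separately, but nothing you have established gives $b_1\ind cc'$, which is what the definition of germ equality requires before canonicity can be applied. In general a single evaluation $(b,\tilde h(b))$ at a point generic over $\tilde h$ does \emph{not} pin down the germ $\tilde h$ up to algebraicity --- that requires a Morley sequence. Indeed, $c\in\acl(b_1,b_3)$ is essentially \emph{equivalent} to the lemma: once $r_1\ind c$ is known, one computes that $\dim(c)=\dim(q)=\dim(p_1)$, so the stabiliser of $b_1$ in the eventual group is generically finite, and $c\in\acl(b_1,b_3)$ follows; without that, there is no a priori obstruction to the stabiliser being infinite. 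So your argument assumes the hard part.

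A secondary issue: the ``short rank computation'' uses Lascar inequalities for $U$-rank. The ambient theory is only assumed \emph{stable}, not superstable, so $U$-rank need not be ordinal-valued and those inequalities are unavailable. This part is fixable --- the conclusion $r_1\ind b_1 b_3$ you want from the rank computation can actually be obtained by pure forking calculus from $b_1\ind r_1 r_2$, $r_1\ind r_2$, and the triangle relations of Remark~\ref{rem: trans inv technical} --- but as written it is not justified. The paper sidesteps both difficulties by routing the independence through the geometry of a (classical, arity-4) group configuration, which is where the combinatorial force of the argument actually lives.
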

\begin{proof} It is sufficient to prove the lemma for some
  $(r_1,r_2)\models q^{(2)}$. We take $r_1  := \xi$ from
  our abelian expanded $m$-gon $(\vec{a}, \xi)$ and let $r_2\models q |_{a_1,\dotsc,
  a_m}$. Let $c := \lceil  f_{\xi}^{-1} {\circ} f_{r_2} \rceil$.

  Let $\tilde a := (a_5,\dotsc,a_m)$ and $\eta := \acl(a_1a_3)\cap
  \acl(a_2a_4\dotsc a_m)$. We have an enhanced group
  configuration

\[
\begin{tikzpicture}[scale=0.9]
\draw[black,line  width=0.2mm] (0,0) -- (1,0.3);
\draw[black,line  width=0.2mm] (1,0.3) -- (2,0.6);
\draw[black,line  width=0.2mm] (2,0.6) -- (1,0.9);
\draw[black,line  width=0.2mm] (1,0.9) -- (0,1.2);
\draw[black,line  width=0.2mm] (0,0) -- (1, 0.9);
\draw[black,line  width=0.2mm] (0,1.2) -- (1, 0.3);

\draw[fill=black] (2,0.6) circle (1.5pt);
\node[scale=0.7, right] at (2, 0.6) {$a_3$};
\draw[fill=black] (1,0.9) circle (1.5pt);
\node[scale=0.7, above] at (1, 0.9) {$\xi$};
\draw[fill=black] (0,1.2) circle (1.5pt);
\node[scale=0.7, above] at (0, 1.2) {$a_4$};
\draw[fill=black] (1,0.3) circle (1.5pt);
\node[scale=0.7, below] at (1, 0.3) {$\eta$};
\draw[fill=black] (0,0) circle (1.5pt);
\node[scale=0.7, below left] at (0, 0) {$a_1$};
\draw[fill=black] (0.66,0.6) circle (1.5pt);
\node[scale=0.7, left] at (0.62, 0.6) {$a_2$};

\draw[black,line  width=0.4mm, dotted] (0, 1.2) -- (-0.5, 1.33);
\draw[fill=black] (-0.5,1.35) circle (1.33pt);
\node[scale=0.7, left] at (-0.5, 1.35) {$\tilde a$};

\end{tikzpicture}
\]

In particular $(a_3, \xi, a_4, \eta, a_1,a_2)$ form a group configuration
\emph{over $\tilde a$}, i.e.~we have a group configuration

\[
\begin{tikzpicture}[scale=0.9]
\draw[black,line  width=0.2mm] (0,0) -- (1,0.3);
\draw[black,line  width=0.2mm] (1,0.3) -- (2,0.6);
\draw[black,line  width=0.2mm] (2,0.6) -- (1,0.9);
\draw[black,line  width=0.2mm] (1,0.9) -- (0,1.2);
\draw[black,line  width=0.2mm] (0,0) -- (1, 0.9);
\draw[black,line  width=0.2mm] (0,1.2) -- (1, 0.3);

\draw[fill=black] (2,0.6) circle (1.5pt);
\node[scale=0.7, right] at (2, 0.6) {$a_3$};
\draw[fill=black] (1,0.9) circle (1.5pt);
\node[scale=0.7, above] at (1, 0.9) {$\xi$};
\draw[fill=black] (0,1.2) circle (1.5pt);
\node[scale=0.7, left] at (0, 1.2) {$a_4$};
\draw[fill=black] (1,0.3) circle (1.5pt);
\node[scale=0.7, below] at (1, 0.3) {$\eta$};
\draw[fill=black] (0,0) circle (1.5pt);
\node[scale=0.7, below left] at (0, 0) {$a_1$};
\draw[fill=black] (0.66,0.6) circle (1.5pt);
\node[scale=0.7, left] at (0.62, 0.6) {$a_2$};

\end{tikzpicture}
\]
 where any three distinct collinear points form  a triangle over
$\tilde a$,  and any  three distinct non-collinear points form an independent set
over $\tilde a$.

It follows from the proof of the Group Configuration Theorem (e.g.~see
Step (II) in the proof of \cite[Theorem 6.1]{bays2018geometric}) that
$c\ind_{\tilde {a}} \xi$ and $c\ind_{\tilde {a}} r_2$.

We also have $r_2  \ind a_1 \ldots a_m$, hence $r_2 \ind_{a_1 a_2} \tilde{a}$, and as $a_1 a_2 \ind \tilde{a}$ this implies $r_2 a_1 a_2 \ind \tilde{a}$, which together with $\xi \in \acl(a_1a_2)$ implies  $\xi r_2 \ind \tilde a$.
Hence $c\ind \xi$ and $c\ind r_2$.
\end{proof}

This shows that the families of germs $\tilde{f}_{q}:p_1 \to p_2, \tilde{h}_s: p_1 \to p_1$ satisfy the assumptions of the Hrushovski-Weil theorem for bijections
 (see \cite[Lemma 5.4]{bays2018geometric}), applying which
we obtain the following.
\begin{enumerate}[(a)]
\item The family of germs $\tilde h_s\colon p_1\to p_1$ is closed
  under  generic composition and inverse, i.e.~for any independent
  $c_1,c_2\models s(z)$ there exists $c\models s(z)$ with $\tilde h_c=\tilde h_{c_1}{\circ}\tilde h_{c_2}$, and also there is
  $c_3\models s(z)$ with $\tilde h_{c_3}= \tilde h_{c_1}^{-1}$.
  \item  There is a type-definable connected  group $(G, \cdot)$ and a type-definable  set $S$ with a relatively definable faithful transitive action of
        $G$ on $S$ that we will denote by $*: G \times S \to S$, so that $G$, $S$  and the
        action are defined over  the empty set.
      \item There is a definable embedding of $s(z)$ into $G$ as its
        unique generic type, and a definable embedding of $p_1(x_1)$
        into $S$ as its unique generic type, such that the generic
        action of  the family $h_s$ on $p_1$ agrees with that of $G$ on $S$,
        i.e.~for any $c\models s(z)$ and $a\models p_1(x)| c$ we have $h_c(a)=c*a$.
\end{enumerate}

\medskip
\noindent\textbf{Reduction 2.} \emph{Let $r_1,r_2$ be
  independent realizations of $q(y)$, $c := \lceil  f_{r_1}^{-1}
  {\circ} f_{r_2} \rceil$  and $s(z) := \tp(c/\emptyset)$.}

  \emph{From now on we assume that $s(z)$ is the generic type of a
    type-definable
  connected  group $(G, \cdot)$,  the group $G$ relatively
  definably acts faithfully and transitively on a type-definable
   set $S$, the type $p_1(x_1)$ is the generic type of
  $S$,  and generically the action of  $h_s$ on $p_1$ agrees with the
  action of $G$ on $S$,  and $G$, $S$ and the action are definable
  over the empty set.}
\medskip

\subsection{Step 3. Finishing the proof}

We fix an independent  copy  $(\vec e, \xi_e)$ of
 $(\vec a, \xi)$, i.e.~$(\vec e, \xi_e)\equiv (\vec a,  \xi)$
and $\vec e   \xi_e\ind \vec a \xi$.

We denote by $\pi$ the map $\pi\colon q(y)| _{\xi_e} \to s(z)
|_{\xi_e}$ given by $\pi\colon r  \mapsto \lceil  f_{\xi_e}^{-1}
  {\circ} f_{r} \rceil$.  Note that $\pi$ is relatively definable
  over $\acl(\vec e)$.
  Let
  \begin{gather*}
  	t(x_3,\dotsc,x_m) := \tp(a_3,\dotsc,a_m/\emptyset),\\
  	t_\xi(y,x_3,\dotsc,x_m) := \tp(\xi,a_3,\dotsc,a_m/\emptyset).
  \end{gather*}
Note that by Claim~\ref{claim:both-gons} every tuple realizing
$t_\xi$ is an $(m-1)$-gon.

\begin{ntn}
For a tuple $\bar c=(c_3,\dotsc,c_m)$, $j\in \{3,\dotsc,m\}$ and
$\square\in \{ <,\leq,>,\geq\}$, we will denote by $\bar c_{\square j}$
the tuple $\bar c_{\square j}=(c_i :  3\leq i \leq m \land  i\square j)$.
For example, $\bar c_{<j}=(c_3,\dotsc,c_{j-1})$.
We will typically omit the concatenation  sign:  e.g., for $\bar c=(c_3,\dotsc, c_m)$, $\bar
b=(b_3,\dotsc,b_m)$  and $j\in\{3,\dotsc,m\}$ we denote by
$\bar c_{<j},b_j, \bar c_{>j}$ the tuple
$(c_3,\dotsc,c_{j-1},b_j,c_{j+1},\dotsc,c_m)$.

Also in the proof of the next proposition we let $\bar a := (a_3,\dotsc,a_m)$,
$\bar e := (e_3,\dotsc,e_m)$, and continue using $\vec a$ and $\vec e$ to denote the
corresponding $m$-tuples.
\end{ntn}

  \begin{prop}\label{prop:finding-r}
    For each $j \in \{3,\dotsc,m \}$ there exists $r_j\models q(y)|_{\xi_e}$ such that
    $\models t_\xi(r_j,\bar e_{<j}, a_j,\bar e_{>j})$
and  $\pi(\xi)=\pi(r_m)\cdot \pi(r_{m-1})  \cdot \dotsc \cdot \pi(r_3)$.
  \end{prop}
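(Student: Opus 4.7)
The plan is to construct each $r_j$ in two substeps (type realization, then independence), and then verify the multiplicative identity via germ computation.

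First, I would show that the $(m-2)$-tuple $(\bar{e}_{<j}, a_j, \bar{e}_{>j})$ realizes $t$. Since $(\vec{e}, \xi_e) \equiv (\vec{a}, \xi)$, the tuple $\bar{e}$ realizes $t$, and $\vec{e} \ind \vec{a}$ gives in particular $\bar{e}_{\neq j} \ind a_j$; as $a_j \equiv e_j$ (from $\vec{e} \equiv \vec{a}$) and types over $\emptyset$ are stationary by our standing assumption, it follows that $a_j \equiv_{\bar{e}_{\neq j}} e_j$, so substituting one for the other preserves the type. Then, by realizing the complete type $t_\xi$ extending the partial data $(\bar{e}_{<j}, a_j, \bar{e}_{>j})$ in the last $m-2$ coordinates, I obtain some $r_j$ with $\models t_\xi(r_j, \bar{e}_{<j}, a_j, \bar{e}_{>j})$; in particular, $(r_j, \bar{e}_{<j}, a_j, \bar{e}_{>j})$ is an $(m-1)$-gon.

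Second, I would arrange $r_j \models q|_{\xi_e}$. Since $(\xi_e, \bar{e})$ is an $(m-1)$-gon (as a copy of $(\xi, \bar{a})$), any $m-2$ of its coordinates are independent, so $\xi_e \ind \bar{e}_{\neq j}$. Moreover, $\xi_e \in \acl(\bar{e})$ together with $\vec{e} \ind \vec{a}$ yields $\xi_e \bar{e} \ind \vec{a}$, hence $a_j \ind \xi_e \bar{e}_{\neq j}$, which by symmetry and monotonicity gives $\xi_e \ind_{\bar{e}_{\neq j}} a_j$. Combining with $\xi_e \ind \bar{e}_{\neq j}$ via transitivity of forking independence yields $\xi_e \ind \bar{e}_{\neq j} a_j$. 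Since the $(m-1)$-gon structure of $(r_j, \bar{e}_{<j}, a_j, \bar{e}_{>j})$ gives $r_j \in \acl(\bar{e}_{\neq j}, a_j)$, we conclude $r_j \ind \xi_e$, and stationarity of $q$ then upgrades this to $r_j \models q|_{\xi_e}$.

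Third, I must verify the group-theoretic identity $\pi(\xi) = \pi(r_m) \cdot \pi(r_{m-1}) \cdot \dotsc \cdot \pi(r_3)$ in the abelian group $G$; this is the main challenge. My approach is to pick a generic $a \models p_1$ independent from $\{\xi, \xi_e, \vec{a}, \vec{e}, r_3, \dotsc, r_m\}$ and compare both sides via their action on $a$: $\pi(r) * a = f_{\xi_e}^{-1}(f_r(a))$ for each $r \models q$ independent from $\xi_e$. Heuristically, each $r_j$ encodes the ``replacement of $e_j$ by $a_j$'' in the $(m-1)$-gon containing $\xi_e$, and a careful telescoping--introducing intermediate $(m-1)$-gons in which a growing subset of the $e_k$'s has been swapped for $a_k$'s--together with the abelianness of $G$ (which renders the product order immaterial) and the cancellation of $f_{\xi_e}$-factors, collapses the right-hand product to $f_{\xi_e}^{-1} \circ f_\xi$, i.e.\ to $\pi(\xi)$. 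The hard part will be verifying the requisite independence and genericity hypotheses at each stage of the telescoping so that the germ compositions are well-defined and represent the correct canonical parameters; this is where the abelianness of the $m$-gon (in the strong sense of Definition~\ref{def: m-gon}) is used, ensuring that the relevant $\acl$-intersections produce the group elements matching up on $a$.
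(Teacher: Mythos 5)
Your parts (1) and (2) are essentially correct and closely track the paper: showing that $(\bar e_{<j}, a_j, \bar e_{>j})\models t$ uses exactly the independence $\bar e_{\neq j}\ind a_j$ plus stationarity, and the upgrade of the resulting $r_j$ to a realization of $q|_{\xi_e}$ is the same argument the paper makes in the preamble to Lemma~\ref{lem:r-ind}. So the easy half of the statement is fine.

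Part (3) is where the real content lies, and your proposal does not actually supply it. You correctly identify the right shape of the argument — introduce intermediate $(m-1)$-gons in which a growing prefix of $e$'s has been swapped for $a$'s, and telescope — and this is precisely what the paper does via Corollary~\ref{cor:rrr} and reverse induction on $j$ starting from $r_{\leq m}:=\xi$. But the inductive step $\pi(r_{\leq j}) = \pi(r_j)\cdot\pi(r_{<j})$ is itself a nontrivial lemma (Lemma~\ref{lem:r-ind}), and ``compare both sides by action on a generic $a\models p_1$'' does not by itself produce the equation $(f_{r_{<j}}\circ f_{\xi_e}^{-1}\circ f_{r_j})(b_1) = f_{r_{\leq j}}(b_1)$ at a suitable point $b_1$. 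The paper gets this by: (i) introducing $\eta := \acl_{\bar e_{>j}}(e_1,e_j)\cap\acl_{\bar e_{>j}}(e_2,\ldots,e_{j-1})$, which exists by the abelianness of the $(m-1)$-gon $(\xi_e,\bar e)$; (ii) verifying that $(e_j,\xi_e,e_{j-1},\eta,e_1,e_2,\bar e_{<j-1})$ is an enhanced group configuration over $\bar e_{>j}$; (iii) producing, by forking calculus and stationarity over $\acl(\eta)$, three auxiliary configurations isomorphic over $\eta$ to the original one (with some $e$'s swapped for $a$'s, and new points $b_1,b_2$ created in the process); and (iv) extracting the function identity from those isomorphisms, then upgrading to germs by checking $b_1\ind r_{<j}\,\xi_e\,r_j\,r_{\leq j}$ and $c_0\ind c_1$. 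You name all of this as ``the hard part'' and correctly locate the role of abelianness, but you do not carry out steps (i)--(iv), and they are not routine: in particular the auxiliary $\eta$ and the diagram-chasing around it are the mechanism that makes the germ composition defined and equal to what you want, and there is no shortcut via a single generic $a$. So as written the proposal is an accurate outline of the paper's strategy with the decisive step left as a heuristic.
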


    We will choose such  $r_j$ by reverse induction on $j$. Before proving Proposition~\ref{prop:finding-r} we  first establish the following lemma and its corollary
   that will provide the induction step.

    \begin{lem} \label{lem:r-ind}
      For  $j\in \{4,\dotsc, m\}$ there exist $r_{<j}, r_j, r_{\leq
        j}$, each realizing $q(y)|_{\xi_e}$,
      such that
      $$\models t_\xi(r_{< j}, \bar a_{<j},\bar  e_{\geq j}),
\models t_\xi(r_j,\bar e_{<j},a_j,\bar e_{>j}), 
      \models t_\xi(r_{\leq j}, \bar a_{\leq j}, \bar e_{>j})$$
      and $\pi(r_{\leq j})=\pi(r_j)\cdot \pi(r_{< j})$.
    \end{lem}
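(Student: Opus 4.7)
Plan: The lemma splits into two parts --- producing the three elements $r_{<j}, r_j, r_{\leq j}$ with the stated type and independence conditions, and then verifying the multiplicative identity $\pi(r_{\leq j}) = \pi(r_j) \cdot \pi(r_{<j})$ in $G$.

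For existence, each of the mixed tuples $(\bar a_{<j}, \bar e_{\geq j})$, $(\bar e_{<j}, a_j, \bar e_{>j})$ and $(\bar a_{\leq j}, \bar e_{>j})$ realizes $t$: using $\vec a \ind \vec e\xi_e$ together with the fact that both $\bar a$ and $\bar e$ are independent tuples (as subtuples of $m$-gons, over $\emptyset$), each mixed tuple is an independent concatenation of realizations of $\tp(a_3),\ldots,\tp(a_m)$. By stationarity of $t_\xi$ over $\emptyset$ I obtain $r_{<j}, r_j, r_{\leq j}$ satisfying the three displayed $t_\xi$ conditions, and each lies in the algebraic closure of its mixed tuple because abelianity gives $\xi \in \acl(\bar a)$. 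To see $r_{<j} \ind \xi_e$ I show $\bar a_{<j}\bar e_{\geq j} \ind \xi_e$: from $\vec a \ind \vec e\xi_e$ I extract $\bar a_{<j} \ind \bar e_{\geq j}\xi_e$; separately, $\xi_e \in \acl(e_1,e_2)$, and in the $m$-gon $\vec a$ the set $\{a_1,a_2\} \cup \bar a_{\geq j}$ has $m-j+3 \leq m-1$ elements (using $j \geq 4$), hence is independent, so $a_1a_2 \ind \bar a_{\geq j}$ and thus $\xi \ind \bar a_{\geq j}$, and transferring via the automorphism $(\vec a,\xi)\mapsto (\vec e,\xi_e)$ yields $\xi_e \ind \bar e_{\geq j}$; transitivity of forking then gives $\bar a_{<j}\bar e_{\geq j} \ind \xi_e$, whence $r_{<j} \ind \xi_e$. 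Entirely analogous arguments handle $r_j$ and $r_{\leq j}$.

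For the multiplicative identity, unwinding $\pi(r) = \lceil f_{\xi_e}^{-1}\circ f_r\rceil$ and the description of multiplication in $G$ as generic germ composition in the family $\tilde h_s$, the identity reduces to the germ equation $f_{r_{\leq j}} \sim f_{r_j} \circ f_{\xi_e}^{-1} \circ f_{r_{<j}}$ of maps $p_1 \to p_2$. To verify it, I pick $a_1^* \models p_1$ generic over all relevant data (the three $r$'s, $\xi_e$, $\vec a$ and $\vec e$) and follow the chain $a_1^* \stackrel{f_{r_{<j}}}{\longmapsto} a_2^* \stackrel{f_{\xi_e}^{-1}}{\longmapsto} a_1^{**} \stackrel{f_{r_j}}{\longmapsto} a_2^{**}$. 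At each arrow the source/target pair lies in the graph of $\varphi$ with the corresponding $\xi$-element as parameter, and by Reduction~1 each such pair can be identified with the ``first two coordinates'' of an expanded $m$-gon whose last $m-3$ coordinates are the corresponding mixed tuple. The core step is to glue these three expanded $m$-gons into a single expanded $m$-gon with first two coordinates $(a_1^*, a_2^{**})$ and last $m-3$ coordinates $(\bar a_{\leq j}, \bar e_{>j})$: its $\xi$-element is forced by the $t_\xi$ condition and independence to be $r_{\leq j}$, and uniqueness of $\varphi$ then yields $a_2^{**} = f_{r_{\leq j}}(a_1^*)$, which is the required germ equation.

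The main obstacle is this gluing step. It amounts to a forking-independence computation showing that the four expanded $m$-gons arising in the construction fit together into one configuration consistent with $t_\xi$; the abelianity conditions on $\vec a$ and $\vec e$ combined with $\vec a \ind \vec e\xi_e$ are used in an essential way. One must also verify the independence $\pi(r_j) \ind \pi(r_{<j})$ over appropriate parameters in order to apply the generic composition formula for multiplication in $G$, which follows from an analogous forking calculation.
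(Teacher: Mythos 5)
Your plan correctly separates the lemma into producing the three $r$'s (with type conditions and $r \ind \xi_e$) and then proving the multiplicative identity, and your handling of the first part — showing the mixed tuples realize $t$ by independence and stationarity, and checking $r \ind \xi_e$ — is essentially the same as the paper's preliminary reduction. But there is a genuine gap in the second part, and it is not just the forking bookkeeping you defer to at the end.

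The problem is that you construct $r_{<j}$, $r_j$ and $r_{\leq j}$ \emph{independently}, each by completing a realization of $t$ to a realization of $t_\xi$, and then try to verify the germ equation afterwards. But the $t_\xi$-condition only pins down each $r$ up to conjugacy over its mixed tuple; three such elements chosen with no reference to one another will not in general satisfy any algebraic relation. If you trace your chain $a_1^* \mapsto a_2^* \mapsto a_1^{**} \mapsto a_2^{**}$ you get a pair $(a_1^*, a_2^{**})$, and some parameter $r^*$ with $\varphi(a_1^*, a_2^{**}, r^*)$, but there is nothing forcing $(r^*, \bar a_{\leq j}, \bar e_{>j})$ to realize $t_\xi$ — that is exactly the ``gluing'' claim, and asserting that the $\xi$-element is ``forced by the $t_\xi$ condition and independence to be $r_{\leq j}$'' is circular, since neither is it forced nor was $r_{\leq j}$ constructed to be related to $r_{<j}, r_j$. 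The paper avoids this by constructing all three $r$'s (and auxiliary points $b_1, b_2$) \emph{simultaneously} as nodes of an enhanced group configuration over the pivot element $\eta := \acl_{\bar e_{>j}}(e_1, e_j) \cap \acl_{\bar e_{>j}}(e_2, \ldots, e_{j-1})$, obtained by replacing $e$'s with $a$'s through a chain of isomorphic diagrams (\eqref{eq:r}--\eqref{eq:r3}). The germ equation then reads off the shared triangle structure of those diagrams. This auxiliary $\eta$ and the diagram isomorphisms are the essential missing idea; without some such joint construction the claimed equation need not hold for your independently chosen $r$'s. You also state the germ equation with $f_{r_j}$ outermost, while the paper's diagram chase yields $f_{r_{<j}}$ outermost in \eqref{eq:3}; since $G$ is not yet known abelian at this stage these are different, so the order needs to be derived, not guessed. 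Finally, the independence $c_0 \ind c_1$ at the end (needed to translate germ composition into group multiplication) does not follow ``from an analogous forking calculation'' in the abstract — the paper's proof of $r_{<j} \ind_{\xi_e} r_j$ uses that $r_{<j}$ and $r_j, \xi_e$ sit in algebraic closures of disjoint-enough pieces of the independent set $\{e_3, \ldots, e_m, a_3, \ldots, a_m\}$, which in turn uses the specific way the $r$'s were constructed.
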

    \begin{proof}

First we note that the condition
$r_{<j}, r_j, r_{\leq
        j}\models q(y)|_{\xi_e}$
         can be relaxed to $r_{<j}, r_j, r_{\leq
        j}\models q(y)$ by stationarity of $q$, since for $j\in
      \{4,\dotsc, m\}$ and $r\models q(y)$ satisfying one of
$\models t_\xi(r, \bar a_{<j},\bar  e_{\geq j})$,
$\models t_\xi(r,\bar e_{<j},a_j,\bar e_{>j})$,
      $\models t_\xi(r, \bar a_{\leq j}, \bar e_{>j})$ we have $r\ind \xi_e$.
Indeed, assume e.g.~$\models  t_\xi(r, \bar a_{<j},\bar  e_{\geq
  j})$.  We have $r\in \acl(\bar a_{<j},\bar  e_{\geq
  j})$ and $\xi_e\in \acl(e_3,\dotsc, e_m)$. By assumption 
  $$\{e_3,\dotsc,
e_m,a_3, \dotsc,a_m\}$$
 is an independent set, hence  we obtain $r\ind_{\bar
  e_{\geq j}} \xi_e$. Using $\xi_e\ind \bar e_{\geq j}$ we conclude
$r\ind \xi_e$. The other two cases are similar.

Let
$\eta := \acl_{\bar{e}_{>j}}(e_1,e_j)\cap   \acl_{{\bar{e}_{>j}}}(e_2,e_3,\dotsc,e_{j-1})$.
Note that $\acl(\eta)=\eta$, hence all types over $\eta$ are
stationary, and ${\bar e}_{>j}  \in \eta$.

Then one verifies by basic forking calculus that
\begin{equation}
  \label{eq:r}
  \begin{tikzpicture}[scale=0.9, baseline=(current  bounding  box.center)]
\draw[black,line  width=0.2mm] (0,0) -- (1,0.3);
\draw[black,line  width=0.2mm] (1,0.3) -- (2,0.6);
\draw[black,line  width=0.2mm] (2,0.6) -- (1,0.9);
\draw[black,line  width=0.2mm] (1,0.9) -- (0,1.2);
\draw[black,line  width=0.2mm] (0,0) -- (1, 0.9);
\draw[black,line  width=0.2mm] (0,1.2) -- (1, 0.3);

\draw[fill=black] (2,0.6) circle (1.5pt);
\node[scale=0.7, above] at (2, 0.6) {$e_j$};
\draw[fill=black] (1,0.9) circle (1.5pt);
\node[scale=0.7, above] at (1, 0.9) {$\xi_e$};
\draw[fill=black] (0,1.2) circle (1.5pt);
\node[scale=0.7, above] at (0, 1.2) {$e_{j-1}$};
\draw[fill=black] (1,0.3) circle (1.5pt);
\node[scale=0.7, below] at (1, 0.3) {$\eta$};
\draw[fill=black] (0,0) circle (1.5pt);
\node[scale=0.7, below left] at (0, 0) {$e_1$};
\draw[fill=black] (0.66,0.6) circle (1.5pt);
\node[scale=0.7, left] at (0.62, 0.6) {$e_2$};

\draw[black,line  width=0.4mm, dotted] (0, 1.2) -- (-0.5, 1.33);
\draw[fill=black] (-0.5,1.35) circle (1.33pt);
\node[scale=0.7, left] at (-0.5, 1.35) {$\bar e _{<j-1}$};
\end{tikzpicture}
\end{equation}
is an enhanced group configuration \emph{over ${\bar{e}_{>j}}$}. Namely,
\begin{itemize}
\item $(e_j, \xi_e, e_{j-1})$ and $(\eta, e_2, e_{j-1})$ are
  triangles over $\bar{e}_{<j-1},{\bar{e}_{>j}}$;
\item $(e_1,\eta,e_j)$ and $(e_1, e_2, \xi_e)$ are triangles over ${\bar{e}_{>j}}$;
\item for any non-collinear triple in $e_1,e_2,e_{j-1},e_j, \eta,
  \xi_e$, the set given
  by it and $\bar e_{<j-1}$ is independent over ${\bar{e}_{>j}}$.
\end{itemize}
In addition, $e_1e_2\xi_e \ind {\bar{e}_{>j}}$ and $f_{\xi_e}(e_1)=e_2$.

\medskip
The triple $\eta,e_j,e_{j-1}$ is non-collinear, hence
$\eta\ind_{{\bar{e}_{>j}}} e_3 \dotsc e_j$.  Since 
$${\bar{e}_{>j}}\ind
e_3\dotsc e_j,$$
  this implies  $\eta\ind  e_3 \dotsc e_j$. Since also
$\eta\ind  a_3 \dotsc a_j$, by stationarity of types over $\emptyset$ we have
$a_3\dotsc a_j \equiv_{\eta} e_3\dotsc e_j$. Hence  there exist $r_{\leq j}$,
$b_1$, $b_2$ such that the diagram
\begin{equation}
  \label{eq:r1}
  \begin{tikzpicture}[scale=0.9, baseline=(current  bounding  box.center)]
\draw[black,line  width=0.2mm] (0,0) -- (1,0.3);
\draw[black,line  width=0.2mm] (1,0.3) -- (2,0.6);
\draw[black,line  width=0.2mm] (2,0.6) -- (1,0.9);
\draw[black,line  width=0.2mm] (1,0.9) -- (0,1.2);
\draw[black,line  width=0.2mm] (0,0) -- (1, 0.9);
\draw[black,line  width=0.2mm] (0,1.2) -- (1, 0.3);

\draw[fill=black] (2,0.6) circle (1.5pt);
\node[scale=0.7, above] at (2, 0.6) {$a_j$};
\draw[fill=black] (1,0.9) circle (1.5pt);
\node[scale=0.7, above] at (1, 0.9) {$r_{\leq j}$};
\draw[fill=black] (0,1.2) circle (1.5pt);
\node[scale=0.7, above] at (0, 1.2) {$a_{j-1}$};
\draw[fill=black] (1,0.3) circle (1.5pt);
\node[scale=0.7, below] at (1, 0.3) {$\eta$};
\draw[fill=black] (0,0) circle (1.5pt);
\node[scale=0.7, below left] at (0, 0) {$b_1$};
\draw[fill=black] (0.66,0.6) circle (1.5pt);
\node[scale=0.7, left] at (0.62, 0.6) {$b_2$};

\draw[black,line  width=0.4mm, dotted] (0, 1.2) -- (-0.5, 1.33);
\draw[fill=black] (-0.5,1.35) circle (1.33pt);
\node[scale=0.7, left] at (-0.5, 1.35) {$\bar a_{<j-1}$};
\end{tikzpicture}
\end{equation}
is isomorphic over $\eta$
to the  diagram \eqref{eq:r}. I.e., there is an automorphism of $\MM$ fixing $\eta$ (hence also
${\bar{e}_{>j}}$) and mapping  \eqref{eq:r1} to \eqref{eq:r}.

It follows from  the choice of the tuple $(\vec{e},\xi_e)$, diagrams \eqref{eq:r}, \eqref{eq:r1} and their
isomorphism over $\eta$ that $e_1e_j \ind_\eta e_2\dotsc e_{j-1}$ and
$b_1a_j \equiv_\eta e_1 e_j$.
Since $a_j\ind e_1\dotsc e_m$  we have $a_j\ind_\eta e_2\dotsc  e_{j-1}$. As
$b_1\in \acl(a_j\eta)$, we have 
$$b_1a_j\ind_{\eta}  e_2\dotsc
e_{j-1} .$$
 Since all  types over $\eta$ are stationary,  this implies
$$b_1a_j e_2\dotsc e_{j-1} \equiv_\eta e_1e_j e_2\dotsc  e_{j-1},$$
 hence
there exists $r_j$ such that
the diagram
\begin{equation}
  \label{eq:r2}
  \begin{tikzpicture}[scale=0.9,  baseline=(current  bounding  box.center)]
\draw[black,line  width=0.2mm] (0,0) -- (1,0.3);
\draw[black,line  width=0.2mm] (1,0.3) -- (2,0.6);
\draw[black,line  width=0.2mm] (2,0.6) -- (1,0.9);
\draw[black,line  width=0.2mm] (1,0.9) -- (0,1.2);
\draw[black,line  width=0.2mm] (0,0) -- (1, 0.9);
\draw[black,line  width=0.2mm] (0,1.2) -- (1, 0.3);

\draw[fill=black] (2,0.6) circle (1.5pt);
\node[scale=0.7, above] at (2, 0.6) {$a_j$};
\draw[fill=black] (1,0.9) circle (1.5pt);
\node[scale=0.7, above] at (1, 0.9) {$r_j$};
\draw[fill=black] (0,1.2) circle (1.5pt);
\node[scale=0.7, above] at (0, 1.2) {$e_{j-1}$};
\draw[fill=black] (1,0.3) circle (1.5pt);
\node[scale=0.7, below] at (1, 0.3) {$\eta$};
\draw[fill=black] (0,0) circle (1.5pt);
\node[scale=0.7, below left] at (0, 0) {$b_1$};
\draw[fill=black] (0.66,0.6) circle (1.5pt);
\node[scale=0.7, left] at (0.62, 0.6) {$e_2$};

\draw[black,line  width=0.4mm, dotted] (0, 1.2) -- (-0.5, 1.33);
\draw[fill=black] (-0.5,1.35) circle (1.33pt);
\node[scale=0.7, left] at (-0.5, 1.35) {$\bar e_{<j-1}$};
\end{tikzpicture}
\end{equation}
is isomorphic to the  diagram \eqref{eq:r} over $\eta$.

A similar argument with the roles of the  $a$'s and the $e$'s  interchanged shows
that $e_1 e_j a_2 \dotsc a_{j-i} \equiv_{\eta} b_1 a_j a_2 \dotsc a_{j-1} $, hence there exists $r_{<j}$ such that the diagram
\begin{equation}
  \label{eq:r3}
  \begin{tikzpicture}[scale=0.9, baseline=(current  bounding  box.center)]
\draw[black,line  width=0.2mm] (0,0) -- (1,0.3);
\draw[black,line  width=0.2mm] (1,0.3) -- (2,0.6);
\draw[black,line  width=0.2mm] (2,0.6) -- (1,0.9);
\draw[black,line  width=0.2mm] (1,0.9) -- (0,1.2);
\draw[black,line  width=0.2mm] (0,0) -- (1, 0.9);
\draw[black,line  width=0.2mm] (0,1.2) -- (1, 0.3);

\draw[fill=black] (2,0.6) circle (1.5pt);
\node[scale=0.7, above] at (2, 0.6) {$e_j$};
\draw[fill=black] (1,0.9) circle (1.5pt);
\node[scale=0.7, above] at (1, 0.9) {$r_{<j}$};
\draw[fill=black] (0,1.2) circle (1.5pt);
\node[scale=0.7, above] at (0, 1.2) {$a_{j-1}$};
\draw[fill=black] (1,0.3) circle (1.5pt);
\node[scale=0.7, below] at (1, 0.3) {$\eta$};
\draw[fill=black] (0,0) circle (1.5pt);
\node[scale=0.7, below left] at (0, 0) {$e_1$};
\draw[fill=black] (0.66,0.6) circle (1.5pt);
\node[scale=0.7, left] at (0.62, 0.6) {$b_2$};

\draw[black,line  width=0.4mm, dotted] (0, 1.2) -- (-0.5, 1.33);
\draw[fill=black] (-0.5,1.35) circle (1.33pt);
\node[scale=0.7, left] at (-0.5, 1.35) {$\bar a_{<j-1}$};
\end{tikzpicture}
\end{equation}
is isomorphic  to the  diagram \eqref{eq:r} over $\eta$.

From the choice of $(\vec{e}, \xi_e)$ and the isomorphisms of the diagrams we have
\begin{equation}
  \label{eq:3}
( f_{r_{<j}}\co f_{\xi_e}^{-1}\co f_{r_j}) (b_1) = b_2=f_{r_{\leq
      j}} (b_1).
\end{equation}

We claim that $b_1\ind r_{<j},\xi_e,r_j, r_{\leq j}$. Indeed, as
\begin{gather*}
	r_{<j},\xi_e,r_j, r_{ \leq j}\in
\acl(a_3,\dotsc,a_m,e_3,\dotsc e_m) \textrm{  and}\\
e_2\ind
a_3,\dotsc,a_m,e_3,\dotsc e_m,
\end{gather*}
we obtain
$e_2\ind r_{<j},\xi_e,r_j, r_{ \leq j}$, hence $e_2\ind_{r_j}r_{<j},\xi_e, r_{ \leq j}$.
As $b_1\in \acl(e_2, r_j)$ we have $b_1\ind_{r_j}r_{<j},\xi_e, r_{
  \leq j}$. Using $b_1\ind r_j$ we conclude
\begin{equation}
  \label{eq:2}
b_1\ind r_{<j},\xi_e,r_j,r_{ \leq j}.
\end{equation}

It follows from \eqref{eq:3} and \eqref{eq:2} that
\[ \tilde f_{r_{<j}}\co \tilde f_{\xi_e}^{-1}\co \tilde f_{r_j}
=\tilde f_{r_{\leq
      j}}, \]
and hence
 \begin{equation}
    \label{eq:1}
  \Bigl((\tilde f_{\xi_e}^{-1}\co \tilde  f_{r_{<j}})\co
  (\tilde f_{\xi_e}^{-1}
  \co \tilde f_{r_j})\Bigr)
  =\tilde f_{\xi_e}^{-1}\co  \tilde f_{r_{\leq j}}.
  \end{equation}

As noted at the beginning of the proof, we have  that $r_j,
r_{<j},r_{\leq  j}\models q(y)|_{\xi_e}$, and
we define
$c_0,c_1,c_2 \models s(z)|_{\xi_e}$ as follows:
\begin{gather*}
c_0  := \pi(r_{<j})  =\lceil  f_{\xi_e}^{-1}
{\circ} f_{r_{<j}}\rceil,\\
c_1 := \pi(r_{j}) =\lceil  f_{\xi_e}^{-1}
{\circ} f_{r_{j}}\rceil, \\
c_2  := \pi(r_{\leq j})  =\lceil  f_{\xi_e}^{-1}
{\circ} f_{r_{\leq j}}\rceil.
\end{gather*}

By \eqref{eq:1}, to conclude  that $c_2=c_0\cdot c_1$ in  $G$ and finish the proof
of the lemma it is sufficient to show that $c_0\ind  c_1$.

As $r_{<j}\in \acl({\bar a}_{<j},{\bar e}_{\geq j})$,
$r_j,\xi_e\in \acl(\bar e,a_j)$, and
$\{e_3,\dotsc,e_m,a_j,{\bar a}_{<j}\}$
 is an
independent  set, we have $r_{<j}\ind_{{\bar e}_{\geq j}} r_j\xi_e$.
Since  $r_{<j}\ind {\bar e}_{\geq j}$ (as $(r_{<j}, {\bar a}_{<j},
{\bar e}_{\geq j})$ is an $(m-1)$-gon) we also have $r_{<j}\ind_{\xi_e} r_j$.
It follows then that $c_0\ind_{\xi_e} c_1$.  Since, by
Lemma~\ref{lem:compind}, $c_0\ind \xi_e$ we have $c_0\ind c_1$.

This concludes the proof of Lemma~\ref{lem:r-ind}.
\end{proof}

\begin{cor}
  \label{cor:rrr}
  For  any $j\in \{4,\dotsc, m\}$,
let $r_{\leq j}\models q(y)|_{\xi_e}$
with 
$$\models t_\xi(r_{\leq j}, \bar a_{\leq j}, \bar e_{>j}).$$
Then
there exist $r_{<j}, r_j \models q(y)|_{\xi_e}$
      such that
     $$\models t_\xi(r_{< j}, \bar a_{<j},\bar  e_{\geq j}), \models t_\xi(r_j,\bar e_{<j},a_j,\bar e_{>j})$$
      and $\pi(r_{\leq j})=\pi(r_j)\cdot \pi(r_{< j})$.
\end{cor}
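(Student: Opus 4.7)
The plan is to reduce the statement to Lemma~\ref{lem:r-ind} by an automorphism transport argument. Apply the lemma to produce some triple $(r_{<j}^{\ast}, r_j^{\ast}, r_{\leq j}^{\ast})$ satisfying its conclusion. Given the $r_{\leq j}$ from the hypothesis, the goal is to find an automorphism $\sigma$ of $\MM$ fixing $\bar{a}_{\leq j}\bar{e}$ pointwise and sending $r_{\leq j}^{\ast} \mapsto r_{\leq j}$, then set $r_{<j} := \sigma(r_{<j}^{\ast})$ and $r_j := \sigma(r_j^{\ast})$. Once such a $\sigma$ is in hand, each of the required conclusions follows immediately: $\models t_\xi(r_{<j}, \bar a_{<j}, \bar e_{\geq j})$ holds because $\sigma$ fixes $\bar a_{<j}\bar e_{\geq j}$ (both are contained in $\bar a_{\leq j} \bar e$) and $t_\xi$ is $\emptyset$-definable; $\models t_\xi(r_j, \bar e_{<j}, a_j, \bar e_{>j})$ holds analogously; both $r_{<j}$ and $r_j$ still realize $q(y)|_{\xi_e}$ since $\sigma$ fixes $\xi_e \in \acl(\bar e)$; and the equation $\pi(r_{\leq j}) = \pi(r_j)\cdot\pi(r_{<j})$ is preserved because $\pi$ is relatively definable over $\acl(\vec e)$ and the group law of $G$ is $\emptyset$-definable, so $\sigma$ commutes with both.

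The key claim, then, is that $r_{\leq j} \equiv_{\bar a_{\leq j}\bar e} r_{\leq j}^{\ast}$. First, one obtains $r_{\leq j} \equiv_{\bar a_{\leq j}\bar e_{>j}} r_{\leq j}^{\ast}$ by a standard automorphism-composition argument: both tuples $(r_{\leq j}, \bar a_{\leq j}, \bar e_{>j})$ and $(r_{\leq j}^{\ast}, \bar a_{\leq j}, \bar e_{>j})$ realize the complete type $t_\xi$ over $\emptyset$, so one obtains an automorphism of $\MM$ fixing $\bar a_{\leq j}\bar e_{>j}$ pointwise and sending $r_{\leq j}^{\ast} \mapsto r_{\leq j}$ by composing the automorphism witnessing $(\xi, \bar a) \equiv (r_{\leq j}^{\ast}, \bar a_{\leq j}, \bar e_{>j})$ with the inverse of the one witnessing $(\xi, \bar a) \equiv (r_{\leq j}, \bar a_{\leq j}, \bar e_{>j})$.

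The remaining step, extending the conjugacy from $\bar a_{\leq j}\bar e_{>j}$ to $\bar a_{\leq j}\bar e$, is the main obstacle. The idea is to observe that $\bar a\bar e$ is an independent set (since $\bar a$ and $\bar e$ are independent copies, and each is a size-$(m-2)$ subset of an $m$-gon, hence independent on its own); in particular $\bar a_{\leq j}\bar e_{>j} \ind \bar e_{\leq j}\bar a_{>j}$. Since $r_{\leq j}, r_{\leq j}^{\ast} \in \acl(\bar a_{\leq j}\bar e_{>j})$, we have trivially $r_{\leq j} \ind_{\bar a_{\leq j}\bar e_{>j}} \bar e_{\leq j}$ and similarly for $r_{\leq j}^{\ast}$, so the types $\tp(r_{\leq j}/\bar a_{\leq j}\bar e)$ and $\tp(r_{\leq j}^{\ast}/\bar a_{\leq j}\bar e)$ are non-forking extensions of the common type over $\bar a_{\leq j}\bar e_{>j}$. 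In a stable theory, the non-forking extension of a type over a set $B$ is unique up to strong type over $B$, so after a base change absorbing the finitely many algebraic conjugates (which is among the allowed operations for abelian $m$-gons, as noted earlier in the paper), one obtains equality of the two types and hence the desired $\sigma$. The hard point is exactly this stationarity bookkeeping—distinguishing the $\acl$- and $\dcl$-closures of the base—but it is a routine consequence of forking in stable theories once the independence of $\bar a\bar e$ is exploited.
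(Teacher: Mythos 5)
Your overall strategy matches the paper's: reduce to Lemma~\ref{lem:r-ind} by applying it once to obtain a triple $(r^*_{<j}, r^*_j, r^*_{\leq j})$, and then transport it by an automorphism that fixes enough parameters and carries $r^*_{\leq j}$ to the given $r_{\leq j}$. The observation that $\{ \bar a, \bar e \}$ is an independent set, yielding $\bar a_{\leq j}\bar e_{>j}\ind \bar a_{>j}\bar e_{\leq j}$ and hence $r_{\leq j} r^*_{\leq j}\bar a_{\leq j}\bar e_{>j}\ind \bar a_{>j}\bar e_{\leq j}$ (since the $r$'s are algebraic over the left side), is exactly the right move and is the same one the paper uses.

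However, your final step contains a genuine misstep. You frame the problem as extending the \emph{equality of the types $\tp(r_{\leq j}/\bar a_{\leq j}\bar e_{>j}) = \tp(r^*_{\leq j}/\bar a_{\leq j}\bar e_{>j})$} to the larger base, and you worry, correctly, that this type over $\bar a_{\leq j}\bar e_{>j}$ is algebraic and therefore not stationary in general, so its non-forking extension is unique only up to strong type. But the proposed fix --- doing another base change to absorb the conjugates --- is illegitimate. The base-change operation allowed in this proof requires the set $B$ being absorbed to satisfy $\bar a\ind B$, precisely so that $\bar a$ remains an $m$-gon afterwards; the set $\bar a_{\leq j}\bar e_{>j}$ \emph{contains} $a_3,\dotsc,a_j$ and so is manifestly not independent from $\bar a$. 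A base change to it would destroy the $m$-gon structure, and in any case would retroactively change $t_\xi$, $\xi_e$ and $\pi$, which are fixed before the induction starts. The correct argument, and the one the paper uses, invokes stationarity over $\emptyset$ rather than over $\bar a_{\leq j}\bar e_{>j}$: the \emph{full tuples} $r_{\leq j}\bar a_{\leq j}\bar e_{>j}$ and $r^*_{\leq j}\bar a_{\leq j}\bar e_{>j}$ both realize the type $t_\xi$, which is a complete type over $\emptyset$ and hence stationary by the global assumption made at the start of the proof of Theorem~\ref{thm: main ab mgon gives grp}. Since both tuples are independent from $\bar a_{>j}\bar e_{\leq j}$ over $\emptyset$, stationarity over $\emptyset$ gives outright equality of their types over $\bar a_{>j}\bar e_{\leq j}$, with no strong-type ambiguity and no base change. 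This immediately yields $r_{\leq j}\bar a\bar e\equiv r^*_{\leq j}\bar a\bar e$ and hence the desired $\sigma$.

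A separate, smaller point: you assert that $\sigma$ fixing $\bar e$ pointwise forces it to fix $\xi_e$, since $\xi_e\in\acl(\bar e)$, and likewise to commute with $\pi$ (definable over $\acl(\vec e)$). Fixing $\bar e$ only fixes $\acl(\bar e)$ \emph{setwise}, so this does not follow from the mere definable closure of $\bar e$; the justification must again come from the stationarity argument, which, run over $\emptyset$ as above, actually controls the type over $\acl(\bar a_{>j}\bar e_{\leq j})$ and not only over the finite tuple, and one needs to track that $\xi_e$ and the parameters of $\pi$ are handled by this. Your write-up glosses this the same way the paper does, but combined with the erroneous base change it leaves the proposal with a real gap where the paper's argument, read carefully, does not have one.
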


\begin{proof}  It is
  sufficient to show that for any $r,r'$ with
  $\models t_\xi(r, \bar a_{\leq j},\bar  e_{> j})$,
    $\models t_\xi(r', \bar a_{\leq j},\bar  e_{> j})$
    we have $r\bar a\bar e  \equiv r'\bar a\bar e $. Indeed, given any $(r'_{\leq j},  r'_j, r'_{>j})$  satisfying  the conclusion of Lemma~\ref{lem:r-ind}, we then have an automorphism $\sigma$ of $\mathbb{M}$ fixing $\bar{a}\bar{e}$ with $\sigma(r'_{\leq j}) = r_{\leq j}$; as the map $\pi$   is relatively definable over $\acl(\bar{e})$, it  then follows that $r_{<j} := \sigma(r'_{<j}), r_{j} :=  \sigma(r'_j)$ satisfy the requirements.

We have $r  \bar a_{\leq j} \bar  e_{>j} \equiv r'  \bar a_{\leq j} \bar
e_{> j}$. As $\bar{e} \ind \bar{a}$ and each of $\bar{e}, \bar{a}$ is an  $(m-2)$-tuple  from the corresponding $m$-gon,  we get $\bar{a}_{\leq j} \bar{e}_{>j} \ind \bar{a}_{>j} \bar{e}_{\leq j}$. Also $r,r' \in \acl(\bar{a}_{\leq j}  \bar{e}_{>j})$,  as any realization of $t_{\xi}$ is an $(m-1)$-gon, hence 
$$rr'  \bar{a}_{\leq j}   \bar{e}_{>j} \ind\bar{a}_{>j} \bar{e}_{\leq j}.$$
 As all types over the empty set are
stationary, we conclude
$r\bar a\bar e\equiv r'\bar a\bar e$.
 \end{proof}
 We can now finish the proof of Proposition~\ref{prop:finding-r}.

   \begin{proof}[Proof of Proposition~\ref{prop:finding-r}]

 We start with $r_{\leq m} := \xi$. Applying Corollary~\ref{cor:rrr}
 with $j  :=  m$, we obtain $r_m$ and $r_{<m}$ with
   $\pi(\xi)=\pi(r_m)\cdot \pi(r_{<m})$.

 Applying Corollary~\ref{cor:rrr} again with $j := m-1$ and $r_{\leq m-1} :=  r_{<
   m}$
we obtain $r_{m-1}$ and $r_{<m-1}$ with
$\pi(\xi)=\pi(r_m)\cdot \pi(r_{m-1})\cdot\pi(r_{<m-1})$.

Continuing this process with $j   := m-2, \dotsc,   4$ we obtain  some
$$r_{m-2},\dotsc,r_{4},r_{<4}$$
 with
$\pi(\xi)=\pi(r_m)\cdot \dotsc \cdot \pi(r_4)\cdot\pi(r_{<4})$.
We take  $r_3 := r_{<4}$,  which  concludes the proof of the proposition.
\end{proof}

\begin{prop}\label{prop:1-2}
  There exist $r_1,r_2\models q(y)|_{\xi_e}$ such that $f_{r_1}(a_1)=e_2$,
  $f_{r_2}(e_1)=a_2$ and $\pi(r_2)\cdot \pi(r_1)=\pi(\xi)$.
  \end{prop}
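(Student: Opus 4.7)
The plan is to construct $r_1$ directly by stationarity so that $f_{r_1}(a_1)=e_2$, then to \emph{define} $r_2$ as the preimage under $\pi$ of the group element $g:=\pi(\xi)\cdot \pi(r_1)^{-1}\in G$ (so the desired equation $\pi(r_2)\cdot \pi(r_1)=\pi(\xi)$ holds by construction), and finally to verify $f_{r_2}(e_1)=a_2$ by translating that group identity back into a germ equality and evaluating it at $e_1$.

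For $r_1$: since $\vec a\ind \vec e$, the pair $(a_1,e_2)$ satisfies $a_1\ind e_2$ with $a_1\models p_1$ and $e_2\models p_2$, so by stationarity of $\tp(a_1,a_2,\xi)$ I may pick $r_1$ with $(a_1,e_2,r_1)\equiv (a_1,a_2,\xi)$; in particular $r_1\models q$, $f_{r_1}(a_1)=e_2$, and $r_1\in\acl(a_1,e_2)$. From $a_1\ind \vec e$ one gets $a_1\ind_{e_2}e_1$, whence $r_1\ind_{e_2}e_1$ (as $r_1\in\acl(a_1,e_2)$) and therefore $r_1\ind_{e_2}\xi_e$ (as $\xi_e\in\acl(e_1,e_2)$); combined with $r_1\ind e_2$ coming from the type of the triangle (cf.~Remark~\ref{rem: trans inv technical}), this gives $r_1\models q|_{\xi_e}$. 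For $r_2$: the independence $\xi\ind_{\xi_e}r_1$, which follows from $\xi\in\acl(\vec a)$, $r_1\in\acl(a_1,e_2)$ and $\vec a\ind \vec e$, translates through $\pi$ to $\pi(\xi)\ind_{\xi_e}\pi(r_1)$, so the product $g$ of two independent generics of the connected type-definable group $G$ over $\xi_e$ is again generic, i.e.~$g\models s|_{\xi_e}$. Since $\pi\colon q|_{\xi_e}\to s|_{\xi_e}$ is a relatively definable bijection, I set $r_2:=\pi^{-1}(g)$; then $r_2\models q|_{\xi_e}$, $r_2\in\dcl(\xi,r_1,\xi_e)$, and $\pi(r_2)\cdot \pi(r_1)=\pi(\xi)$ by construction.

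It remains to check $f_{r_2}(e_1)=a_2$. Unwinding the definition of $\pi$ and the group law on germs, the identity $\pi(r_2)=\pi(\xi)\cdot \pi(r_1)^{-1}$ is equivalent to the equality of germs $\tilde f_{r_2}=\tilde f_\xi\circ \tilde f_{r_1}^{-1}\circ \tilde f_{\xi_e}\colon p_1\to p_2$. Formally evaluating the right-hand side at $e_1$ gives $f_\xi(f_{r_1}^{-1}(f_{\xi_e}(e_1)))=f_\xi(f_{r_1}^{-1}(e_2))=f_\xi(a_1)=a_2$, so what I really have to show is that this evaluation is legitimate, i.e.~that $e_1$ is generic over the germ parameters $\xi,r_1,\xi_e$ (which, via $r_2\in\dcl(\xi,r_1,\xi_e)$, handles $r_2$ as well). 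This is the main obstacle: since $e_1$ is algebraic over $e_2,\xi_e$ and $r_1$ depends on $e_2$, the required joint independence is not immediate. I plan to extract it from a dimension count inside the configuration $\{a_1,a_2,e_1,e_2,\xi,\xi_e,r_1\}$: $\vec a\ind \vec e$ together with the triangle $(e_1,e_2,\xi_e)$ give $e_1\ind \vec a, e_2$, hence $e_1$ is independent from $\xi\in\acl(\vec a)$ and from $r_1\in\acl(a_1,e_2)$ jointly; then, letting $n$ denote the common dimension of $p_1,p_2,q$, a short count yields $\dim(\xi,r_1,\xi_e)=3n$ and $\dim(e_1,\xi,r_1,\xi_e)=4n$, which forces $e_1\ind \xi,r_1,\xi_e$. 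Evaluating the germ equation at $e_1$ then yields $f_{r_2}(e_1)=a_2$, completing the proof.
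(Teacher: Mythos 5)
Your construction of $r_1$ is exactly the paper's, and defining $r_2:=\pi^{-1}\bigl(\pi(\xi)\cdot\pi(r_1)^{-1}\bigr)$ instead of producing it via an automorphism of $\MM$ (as the paper does) is a harmless cosmetic reversal: in both cases one is forced to derive the germ identity $\tilde f_{r_2}=\tilde f_\xi\circ\tilde f_{r_1}^{-1}\circ\tilde f_{\xi_e}$ and then evaluate it at $e_1$, which in turn forces the same key independence $e_1\ind\xi\, r_1\, \xi_e$.

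There is, however, a genuine gap at exactly the step you flag as ``the main obstacle.'' You propose to get $e_1\ind\xi\, r_1\, \xi_e$ by a dimension count, ``letting $n$ denote the common dimension of $p_1,p_2,q$'' and computing $\dim(\xi,r_1,\xi_e)=3n$, $\dim(e_1,\xi,r_1,\xi_e)=4n$. This presupposes that the types $p_1,p_2,q$ carry a finite additive rank, which Theorem~\ref{thm: main ab mgon gives grp} does \emph{not} assume: it is stated for an arbitrary stable $T$, where $U$-rank may be infinite and Lascar's equality degenerates to an inequality, so the subtraction $\,4n-3n=n\Rightarrow e_1\ind\xi r_1\xi_e\,$ is not available. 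The paper's argument replaces this count by direct forking calculus: first $\{r_1,e_1,\xi_e\}$ is shown independent (from $r_1\ind_{e_2}e_1\xi_e$ via independence of $\{a_1,e_1,e_2\}$, then $r_1\ind e_2$), and then $\xi$ is brought in via $\xi\ind_{a_1}e_1\xi_e r_1$ (using $a_1a_2\ind e_1e_2$ and $\xi,r_1,\xi_e\in\acl(a_1,e_1,e_2)\cup\acl(a_1,a_2)$) together with $\xi\ind a_1$. You would need to replace your count with this kind of argument to preserve the generality claimed. Two smaller points in the same spirit: (i) your justification of $\xi\ind_{\xi_e}r_1$ (``follows from $\xi\in\acl(\vec a)$, $r_1\in\acl(a_1,e_2)$, $\vec a\ind\vec e$'') glosses over the fact that $a_1$ occurs in both $\acl$-closures; one actually needs $\xi\ind a_1$ from the triangle $(a_1,a_2,\xi)$ and then $\xi\ind a_1e_1e_2$ by transitivity, which gives the stronger $\xi\ind r_1\xi_e$; (ii) unwinding $\pi(r_2)=\pi(\xi)\cdot\pi(r_1)^{-1}$ into a germ equality uses the Hrushovski--Weil correspondence between the group law and generic composition, which requires $\pi(\xi)\ind\pi(r_1)$ over the base $\emptyset$, not just over $\xi_e$; this does follow from what you have (upgrade $\pi(\xi)\ind_{\xi_e}\pi(r_1)$ using $\pi(\xi)\ind\xi_e$ from Lemma~\ref{lem:compind}), but it should be said.
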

  \begin{proof}

  We choose $r_1\models q(y)$  with $f_{r_1}(a_1)=e_2$ (possible  by generic transitivity: as $a_1 \ind e_2$, hence $a_1 e_2 \equiv a_1 a_2$ by stationarity of types over $\emptyset$; and as $f_{\xi}(a_1) = a_2$, we can take $r_1$ to be the image of $\xi$ under the automorphism of $\MM$ sending $(a_1, a_2)$ to $(a_1,  e_2)$). We also have $r_1\ind \xi_e$ ($a_1 \ind \vec{e}$ and $e_2 \ind \bar{e}$  by the choice of $\vec{e}$, so $a_1 e_2 \ind \bar{e}$;
 as $r_1 \in \acl(a_1,e_2), \xi_e \in \acl(\bar{e})$, we conclude $r_1 \ind \xi_e$), hence $r_1 \models q|_{\xi_e}$  by stationarity again.

 Similarly $\xi\ind \xi_e r_1$, hence
 $\xi \ind \lceil  f_{r_1}^{-1}
{\circ} f_{\xi_e}\rceil$. By
Lemma~\ref{lem:compind} we also  have $r_1 \ind  \lceil  f_{r_1}^{-1}
{\circ} f_{\xi_e}\rceil$. By stationarity of $q$ this implies $\xi \equiv_{\lceil  f_{r_1}^{-1}
{\circ} f_{\xi_e}\rceil}  r_1$,  so there exists some $r_2 \models q$ such that $\xi r_2 \equiv_{\lceil  f_{r_1}^{-1}
{\circ} f_{\xi_e}\rceil}  r_1 \xi_e$. Hence
\[\tilde f_\xi^{-1} {\circ} \tilde f_{r_2}= \tilde f_{r_1}^{-1}
{\circ} \tilde f_{\xi_e },\]
equivalently
\begin{equation}
  \label{eq:4}
  \tilde f_{r_2}= \tilde  f_\xi{\circ} \tilde f_{r_1}^{-1}
{\circ} \tilde f_{\xi_e}.
\end{equation}
In particular, $r_2\in \acl(\xi,r_1,\xi_e)$.

We claim that $e_1\ind r_2 \xi r_1 \xi_e$.
Since $\xi_e\in \acl(e_1,e_2)$, $r_1\in \acl(a_1,e_2)$ and $\{a_1,e_1,e_2\}$ is an
independent set, we have $r_1\ind_{e_2} e_1 \xi_e$. Using $r_1\ind e_2$ we
deduce $r_1 \ind e_1\xi_e$. As $\xi_e\ind e_1$, it implies that $\{r_1,
e_1,\xi_e\}$ is an independent set.  We have $r_1,
e_1,\xi_e \in \acl(a_1,e_1,e_2)$ and $\xi\in\acl(a_1,a_2)$. Using
independence  of $a_1,a_2,e_1,e_2$ we obtain  $\xi\ind_{a_1}
e_1\xi_e r_ 1$.
Since $\xi\ind a_1$, we have that  $\xi\ind e_1r_1\xi_e$, hence
$\{\xi,e_1,r_1,\xi_e\}$ is an independent set and $e_1\ind
\xi r_1 \xi_e$. As $r_2\in \acl(\xi, r_1, \xi_e)$ we can conclude
$e_1\ind r_2 \xi r_1 \xi_e$.

It  then follows from \eqref{eq:4} that
\[   f_{r_2}(e_1)=  (f_\xi{\circ} f_{r_1}^{-1}
{\circ} f_{\xi_e})(e_1)=a_2, \]
so $f_{r_2}(e_1)=a_2$.

It also follows from~\eqref{eq:4} that
\[ \Bigl((\tilde f_{\xi_e}^{-1}\co \tilde  f_{r_2})\co
  (\tilde f_{\xi_e}^{-1}
  \co \tilde f_{r_1})\Bigr)
  =\tilde f_{\xi_e}^{-1}\co  \tilde f_{\xi}. \]

We let
\[
c_1 := \pi(r_{1}) =\lceil  f_{\xi_e}^{-1}
{\circ} f_{r_{1}}\rceil \text{ and } c_2  := \pi(r_{2})  =\lceil  f_{\xi_e}^{-1}
{\circ} f_{r_{2}}\rceil.  \]
To show that $c_2\cdot c_1=\pi(\xi)$ and finish the proof of  the proposition it
is sufficient to show that $c_1\ind c_2$.

Since $r_1\in \acl(a_1,e_2)$, $r_2\in \acl(e_1,a_2)$ (by Remark \ref{rem: trans inv technical}, as by the above we have $r_2 \models q, e_1\ind r_2$ and $f_{r_2}(e_1) = a_2$) and
$\xi_e\in \acl(e_1,e_2)$, we obtain $r_1\ind_{e_2} r_2\xi_e$.
Using $r_1\ind e_2$ we deduce $r_1\ind r_2\xi_e$,
hence $r_1\ind_{\xi_e} r_2$. It follows then that $c_1\ind_{\xi_e}
c_2$
and, as $c_1\ind \xi_e$, we obtain $c_1\ind c_2$.
\end{proof}

Combining Propositions \ref{prop:1-2} and \ref{prop:finding-r},
we obtain some $r_1,\dotsc,r_m\models q(y)|_{\xi_e}$ such that each $r_i$ is
inter-algebraic with $a_i$ over $\{  e_1,\dotsc,e_m \}$ and
\[ \pi(r_2)\cdot \pi(r_1)= \pi(r_m)\cdot\dotsc\cdot \pi(r_3).\]
Obviously each $r_i$ is also inter-algebraic over $\{e_1,\dotsc,e_m\}$ with
$\pi(r_i)$.

Thus, after a base  change to $\{ e_1, \dotsc,  e_m \}$ and
inter-algebraically replacing $a_1$ with $\pi(r_1)^{-1}$,
$a_2$ with $\pi(r_2)^{-1}$, and  $a_i$ with $\pi(r_i)$ for $i \in \{  3,\dotsc,m \}$, and using that permuting the elements of an abelian  $m$-gon we  still  obtain an abelian $m$-gon, we achieve  the
following.

\medskip
\noindent\textbf{Reduction 3.} \emph{
We may assume that $a_1,\dotsc,a_m$ realize the generic type $s(z)$ of
a connected group  $G$  that  is  type-definable over the  empty set,
with $a_1\cdot a_2\cdot a_m\cdot\dotsc \cdot a_3= 1_{G}$.}
\medskip

To finish the proof of Theorem~\ref{thm: main ab mgon gives grp} it only
remains to show that the group $G$ is abelian.
We  deduce it from the Abelian Group Configuration Theorem, more precisely
\cite[Lemma~C.1]{bays2017model}.

\begin{claim}\label{claim:abelian}
Let $G$ be a connected group type-definable over the  empty set,
$m\geq 4$ and
$g_1,\dotsc,g_m$ are generic elements of $G$ such that $g_1,\dots,g_m$
form an abelian $m$-gon and $g_1\cdot\dotsc \cdot g_m=1_{G}$. Then the
group $G$ is abelian.
\end{claim}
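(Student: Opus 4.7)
The plan is to reduce to the case $m = 4$ and then invoke the Abelian Group Configuration Theorem.

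First, for $m \geq 5$, I would group the last $m - 3$ factors and consider the $4$-tuple
$$h_1 := g_1, \quad h_2 := g_2, \quad h_3 := g_3, \quad h_4 := g_4 \cdot g_5 \cdots g_m$$
in $G$. The identity $h_1 h_2 h_3 h_4 = 1_G$ is immediate, and each $h_i$ is generic in $G$ (for $h_4$, using that $g_4$ is generic over the independent tuple $(g_5, \ldots, g_m)$ and that right translation by $g_5 \cdots g_m$ is a definable bijection of $G$). The key verification is that $(h_1, h_2, h_3, h_4)$ forms an abelian $4$-gon associated to $G$: the independence of any three of the $h_i$'s and the algebraicity of each over the other three follow from the abelian $m$-gon structure by basic forking calculus, using that $h_4 \in \dcl(g_4, \ldots, g_m)$. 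For the intersection condition on each pair $\{i,j\} \subseteq \{1,2,3,4\}$, the product $h_i h_j$, which equals $(h_k h_\ell)^{-1}$ for $\{k, \ell\} = \{1,2,3,4\} \setminus \{i,j\}$, lies in $\acl(h_i, h_j) \cap \acl(\bar h_{ij})$ by direct computation and provides the required witness for independence.

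Next, in the base case $m = 4$, I would invoke the Abelian Group Configuration Theorem in the form of \cite[Lemma~C.1]{bays2017model}. The data of generic, pairwise independent $g_1, g_2, g_3, g_4 \in G$ with $g_1 g_2 g_3 g_4 = 1_G$ satisfying the abelian $4$-gon intersection conditions is precisely the input for that lemma, which asserts that such a configuration inside a connected type-definable group forces the group to be abelian. Intuitively, the intersection condition for the pair $(1,3)$ forces $\acl(g_1, g_3) \cap \acl(g_2, g_4)$ to contain an element generic in $G$, and such a ``diagonal'' element simultaneously algebraic over $(g_1, g_3)$ and over $(g_2, g_4)$ can only exist when the group operation is commutative, giving $g_1 g_3 = (g_2 g_4)^{-1}$.

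The main obstacle is the reduction step: passing from $(g_1, \ldots, g_m)$ to $(h_1, h_2, h_3, h_4)$ replaces the original intersection $\acl(g_1, g_2) \cap \acl(g_3, \ldots, g_m)$ with the potentially smaller set $\acl(h_1, h_2) \cap \acl(h_3, h_4)$, and independence over a smaller base is a strictly stronger requirement. However, the natural witness $g_1 g_2 = (g_3 \cdot h_4)^{-1}$ lies in both intersections and has generic dimension in $G$, so the two sets agree up to finite algebraicity, which suffices to transport the forking-theoretic independence. Once this reduction is carried out, the proof is completed by a direct application of \cite[Lemma~C.1]{bays2017model}.
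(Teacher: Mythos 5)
Your reduction to $m=4$ is the right idea, and invoking \cite[Lemma~C.1]{bays2017model} is indeed the endpoint, but the collapse of $g_4,\ldots,g_m$ into $h_4 := g_4\cdots g_m$ over the empty base breaks precisely where it matters. The claim that ``$h_i h_j = (h_k h_\ell)^{-1}$ for $\{k,\ell\} = \{1,2,3,4\}\setminus\{i,j\}$'' holds only for the cyclically adjacent pairs: from $h_1 h_2 h_3 h_4 = 1$ and its cyclic rotations one gets $h_1 h_2 = (h_3 h_4)^{-1}$, $h_2 h_3 = (h_4 h_1)^{-1}$, etc., but for the crossing pairs $\{1,3\}$ and $\{2,4\}$ there is no such identity unless $G$ is already abelian — which is the conclusion you are trying to reach. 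Consequently there is no ``natural witness'' in $\acl(h_1,h_3) \cap \acl(h_2,h_4)$, and your argument for transferring the independence condition from the larger base $\acl(g_1,g_3) \cap \acl(g_2,g_4,\dots,g_m)$ down to the smaller base $\acl(g_1,g_3) \cap \acl(g_2,h_4)$ has nothing to work with. This is fatal: the input to \cite[Lemma~C.1]{bays2017model} is exactly the abelianity condition for the crossing pair $\{1,3\}$ (one needs $g_1 g_2 g_3 \in \acl\bigl(g_2,\ \acl(g_1,g_3)\cap\acl(g_2,g_1 g_2 g_3)\bigr)$ after the appropriate identifications), so the part of your verification that is unjustified is precisely the part the lemma consumes. (Even for the adjacent pair $\{1,2\}$, the inference ``common generic element implies the intersections agree up to algebraicity'' is not automatic; it requires an argument such as $g_1 g_2 \ind_{g_1\cdot g_2} g_3\cdots g_m$, though that pair is not the problematic one.)

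The paper avoids the whole issue by base-changing rather than collapsing: set $B := \acl(g_5,\dots,g_m)$ and keep $g_1,g_2,g_3,g_4$. For any pair $\{i,j\}\subseteq[4]$ the $m$-gon condition $(g_i,g_j) \ind_{\xi_{ij}} \bar g_{ij}$ (with $\xi_{ij} := \acl(g_i,g_j)\cap\acl(\bar g_{ij})$) gives, after adding $B\subseteq\acl(\bar g_{ij})$ to the base by monotonicity, $(g_i,g_j) \ind_{\acl(\xi_{ij}B)} (g_k,g_\ell)$, and since $\acl(\xi_{ij}B) \subseteq \acl_B(g_i,g_j)\cap\acl_B(g_k,g_\ell) \subseteq \acl_B(g_k,g_\ell)$ one may further enlarge the base to conclude $(g_i,g_j) \ind_{\acl_B(g_i,g_j)\cap\acl_B(g_k,g_\ell)} (g_k,g_\ell)$. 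Thus all six pairs of the $4$-gon inherit abelianity over $B$ uniformly, with no need for a witness element. Moreover $g_4$ is inter-algebraic over $B$ with $g_1\cdot g_2\cdot g_3$, which is the form \cite[Lemma~C.1]{bays2017model} wants. Replacing your collapsing step by this base change would repair the proof.
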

\begin{proof}
  Let $B :=  \acl(g_5,\dotsc,g_m)$.
We have that $g_1, \ldots, g_4$ are generics of  $G$ over $B$, and they   form an
abelian $4$-gon over $B$. Since $g_4$ is inter-algebraic over $B$  with
$g_1{\cdot} g_2 {\cdot} g_3$, we have that $g_1,g_2,g_3,g_1{\cdot}g_2{\cdot}g_3$ form an abelian
$4$-gon over $B$.  Let $D  := \acl_{B}(g_1,g_3)\cap
\acl_{B}(g_2,g_1{\cdot}g_2{\cdot}g_3)$.
We have $g_1,g_3\ind_D g_2, g_1{\cdot}g_2{\cdot}g_3$, hence
\[g_1{\cdot}g_2{\cdot}g_3\in \acl_{B}(g_2,D)=\acl_{B} \big(g_2, \acl_{B}(g_1,g_3)\cap
\acl_{B}(g_2,g_1{\cdot}g_2{\cdot}g_3) \big). \]
By \cite[Lemma~C.1]{bays2017model}, the group $G$ is abelian.
  \end{proof}

\section{Main theorem in the stable case}\label{sec: main thm stable}

Throughout the section  we work in  a complete theory $T$ in a language $\mathcal{L}$. We fix an $|\CL|^+$-saturated model $\CM = (M, \ldots )$ of $T$,
and also choose a large saturated elementary extension $\MM$ of
$\CM$. We say that a subset $A$ of $\CM$ is \emph{small} if $|A| \leq |\CL|$.
Given a definable set $X$ in $\CM$, we will often view it as a definable subset of $\mathbb{M}$, and sometimes write explicitly $X(\mathbb{M})$ to denote the set of tuples in $\mathbb{M}$ realizing the formula defining $X$.

\subsection{On the notion of $\mfp$-dimension}
\label{sec:notion-mfp-dimension}

We introduce a basic notion of dimension in an arbitrary theory imitating the topological definition of dimension in $o$-minimal structures, but localized at a given tuple of commuting definable global types.
We will see that it enjoys definability properties that may fail for Morley rank even in nice theories such as $\operatorname{DCF}_0$.

\begin{defn}
	If $X$ is a definable set in $\CM$ and $\CF$ is a family of subsets of $X$, we say that $\CF$ is a \emph{definable family} (over a set of parameters $A$) if there exists a definable set $Y$ and a definable set $D \subseteq X \times Y$ (both defined over $A$) such that $\CF = \{D_b : b \in Y \}$, where $D_b = \{a \in X : (a,b) \in D\}$ is the fiber of $D$ at $b$.
\end{defn}


\begin{defn}\label{def: p-pairs}
\begin{enumerate}
	\item By a \emph{$\mfp$-pair} we
mean a pair $(X,\mfp_X)$  where $X$ is an $\emptyset$-definable set and $\mfp_X  \in S(\CM)$ is an $\emptyset$-definable
stationary type on $X$.
\item Given $s \in \mathbb{N}$, we say that $(X_i,\mfp_i)_{i \in [s]}$ is a  \emph{$\mfp$-system} if each $(X_i,\mfp_i)$ is a $\mfp$-pair and the types $\mfp_1, \ldots, \mfp_s$ commute, i.e.~$\mfp_i \otimes \mfp_j = \mfp_j \otimes \mfp_i$ for all $i,j \in [s]$.
\end{enumerate}
	
\end{defn}

\begin{sample}
	Assume $T$ is a stable theory, $(\mfp_i)_{i \in [s]}$ are arbitrary types over $\CM$ and $X_i \in \mfp_i$ are arbitrary definable sets.  By local character we can choose a model $\CM_0 \preceq \CM$ with $|\CM_0| \leq |\CL|$ such that each $\mfp_i$ is definable (and stationary) over $\CM_0$ and $X_i, i \in [s]$ are definable over $\CM_0$. The types $(\mfp_i)_{i \in [s]}$ automatically commute in a stable theory. Hence, naming the elements of $\CM_0$ by constants, we obtain a $\mfp$-system.
\end{sample}

Assume now that $(X_i,\mfp_i)_{i \in [s]}$ is a $\mfp$-system. Given $u \subseteq [s]$, we let $\pi_u : \prod_{i \in [s]} X_i \to \prod_{i \in u} X_i$ be the projection map. For $i \in [s]$, we let $\pi_i := \pi_{\{i\}}$. Given $u,v \subseteq [s]$ with $u \cap v = \emptyset$, $a = (a_i : i \in u) \in \prod_{i \in u} X_i$ and $b = (b_i : i \in v) \in \prod_{i \in v} X_i$, we write $a \oplus b$ to denote the tuple $c = (c_i : i \in u \cup v) \in \prod_{i \in u \cup v} X_i$ with $c_i = a_i$ for $i \in u$ and $c_i = b_i$ for $i \in v$.
Given $Y \subseteq \prod_{i \in [s]} X_i$, $u \subseteq [s]$ and $a  \in \prod_{i \in u} X_i$, we write $Y_a := \{ b \in \prod_{i \in [s] \setminus u} X_i : a \oplus b \in Y\}$ to denote the fiber of $Y$ above $a$.

\begin{sample}
	If $\CF$ is a definable family of subsets of $\prod_{i \in [s]} X_i$ and $u \subseteq [s]$, then $ \left\{\pi_u(F) : F \in \CF\right\}$ and $\left\{F_a : F \in \CF, a \in \prod_{i \in [s] \setminus u} X_i \right\}$ are  definable families of subsets of $\prod_{i \in u} X_i$ (over the same set of parameters).
\end{sample}

\begin{defn}\label{def: p-gen, dim, etc}  Let $\bar a = (a_1,\dotsc, a_s)\in X_1\times
  \dotsb\times X_s$ and $A$  a small subset of $\CM$.
  \begin{enumerate}
  \item We say that $\bar a$  \emph{is  $\mfp$-generic in  $X_1\times \dotsb\times X_s$
  over $A$}  if
$(a_1,\dotsc,a_s)\models \mfp_1\otimes \dotsb\otimes \mfp_s {\restriction} A$.

\item
\begin{enumerate}
\item   For $k \leq s$  we write $\dimp(\bar a /A) \geq  k$ if
  for \emph{some} $u \subseteq [s]$ with $|u| \geq k$ the tuple
  $\pi_u(\bar a)$  is $\mfp$-generic (with respect to the corresponding $\mfp$-system $\{(X_i, \mfp_i) : i \in u \}$).

\item     As usual,  we  define $\dimp(\bar a /A) =k$  if  $\dimp(\bar a/A)\geq k$
  and it is not true that $\dimp(\bar a /A) \geq k+1$.
\end{enumerate}

\item If $q(\bar{x}) \in S(A)$ and $q(\bar{x}) \vdash \bar{x} \in X_1 \times \ldots \times X_s$, we write $\dim_{\mfp}(q) := \dim_{\mfp}(\bar{a}/A)$ for some (equivalently, any) $\bar{a} \models q$.

\item
  For a subset $Y \subseteq X_1\ttimes \dotsb \times X_s$ definable
  over $A$,
  we define
  \begin{gather*}
  	\dimp(Y) := \max\left\{ \dimp(\bar a/A)  \colon \bar a \in Y\right\}\\
  	= \max\left\{ \dimp(q)  \colon q \in S(A), Y \in q \right\},
  \end{gather*}
  note that this does not depend on the set $A$ such that $Y$ is $A$-definable.

 \item
As usual, for a definable subset $Y \subseteq X_1\ttimes \dotsb \times X_s$
we  say that $Y$
is \emph{a $\mfp$-generic subset of $X_1\times \dots \times X_s$} if
$\dimp(Y)=s$ (equivalently, $Y$ is contained in $\mfp_1\otimes
\dotsb\otimes \mfp_s.$)
\end{enumerate}
If $A=\emptyset$ we will omit it.
\end{defn}

\begin{rem}\label{rem: p-gen tuple def}
	It follows from the definition that for a  definable set $Y \subseteq X_1\ttimes
\dotsb \ttimes X_s$,  $\dimp(Y)$ is the maximal $k$ such that the
projection of $Y$ onto some $k$ coordinates is $\mathfrak{p}$-generic.
As usual, for a  definable $Y \subseteq X_1\tdt X_s$  and small $A\subseteq \CM$
we say that an element $a\in Y$ is \emph{generic in $Y$ over $A$} if
$\dimp( a/A)=\dimp(Y)$.
\end{rem}

\begin{rem}
	It also follows that if $\CN \succeq \CM$ is an arbitrary $|\CL|^+$-saturated model and $\mfp'_i := \mfp_i|_{\CN} \in  S (\CN)$ is the unique definable extension, for $i \in [s]$,  then $(X_i(\CN), \mfp'_i)_{i \in [s]}$ is a $\mfp$-system  in $\CN$,  and for every definable subset $Y \subseteq X_1 \times \ldots \times X_s$  in $\CM$ we have $\dimp(Y) = \dimp(Y(\CN))$, where  the latter is calculated in $\CN$ with respect to this $\mfp$-system.
\end{rem}

\begin{claim}\label{claim:p-dim-def}
 Let  $\CF$ be   a definable (over $A$) family of subsets of $X_1\times \dotsb
 \times X_s$ and $k\leq s$. Then
the family
  \[ \left\{ F\in \CF \colon \dimp(F) = k \right\} \]
  is definable (over $A$ as well).
\end{claim}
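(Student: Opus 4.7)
The plan is to reduce the statement to the fact that each type $\mfp_i$ is definable (over $\emptyset$, by assumption in Definition \ref{def: p-pairs}), hence so is every tensor product $\mfp_u := \bigotimes_{i \in u} \mfp_i$ for $u \subseteq [s]$ (here I use that the $\mfp_i$ commute, so $\mfp_u$ is a well-defined complete stationary type in the variables $(x_i : i \in u)$, definable over $\emptyset$).

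Write $\CF = \{D_b : b \in Y\}$ for some $A$-definable $Y$ and $A$-definable $D \subseteq (X_1 \times \dotsb \times X_s) \times Y$. For each $u \subseteq [s]$, the projection $\pi_u(D_b) \subseteq \prod_{i \in u} X_i$ is uniformly definable: there is an $A$-definable formula $\psi_u(x_u; y)$ (obtained by existentially quantifying out the variables indexed by $[s]\setminus u$) such that $\pi_u(D_b) = \psi_u(\MM; b)$ for every $b$. By definability of $\mfp_u$, there is an $\emptyset$-definable formula $d_{\mfp_u}\psi_u(y)$ such that for every $b$,
\[
\pi_u(D_b) \in \mfp_u \iff \psi_u(x_u; b) \in \mfp_u \iff \MM \models d_{\mfp_u}\psi_u(b).
\]

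Now, by Definition \ref{def: p-gen, dim, etc} and Remark \ref{rem: p-gen tuple def}, $\dimp(D_b) \geq k$ if and only if $\pi_u(D_b)$ is $\mfp$-generic for some $u \subseteq [s]$ with $|u| = k$. Hence
\[
\{b \in Y : \dimp(D_b) \geq k\} = \bigcup_{\substack{u \subseteq [s] \\ |u| = k}} \{b \in Y : \MM \models d_{\mfp_u}\psi_u(b)\},
\]
which is a finite union of $A$-definable sets, hence $A$-definable. Finally, $\{F \in \CF : \dimp(F) = k\}$ corresponds to the $A$-definable set
\[
\{b \in Y : \dimp(D_b) \geq k\} \setminus \{b \in Y : \dimp(D_b) \geq k+1\},
\]
proving the claim. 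There is no real obstacle here; the only point to double-check is that the commutativity assumption in Definition \ref{def: p-pairs}(2) genuinely gives a well-defined $\emptyset$-definable type $\mfp_u$ for each $u$, so that the definition of $\mfp$-genericity of $\pi_u(D_b)$ translates directly into an $A$-definable condition on $b$.
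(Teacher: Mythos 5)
Your proof is correct and follows essentially the same route as the paper's: write $\CF = \{D_b : b \in Y\}$, use definability of each $\bigotimes_{i\in u}\mfp_i$ to get a definable set $Z_u \subseteq Y$ of parameters $b$ with $\pi_u(D_b) \in \mfp_u$, and then express the dimension-$k$ condition as a Boolean combination of the $Z_u$'s. The only cosmetic difference is that you phrase the final step as $\{\dimp \ge k\}\setminus\{\dimp\ge k+1\}$ (implicitly using the downward monotonicity $\pi_u(D_b)\in\mfp_u \Rightarrow \pi_{u'}(D_b)\in\mfp_{u'}$ for $u'\subseteq u$ to restrict the union to $|u|=k$), while the paper writes $Y_k$ directly as a conjunction over $|u|=k$ and $|u|>k$; the two are equivalent.
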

\begin{proof}
Assume that $\CF = \{ D_b : b \in Y \}$ for some definable $Y$ and definable $D \subseteq (X_1 \times \ldots \times X_s) \times Y$.
Given $0 \leq k \leq s$, let $Y_k := \{ b \in Y : \dim_{\mfp}(D_b) = k \}$, it suffices to show that $Y_k$ is definable. As every $\mfp_i$ is definable, for every $u \subseteq [s]$, the type $\mfp_u = \bigotimes_{i \in u} \mfp_i$ is also definable. In particular, there is a definable (over any set of parameters containing the parameters of $Y$ and $D$) set $Z_u \subseteq Y$ such that for any $b \in Y$, $\pi_{u}(D_b) \in \mfp_u  \iff b \in Z_u$. Then $Y_k$ is definable as
\begin{gather*}
	Y_k = \left( \bigvee_{u \subseteq [s], |u| = k} b \in Z_u \right) \land \left( \bigwedge_{u \subseteq [s], |u| > k} b \notin Z_u \right).\qedhere
\end{gather*} 
\end{proof}

The following lemma shows that  $\mfp$-dimension is ``super-additive''.

\begin{lem}\label{lem:sub}
  Let $Y\subseteq  X_1\tdt X_s$ be definable and $u \subseteq [s]$. Assume that $0 \leq n \leq [s]$ is such that for  every  $a\in \pi_u(Y)$ we have
  $\dimp(Y_a) \geq n$. Then $\dimp(Y)\geq \dimp(\pi_u(Y)) +n$.

\end{lem}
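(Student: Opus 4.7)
\medskip

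\noindent\textbf{Proof plan.} The idea is to extract the generic projection witnessing $\dim_{\mfp}(\pi_u(Y))$, lift it to a point of $Y$, then use the hypothesis on its fiber to enlarge the projection, gluing the two generic pieces via commutativity of the $\mfp_i$'s.

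First I would set $k := \dim_{\mfp}(\pi_u(Y))$ and fix a small parameter set $A$ over which $Y$ is defined. By Definition~\ref{def: p-gen, dim, etc} applied to $\pi_u(Y) \subseteq \prod_{i\in u} X_i$, there exists $v \subseteq u$ with $|v|=k$ such that $\pi_v(Y) = \pi_v(\pi_u(Y))$ is $\mfp$-generic, i.e.\ contains a realization $b$ of $\bigotimes_{i\in v}\mfp_i\restriction A$. Lift $b$ to some $a \in \pi_u(Y)$ with $\pi_v(a)=b$. By the hypothesis, $\dim_{\mfp}(Y_a) \geq n$, where $Y_a$ is definable over $A\cup\{a\}$; unwinding the definition this gives $w \subseteq [s]\setminus u$ with $|w|=n$ and $d \in Y_a$ such that $\pi_w(d) \models \bigotimes_{i\in w}\mfp_i \restriction (A\cup\{a\})$.

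Next I would consider the element $e := a \oplus d \in Y$, for which $\pi_{v\cup w}(e) = b \oplus \pi_w(d)$. The claim is that this tuple realizes $\bigotimes_{i\in v\cup w}\mfp_i \restriction A$, so that $\pi_{v\cup w}(Y)$ is $\mfp$-generic in $\prod_{i\in v\cup w} X_i$, giving $\dim_{\mfp}(Y) \geq |v|+|w| = k+n$ as desired (recall $v \subseteq u$ and $w \subseteq [s]\setminus u$ are disjoint). To verify the claim, I would use that the $\mfp_i$'s pairwise commute, so $\bigotimes_{i\in v\cup w}\mfp_i$ is well-defined as a product of the two halves $\bigotimes_{i\in v}\mfp_i$ and $\bigotimes_{i\in w}\mfp_i$. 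By the chain rule for $\otimes$, the required realization reduces to checking: (a) $b \models \bigotimes_{i\in v}\mfp_i\restriction A$, which holds by construction; and (b) $\pi_w(d) \models \bigotimes_{i\in w}\mfp_i\restriction (A\cup\{b\})$. For (b), I know the stronger fact that $\pi_w(d)$ realizes the type over $A\cup\{a\}$, and $b = \pi_v(a) \in \dcl(a)$, so the restriction to the smaller base $A\cup\{b\}$ is automatic.

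There is no real obstacle here beyond being careful with the bookkeeping of disjoint index sets and the chain rule for the tensor product of definable types. The one place where a reader might worry is whether the $w$ produced by the fiber hypothesis could depend on $a$ in a way that is incompatible with the $v$ we chose; but we only need \emph{some} $v \cup w$ to yield a $\mfp$-generic projection of $Y$, so no uniform choice is required. Commutativity of the $\mfp_i$'s (which is part of the definition of a $\mfp$-system) is exactly what makes the gluing step legitimate, and will be the single substantive ingredient invoked.
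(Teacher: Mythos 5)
Your proof is correct and follows essentially the same route as the paper: find a $k$-subset $v \subseteq u$ witnessing $\dim_\mfp(\pi_u(Y))$, lift to $a \in \pi_u(Y)$, use the fiber hypothesis to extract an $n$-subset $w$ disjoint from $u$, and glue the two realizations via the chain rule for $\otimes$ together with commutativity. The only cosmetic difference is the order of operations in the last step: the paper first picks a realization of $\mfp_w|_{A b_u}$ and then notes it lies in $\pi_w(Y_a)$, whereas you first pick $d \in Y_a$ and observe its projection is such a realization; both are equivalent by saturation and definability of the types.
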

\begin{proof}
%
%
%
%
%
Assume that $Y$ is definable over a small set of parameters $A$, and let $m := \dim_{\mfp}(\pi_u(Y))$. Then there is some $u^* \subseteq u, |u^*| = m$ such that 
$$\pi_{u^*}(Y)(\pi_u(Y)) = \pi_{u^*}(Y) \in \mfp_{u^*} = \bigotimes_{i \in u^*} \mfp_i.$$
Let $b_{u^*} = (b_{i} : i \in u^*) \models \mfp_{u^*}|_{A}$. As $b_{u^*} \in \pi_{u^*}(Y)(\pi_u(Y))$, there exist some $(b_i : i \in u \setminus u^*)$ so that $b_u := (b_i : i \in u) \in \pi_u(Y)$. Then by assumption $\dimp(Y_{b_{u}}) \geq n$, that is for some $v^* \subseteq v := [s] \setminus u$ with $|v^*| \geq  n$ we have $\pi_{v^*}(Y_{b_{u}}) \in  \mfp_{v^*} := \bigotimes_{i \in v^*} \mfp_i$. Let $b_{v^*} = (b_i : i \in v^*) \models \mfp_{v^*}|_{A b_u}$, and let $w := u^* \sqcup v^*$.
Since the types $(\mfp_{i} : i \in w)$ are stationary and commuting, it follows that $b_{w} := (b_i : i \in w) \models \mathfrak{p}_{w}|_{A}$ for $p_w := \bigotimes_{i \in u^* \sqcup v^*} \mfp_i$. As $b_{v^*} \in \pi_{v^*}(Y_{b_{u}})$, there exists some $(b_i : i \in v \setminus v^*)$ so that $(b_i : i \in v) \in Y_{b_u}$, hence $(b_i : i \in [s]) \in Y$. Thus $b_w \in \pi_{w}(Y)$, hence $\pi_{w}(Y) \in \mathfrak{p}_{w}$, and $|w| \geq m + n$ --- which shows that $\dim_{\mfp}(Y) \geq m + n$, as required.
\end{proof}
	
\subsection{Fiber-algebraic relations and $\mfp$-irreducibility}\label{sec: pirreducible}

\begin{defn}
	Given a definable set $Y \subseteq \prod_{i \in [s]}X_i$  and a small set of parameters $C\subseteq \CM$ so that $Y$ is defined over $C$, we
say that $Y$ is \emph{$\mfp$-irreducible over $C$} if there do not exist disjoint sets $Y_1,Y_2$ definable over $C$ with $Y = Y_1 \cup Y_2$ and $\dimp(Y_1) = \dimp(Y_2) = \dimp(Y)$.

We say that $Y$ is \emph{absolutely $\mfp$-irreducible} if it is irreducible
over any small set $C \subseteq \CM$ such that $Y$ is defined over $C$.
\end{defn}

\begin{rem}\label{rem: irred iff unique gen type} It follows from the definition of $\mfp$-dimension that a definable set $Y \subseteq  X_1 \times \ldots \times X_s$ is $\mfp$-irreducible over $C$ if and only if any two
  tuples generic in $Y$  over $C$ have the same type over $C$.
\end{rem}

\begin{lem}\label{lem: fin many types of full dim in Q}
	If $Q(\bar{x}) \subseteq X_1\times \dotsc \times X_s$ is fiber-algebraic of degree $\leq d$, then the set
	\begin{gather*}
		 \left\{q \in S_{\bar{x}}(\CM) :  Q \in q \textrm{ and } \dimp(q) \geq s-1 \right\}
	\end{gather*}
	has cardinality at most $sd$.
\end{lem}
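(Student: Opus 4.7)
The plan is to bound the types by stratifying them according to which $(s-1)$-element projection is $\mfp$-generic. Fix $i \in [s]$ and set $u_i := [s] \setminus \{i\}$. Let
\[ S_i := \left\{ q \in S_{\bar x}(\CM) : Q \in q,\ \pi_{u_i}(\bar x) \in \mfp_{u_i} \text{ on } q \right\}, \]
where $\mfp_{u_i} := \bigotimes_{j \in u_i} \mfp_j$. By Definition \ref{def: p-gen, dim, etc} and Remark \ref{rem: p-gen tuple def}, every $q$ with $\dimp(q) \geq s-1$ lies in some $S_i$, so it suffices to show $|S_i| \leq d$ for each $i$.

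So fix $i$ and let $q \in S_i$ with realization $\bar a = (a_1, \ldots, a_s) \models q$ in $\MM$. By definition of $S_i$, the subtuple $\bar a_{u_i} := (a_j)_{j \neq i}$ realizes $\mfp_{u_i}|_{\CM}$. Since each $\mfp_j$ is a stationary type over $\CM$ and the $(\mfp_j)_{j \in [s]}$ commute (as $(X_j, \mfp_j)_{j \in [s]}$ is a $\mfp$-system), the product type $\mfp_{u_i}$ is a single complete type over $\CM$. Hence $\tp(\bar a_{u_i}/\CM)$ is determined, independently of the choice of $q \in S_i$.

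Now the fiber-algebraicity of $Q$ (of degree $\leq d$) gives, for this fixed tuple $\bar a_{u_i}$ (realized in $\MM$), at most $d$ possible values $b \in X_i$ with $Q(\bar a_{u_i} \oplus b)$ holding. Each $q \in S_i$ must assign $a_i$ to be one of these finitely many values, so $\tp(a_i / \CM, \bar a_{u_i})$ takes at most $d$ values, and consequently $q = \tp(\bar a / \CM)$ takes at most $d$ values. Thus $|S_i| \leq d$, and summing over $i \in [s]$ gives the bound $sd$. The only subtle point is using stationarity of $\mfp_{u_i}$ to pin down a single type for the projection; once that is in hand, fiber-algebraicity handles the remaining coordinate.
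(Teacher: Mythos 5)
Your argument is correct, and it takes a genuinely different route from the paper's. You partition the relevant types by which $(s-1)$-element projection realizes the full product type $\mfp_{u_i}$, then for each $i$ fix a single realization $\bar a_{u_i} \models \mfp_{u_i}|_{\CM}$ in $\MM$ and use fiber-algebraicity plus the fact that $\tp(\bar a_{u_i} \oplus b / \CM)$ is determined by $b$ to cap $|S_i|$ at $d$. One small point worth spelling out: different $q \in S_i$ have different realizations in $\MM$; to compare them you should fix one realization $\bar c_{u_i}$ of $\mfp_{u_i}|_{\CM}$ and invoke strong homogeneity of $\MM$ to realize each $q \in S_i$ with $u_i$-projection equal to $\bar c_{u_i}$ (this is legitimate precisely because $\pi_{u_i}(q) = \mfp_{u_i}$ is the same complete type for all $q \in S_i$). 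The paper instead argues by contradiction entirely within $\CM$: it takes $sd+1$ hypothetical types, separates them by formulas $\psi_i$, pigeonholes to find $d+1$ of them sharing a coordinate $k'$, and then uses definability of the $\mfp_\ell$'s to find a witness tuple in $\CM$ itself and $d+1$ distinct completions there, contradicting fiber-algebraicity. Your approach is shorter and more conceptual, at the cost of working in the monster and leaning on homogeneity; the paper's approach stays inside $\CM$ and exercises the definability of the types, which is in the same spirit as the surrounding machinery (e.g.\ Claim~\ref{claim:p-dim-def}). Both yield the same bound $sd$.
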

\begin{proof}
	Assume towards a contradiction that $q_1, \ldots, q_{sd+1}$ are pairwise different types in this set. Then there exist some formulas $\psi_i(\bar{x})$ with parameters in $\CM$ such that $\psi_i(\bar{x}) \in q_i$ and $\psi_i(\bar{x}) \rightarrow \neg \psi_j(\bar{x})$ for all $i \neq j \in [sd+1]$. Let $C \subseteq \CM$ be the (finite) set of the parameters of $Q$ and $\psi_i, i \in [sd+1]$. For each $i \in [sd+1]$, as $\left( \psi_i(\bar{x}) \land Q(\bar{x}) \right) \in q_i$, we have $\dimp \left(\psi_i(\bar{x})  \land Q (\bar{x})\right) \geq s-1$, which by definition of $\mfp$-dimension implies $\exists x_k \left( \psi_i(\bar{x}) \land Q(\bar{x}) \right) \in \bigotimes_{\ell \in [s] \setminus \{k\}} \mfp_{\ell}$ for at least one $k \in [s]$. By pigeonhole, there must exist some $k' \in [s]$ and some $u \subseteq [sd+1]$ such that $|u| \geq d+1$ and $\exists x_{k'} \left( \psi_i(\bar{x}) \land Q(\bar{x}) \right) \in \bigotimes_{\ell \in [s] \setminus \{k' \}} \mfp_{\ell}$ for all $i \in u$. Now let $\bar{a} = (a_{\ell} : \ell \in [s] \setminus \{k'\})$ be a tuple in $\CM$ satisfying $\bar{a} \models \left( \bigotimes_{\ell \in [s] \setminus \{k'\}} \mfp_{\ell} \right)|_{C}$. By the choice of $u$, for each $i \in u$ there exists some $b_i$ in $\CM$ such that $\left( \psi_i \land Q \right) (a_1, \ldots, a_{k'-1}, b_i, a_{k'+1}, \ldots, a_{s})$ holds. By the choice of the formulas $\psi_i$, the elements $(b_i : i \in u)$ are pairwise distinct, and $|u| > d$ --- contradicting that $Q$ is fiber-algebraic of degree $d$.
\end{proof}

\begin{cor}\label{cor: decomp into abs irred sets}
	Every fiber-algebraic $Q \subseteq X_1\times \dotsc \times X_s$ of degree $\leq d$ is a union of at most $sd$ absolutely $\mfp$-irreducible sets (which are  then automatically fiber-algebraic, of degree $\leq d$).
\end{cor}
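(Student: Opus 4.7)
The plan is to isolate each of the finitely many types of maximum $\mfp$-dimension inside $Q$ by means of a separating formula. First observe that $\dimp(Q) \leq s-1$: a $\mfp$-generic tuple in $X_1 \tdt X_s$ over any small $A$ would have all $(s-1)$-coordinate projections $\mfp$-generic, forcing infinitely many preimages in the remaining coordinate and contradicting fiber-algebraicity of degree $d$. Hence Lemma~\ref{lem: fin many types of full dim in Q} provides an enumeration $q_1, \dotsc, q_N \in S_{\bar{x}}(\CM)$, with $N \leq sd$, of all complete types over $\CM$ containing $Q$ of $\mfp$-dimension $\geq s-1$, and these are exactly the types in $Q$ of maximum $\mfp$-dimension.

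Since the $q_i$ are pairwise distinct, for each $i$ one extracts by compactness an $\CM$-formula $\varphi_i(\bar x)$ with $\varphi_i \in q_i$ and $\neg \varphi_i \in q_j$ for all $j \neq i$. Let $C_0 \subseteq \CM$ be finite, containing the parameters of $Q$ and of each $\varphi_i$, and set $Y_i := Q \cap \varphi_i$, a $C_0$-definable subset of $Q$. The leftover $R := Q \setminus (Y_1 \cup \dotsb \cup Y_N)$ implies $\neg\varphi_i$ for every $i$, so $R$ is omitted by every $q_j$; consequently $\dimp(R) < s-1$. Absorbing $R$ into $Y_1$ (keeping it $C_0$-definable and still containing $q_1$ as its only type of $\mfp$-dimension $\geq s-1$) yields $Q = Y_1 \cup \dotsb \cup Y_N$, and each $Y_i \subseteq Q$ automatically inherits fiber-algebraicity of degree $\leq d$.

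The main step is verifying absolute $\mfp$-irreducibility of each $Y_i$. Fix any small $C \supseteq C_0$ and suppose, toward contradiction, $Y_i = Z_1 \sqcup Z_2$ with $Z_1, Z_2$ both $C$-definable and $\dimp(Z_j) = s-1$. Each $Z_j$ then contains a type $r_j \in S_{\bar x}(C)$ with $\dimp(r_j) = s-1$, witnessed by some size-$(s-1)$ projection realizing $\bigotimes_{k \in u_j} \mfp_k|_{C}$. The delicate point — and what I expect to be the only obstacle — is to lift $r_j$ to a type over $\CM$ \emph{without} losing $\mfp$-dimension; this is the place where stability is genuinely used. Since each $\mfp_k$ is $\emptyset$-definable and stationary, so is $\bigotimes_{k \in u_j} \mfp_k$, and its unique non-forking extension from $C$ to $\CM$ is $\bigotimes_{k \in u_j} \mfp_k|_\CM$. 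Realizing $r_j$ by its non-forking extension over $\CM$ therefore preserves $\mfp$-genericity of the projection onto $u_j$, producing an extension $r_j' \in S_{\bar x}(\CM)$ with $\dimp(r_j') \geq s-1$ and $Z_j \in r_j'$. Hence $r_j' \in \{q_1,\dotsc,q_N\}$, and the separating formulas (together with $\dimp(R) < s-1$ handling the absorbed piece in the case $i = 1$) force $r_1' = r_2' = q_i$. But then $q_i$ contains both $Z_1$ and $Z_2$, contradicting their disjointness.
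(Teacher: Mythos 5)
Your proposal is correct and follows essentially the same route as the paper: enumerate the (at most $sd$) types over $\CM$ containing $Q$ of $\mfp$-dimension $\geq s-1$ via Lemma~\ref{lem: fin many types of full dim in Q}, separate them by formulas, and deduce absolute $\mfp$-irreducibility of each piece from the uniqueness of its generic type over any small base (Remark~\ref{rem: irred iff unique gen type}). You spell out two points the paper leaves implicit --- absorbing the leftover of $\mfp$-dimension $<s-1$ into one piece so that the pieces actually cover $Q$, and the dimension-preserving lift of a type over $C$ to one over $\CM$ via the canonical extension of the stationary $\emptyset$-definable product type --- which is a reasonable elaboration rather than a different argument (though note that this lift needs only definability and stationarity of the $\mfp_k$'s, as built into the $\mfp$-system definition, not stability of $T$ at large, which is only assumed from Section~\ref{sec:setting-3} on).
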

\begin{proof}
Let $(q_i : i \in [D])$ be an arbitrary enumeration of the set
$$ \left\{q \in S_{\bar{x}}(\CM) :  Q \in q \land  \dimp(q) \geq s-1 \right\},$$ we have $D \leq sd$
 by Lemma \ref{lem: fin many types of full dim in Q}. We can choose formulas $(\psi_i(\bar{x}) : i \in [D])$  with parameters over $\CM$ such that $\psi_i(\bar{x}) \in q_i$ and $\psi_i(\bar{x}) \rightarrow \neg \psi_j(\bar{x})$ for all $i \neq j \in [D]$. Let $Q_i(\bar{x}) := Q(\bar{x}) \land \psi_i(\bar{x})$, then $Q = \bigsqcup_{i \in [D]} Q_i$ and  each $Q_i$ is absolutely $\mfp$-irreducible (by Remark \ref{rem: irred iff unique gen type}, as every generic tuple in $Q_i$ over a small set $C$ has the type $q_i|_C$).
 \end{proof}

 \begin{lem}\label{lem: irred implies coordwise irred}
 	If $Q \subseteq \prod_{i \in [s]}X_i$ is $\mfp$-irreducible over a small set of parameters $C$ and $\dimp(Q) = s-1$, then for any $i \in [s]$ and any tuple $\bar{a} = (a_j : j \in [s] \setminus \{i\})$ which is $\mfp$-generic in $\prod_{j \in [s] \setminus \{i\} } X_j$ over $C$ (i.e.~$\bar{a} \models (\bigotimes_{j\in [s] \setminus \{i\} } \mfp_j)|_C$), if $Q(a_1,\dotsc,a_{i-1}, x_i,a_{i+1},\dotsc,a_s)$ is consistent then it implies a complete type over $C\cup\left\{a_j : j \in [s] \setminus \{i\}\right\}$.
 	\end{lem}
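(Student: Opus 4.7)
The plan is to argue by contradiction using the characterization of $\mfp$-irreducibility in terms of uniqueness of generic types (Remark \ref{rem: irred iff unique gen type}).

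Suppose for contradiction that $Q(a_1,\dotsc,a_{i-1},x_i,a_{i+1},\dotsc,a_s)$ is consistent but does not isolate a complete type over $C \cup \{a_j : j \neq i\}$. Then there exist two elements $b_1,b_2$ both satisfying this formula with $\tp(b_1/C\bar{a}) \neq \tp(b_2/C\bar{a})$. Set $\bar{c}_\ell := (a_1,\dotsc,a_{i-1},b_\ell,a_{i+1},\dotsc,a_s) \in Q$ for $\ell = 1,2$.

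The key computation is to show that each $\bar{c}_\ell$ is $\mfp$-generic in $Q$ over $C$. Since $\pi_{[s]\setminus\{i\}}(\bar{c}_\ell) = \bar{a}$ is $\mfp$-generic in $\prod_{j \neq i} X_j$ over $C$ by hypothesis, Definition \ref{def: p-gen, dim, etc} gives $\dimp(\bar{c}_\ell/C) \geq s-1$. On the other hand, $\bar{c}_\ell \in Q$ forces $\dimp(\bar{c}_\ell/C) \leq \dimp(Q) = s-1$. So $\dimp(\bar{c}_\ell/C) = s-1 = \dimp(Q)$, meaning $\bar{c}_\ell$ is generic in $Q$ over $C$ in the sense of Remark \ref{rem: p-gen tuple def}.

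Now apply $\mfp$-irreducibility of $Q$ over $C$: by Remark \ref{rem: irred iff unique gen type} any two generic tuples of $Q$ over $C$ realize the same type over $C$, so $\tp(\bar{c}_1/C) = \tp(\bar{c}_2/C)$. Pick an automorphism $\sigma$ of $\MM$ fixing $C$ pointwise with $\sigma(\bar{c}_1) = \bar{c}_2$; since both tuples agree on the coordinates indexed by $[s]\setminus\{i\}$, $\sigma$ fixes $\bar{a}$ and sends $b_1$ to $b_2$, giving $\tp(b_1/C\bar{a}) = \tp(b_2/C\bar{a})$, a contradiction. The only step that requires any care is the dimension computation establishing genericity of the $\bar{c}_\ell$'s in $Q$, and this is immediate from the definitions combined with the hypothesis $\dimp(Q) = s-1$.
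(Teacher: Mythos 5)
Your proof is correct and follows essentially the same route as the paper's: both argue by contradiction from the existence of two distinct types extending the formula, and both reduce to irreducibility via a dimension computation showing the relevant objects have full $\mfp$-dimension $s-1$ (using that $\bar{a}$ is already $\mfp$-generic on the remaining $s-1$ coordinates). The only cosmetic difference is that the paper phrases the contradiction syntactically, producing disjoint $C$-definable full-dimensional subsets $Q\wedge\varphi_1$ and $Q\wedge\neg\varphi_1$ directly against the definition of irreducibility, whereas you go through the equivalent characterization of Remark~\ref{rem: irred iff unique gen type} (uniqueness of the generic type) and finish with an automorphism fixing $C\bar{a}$ — which is fine, since $\sigma(\bar{c}_1)=\bar{c}_2$ coordinate-wise does fix $\bar a$ pointwise.
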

\begin{proof}
Otherwise there exist two types $r_t \in S_{x_i}(C \bar{a}), t \in \{1,2\}$ such that $r_1 \neq r_2$ and $Q(a_1, \ldots, a_{i-1}, x_i, a_{i+1}, \ldots, a_s) \in r_t$ for both $t \in \{1,2\}$. Then there exist some formulas $\varphi_t(\bar{x}), t \in \{1,2\}$ with parameters in $C$ such that $\varphi_t(a_1, \ldots, a_{i-1}, x_i, a_{i+1}, \ldots, a_s) \in r_t$, $\varphi_1(\bar{x}) \rightarrow \neg \varphi_{2}(\bar{x})$ and $\varphi_2(\bar{x}) \rightarrow \neg \varphi_{1}(\bar{x})$. In particular, by assumption on $\bar{a}$,
$$\dimp \left( Q(\bar{x}) \land \varphi_{t}(\bar{x}) \right) \geq s-1$$
 for both $t \in \{1,2\}$ --- contradicting irreducibility of $Q$ over $C$.\end{proof}

\subsection{On general position}
\label{sec:general-position-1}

We recall the notion of general position from Definition \ref{defn: gen pos intro}, specialized to the case of $\mfp$-dimension.

\begin{defn}
	Let  $(X,\mfp)$ be a $\mfp$-pair, and  let $\CF$ be a
definable family of subsets of $X$.
For $\nu\in \NN$, we say that a set $A\subseteq X$ is in \emph{$(\CF,\nu)$-general position}
if for every $F\in \CF$    with $\dimp(F) = 0$  we have $|A\cap F|
\leq \nu$.
\end{defn}
We extend this notion to  cartesian products of $\mfp$-pairs.
\begin{defn}
For sets $X_1\ttimes X_2 \ttimes \dotsb \ttimes X_s$ and an integer
$n\in  \NN$,  by an \emph{$n$-grid on $X_1\ttimes \dotsb \ttimes X_s$}
we mean a set of the form $A_1\ttimes A_2\ttimes\dotsb \ttimes A_s$
with $A_i \subseteq X_i$ and $|A_i| \leq n$ for all $i\in [s]$.	
\end{defn}

\begin{defn}\label{def: p-gen pos}
	Let $s\in \NN$ and $(X_i,\mfp_i)$,  $i\in [s]$,  be $\mfp$-pairs.
Let $\vec \CF$ be a definable system of subsets of  $(X_i)$,
$i\in [s]$, i.e.~$\vec\CF=(\CF_1,\dotsc,\CF_s)$  where each
 $\CF_i$ is a definable family of subsets of $X_i$.
 For  $\nu\in \NN$, we say
that a grid $A_1 \times \dotsb \ttimes A_s$ on $X_1\ttimes\dotsb\ttimes X_s$
is in
\emph{$(\vec\CF,\nu)$-general position} if each $A_i$ is in
  $(\CF_i,\nu)$-general position.
\end{defn}

We will need a couple of auxiliary lemmas bounding the size of the intersection of sets in a definable family with finite grids in terms of their $\mfp$-dimension.

\begin{lem}\label{lem:general-position}
  Let $s \in \mathbb{N}_{\geq 1}$,
$(X_i,\mfp_i)_{i\in [s]}$ a $\mfp$-system,  and $\CG$ a definable family of subsets of
$X_1\tdt X_s$ such that $\dimp(G)=0$ for every $G\in \CG$. Then  there is a definable system of
subsets    $\vec \CF=(\CF_1,\dotsc, \CF_s)$ such that: for any finite
grid $A=A_1\tdt A_s$ on $X_1\times \dotsb \times X_s$  in $(\vec
\CF,\nu)$-general position and any $G\in \CG$
 we have    $|G\cap  A| \leq \nu^s$.
\end{lem}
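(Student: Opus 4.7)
My plan is to prove the lemma by induction on $s$, using the super-additivity of $\mathfrak{p}$-dimension (Lemma \ref{lem:sub}) as the main engine.

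The base case $s=1$ is immediate: take $\mathcal{F}_1 := \mathcal{G}$. Since every $G \in \mathcal{G}$ has $\dim_\mathfrak{p}(G) = 0 < 1 = \dim_\mathfrak{p}(X_1)$, general position gives $|A_1 \cap G| \leq \nu = \nu^1$.

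The heart of the argument is the following observation, which I would establish first: if $G \subseteq X_1 \times \dotsb \times X_s$ has $\dim_\mathfrak{p}(G) = 0$, then for \emph{every} $a' \in \prod_{i < s} X_i$, the fiber $G_{a'} \subseteq X_s$ satisfies $\dim_\mathfrak{p}(G_{a'}) = 0$. Indeed, let $Y' := \{a' \in \prod_{i<s}X_i : \dim_\mathfrak{p}(G_{a'}) \geq 1\}$, which is definable by Claim \ref{claim:p-dim-def}. Each $a' \in Y'$ has $G_{a'}$ nonempty, so $Y' \subseteq \pi_{[s-1]}(G)$. Setting $G'' := G \cap (Y' \times X_s)$, we have $\pi_{[s-1]}(G'') = Y'$, and every fiber $(G'')_{a'} = G_{a'}$ for $a' \in Y'$ has $\dim_\mathfrak{p} \geq 1$. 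Lemma \ref{lem:sub} then gives $\dim_\mathfrak{p}(G'') \geq \dim_\mathfrak{p}(Y') + 1$, but $G'' \subseteq G$ forces $\dim_\mathfrak{p}(G'') \leq 0$, hence $\dim_\mathfrak{p}(Y') \leq -1$, i.e.\ $Y' = \emptyset$.

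For the inductive step, consider the definable family $\mathcal{G}' := \{\pi_{[s-1]}(G) : G \in \mathcal{G}\}$ of subsets of $X_1 \times \dotsb \times X_{s-1}$. Each element has $\dim_\mathfrak{p} \leq \dim_\mathfrak{p}(G) = 0$, so the inductive hypothesis yields a definable system $(\mathcal{F}_1, \dotsc, \mathcal{F}_{s-1})$ such that any grid $A_1 \times \dotsb \times A_{s-1}$ in $(\vec{\mathcal{F}}, \nu)$-general position satisfies $|\pi_{[s-1]}(G) \cap (A_1 \times \dotsb \times A_{s-1})| \leq \nu^{s-1}$. Now define
\[
\mathcal{F}_s := \{G_{a'} : G \in \mathcal{G},\ a' \in \textstyle\prod_{i<s}X_i\},
\]
which is a definable family of subsets of $X_s$; by the key observation, each such $G_{a'}$ has $\dim_\mathfrak{p} = 0$.

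Finally, for any grid $A = A_1 \times \dotsb \times A_s$ in $(\vec{\mathcal{F}}, \nu)$-general position and any $G \in \mathcal{G}$, I would sum fiberwise:
\[
|G \cap A| = \!\!\sum_{a' \in \pi_{[s-1]}(G) \cap (A_1 \times \dotsb \times A_{s-1})}\!\! |A_s \cap G_{a'}| \leq \nu^{s-1} \cdot \nu = \nu^s,
\]
using the inductive bound on the number of fibers and general position with respect to $\mathcal{F}_s$ to bound each $|A_s \cap G_{a'}| \leq \nu$. The only delicate step is the fiber-dimension observation, which is where super-additivity (Lemma \ref{lem:sub}) is indispensable; everything else is bookkeeping about definability of the auxiliary families.
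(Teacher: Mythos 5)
Your proof is correct, but it takes a genuinely different route from the paper's. The paper's argument is a one-step containment observation: take $\CF_i := \{\pi_i(G) : G \in \CG\}$; since $\dimp(G)=0$ forces $\dimp(\pi_i(G))=0$ for each coordinate projection, and since $G \subseteq \prod_{i\in[s]}\pi_i(G)$, one gets $|G\cap A| \leq \prod_i |\pi_i(G)\cap A_i| \leq \nu^s$ immediately, with no induction and no appeal to Lemma \ref{lem:sub}. Your version instead inducts on $s$, decomposes $G$ fiber-by-fiber over the last coordinate, and must first establish the (true, and nicely argued) fact that \emph{every} fiber $G_{a'}\subseteq X_s$ inherits $\dimp=0$ — which is exactly where you invoke super-additivity. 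Both arrive at the same bound, but the paper's coordinate-projection trick sidesteps the entire fiber analysis: it buys simplicity and avoids a dependence on Lemma \ref{lem:sub}, while your induction scheme is the natural one to mimic if one later wants the more refined relative bound (this is essentially what the paper does in the proof of Lemma \ref{lem:general-position-2}, which does proceed by fiberwise induction using \ref{lem:sub}). So your approach is more machinery-heavy for this particular lemma, but it is in the spirit of the next lemma in the paper and would generalize the same way.
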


\begin{proof}
%
%
%
%
%
%
%
%
%

Assume that $\CG$  is a definable family of subsets
 $X_1\tdt X_{s}$ with $\dimp(G)=0$ for all $G\in \CG$. For $i \in [s]$ and $G \in \mathcal{G}$, we let $G_i := \pi_i(G)$, note that still $\dimp(G_i) = 0$. Let  $\mathcal{F}_i := \left\{G_i : G \in \mathcal{G} \right\}$, we claim that then $\vec{\mathcal{F}} := \left(\mathcal{F}_1, \ldots, \mathcal{F}_s \right)$ satisfies the requirements.
 
Indeed, let $A=A_1\tdt A_{s}$ be a finite grid on $X_1\times \dotsb \times X_{s}$  in $(\vec
\CF,\nu)$-general position.
Let $G\in \CG$ be arbitrary. As $G_i \in \mathcal{F}_i$ with $\dimp(G_i) = 0$, by assumption we have $|G_i \cap A_i|\leq \nu$ for every $i \in [s]$. As $G \subseteq \prod_{i \in [s]} G_i$, we have 
$$G \cap  \left( \prod_{i \in [s]}A_i \right) \subseteq \left( \prod_{i \in [s]} G_i \right) \cap \left(  \prod_{i \in [s]} A_i \right) =  \prod_{i \in [s]} (G_i \cap A_i),$$
hence $\left \lvert G \cap  \prod_{i \in [s]}A_i \right \rvert \leq \nu^s$, as required.
\end{proof}

\begin{lem}\label{lem:general-position-2}
 Let $s \in \mathbb{N}_{\geq 1}$ and $(X_i,\mfp_i)_{i\in [s]}$  be a $\mfp$-system,  and $\CG$ a definable family of subsets of
$X_1\tdt X_s$.   Assume that for some $0 \leq k\leq s$ we have
$\dimp(G)\leq k$ for every $G\in \CG$.  Then  there is a definable system  $\vec \CF=(\CF_1,\dotsc, \CF_s)$ of subsets of $X_1 \times \ldots \times X_s$ such that: for any $\nu$ and any 
$n$-grid $A=A_1\tdt A_s$ on $X_1\times \dotsb \times X_s$  in $(\vec
\CF,\nu)$-general position,  for every  $G\in \CG$
 we have    $|G\cap  A| \leq s^k \nu^{s-k} n^k $.
\end{lem}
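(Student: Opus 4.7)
The plan is to proceed by induction on $s$, handling all $0 \leq k \leq s$ simultaneously at each stage. The base case $s=1$ is immediate: if $k=0$ apply Lemma \ref{lem:general-position}, and if $k=1$ the trivial bound $|G \cap A_1| \leq n$ already matches $s^k \nu^{s-k} n^k$. For the inductive step from $s-1$ to $s$, the case $k=0$ is again Lemma \ref{lem:general-position}, so assume $k \geq 1$.

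For each $G \in \CG$ define
\[
B^G := \{a \in X_1 : \dimp(G_a) \geq k\}.
\]
By Claim \ref{claim:p-dim-def}, $\{B^G : G \in \CG\}$ is a definable family of subsets of $X_1$, and by Lemma \ref{lem:sub} each $B^G$ satisfies $\dimp(B^G) = 0$ (otherwise $\dimp(G) \geq 1 + k$, contradicting the hypothesis). Put this family into $\CF_1$. Next, consider the definable family of fibers $\CG' := \{G_a : G \in \CG,\ a \in X_1\}$ of subsets of $X_2 \times \dotsb \times X_s$. Because $\pi_u(G_a) \subseteq \pi_u(G)$ for any $u \subseteq [s] \setminus \{1\}$, we automatically have $\dimp(G_a) \leq \dimp(G) \leq k$ for every $a$. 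Split $\CG'$ (using Claim \ref{claim:p-dim-def} again) into two definable subfamilies: $\CG'_k$ of fibers with $\dimp \leq k$ (this is all of $\CG'$) and $\CG'_{k-1}$ of fibers with $\dimp \leq k-1$.

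By the inductive hypothesis applied to $\CG'_k$ and to $\CG'_{k-1}$, we obtain definable systems $\vec{\CF}' = (\CF_1', \dotsc, \CF_{s-1}')$ and $\vec{\CF}'' = (\CF_1'', \dotsc, \CF_{s-1}'')$ of subsets of $X_2, \dotsc, X_s$ such that any $n$-grid $(A_2, \dotsc, A_s)$ in the corresponding $(\cdot, \nu)$-general position satisfies
\[
|G_a \cap (A_2 \times \dotsb \times A_s)| \leq (s-1)^k \nu^{s-1-k} n^k \quad\text{when } G_a \in \CG'_k,
\]
\[
|G_a \cap (A_2 \times \dotsb \times A_s)| \leq (s-1)^{k-1} \nu^{s-k} n^{k-1} \quad\text{when } G_a \in \CG'_{k-1}.
\]
Setting $\CF_i := \CF_{i-1}' \cup \CF_{i-1}''$ for $i \geq 2$ gives the required definable system $\vec{\CF} = (\CF_1, \dotsc, \CF_s)$.

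For any $n$-grid $A = A_1 \times \dotsb \times A_s$ in $(\vec{\CF}, \nu)$-general position and any $G \in \CG$, we have $|A_1 \cap B^G| \leq \nu$ (since $B^G \in \CF_1$ with $\dimp(B^G) = 0$), while for every $a \in A_1 \setminus B^G$ we have $G_a \in \CG'_{k-1}$. Splitting the sum over $A_1$ and combining the two inductive bounds yields
\begin{align*}
|G \cap A| &\leq \nu \cdot (s-1)^k \nu^{s-1-k} n^k + n \cdot (s-1)^{k-1} \nu^{s-k} n^{k-1} \\
&= (s-1)^{k-1}\bigl((s-1)+1\bigr) \nu^{s-k} n^k = s(s-1)^{k-1} \nu^{s-k} n^k \leq s^k \nu^{s-k} n^k,
\end{align*}
completing the induction. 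The arithmetic balances precisely because the factor $n$ lost on the ``low-dimensional'' fibers over $A_1 \setminus B^G$ is compensated by the factor $\nu$ gained from $|A_1 \cap B^G| \leq \nu$ on the ``high-dimensional'' fibers; the principal technical care is only in checking that every auxiliary family is definable, which is provided uniformly by Claim \ref{claim:p-dim-def}.
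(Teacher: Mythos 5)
Your proof is essentially the same approach as the paper's: partition $X_1$ for each $G$ into the exceptional set $B^G$ where the fiber still has $\mfp$-dimension $\geq k$ (and which, by Lemma~\ref{lem:sub}, has $\mfp$-dimension $0$), versus its complement where fibers drop to dimension $\leq k-1$, and then recurse on $s$. The paper phrases this as a recurrence $C(k,s,\nu)\leq \nu C(k,s-1,\nu)+C(k-1,s-1,\nu)$ and unwinds it somewhat laboriously, asserting at the end that the closed form $s^k\nu^{s-k}$ follows; you instead verify the closed form directly via $s(s-1)^{k-1}\leq s^k$, which is tidier and fills in a step the paper leaves to the reader.

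One small gap: your inductive step, as written, breaks down at $k=s$. There $B^G=\emptyset$, so the ``high-dimensional'' term vanishes, but you formally invoke the inductive hypothesis on $\CG'_k$ with parameter $k=s$ in a lemma for $s-1$ factors, which only allows $k'\leq s-1$ (the expression $\nu^{(s-1)-k}=\nu^{-1}$ is already a sign something is off). The fix is immediate --- either handle $k=s$ as a separate trivial base case with $\vec\CF=\emptyset$ and $|G\cap A|\leq n^s\leq s^s n^s$, as the paper does, or observe that since every fiber has $\dimp\leq s-1$, one should apply the inductive hypothesis at $\min(k,s-1)$. Either way the argument closes; I'm flagging it because as written the $k=s$ case does not type-check.
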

\begin{proof}

 Given $s \geq k$ and $\nu$, we let $C(k,s,\nu)$ be the smallest number in $\mathbb{N}$ (if it exists) so that the bound $|G\cap  A| \leq C(k,s, \nu) n^k $ holds (with respect to all possible $\mfp$-systems $(X_i,\mfp_i)_{i\in [s]}$ and definable families $\CG$). We will show that  $C(k,s,\nu) \leq s^k \nu^{s-k}$ for all $s \geq k \geq 0$ and $\nu$.

  For any $s \in \mathbb{N}_{\geq 1}$ and $k=0$, the claim holds by Lemma \ref{lem:general-position} with $C(0,s,\nu) = \nu^{s}$.
  For any $s \in \mathbb{N}_{\geq 1}$ and $k=s$, the claim trivially holds with $C(s,s, \nu) = 1$ (and $\CF_i = \emptyset, i \in [s]$).

  We fix $s > k \geq 1$ and assume that the claim holds for all pairs $s' \geq k' \geq 0$ with either $s' < s$ or $k' < k$.
 Assume that $\dim_{\mfp}(G) \leq k$ for every $G \in \CG$.
 Given $G \in \CG$, let $G' := \{g \in \pi_1(G) : \dim_{\mfp}(G_g) \geq k \}$. Then $\CF_1 := \{G' : G \in \CG \}$ is a definable family of subsets of $X_1$ by Claim \ref{claim:p-dim-def}. By assumption and Lemma \ref{lem:sub} we have $\dim_{\mfp}(G') = 0$ for every $G \in \CG$.
  Let
 \begin{gather*}
 	 \CG^* := \{ G_g : G \in \CG \land g \in \pi_1(G) \},\\
 	 \CG^*_{< k} := \{ G_g : G \in \CG \land g \in \pi_1(G) \land \dim_{\mfp}(G_g) < k \}.
 \end{gather*}
\noindent Both $\CG^*$ and $\CG^*_{<k}$ (by Claim \ref{claim:p-dim-def}) are definable families of subsets of $\prod_{2 \leq i \leq s} X_i$, all sets in $\CG^*$ have $\mfp$-dimension $\leq k$, and all sets in $\CG^*_{<k}$ have $\mfp$-dimension $\leq k-1$. Applying the $(k,s-1)$-induction hypothesis,
let $\vec \CF^* = \left(\CF^*_i : 2 \leq i \leq s \right)$ be a definable system of subsets of $X_2 \times \ldots \times X_s$ satisfying the conclusion of the lemma with respect to $\CG^*$. Applying the $(k-1, s-1)$-induction hypothesis, let $\vec \CF^{*}_{<k} = \left(\CF^*_{<k,i} : 2 \leq i \leq s \right)$ be a definable system of subsets of $X_2 \times \ldots \times X_s$ satisfying the conclusion of the lemma with respect to $\CG^*_{<k}$.
 We let $\vec{\CF} = (\CF_i : i \in [s])$ be a definable system of subsets of $X_1 \times \ldots \times X_{s}$, with $\CF_1$ defined above and $\CF_i := \CF^{*}_i \cup \CF^{*}_{<k,i}$ for $2 \leq i \leq s$.

Let now $\nu \in \mathbb{N}$ and $A=A_1\tdt A_{s}$ be a finite grid on $X_1\times \dotsb \times X_{s}$  in $(\vec
\CF,\nu)$-general position.
Let $G\in \CG$ be arbitrary.
As $G' \in \CF_0$, we have in particular that $|G' \cap A_1| \leq \nu$, and by the choice of $\vec{\CF}^*$, for every $g \in G' \cap A_1$ we have $|G_g \cap (A_2 \times \ldots \times A_s)| \leq C(k, s-1, \nu) n^{k}$.
And by the choice of $\vec{\CF}^*_{<k}$, for every $g \in A_1 \setminus G'$, we have $|G_g \cap (A_2 \times \ldots \times A_s)| \leq C(k-1, s-1, \nu) n^{k-1}$.
Combining, we get
\begin{gather*}
	|G \cap (A_1 \times \ldots \times A_s)| \leq \\
	 \nu C(k, s-1, \nu) n^{k} + (n-\nu)C(k-1, s-1, \nu) n^{k-1}
	 \leq \\
	 \big( \nu C(k, s-1, \nu) + C(k-1, s-1, \nu) \big) n^{k}.
	\end{gather*}
 This establishes a recursive bound on $C(k,s,\nu)$. Given $s \geq k \geq 1$, we can repeatedly apply this recurrence for $s, s-1, \ldots, k$, and using that $C(s,s,\nu) = 1$ for all $s,\nu$ we get that
 $$C(k,s,\nu) \leq \nu^{s-k} + \sum_{i=1}^{s-k} \nu^{i-1} C(k-1, s-i, \nu)$$
  for any $s \geq k \geq 1$. Using that $C(0,s,\nu) = \nu^s$ for all $s,\nu$ and iterating this inequality for $0,1, \ldots, k$, it is not hard to see that $C(s,k,\nu) \leq  s^k \nu^{s-k} $ for all $s,k,\nu$.
\end{proof}

\subsection{Main theorem:  the statement and some reductions}
\label{sec:setting-3}

From now on we will assume additionally that the theory $T$ is stable and
eliminates imaginaries, i.e.~$T = T^{\eq}$ (we refer to e.g.~\cite{tent2012course} for a general exposition of stability).
As before, $\CM$ is an $|\CL|^+$-saturated  model of $T$, $\mathbb{M}$ is a monster model of $T$, and we assume that $(X_i, \mfp_i)_{i \in [s]}$ is a $\mfp$-system in $\CM$, with each $\mfp_i$ non-algebraic. ``Definable'' means ``definable with parameters in $\mathcal{M}$''. As usual, if $X$ is a definable set, a family $\mathcal{F}$ of subsets of $X$ is definable if there exist definable sets $Y, F \subseteq X \times Y$ so that $\mathcal{F} = \left\{ F_b : b \in Y \right\}$.

\begin{rem}
Note that if $Q\subseteq X_1\tdt X_s$ is a
fiber-algebraic  relation of degree $d$, then for any $n$-grid
$A \subseteq \prod_{i \in [s]}X_i$ we have
\[ |Q\cap  A |\leq  d n^{s-1} =O_d(n^{s-1}). \]	
\end{rem}

  \gdef\CQ{\mathcal{Q}}

\begin{defn}\label{def: power saving} Let $\mathcal{Q}$ be a definable family of subsets of $X_1\tdt X_s$.
\begin{enumerate}
  \item  Given a real $\varepsilon > 0$, we say that $\mathcal{Q}$ admits \emph{$\varepsilon$-power saving}
if there exist 
definable families
  $\CF_i$ on $X_i$,   such
  that for  $\vec\CF=(\CF_i) _{i\leq s}$ and  any $\nu \in \NN$, for any
$n$-grid $A=A_1\times \dotsb \times A_s$ on $X_1 \times \dotsb \times X_s$ in
$(\vec\CF,\nu)$-general position and any $Q \in \mathcal{Q}$ we have
\[|Q\cap A | = O_\nu \left(n^{
    (s{-}1)-\varepsilon} \right). \] 

 \item   We say that $\mathcal{Q}$ \emph{admits power saving}\footnote{We are following the terminology in \cite{Bays}.} if it admits $\varepsilon$-power saving for some $\varepsilon > 0$.
 \item     We say that a relation $Q \subseteq X_1 \times \ldots \times X_s$ admits $(\varepsilon$-)power saving if the family $\mathcal{Q} := \{ Q\}$ does.
\item  We say that $Q$ is \emph{special} if it  is fiber-algebraic and does not admit power-saving.
  \end{enumerate}
  \end{defn}
  
\begin{lemma}\label{lem: union power saving}
	Assume $\CQ, \CQ_1, \ldots, \CQ_m$ are definable families of subsets of $X_1\tdt X_s$ and $\varepsilon >0$ is such that each $\CQ_t$ satisfies $\varepsilon$-power saving. Assume that for every $Q \in \CQ$, $Q = \bigcup_{t \in [m]}Q_t$ for some $Q_t \in \CQ_t$. Then $\CQ$ also satisfies $\varepsilon$-power saving.
\end{lemma}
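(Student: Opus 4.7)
The plan is to combine the witnessing data for the individual families $\CQ_t$ into a single witness for $\CQ$. For each $t \in [m]$, let $\bigl(\CF_i^{(t)}\bigr)_{i \in [s]}$ be the definable system on $(X_i)_{i \in [s]}$ witnessing that $\CQ_t$ admits $\varepsilon$-power saving, and let $C_t(\nu)$ be the implicit constant in the bound $|Q_t \cap A| \le C_t(\nu) n^{(s-1)-\varepsilon}$ for every $Q_t \in \CQ_t$ and every $n$-grid $A$ in $(\vec\CF^{(t)},\nu)$-general position.

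First I would merge the families coordinate-wise. For each $i \in [s]$, define $\CF_i := \bigcup_{t \in [m]} \CF_i^{(t)}$; this is again a definable family of subsets of $X_i$, obtained for example by indexing over a disjoint union of the parameter sets of the $\CF_i^{(t)}$ in $\CM^{\eq}$. The key observation is monotonicity of general position: if $A_i \subseteq X_i$ is in $(\CF_i,\nu)$-general position, then $A_i$ is in $(\CF_i^{(t)},\nu)$-general position for every $t$, because $\CF_i^{(t)} \subseteq \CF_i$ and the $\mfp$-dimension of a set does not depend on which family it is viewed as lying in. Consequently, any $n$-grid $A$ in $(\vec\CF,\nu)$-general position is in $(\vec\CF^{(t)},\nu)$-general position for every $t \in [m]$.

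Now given $Q \in \CQ$, by hypothesis we may write $Q = \bigcup_{t \in [m]} Q_t$ for some $Q_t \in \CQ_t$. For any $n$-grid $A$ in $(\vec\CF,\nu)$-general position we then estimate
\[
|Q \cap A| \;\le\; \sum_{t \in [m]} |Q_t \cap A| \;\le\; \sum_{t \in [m]} C_t(\nu)\, n^{(s-1)-\varepsilon} \;=\; C(\nu)\, n^{(s-1)-\varepsilon},
\]
where $C(\nu) := \sum_{t \in [m]} C_t(\nu)$. This shows $|Q \cap A| = O_\nu\bigl(n^{(s-1)-\varepsilon}\bigr)$, with the implicit constant independent of the particular $Q \in \CQ$, so $\CQ$ admits $\varepsilon$-power saving with witness $\vec\CF$.

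I do not expect a significant obstacle here: the only mildly delicate point is verifying that a single definable family $\CF_i$ can simultaneously accommodate the finitely many $\CF_i^{(t)}$, which is handled routinely by passing to $\CM^{\eq}$ (available since $T = T^{\eq}$) to form a disjoint union of their parameter sorts.
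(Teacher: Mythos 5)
Your proposal is correct and follows essentially the same route as the paper's proof: take the coordinatewise union of the witnessing families, observe that general position with respect to the union implies general position with respect to each original family, and sum the resulting bounds with constant $C := \sum_t C_t$.
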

\begin{proof}
	Assume each $\CQ_t, t \in [m]$ satisfies $\varepsilon$-power saving, i.e.~there exist
	definable families
  $\CF_{t,i}$ on $X_i$ and functions $C_t: \mathbb{N} \to \mathbb{N}$ so that letting $\vec\CF_t=(\CF_{t,i})_{i\leq s}$, 
	for every grid $A$ in $\left(\vec\CF_t , \nu \right)$-general position and every $Q_t \in \CQ_t$ we have  $|Q_t\cap A | \leq C_t(\nu) n^{
    (s{-}1)-\varepsilon}$. Let $\CF_{i} := \bigcup_{t \in [m]} \CF_{t,i}$, $\vec\CF=(\CF_{i})_{i\leq s}$ and $C := \sum_{t \in [m]}C_t$. Then for every grid $A$ in $\left(\vec\CF , \nu \right)$-general position and every $Q \in \CQ$ we have  $|Q\cap A | \leq C(\nu) n^{
    (s{-}1)-\varepsilon}$, as required.
    \end{proof}

 We recall Definition \ref{def: gen corresp intro}, specializing to $\mfp$-dimension.

 \begin{defn}\label{def: group corresp}
 	Let $Q \subseteq \prod_{i \in [s]} X_i$ be a definable relation and $(G, \cdot, 1_{G})$ a type-definable group in $\CM$ (over a small set of parameters $A$). We say that $Q$ is in a \emph{$\mfp$-generic correspondence with $G$} (over  $A$) if there exist elements $g_1, \ldots, g_s \in G(\CM)$ such that:
\begin{enumerate}
\item   $g_1\cdot
  \dotsc\cdot  g_s =1_{G}$;
 \item  $g_1,\dotsc,g_{s-1}$ are independent generics in $G$ over $A$ (in the usual sense of stable group theory);
 \item  for each $i \in [s]$ there is a generic element $a_i \in X_i$   realizing $\mfp_i|_{A}$ and inter-algebraic with $g_i$ over $A$,  such that $\CM \models Q(a_1, \ldots, a_s)$.
\end{enumerate}
 \end{defn}

\begin{rem}
	 If $Q$ is $\mfp$-irreducible over $A$, then (3) holds for all $g_1, \ldots, g_s \in G$ satisfying (1) and (2), providing a definable generic finite-to-finite correspondence between $Q$ and the graph of the $(s-1)$-fold multiplication in $G$.
\end{rem}

The following is the main theorem of the section characterizing special fiber-algebraic relations in stable reducts of distal structures.

 \begin{thm}\label{thm: stab main ineff}
 Assume that $\CM$ is an $|\CL|^+$-saturated $\CL$-structure, and $\Th(\CM)$ is stable and admits a distal expansion. Assume that $s \geq 3$, $(X_i, \mfp_i)_{i \in [s]}$ is a $\mfp$-system with each $\mfp_i$ non-algebraic and $Q\subseteq X_1\tdt X_s$ is a definable fiber-algebraic  relation.
  	Then at least one of the following holds.
 	\begin{enumerate}
 		\item $Q$ admits power saving.
 		
 		\item $Q$ is in a $\mfp$-generic correspondence with an abelian group $G$ type-definable in $\CM^{\eq}$ over a set of parameters of cardinality $\leq |\CL|$.
 	\end{enumerate}
 \end{thm}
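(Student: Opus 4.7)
The plan is to prove Theorem \ref{thm: stab main ineff} by first reducing to the case where $Q$ is absolutely $\mfp$-irreducible with $\dim_\mfp(Q) = s-1$, and then establishing a structural dichotomy on a generic tuple: either the tuple forms an abelian $s$-gon (yielding a group via Theorem \ref{thm: main ab mgon gives grp}) or $Q$ behaves as a ``pseudo-plane'' (yielding power saving via the $\gamma$-ST property from Proposition \ref{prop: ind ES}). First, by Corollary \ref{cor: decomp into abs irred sets} we can write $Q = \bigcup_{t\leq N} Q_t$ with each $Q_t$ absolutely $\mfp$-irreducible and fiber-algebraic; by Lemma \ref{lem: union power saving}, it suffices to handle each $Q_t$ separately, and I may assume $Q$ itself is absolutely $\mfp$-irreducible. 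If $\dim_\mfp(Q) < s-1$, then Lemma \ref{lem:general-position-2} (applied to the definable family consisting of $Q$ alone, bounding $|Q\cap A|$ by $O_\nu(n^{s-2})$) immediately delivers power saving with $\varepsilon = 1$, so I may assume $\dim_\mfp(Q) = s-1$.

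Next, fix a small set $A_0 \subseteq \CM$ over which $Q$ and the $\mfp_i$ are defined, and pick a tuple $\bar{a} = (a_1, \ldots, a_s)$ generic in $Q$ over $A_0$. By fiber-algebraicity and $\dim_\mfp(Q) = s-1$, combined with $\mfp$-irreducibility, any $s-1$ of the $a_i$'s are $\mfp$-independent and the remaining one is algebraic over them; hence $\bar{a}$ is an $s$-gon over $A_0$. Now I invoke the central dichotomy (the stable analog of Theorem (G) from the outline): either
\begin{enumerate}
\item[(i)] for every partition of $[s]$ into a pair $\{i,j\}$ and its complement, we have $a_i a_j \ind_{\acl(A_0 a_i a_j)\cap \acl(A_0 \bar{a}_{ij})} \bar{a}_{ij}$, i.e.\ $\bar a$ is an abelian $s$-gon; or
\item[(ii)] for some such partition, this fails, in which case---viewing $Q$ as a binary relation between $U := X_1 \times X_2$ and $V := X_3 \times \cdots \times X_s$---the fibers $Q_v \subseteq U$ intersect pairwise in low-$\mfp$-dimension for $v$ outside a $\mfp$-small exceptional set.
\end{enumerate}

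In case (ii), I will verify that $Q$ as a binary relation satisfies the hypothesis in Definition \ref{def: gamma ST property}: after extracting the exceptional low-dimensional set (definable by Claim \ref{claim:p-dim-def}) and absorbing it into an auxiliary family $\vec\CF$ via Lemma \ref{lem:general-position-2}, a generic grid point $u \in U$ has bounded fiber intersection with most other $u' \in U$. Since $Q$ is definable in $\CM$ and $\Th(\CM)$ admits a distal expansion, Fact \ref{fac: def in dis impl distal cell decomp} and Proposition \ref{prop: ind ES}(2) give that $Q$ satisfies the $\gamma$-ST property for some $\gamma > 0$ computed from its distal cell decomposition. Applied to $n$-grids of sizes $n^{s-2}$ and $n^2$ coming from the projection $|A_1 \times A_2| \leq n^2$ and $|A_3 \times \cdots \times A_s| \leq n^{s-2}$ (in general position, using Lemma \ref{lem:general-position-2} to bound the contribution of non-generic points), this yields $|Q \cap A| = O_\nu(n^{(s-1) - \gamma})$, i.e.\ $\gamma$-power saving.

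In case (i), assuming first $s \geq 4$, Theorem \ref{thm: main ab mgon gives grp} applied to the abelian $s$-gon $\bar a$ produces a connected type-definable abelian group $(G,\cdot)$ in $\CM^{\eq}$ over a set of parameters of cardinality $\leq |\CL|$, together with an associated abelian $s$-gon $(g_1, \ldots, g_s)$ of independent generics with $g_1 \cdots g_s = 1_G$, such that (after base change by a finite independent set) each $g_i$ is inter-algebraic with $a_i$; this is exactly a $\mfp$-generic correspondence of $Q$ with $G$ in the sense of Definition \ref{def: group corresp}. For $s = 3$, the dichotomy does not directly invoke Theorem \ref{thm: main ab mgon gives grp} (which requires $s \geq 4$); here I will follow the reduction of Section \ref{sec: stable main thm ternary} by passing to an auxiliary $4$-ary relation built from two independent copies of $Q$ glued along a common coordinate, show that power saving (resp.\ abelian group correspondence) for the $4$-ary relation transfers back to $Q$, and apply the $s=4$ case just established. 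The main obstacle is proving the dichotomy in case (i)/(ii): one must show, using stability-theoretic forking calculus within the $s$-gon and the definability of $\mfp$-dimension from Claim \ref{claim:p-dim-def}, that failure of $1$-basedness of the $s$-gon translates into the geometric pseudo-plane property of $Q$ at the level of fibers over a $\mfp$-generic base---and that this property is robust enough (via Lemma \ref{lem: irred implies coordwise irred}) to be fed into the $\gamma$-ST machinery.
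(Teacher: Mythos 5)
Your outline matches the paper's overall architecture: decompose $Q$ into absolutely $\mfp$-irreducible pieces via Corollary \ref{cor: decomp into abs irred sets}, reduce to $\dim_\mfp(Q)=s-1$, establish a dichotomy on generic tuples (abelian $s$-gon versus pseudo-plane), feed the pseudo-plane case into the $\gamma$-ST property (Proposition \ref{prop: ind ES}), feed the abelian $s$-gon into Theorem \ref{thm: main ab mgon gives grp} when $s\geq 4$, and reduce $s=3$ to $s=4$. The routing through the family version (Theorem \ref{rem: power saving bound, main stable}) in the paper is inessential to the logic, so your slightly more direct phrasing is fine.

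However, the proposal has a genuine gap precisely where you flag ``the main obstacle.'' The dichotomy — the content of Theorem \ref{thm:ser-main} — is stated but not argued, and this is where most of the work in the section lies. Concretely, what is missing is the construction of the tuple $\xi$ when the pseudo-plane alternative fails. In the paper this is done by introducing the definable sets $Z_\alpha(a)=\{a'\in U:\dimp(\varphi(a;v;\alpha)\cap\varphi(a';v;\alpha))=1\}$ and $Z_\alpha=\{a:\dimp(Z_\alpha(a))=s-3\}$, proving that $Z_\alpha(a)$ is itself fiber-algebraic of bounded degree (Claim \ref{claim-za-fiberalg-ser}, hence $\dimp\leq s-3$), and then, when $\dimp(Z_\alpha)=s-2$, selecting a $\mfp$-generic $e\in Z_\alpha$, a small model $\CM_0\ni e$, a $\mfp$-generic $\beta\in Z_\alpha(e)$ over $M_0$, and $\delta$ in the intersection of fibers, and taking $\xi$ to be the pair of canonical bases of the definable types $\tp(\beta/M_0)$ and $\tp(\delta/M_0)$; the inter-algebraicity claims $\xi_q\in\acl(\beta)$, $\xi_p\in\acl(\delta)$, $\xi_p\in\acl(\xi_q)$, $\xi_q\in\acl(\xi_p)$ (Claim \ref{claim:alpha-beta2}) then use fiber-algebraicity crucially. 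Your appeal to ``stability-theoretic forking calculus within the $s$-gon'' gives no indication of this canonical-base mechanism, which is the key new idea and is not recoverable from the rest of your sketch.

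A smaller but real omission: after reducing to $\dim_\mfp(Q)=s-1$, you claim the generic tuple $\bar a$ is automatically an $s$-gon ``by fiber-algebraicity and $\mfp$-irreducibility.'' This is not immediate: $\mfp$-irreducibility only gives a unique generic type, and it could happen that some $(s-1)$-coordinate projection of $Q$ is not $\mfp$-generic, in which case the corresponding subtuple of $\bar a$ does not realize the product of the $\mfp_i$'s. One must first discard the definable set of $Q$'s for which some projection fails to be $\mfp$-generic (this is the paper's Assumption \ref{ass: two}, handled by Proposition \ref{prop:1-save-ser}, giving $1$-power saving), or alternatively invoke Proposition \ref{prop:q-gen} under a ``no $1$-power saving'' hypothesis. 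Without this step the claim that $\bar a$ is an $s$-gon does not follow.
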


  The only property of distal structures actually used is that every definable binary relation in $\CM$ satisfies the $\gamma$-ST property (Definition  \ref{def: gamma ST property}) for some $\gamma > 0$, by Proposition \ref{prop: ind ES} and Fact \ref{fac: def in dis impl distal cell decomp}. 
   In fact, Theorem \ref{thm: stab main ineff} follows from the following more precise version with the additional uniformity in families and explicit bounds on power saving.
   \gdef\CQ{\mathcal{Q}}
\begin{defn}
Let $\CQ$ be a definable family of subsets $X_1\tdt X_s$.

\begin{enumerate}
	\item  We say that $\CQ$ is a \emph{fiber-algebraic} family if each
  $Q\in \CQ$ is fiber-algebraic.
\item  We say that $\CQ$ is an \emph{absolutely $\mfp$-irreducible fiber-algebraic family} if each
  $Q\in \CQ$ is $\mfp$-irreducible and fiber-algebraic
\end{enumerate}
 \end{defn}

\begin{rem} Let $\CQ$ be a definable fiber-algebraic family. By saturation of $\CM$ there is $d\in \NN$ such that every $Q\in \CG$  has degree $\leq d$. In this case we say that $\CQ$ is \emph{of degree $\leq d$}. 
  \end{rem}
  \gdef\CQ{\mathcal{Q}}
  \begin{thm}\label{rem: power saving bound, main stable}
  Assume that $\CM$ is an $|\CL|^+$-saturated $\CL$-structure and $\Th(\CM)$ is stable. Assume that $s \geq 4$, $(X_i, \mfp_i)_{i \in [s]}$ is a $\mfp$-system with each $\mfp_i$ non-algebraic, and let
  $\CQ$ be a fiber-algebraic definable family, and fix $0 < \gamma \leq 1$.
  \begin{itemize}
  	\item If $s \geq 4$, assume that there exist $m \in \mathbb{N}$ and definable families $\mathcal{Q}_i, i \in [m]$ of absolutely $\mfp$-irreducible sets so that for every $Q \in \CQ$ we have $Q = \bigcup_{i \in [m]}Q_i$ for some $Q_i \in \CQ_i$.  Assume also that for each $i \in [m], t_1 \neq t_2 \in [s]$, the family $\mathcal{Q}_i$ viewed as a definable family of subsets of $\left( X_{t_1} \times X_{t_2} \right) \times \left( \prod_{k \in [s] \setminus \{t_1, t_2\}} X_k\right)$ satisfies the $\gamma$-ST property.

  	\item If $s = 3$,   for each $i \in [m]$ and $\CQ_i$ as above, we additionally consider the definable family $\CQ^*_i := \left\{ Q^* : Q \in \CQ_i \right\}$ of subsets of $X_1 \times X_2 \times X_3 \times X_4$, where 
  		\begin{gather*}
  		Q^* := \Big\{(x_2,x'_2,x_3,x'_3) \in X_2 \times X_2 \times X_3 \times X_3 : \\
	 \exists x_1 \in X_1 \, \big((x_1,x_2,x_3) \in Q \land (x_1, x'_2, x'_3) \in Q \big) \Big\}.
  	\end{gather*}
  	Assume moreover that there exist $m_i \in \mathbb{N}, i \in [m]$ and definable families $\CQ_{i,j}$ for $i \in [m], j \in [m_i]$ so that for every $i \in [m], Q^* \in \CQ^*_{i}$ we have $Q^* = \bigcup_{j \in [m_i]} Q_{i,j}$ for some $Q_{i,j} \in \CQ_{i,j}$. Assume also that for each $i \in [m], j \in [m_i], t_1 \neq t_2 \in [4]$, the family $\mathcal{Q}_{i,j}$ viewed as a definable family of subsets of $\left( X_{t_1} \times X_{t_2} \right) \times \left( \prod_{k \in [4] \setminus \{t_1, t_2\}} X_k\right)$ satisfies the $2 \gamma$-ST property.

  \end{itemize}
  	Then there is a definable subfamily $\CQ'\subseteq \CQ$  such that the family 
$\CQ'$ admits $\gamma$-power saving, and for each $Q \in \CQ\setminus \CQ'$  the relation  $Q$ is in a $\mfp$-generic correspondence with an abelian group $G_Q$ type-definable in $\CM^{\eq}$ over a set of parameters of cardinality $\leq |\CL|$.
 \end{thm}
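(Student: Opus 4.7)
The plan is to derive Theorem \ref{rem: power saving bound, main stable} by combining the structural dichotomy of Theorem G (Theorem \ref{thm:ser-main}) with the group reconstruction of Theorem F (Theorem \ref{thm: main ab mgon gives grp}), being careful to preserve uniformity across the family $\CQ$.

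First I would reduce to absolutely $\mfp$-irreducible fiber-algebraic families. Corollary \ref{cor: decomp into abs irred sets} decomposes each $Q \in \CQ$ into at most $sd$ absolutely $\mfp$-irreducible fiber-algebraic pieces, and by Lemma \ref{lem: union power saving} it suffices to establish $\gamma$-power saving for each $\CQ_i$ separately (for the $Q$'s in $\CQ \setminus \CQ'$, the decomposition plays no role as we treat $Q$ directly). For each absolutely $\mfp$-irreducible $Q \in \CQ_i$ with $\dimp(Q) < s-1$, Lemma \ref{lem:general-position-2} already gives $|Q \cap A| = O_\nu(n^{s-2})$ on grids in general position, which beats $\gamma$-power saving for any $\gamma \leq 1$; so we may assume $\dimp(Q) = s-1$.

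Next, assume $s \geq 4$. Fix $Q \in \CQ_i$ of full dimension and a small base $C$ of definition. Pick a generic $\bar a = (a_1,\dotsc,a_s) \in Q$ over $C$: by fiber-algebraicity and Lemma \ref{lem: irred implies coordwise irred}, $\bar a$ is an $s$-gon over $C$. Now apply Theorem G (Theorem \ref{thm:ser-main}) to every splitting $(\{i,j\},[s]\setminus\{i,j\})$: either (a) for some pair the associated set $Z$ measuring the failure of the pseudo-plane condition has $\dim_\mfp < s-2$, in which case the $\gamma$-ST property for $\CQ_i$ (assumed as a hypothesis) applied to $Q$ as a binary relation $U \times V$ for $U = X_{t_1} \times X_{t_2}$, $V = \prod_{k \neq t_1,t_2} X_k$ yields, via the argument in Section \ref{sec:proof-theor-ser-main}, the bound $|Q \cap A| \leq C'(\nu) n^{(s-1)-\gamma}$ uniformly on $n$-grids in $(\vec\CF,\nu)$-general position; or (b) the abelian independence holds for \emph{every} such pair, so $\bar a$ is an abelian $s$-gon over $C$. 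In case (b), Theorem F (Theorem \ref{thm: main ab mgon gives grp}) produces, after a finite base change, a type-definable connected abelian group $G_Q$ in $\CM^{\eq}$ over a small parameter set and an abelian $s$-gon $(g_1,\dotsc,g_s)$ associated to $G_Q$ such that each $g_i$ is inter-algebraic with $a_i$; this gives precisely the $\mfp$-generic correspondence required.

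For the definability of $\CQ'$: the pseudo-plane failure condition ``$\dimp(Z) < s-2$ for some pair'' is a first-order condition on the parameter defining $Q$, by definability of $\mfp$-dimension in definable families (Claim \ref{claim:p-dim-def}) applied to the definable family of sets $Z$ associated to each $Q \in \CQ$ and each pair. Hence $\CQ'$ is a definable subfamily, and every $Q \in \CQ \setminus \CQ'$ falls under case (b) and admits the group correspondence. The case $s = 3$ is then reduced to $s = 4$ exactly as in \cite{MR3577370}: associate to each $Q \in \CQ_i$ the quaternary relation $Q^* \subseteq X_2^2 \times X_3^2$ of the hypothesis, which is fiber-algebraic, decomposes via the given families $\CQ_{i,j}$, and satisfies the $2\gamma$-ST hypothesis. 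Applying the already-established $s=4$ case to $Q^*$ with exponent $2\gamma$, a Cauchy--Schwarz style argument translates a $2\gamma$-power saving for $Q^*$ to a $\gamma$-power saving for $Q$, while an abelian group correspondence for $Q^*$ descends to one for $Q$ via the standard projection argument.

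The main obstacle is precisely the uniformity in $\CQ$: we need $\CQ'$ to be genuinely definable rather than an arbitrary union of subfamilies. This is resolved by noting that both the dichotomy of Theorem G and the power-saving bound obtained from the $\gamma$-ST property depend only on first-order conditions on the defining parameter of $Q$, via Claim \ref{claim:p-dim-def} and the uniform form of Lemma \ref{prop: Holder iteration} built into the definition of $\gamma$-ST. A secondary subtlety in the $s=3$ case is verifying that the group extracted from $Q^*$ (a relation on $X_2^2 \times X_3^2$) actually witnesses a $\mfp$-generic correspondence for the original ternary $Q$; this requires tracing the inter-algebraicity back through the fibers of the two projections $Q^* \to Q$.
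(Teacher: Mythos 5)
Your proposal tracks the paper's proof strategy closely: the key dichotomy from Theorem~\ref{thm:ser-main} applied to each pair of coordinates, group reconstruction via Theorem~\ref{thm: main ab mgon gives grp}, and reduction of $s=3$ to $s=4$ via the Cauchy--Schwarz argument (Lemma~\ref{lem: bound Q from Q'} and Lemma~\ref{lem: pwr save Q' implies Q}). The handling of definability of $\CQ'$ via Claim~\ref{claim:p-dim-def} and the role of the $\gamma$-ST hypothesis are also in line with the paper's Section~\ref{sec: MainThm for s geq 4} and Section~\ref{sec: stable main thm ternary}.

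There is, however, a gap in your reduction step that matters for obtaining the $s$-gon structure. You discard the sets with $\dim_{\mfp}(Q) < s-1$ and claim that a generic $\bar a \in Q$ over $C$ is then an $s$-gon ``by fiber-algebraicity and Lemma~\ref{lem: irred implies coordwise irred}.'' This is not enough: $\dim_{\mfp}(Q) = s-1$ only guarantees that \emph{some} $(s-1)$-subtuple of $\bar a$ is $\mfp$-generic (and hence independent over $C$), not that \emph{all} $(s-1)$-subtuples are independent --- which is what the $s$-gon condition requires. The paper instead reduces to the stronger Assumption~\ref{ass: two} (every projection of $Q$ onto $s-1$ coordinates is $\mfp$-generic) by discarding the definable subfamily $\CQ_0$ via Proposition~\ref{prop:1-save-ser}; under that assumption Proposition~\ref{prop:q-gen-ser} gives independence of every $(s-1)$-subtuple. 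The example after Proposition~\ref{prop:q-gen} shows these two conditions are genuinely different. Also, Theorem~\ref{thm:ser-main} is stated and proved in the running context of Assumption~\ref{ass: two}, so invoking it directly presupposes this reduction has been made. Your opening use of Corollary~\ref{cor: decomp into abs irred sets} is also slightly misplaced: that corollary decomposes a single $Q$ but does not by itself produce definable families $\CQ_i$; the uniform decomposition is already given as a hypothesis of the theorem (it is Corollary~\ref{cor: decomp into abs irred sets} together with definability arguments that is used to derive Theorem~\ref{thm: stab main ineff} \emph{from} this theorem, not in its proof). Both issues are minor and fixable but should be addressed for the argument to close.
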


 To see that Theorem \ref{thm: stab main ineff} follows from Theorem \ref{rem: power saving bound, main stable}, assume that a definable relation $Q$ is as in Theorem \ref{thm: stab main ineff}, and consider the definable family $\CQ := \left\{ Q \right\}$ consisting of a single element $Q$. 
  By Proposition \ref{prop: ind ES} and Fact \ref{fac: def in dis impl distal cell decomp} every definable family of binary relations in $\CM$ satisfies the $\gamma$-ST property (Definition  \ref{def: gamma ST property}) for some $\gamma > 0$. Moreover, by Corollary \ref{cor: decomp into abs irred sets}, if $Q \subseteq X_1 \times \ldots \times X_s$ is definable and fiber-algebraic of degree $\leq d$, we have $Q = \bigcup_{i \in [sd]} Q_i$ for some definable absolutely $\mfp$-irreducible sets $Q_i$. By distality, each $Q_i$ satisfies the $\gamma_i$-ST-property for some $\gamma_i > 0$.  Hence, taking $\CQ_i := \{ Q_i\}$, $m := sd$ and $\gamma := \min \{\gamma_i : i \in [m]\} > 0$, the assumption of Theorem \ref{rem: power saving bound, main stable} is satisfied for $s \geq 4$. If $s = 3$, note that each $Q_i$ is still fiber-algebraic of degree $d$, hence each $Q'_i \subseteq X_1 \times \ldots \times X_4$ is fiber-algebraic, of degree $\leq d^2$ by Lemma \ref{lem: Q' is fiber alg}. By Corollary \ref{cor: decomp into abs irred sets} again, for each $i$ we have $Q'_i = \bigcup_{j \in [4d^2]} Q_{i,j}$ for some definable absolutely $\mfp$-irreducible sets $Q_{i,j}$, each satisfying the $\gamma_{i,j}$-ST-property for some $\gamma_{i,j} > 0$. Hence, taking $m_i := 4d^2$, $\CQ_{i,j} := \left\{ Q_{i,j}\right\}$ and $\gamma := \min \{\gamma_{i,j} : i \in [m], j \in [m_i]\} > 0$, the assumption of Theorem \ref{rem: power saving bound, main stable} is satisfied for $s =3$. In either case, let $\CQ'$ be as given by applying Theorem \ref{rem: power saving bound, main stable}. If $\CQ' = \CQ$, then $Q$ is in Case (1) of Theorem \ref{thm: stab main ineff}. Otherwise $\CQ' = \emptyset$, and $Q$ is in Case (2) of Theorem \ref{thm: stab main ineff}.
%
%

In the rest of the section we give a proof of Theorem \ref{rem: power saving bound, main stable} (which will also establish Theorem \ref{thm: stab main ineff}).
In fact, first we will prove a special case of Theorem \ref{rem: power saving bound, main stable} for definable families of absolutely $\mfp$-irreducible sets and $s \geq 4$ (Theorem \ref{thm: stab main ser}), and then derive full Theorem \ref{rem: power saving bound, main stable} from it in Section \ref{sec: MainThm for s geq 4} (for $s \geq 4$) and Section \ref{sec: stable main thm ternary} (for $s = 3$). We begin with some auxiliary observations and reductions.

%

\begin{ass}\label{ass: stab ass 1}
For the rest of Section \ref{sec: main thm stable},   we assume that $s \in \mathbb{N}_{\geq 3}$ (even though some of the results below make sense for $s \in \mathbb{N}_{\geq 1}$),  $\CM$ is $|\CL|^+$-saturated, $(X_i, \mfp_i)_{i \in [s]}$ is a $\mfp$-system with each $\mfp_i$ non-algebraic, and $X_i$ is a $\emptyset$-definable. ``Definable'' will mean ``definable with parameters in $\mathcal{M}$''
\end{ass}

\begin{lem}\label{lem: fib alg impl dim s-1}
If $Q\subseteq X_1\tdt X_s$ is fiber-algebraic then $\dim_{\mfp}(Q) \leq s-1$.

\end{lem}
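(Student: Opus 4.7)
The plan is to prove the contrapositive of the reverse: assume $\dim_{\mfp}(Q) = s$ and derive a contradiction with fiber-algebraicity combined with the non-algebraicity of each $\mfp_i$.

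Suppose $Q$ is defined over a small set $C \subseteq \CM$ and that $\dim_{\mfp}(Q) = s$. By Remark \ref{rem: p-gen tuple def} and the definition of $\mfp$-dimension, this means there exists a tuple $\bar{a} = (a_1, \ldots, a_s)$ with $\bar{a} \in Q$ and $\bar{a} \models (\mfp_1 \otimes \ldots \otimes \mfp_s)|_C$. In particular, $a_s \models \mfp_s|_{C a_1 \ldots a_{s-1}}$, so $a_s \notin \acl(C a_1 \ldots a_{s-1})$ since $\mfp_s$ is non-algebraic (here I use that any stationary non-algebraic type has no algebraic realizations over any extension of its base).

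On the other hand, since $Q$ is fiber-algebraic of some degree $d$, the formula $Q(a_1, \ldots, a_{s-1}, x_s) \land x_s \in X_s$ has at most $d$ realizations. As $a_s$ satisfies this formula, $a_s \in \acl(C a_1 \ldots a_{s-1})$, which is the desired contradiction.

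Hence no such $\bar{a}$ exists, giving $\dim_{\mfp}(Q) \leq s-1$. The argument is essentially immediate once one unwinds the definition of $\dim_{\mfp}$ in terms of the product type and uses that each $\mfp_i$ is non-algebraic; there is no real obstacle. The only mildly subtle point is recalling that $\dim_{\mfp}(Q) = s$ is exactly the condition that $Q$ belongs to the product type $\mfp_1 \otimes \ldots \otimes \mfp_s$, so that a generic tuple of $X_1 \times \ldots \times X_s$ actually lies in $Q$.
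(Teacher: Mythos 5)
Your proposal is correct and follows essentially the same argument as the paper: both exploit fiber-algebraicity to conclude that $Q(a_1,\ldots,a_{s-1},x_s)$ has finitely many solutions, contradicting the non-algebraicity of $\mfp_s$. The paper phrases it by directly observing that $Q(a_1,\ldots,a_{s-1},x_s)\notin\mfp_s$, whereas you pick a full generic tuple in $Q$ and derive the contradiction via algebraic closure, but the underlying idea is identical.
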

\begin{proof}
	Let $(a_1, \ldots, a_{s-1}) \models \bigotimes_{i \in [s-1]} \mfp_i|_{A}$, where $A$ is some finite set such that $Q$ is $A$-definable. The type $\mfp_s$ is non-algebraic by Assumption \ref{ass: stab ass 1}, and $Q(a_1, \ldots, a_{s-1},x_s)$ has at most $d$ solutions.  Hence necessarily 
	$$Q(a_1, \ldots, a_{s-1},x_s) \notin \mfp_s,$$
	 so $Q(x_1, \ldots,x_s) \notin  \bigotimes_{i \in [s]} \mfp_i$.
\end{proof}

  The following is straightforward by definition of fiber-algebraicity.
  \begin{lem}\label{lem: bdd fib alg grid using proj}  Let $Q\subseteq X_1\tdt X_s$ be a
fiber-algebraic  relation of degree $\leq d$ and $u \subseteq [s]$ with $|u| = s-1$.
    Let $\pi_{u}$ be the projection from $X_1\tdt X_s$ onto $\prod_{i \in u} X_i$.   Let $A=A_1\tdt A_s$ be a grid on
    $X_1\tdt X_s$.   Then
\[ |Q\cap A| \leq d \left\lvert \pi_{u}(Q) \cap \prod_{i \in u} A_i \right\rvert. \]
  \end{lem}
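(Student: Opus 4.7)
The plan is to prove this by a straightforward fiber-counting argument, directly from the definition of fiber-algebraicity. Let $j$ denote the unique index in $[s] \setminus u$. The projection $\pi_u$ restricts to a map
\[
\pi_u \colon Q \cap A \;\longrightarrow\; \pi_u(Q) \cap \textstyle\prod_{i \in u} A_i,
\]
and the goal is to bound the size of each fiber of this restriction by $d$; the stated inequality then follows immediately by summing over the image.

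For a fixed $\bar b = (b_i)_{i \in u} \in \pi_u(Q) \cap \prod_{i \in u} A_i$, the fiber $\pi_u^{-1}(\bar b) \cap (Q \cap A)$ consists of all tuples of the form $\bar b \oplus (a_j)$ with $a_j \in A_j \subseteq X_j$ and $\bar b \oplus (a_j) \in Q$. By Definition \ref{def: fiber alg} applied with the index $i := j$, there are at most $d$ elements $a_j \in X_j$ satisfying $\bar b \oplus (a_j) \in Q$, so in particular there are at most $d$ such $a_j$ lying in $A_j$. Hence $|\pi_u^{-1}(\bar b) \cap (Q \cap A)| \leq d$, and summing yields the claimed bound.

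I do not expect any obstacle here: the statement is essentially a reformulation of fiber-algebraicity, and no forking calculus, $\mfp$-dimension properties, or distal machinery enter. The only mild care needed is to observe that $A_j \subseteq X_j$, so the bound from Definition \ref{def: fiber alg} (which is stated quantifying over $X_j$) applies verbatim after restricting to $A_j$.
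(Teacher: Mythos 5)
Your proof is correct and is precisely the straightforward fiber-counting argument the paper has in mind when it labels this lemma as immediate from the definition (no proof is given in the paper). Nothing to add.
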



\begin{prop}\label{prop:1-save-ser}
  Let $\CQ$ be a definable family of fiber-algebraic subsets  of $X_1\tdt X_s$. Let $u\subseteq [s]$ with $u=s-1$.
  Assume that for every $Q\in \CQ$ the projection $\pi_u(Q)$ onto $\prod_{i\in u} X_i$ is not $\mfp$-generic.
  Then $\CQ$ admits $1$-power saving. 
\end{prop}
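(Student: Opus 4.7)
The plan is to reduce the bound on $|Q \cap A|$ for $Q \in \CQ$ to a bound on $|\pi_u(Q) \cap \prod_{i \in u} A_i|$ using fiber-algebraicity, and then exploit the $\mfp$-dimension hypothesis on $\pi_u(Q)$ via the general-position machinery already established.

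First I would note that, by saturation, the definable fiber-algebraic family $\CQ$ has degree $\leq d$ for some fixed $d \in \mathbb{N}$, and that $\{\pi_u(Q) : Q \in \CQ\}$ is a definable family of subsets of $\prod_{i \in u} X_i$ (over the same parameters as $\CQ$). By hypothesis each $\pi_u(Q)$ fails to be $\mfp$-generic in $\prod_{i \in u} X_i$, so $\dim_{\mfp}(\pi_u(Q)) \leq s-2$ for every $Q \in \CQ$.

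Next I would apply Lemma~\ref{lem:general-position-2} to the family $\{\pi_u(Q) : Q \in \CQ\}$ with ambient product $\prod_{i \in u} X_i$ (of length $s-1$) and $k := s-2$. This produces a definable system $\vec{\CF}^{u} = (\CF^u_i)_{i \in u}$ of subsets of the $X_i$'s for $i \in u$ such that, for every $\nu \in \mathbb{N}$ and every $n$-grid $\prod_{i \in u} A_i$ on $\prod_{i \in u} X_i$ in $(\vec{\CF}^u, \nu)$-general position, and every $Q \in \CQ$,
\[
\Bigl|\pi_u(Q) \cap \prod_{i \in u} A_i \Bigr| \leq (s-1)^{s-2} \, \nu \, n^{s-2}.
\]
Define now the definable system $\vec{\CF} = (\CF_i)_{i \in [s]}$ on $X_1 \times \dotsb \times X_s$ by $\CF_i := \CF^u_i$ for $i \in u$ and $\CF_i := \emptyset$ for the unique index $i \in [s] \setminus u$ (so that the $(\vec{\CF},\nu)$-general position hypothesis on an $n$-grid $A = A_1 \times \dotsb \times A_s$ is exactly $(\vec{\CF}^u,\nu)$-general position of its projection).

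Finally, for any $Q \in \CQ$ and any such grid $A$, Lemma~\ref{lem: bdd fib alg grid using proj} (applied with the index set $u$) yields
\[
|Q \cap A| \;\leq\; d \Bigl|\pi_u(Q) \cap \prod_{i \in u} A_i \Bigr| \;\leq\; d (s-1)^{s-2} \nu \, n^{s-2} \;=\; O_\nu\bigl(n^{(s-1)-1}\bigr),
\]
which is precisely $1$-power saving in the sense of Definition~\ref{def: power saving}, with the uniform constant depending only on $s$, $d$, and $\nu$. There is no real obstacle here; the only point requiring any care is the bookkeeping that the definable system $\vec{\CF}^u$ provided by Lemma~\ref{lem:general-position-2} depends only on the family $\CQ$ (and hence is uniform in $Q \in \CQ$), which is exactly what that lemma delivers.
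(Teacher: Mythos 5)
Your proposal is correct and follows essentially the same route as the paper's proof: apply Lemma~\ref{lem:general-position-2} to the definable family of projections $\{\pi_u(Q) : Q \in \CQ\}$ to get a uniform bound $O_\nu(n^{s-2})$ on $|\pi_u(Q) \cap \prod_{i\in u}A_i|$ for grids in general position, and then lift this to a bound on $|Q\cap A|$ via fiber-algebraicity using Lemma~\ref{lem: bdd fib alg grid using proj}. The only cosmetic difference is in the constant coming out of Lemma~\ref{lem:general-position-2} (you write $(s-1)^{s-2}\nu$, applying the lemma with $s-1$ coordinates and $k=s-2$, while the paper writes $s^{s-2}\nu^2$ — yours is the more careful bookkeeping), but this has no bearing on the conclusion since only the $n^{s-2}$ asymptotic matters.
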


\begin{proof}
By Lemma \ref{lem:general-position-2}  there exists a definable system  $\vec \CF_u^*=(\CF_i : i \in u)$ of subsets of $\prod_{i \in u} X_i$ such that for any $\nu \in  \mathbb{N}$, for any
$n$-grid $A^*$ on $\prod_{i \in u}X_i$  in $(\vec
\CF_u^*,\nu)$-general position, for any $Q\in \CQ$
we have    $|\pi_u(Q) \cap  A^*| \leq s^{s-2} \nu^{2} n^{s-2}$.
Let $d\in \NN$ be such that $\CQ$ is of degree $\leq d$. 
Taking $\CF_i := \emptyset$ for $i \in [s] \setminus u$, let $\vec \CF_u := \{\CF_i : i \in [s]\}$. Then by Lemma \ref{lem: bdd fib alg grid using proj}, for any
$n$-grid $A$ on $\prod_{i \in [s]}X_i$  in $(\vec
\CF,\nu)$-general position, for any $Q\in \CQ$
 we have    $|Q\cap  A| \leq d s^{s-2} \nu^{2} n^{s-2} = O_{\nu}(n^{s-2})$, hence the family  $\CQ$ admits $1$-power saving.
\end{proof}

The following is the main theorem for definable families of absolutely irreducible sets:
\begin{thm}\label{thm: stab main ser}
  Assume that $\CM$ is an $|\CL|^+$-saturated $\CL$-structure and $\Th(\CM)$ is stable. Assume that $s \geq 4$, $(X_i, \mfp_i)_{i \in [s]}$ is a $\mfp$-system with each $\mfp_i$ non-algebraic, and let
  $\CQ$ be a fiber-algebraic definable family of absolutely $\mfp$-irreducible subsets of $X_1\tdt X_s$. Assume that  for some $0 < \gamma \leq 1$, for each $i \neq j \in [s]$, $\mathcal{Q}$ viewed as a definable family of subsets of $\left( X_i \times X_j \right) \times \left( \prod_{k \in [s] \setminus \{i,j\}} X_k\right)$ satisfies the $\gamma$-ST property.
  	Then there is a definable subfamily $\CQ'\subseteq \CQ$  such that the family 
$\CQ'$ admits $\gamma$-power saving, and for each $Q \in \CQ\setminus \CQ'$  the relation  $Q$ is in a $\mfp$-generic correspondence with an abelian group $G_Q$ type-definable in $\CM^{\eq}$ over a set of parameters of cardinality $\leq |\CL|$.
 \end{thm}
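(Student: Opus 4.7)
The plan is to establish a family-uniform version of the dichotomy in Theorem (G) from the introduction, and then apply Theorem \ref{thm: main ab mgon gives grp} on the abelian side. Fix $Q \in \CQ$, defined over a finite set of parameters $A_Q \subseteq \CM$. Since $Q$ is absolutely $\mfp$-irreducible and fiber-algebraic, $\dim_{\mfp}(Q) = s-1$ (Lemma \ref{lem: fib alg impl dim s-1}), and by Remark \ref{rem: irred iff unique gen type} there is a unique complete type $q_Q$ over $A_Q$ containing $Q$ of $\mfp$-dimension $s-1$. Pick $\bar a = (a_1, \ldots, a_s) \models q_Q$: by fiber-algebraicity and $\mfp$-genericity, any $s-1$ of the $a_i$ are independent and the remaining one lies in $\acl(A_Q, \bar a_{-i})$, so $\bar a$ is an $s$-gon over $A_Q$.

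The key step is the following dichotomy, applied separately for each partition $\{i,j\} \sqcup [s]\setminus\{i,j\}$ of $[s]$. Writing $u = (a_i,a_j)$ and $v = \bar a_{-\{i,j\}}$, view $Q$ as a binary relation on $U \times V$ where $U = X_i \times X_j$ and $V = \prod_{k \notin \{i,j\}} X_k$. Consider the definable set
\[
Z_{i,j}(Q) := \bigl\{(u,v,v') \in U \times V \times V : Q(u,v) \land Q(u,v') \land \dim_{\mfp}\bigl(Q_v \cap Q_{v'}\bigr) > 0 \bigr\}
\]
(more precisely, the version used in \S\ref{sec:proof-theor-ser-main}). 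I claim that either $\dim_{\mfp}(Z_{i,j}(Q)) < s-2$, in which case the fiber-intersection hypothesis of Definition \ref{def: gamma ST property} holds for $Q$ with $X=U$, $Y=V$; or else, via forking calculus on the generic tuple $\bar a$, one obtains $a_i a_j \ind_{\acl_{A_Q}(a_i a_j)\cap \acl_{A_Q}(\bar a_{-\{i,j\}})} \bar a_{-\{i,j\}}$. The verification here uses absolute $\mfp$-irreducibility (Lemma \ref{lem: irred implies coordwise irred}) to pass from $\mfp$-generic behavior to the canonical base-type computation.

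Let $\CQ' \subseteq \CQ$ be the subfamily of those $Q$ for which $\dim_{\mfp}(Z_{i,j}(Q)) < s-2$ for \emph{some} partition $\{i,j\}$. Definability of $\CQ'$ follows from Claim \ref{claim:p-dim-def}. For $Q \in \CQ'$: I apply Lemma \ref{lem:general-position-2} to the definable family of fibers $\{Q_v : v \in V\}$ to set up general position; the small-$\mfp$-dimension of $Z_{i,j}(Q)$, together with a pigeonhole on fibers, yields the hypothesis ``at most $C(\nu) n^{s-4}$ bad $u'$ per $u$'' required in Definition \ref{def: gamma ST property}, with $|A| \leq n^{s-2}$, $|B| \leq n^2$. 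The assumed $\gamma$-ST property for the family $\CQ$ (over the partition in question) then delivers $|Q \cap \bar A| \leq C'(\nu) n^{(s-1)-\gamma}$, so $\CQ'$ admits $\gamma$-power saving; Lemma \ref{lem: gamma-ST prop props} ensures this is uniform over $\CQ'$.

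For $Q \in \CQ \setminus \CQ'$, the opposite alternative holds for \emph{every} partition $\{i,j\}$, and hence $\bar a$ is an abelian $s$-gon over $A_Q$ in the sense of Definition \ref{def: m-gon}. Since $s \geq 4$, Theorem \ref{thm: main ab mgon gives grp} applies: after a finite base change to some $C_Q$ with $\bar a \ind_{A_Q} C_Q$, there is a connected abelian group $(G_Q, \cdot)$ type-definable over $\acl(A_Q \cup C_Q)$ and an abelian $s$-gon $(g_1, \ldots, g_s)$ associated to $G_Q$ with $g_i$ inter-algebraic with $a_i$ over $\acl(A_Q \cup C_Q)$. This is exactly the $\mfp$-generic correspondence of Definition \ref{def: group corresp}: conditions (1) and (2) hold by construction, and condition (3) follows since each $a_i \models \mfp_i|_{A_Q}$ (as $\bar a$ is $\mfp$-generic) and inter-algebraicity is preserved. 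The parameter set has cardinality $\leq |\CL|$ as needed.

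The main obstacle, and the bulk of the forthcoming \S\ref{sec:proof-theor-ser-main}, is the dichotomy step: in particular, translating the algebraic statement ``$\dim_{\mfp}(Z_{i,j}(Q)) \geq s-2$'' into the forking-theoretic equality needed for the abelian $m$-gon condition, uniformly and for arbitrary permutations of the coordinates, without destroying $\mfp$-genericity of $\bar a$ when passing between them. The rest is bookkeeping: extracting definability of $\CQ'$ from Claim \ref{claim:p-dim-def} and formatting the $\gamma$-ST input in the form of Definition \ref{def: gamma ST property}.
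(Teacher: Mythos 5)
Your overall plan — dichotomy into a pseudo-plane case (power saving) versus a universally abelian case (abelian $s$-gon, then apply Theorem \ref{thm: main ab mgon gives grp}) — is the same as the paper's, and the use of Claim \ref{claim:p-dim-def} for definability of $\CQ'$ and the permutation argument for the abelian condition are correct. However, there are two genuine gaps.

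First, you skip the preliminary reduction to Assumption \ref{ass: two}. Absolute $\mfp$-irreducibility plus fiber-algebraicity only gives $\dim_\mfp(Q) \leq s-1$ (Lemma \ref{lem: fib alg impl dim s-1}), not equality, and it certainly does not guarantee that projections onto $s-1$ coordinates are $\mfp$-generic. Without that, $\bar a \models q_Q$ need not be an $s$-gon (a coordinate could fail to be generic or even be algebraic over the others in a degenerate way), and Proposition \ref{prop:q-gen-ser} — which you need for condition (3) of Definition \ref{def: group corresp} — is not available. The paper handles this by first splitting off the definable subfamily $\CQ_0$ with a non-$\mfp$-generic projection and applying Proposition \ref{prop:1-save-ser} to obtain $1$-power saving for it; your $\CQ'$ must absorb that subfamily too.

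Second, your dichotomy set $Z_{i,j}(Q) \subseteq U \times V \times V$ is not the right object, and the threshold $\dim_\mfp(Z_{i,j}(Q)) < s-2$ against it is unmotivated: for a ternary relation inside a space of $\mfp$-dimension up to $2s-4$ this bound says nothing usable. What the $\gamma$-ST hypothesis actually requires is a pointwise statement: for (all but few) $a$ on the $(s-2)$-coordinate side, the exceptional set $Z_\alpha(a) := \{a' : \dim_\mfp(E_a \cap E_{a'}) = 1\}$ has $\mfp$-dimension $\leq s-4$, so that by general position it meets each $n$-grid in $O_\nu(n^{s-4})$ points. The paper therefore defines $Z_\alpha(a)$ as a subset of $\prod_{i\leq s-2}X_i$, proves it is fiber-algebraic of degree $\leq 2d^2$ (so $\dim_\mfp \leq s-3$), and sets $Z_\alpha := \{a : \dim_\mfp(Z_\alpha(a)) = s-3\}$ with the dichotomy on whether $\dim_\mfp(Z_\alpha) < s-2$; the key step then splits $Q$ over $Z_\alpha$ vs.~its complement and handles the two pieces separately (Proposition \ref{prop:1-save-ser} for the first, $\gamma$-ST for the second). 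Your ternary formulation collapses this two-stage structure and loses the fiber-algebraicity claim that makes the pointwise bound work. Relatedly, you write ``$X = U$, $Y = V$'' with $U = X_i\times X_j$, but in Definition \ref{def: gamma ST property} the set $A \subseteq X$ must satisfy $|A| \leq n^{s-2}$, so $X$ must be the $(s-2)$-coordinate side; you have the roles of $X$ and $Y$ reversed relative to your own choice of $U$, $V$.

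Everything after the dichotomy (permuting coordinates to get an abelian $s$-gon, applying Theorem \ref{thm: main ab mgon gives grp}, and reading off the $\mfp$-generic correspondence) is the paper's argument, and your sketch of it is fine modulo the caveat above about where $a_i \models \mfp_i|_{A_Q}$ comes from.
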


 In the rest of this section we give a proof of Theorem~\ref{thm: stab main ser} (and then of Theorem \ref{rem: power saving bound, main stable}).

We fix  a fiber-algebraic definable family $\CQ$  of absolutely $\mfp$-irreducible subsets of $X_1\tdt X_s$.

 Let $\CQ_0$ be the set of all $Q\in \CQ$ such that for some $u\subseteq [s]$ with $|u|=s-1$ for the projection $\pi_u(Q)$   of $Q$ onto $\prod_{i\in u} X_i$  we have $\dimp(\pi_u(Q) < s-1$.  
 
 By Claim~\ref{claim:p-dim-def}, the family $\CQ_0$ is definable and it follows from Proposition~\ref{prop:1-save-ser}
 that the family $\CQ_0$ admits $1$-power saving.
 Hence replacing $\CQ$ with $\CQ\setminus \CQ_0$, if needed, we will assume the following:
\begin{ass}\label{ass: two}
 $\CQ$ is a fiber-algebraic definable family  of absolutely $\mfp$-irreducible subsets of $X_1\tdt X_s$. For any $Q\in \CQ$ the projection of $Q$ onto any $s-1$
coordinates is $\mfp$-generic.  In particular, $\dimp(Q)=s-1$ (by Lemma \ref{lem: fib alg impl dim s-1}).
\end{ass}

\begin{prop}\label{prop:q-gen-ser}
Let $C$ be a small set in $\CM$, $Q\in \CQ$  and
  let  $\bar a =(a_1,\dotsc,a_s)$ be a $\mfp$-generic in $Q$
  over $C$ (see Remark \ref{rem: p-gen tuple def} for the definition).
  Then for any $i \in [s]$
  we have
  \[ \left(a_j : j \in [s] \setminus \{ i\} \right)\models \bigotimes_{j \in [s] \setminus \{i\} } \mfp_j |_C.   \]
\end{prop}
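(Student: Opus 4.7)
The plan is to exploit absolute $\mfp$-irreducibility of $Q$ together with the assumption that every $(s-1)$-fold projection of $Q$ is $\mfp$-generic. The high-level idea: by Remark~\ref{rem: irred iff unique gen type}, all $\mfp$-generic tuples in $Q$ over $C$ have the same type over $C$, so to verify that the projection $\pi_{[s] \setminus \{i\}}(\bar a)$ realizes $\bigotimes_{j \neq i}\mfp_j|_C$ it is enough to produce a single $\mfp$-generic tuple $\bar b$ in $Q$ over $C$ whose projection to the coordinates $[s]\setminus\{i\}$ realizes that product type.

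First I would reduce to the case when $Q$ is actually defined over $C$. If $C_Q$ denotes a finite set of parameters defining $Q$, pick $\bar a'$ with $\bar a' \equiv_C \bar a$ and $\bar a' \ind_C C_Q$ (possible by saturation of $\CM$); it suffices to prove the conclusion for $\bar a'$ over $C' := C \cup C_Q$. Since each $\mfp_j$ is $\emptyset$-definable, so is every $\mfp_u = \bigotimes_{j\in u}\mfp_j$, and any nonforking extension of $\mfp_u|_C$ is still a restriction of $\mfp_u$. Hence $\pi_u(\bar a')\models \mfp_u|_C$ together with $\bar a'\ind_C C'$ gives $\pi_u(\bar a')\models \mfp_u|_{C'}$, so $\dimp(\bar a'/C')=\dimp(\bar a/C)=s-1 = \dimp(Q)$, i.e.\ $\bar a'$ is still $\mfp$-generic in $Q$ over $C'$. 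So from now on assume $C \supseteq C_Q$.

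Fix $i \in [s]$ and set $u := [s]\setminus\{i\}$. By Assumption~\ref{ass: two} the projection $\pi_u(Q)$ is $\mfp$-generic, i.e.\ $\pi_u(Q) \in \mfp_u$. Choose $\bar b_u = (b_j : j \in u) \models \mfp_u|_C$; then $\bar b_u \in \pi_u(Q)$, so there exists $b_i \in X_i$ with $\bar b := (b_1,\dotsc,b_s) \in Q$. The projection condition gives $\dimp(\bar b/C) \geq |u| = s-1$, and on the other hand $\bar b \in Q$ and $Q$ is $C$-definable, so by Lemma~\ref{lem: fib alg impl dim s-1}, $\dimp(\bar b/C) \leq \dimp(Q) = s-1$. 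Thus $\bar b$ is $\mfp$-generic in $Q$ over $C$.

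Since $Q$ is absolutely $\mfp$-irreducible and defined over $C$, Remark~\ref{rem: irred iff unique gen type} applies: $\bar a$ and $\bar b$, both $\mfp$-generic in $Q$ over $C$, realize the same type over $C$. In particular $\pi_u(\bar a) \equiv_C \pi_u(\bar b) = \bar b_u$, so $\pi_u(\bar a) \models \mfp_u|_C = \bigotimes_{j \neq i}\mfp_j|_C$, as required. Since $i$ was arbitrary, the conclusion holds for every $i \in [s]$. The only mildly delicate point is the reduction to $C \supseteq C_Q$; once that is handled, the argument is a direct application of irreducibility plus the generic projection assumption, so no real obstacle remains.
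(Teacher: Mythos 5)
Your core argument coincides exactly with the paper's: absolute $\mfp$-irreducibility gives (via Remark~\ref{rem: irred iff unique gen type}) a unique $\mfp$-generic type of $Q$ over $C$; Assumption~\ref{ass: two} guarantees that $\pi_u(Q)\in\mfp_u$ for $u=[s]\setminus\{i\}$, so one can choose $\bar b\in Q$ with $\pi_u(\bar b)\models\mfp_u|_C$, which is then $\mfp$-generic in $Q$ over $C$; uniqueness then transfers this projection condition back to $\bar a$. This is precisely what the paper does.

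The preliminary reduction you add to arrange $C\supseteq C_Q$ is not in the paper, and as written it has a gap. Choosing $\bar a'$ with $\bar a'\equiv_C\bar a$ and $\bar a'\ind_C C_Q$ does \emph{not} guarantee $\bar a'\in Q$: membership in $Q$ is a formula over $C_Q$, not over $C$, so it is not part of $\tp(\bar a/C)$ and can be lost when passing to a $C$-conjugate $\bar a'$. Without $\bar a'\in Q$ you cannot claim $\dimp(\bar a'/C')\leq \dimp(Q)$, nor apply the main argument to $\bar a'$ at all. (You did correctly flag a genuine implicit hypothesis: Remark~\ref{rem: irred iff unique gen type}, on which both proofs rely, only applies when $Q$ is defined over $C$. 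The paper's own proof simply does not address the case $C\not\supseteq C_Q$; in its applications this is arranged externally, e.g.\ by first adding $\acl(\alpha)$ to the language so that $Q_\alpha$ becomes $\emptyset$-definable. A correct patch would move the defining parameter of $Q$ rather than $\bar a$: take a $C$-automorphism $\sigma$ moving $C_Q$ to be independent from $\bar a$ over $C$ and replace $Q$ by $\sigma(Q)$ and $C_Q$ by $\sigma(C_Q)$, keeping $\bar a$ fixed; but this requires $\bar a\in\sigma(Q)$, which again is not automatic, so some care is needed.)
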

\begin{proof}
  Since $Q$ is absolutely $\mfp$-irreducible,  it has unique $\mfp$-generic type over $C$. By our assumption for any $i\in [s]$
  the projection of $Q$ onto $[s]\setminus \{i\}$ is $\mfp$-generic.
  Hence any realization of $\bigotimes_{j \in [s] \setminus \{i\} } \mfp_j |_C$ can be extended to a $\mfp$-generic of
  $Q$.
\end{proof}

Next we observe that the assumption that the projection of $Q$ onto any $s-1$
coordinates is $\mfp$-generic in Proposition \ref{prop:q-gen-ser} was necessary, but could be replaced by the assumption that $Q$ does not admit $1$-power saving (this will not be used in the proof of the main theorem).

\begin{prop}\label{prop:q-gen}
Assume that $Q$ is absolutely $\mfp$-irreducible, $\dimp(Q) = s-1$ (but no assumption on the projections of $Q$), and $Q$ does not admit $1$-power saving. Let $C$ be a small set in $\CM$ and
  let  $\bar a =(a_1,\dotsc,a_s)$ be a generic in $Q$
  over $C$.
  Then for any $i \in [s]$
  we have
  \[ (a_j : j \in [s] \setminus \{ i\} )\models \bigotimes_{j \in [s] \setminus \{i\} } \mfp_j |_C.   \]
\end{prop}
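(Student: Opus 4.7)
The plan is to reduce Proposition \ref{prop:q-gen} directly to Proposition \ref{prop:q-gen-ser}. The only difference between their hypotheses is that Proposition \ref{prop:q-gen-ser} requires every $(s-1)$-fold projection of $Q$ to be $\mfp$-generic, while Proposition \ref{prop:q-gen} replaces this condition by the assumption that $Q$ does not admit $1$-power saving. It therefore suffices to show that, in the present fiber-algebraic setting, the latter condition forces the former; the conclusion of Proposition \ref{prop:q-gen-ser} then applies verbatim, using only the absolute $\mfp$-irreducibility of $Q$ to transfer $\mfp$-genericity of each projection $\pi_u(Q)$ to $\mfp$-genericity of the corresponding projection of any generic tuple $\bar a$ over $C$.

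I would argue by contrapositive: assume that $\pi_{u_0}(Q)$ is not $\mfp$-generic for some $u_0\subseteq [s]$ with $|u_0|=s-1$, and deduce that $Q$ admits $1$-power saving, contradicting the standing hypothesis. This is immediate from Proposition \ref{prop:1-save-ser} applied to the singleton definable family $\{Q\}$ and the chosen $u_0$: fiber-algebraicity of $Q$ is inherited from the enclosing family $\CQ$ (in particular from Assumption \ref{ass: stab ass 1} and the setup of Section \ref{sec:setting-3}), and the clause ``$\pi_{u_0}(Q)$ is not $\mfp$-generic'' is exactly the hypothesis that Proposition \ref{prop:1-save-ser} requires. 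The conclusion of Proposition \ref{prop:1-save-ser} then produces a definable system $\vec\CF$ witnessing $1$-power saving for $\{Q\}$, and in particular for $Q$.

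The only delicate point, which I expect to be the main obstacle to a literal reading of the statement, is the implicit fiber-algebraicity of $Q$: it is not restated in the hypotheses of Proposition \ref{prop:q-gen}, but is tacitly inherited from the context of Section \ref{sec:setting-3}. Without fiber-algebraicity the statement would genuinely fail --- a simple cylindrical example of the form $Q := X_1\times \{a\}\times X_3$ with $a\in X_2$ (in the case $s=3$) is absolutely $\mfp$-irreducible, has $\mfp$-dimension $s-1$, admits no $1$-power saving because its intersection with any grid $A$ containing $a$ in the second coordinate has size $n^{s-1}$ (and no general-position condition on $A_2$ can exclude the isolated point $a$), yet $\pi_{\{1,2\}}(Q)$ is not $\mfp$-generic so the conclusion fails for the generic $\bar a$. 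Hence fiber-algebraicity is essential to the reduction via Proposition \ref{prop:1-save-ser}, and it is the piece of context that must be made explicit before the two-line argument above goes through.
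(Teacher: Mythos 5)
Your proof is correct and takes a genuinely different, and shorter, route than the paper's. The paper argues directly by contradiction: assuming $(a_2,\dotsc,a_s)$ lies in a $C$-definable set $G_1\subseteq X_2\times\cdots\times X_s$ of $\mfp$-dimension $<s-1$, it uses Lemma~\ref{lem: irred implies coordwise irred} (via absolute $\mfp$-irreducibility) together with definability of the $\mfp_i$ and saturation to produce a $\mfp$-generic $G_2\subseteq X_1\times\cdots\times X_{s-1}$ over $C$ with $Q\wedge G_2\rightarrow G_1$, hence covers $Q$ by $H_1:=X_1\times G_1$ and $H_2:=(\neg G_2)\times X_s$; since each $Q\cap H_i$ has a non-$\mfp$-generic $(s-1)$-projection, Proposition~\ref{prop:1-save-ser} yields $1$-power saving for $Q$, the desired contradiction. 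You instead supply the missing hypothesis of Proposition~\ref{prop:q-gen-ser} directly: the contrapositive of Proposition~\ref{prop:1-save-ser} applied to the singleton family $\{Q\}$ shows that failure of $1$-power saving forces every $(s-1)$-projection of $Q$ to be $\mfp$-generic, after which the short proof of Proposition~\ref{prop:q-gen-ser} applies verbatim (its argument needs only absolute $\mfp$-irreducibility, $\dimp(Q)=s-1$, and the genericity of the projections). Your route is cleaner in that it factors the argument through an already-established lemma rather than re-deriving its content inside the contradiction; both proofs hinge on the same two inputs, namely Proposition~\ref{prop:1-save-ser} and the uniqueness of the $\mfp$-generic type coming from absolute $\mfp$-irreducibility. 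Your observation that fiber-algebraicity of $Q$ is tacitly in force (inherited from the ambient setup of Section~\ref{sec: main thm stable}) and genuinely necessary is also correct: both arguments need it to invoke Proposition~\ref{prop:1-save-ser}, and your cylinder $X_1\times\{a\}\times X_3$ is a legitimate witness that the statement fails without it.
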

\begin{proof} Let $\bar{a}$ be a generic in $Q$ over $C$. Permuting the variables if necessary and using that the types $\mfp_i$ commute, we may assume
  $$(a_1,\dotsc,a_{s-1})\models   \mfp_1\otimes
  \dotsb\otimes  \mfp_{s-1} |_{C}.$$

  We only consider the case $i=1$, i.e.~we need to show that
  $$(a_2, \dotsc,a_s)\models  \mfp_2\otimes  \dotsb\otimes  \mfp_s |_C,$$
 the other cases are analogous.

Assume this does not hold, then there is a relation $G_1\subseteq X_2\tdt X_s$
definable over $C$ such that $\dimp(G_1)< s-1$ and
$(a_2,\dotsc,a_s)\in G_1$.

Since $Q$ is $\mfp$-irreducible over $C$,  the formula $Q(a_1,\dotsc,a_{s-1},x_s)$ implies a complete type over $C \cup \{a_1, \ldots, a_{s-1} \}$ by Lemma \ref{lem: irred implies coordwise irred}.
Hence we have
\[
  Q(a_1,\dotsc,a_{s-1},x_s) \vdash \tp(a_s/C \cup\{a_1,\dotsc,a_{s-1}\}),
\]
so in particular
\begin{gather*}
   Q(a_1,\dotsc,a_{s-1},x_s) \rarr  G_1(a_2,\dotsc,a_{s-1},x_s),
\end{gather*}
which implies
\begin{gather*}
   \{Q(x_1,\dotsc,x_{s-1},x_s)\} \cup  \left(\mfp_1 \otimes \ldots \otimes \mfp_{s-1}\right) |_{C}(x_1, \ldots, x_{s-1}) \\
     \rarr  G_1(x_2,\dotsc,x_{s-1},x_s) .
\end{gather*}
Then, by saturation of $\CM$, there exists some $\mfp$-generic  set $G_2\subseteq
X_1\tdt X_{s-1}$ definable over $C$ (given by a finite conjunction of formulas from $\left(\mfp_1 \otimes \ldots \otimes \mfp_{s-1}\right) |_{C}$)
such that
\[
Q(x_1,\dotsc,x_{s-1},x_s) \wedge G_2(x_1,\dotsc,x_{s-1}) \rarr  G_1(x_2,\dotsc,x_{s-1},x_s),
\]
hence
\[
Q(x_1,\dotsc,x_{s-1},x_s) \rarr  \big(\neg G_2(x_1,\dotsc,x_{s-1}) \vee  G_1(x_2,\dotsc,x_{s-1},x_s)  \big).
\]
Let $H_2 := (\neg G_2 )\times X_s$ and $H_1 := X_1 \times  G_1$. Then $\dim_{\mfp} \left( \pi_{[s-1]}(H_2) \right) = \dim_{\mfp}(\neg G_2) < s-1$ and $\dim_{\mfp} \left( \pi_{[s] \setminus \{1\}}(H_1) \right) = \dim_{\mfp}(\neg G_1) < s-1$.
Thus $Q$ is covered by the union of $H_1$ and $H_2$, each
with $1$-power saving  by Proposition~\ref{prop:1-save-ser}, which implies that $Q$ admits $1$-power-saving.
\end{proof}

\begin{rem}
The assumption that $Q$ has no $1$-power saving is necessary in Proposition \ref{prop:q-gen}, and the assumption that the projection of $Q$ onto any $s-1$
coordinates is $\mfp$-generic in necessary in Proposition \ref{prop:q-gen-ser}. For example let $s=2$  and assume $Q(x_1,x_2)$ is the graph of a bijection from $X_1$ to some  $\emptyset$-definable set $Y_2 \subseteq X_2$  with $Y_2 \notin \mfp_2$. Then $Q$ is clearly fiber algebraic, absolutely $\mfp$-irreducible, with $\dim_{\mathfrak{p}}(Q)=1$. But for a generic $(b_1, b_2) \in Q$, $b_2$ does not realize $\mfp_2|_{\emptyset}$. Note that $Q$ has $1$-power saving. Indeed, let $\vec{\mathcal{F}} := (\mathcal{F}_1, \mathcal{F}_2)$ with $\mathcal{F}_1 := \emptyset, \mathcal{F}_2 := \{ Y_2\}$. Then, given any $n,\nu \in \mathbb{N}$ and an $n$-grid $A_1 \times A_2$ in $\left(\vec{\mathcal{F}},\nu \right)$-general position, as $\dim_{\mathfrak{p}}(Y_2) = 0$ we must have $|A_2 \cap Y_2| \leq \nu$, hence, by definition of $Q$, $|Q \cap (A_1 \times A_2)| \leq \nu = O_{\nu}(1) =  O_{\nu}\left(n^{(2-1)-1} \right)$. Also note that $\pi_{\{2\}}(Q)$ is not $\mathfrak{p}$-generic.
\end{rem}

  We can now state the key structural dichotomy at  the core of Theorem \ref{thm: stab main ser}:
  \begin{thm}\label{thm:ser-main} Let $\CQ=\{Q_\alpha\colon \alpha\in \Omega\}$ be a definable family of absolutely $\mfp$-irreducible fiber-algebraic
    subsets of $\prod_{i\in s} X_i$  satisfying the Assumption \ref{ass: stab ass 1} above.  Assume the family  
$\CQ$, 
as a family of binary relations on $$\left(\prod_{i \in [s-2]} X_i \right) \times \left( X_{s-1} \times X_{s} \right),$$ satisfies the $\gamma$-ST property for some $0< \gamma \leq 1$.

  Then there is a definable $\Omega_1 \subseteq \Omega$ such that the family $\{Q_\alpha\colon \alpha\in \Omega_1 \}$, 
admits $\gamma$-power-saving, and for every $\alpha\in \Omega\setminus \Omega_1$,  
for every tuple $(a_1,\dotsc, a_s)\in Q_\alpha$ generic over $\alpha$ there exists some tuple 
$$\xi\in \acl(a_1,\dotsc,a_{s-2},\alpha) \cap
   \acl(a_{s-1},a_s,\alpha)$$
    of length at most $|\CL|$ such that
   $$(a_1,\dotsc,a_{s-2})\ind_\xi (a_{s-1},a_s).$$
 \end{thm}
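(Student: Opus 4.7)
The plan is to identify a definable ``power-saving'' subfamily $\{Q_\alpha : \alpha \in \Omega_1\} \subseteq \CQ$ via a single dimension condition on a definable fiber-overlap set, and to derive the conclusion on the complement via a canonical-base argument. Set $U := \prod_{i \in [s-2]} X_i$ and $V := X_{s-1}\times X_s$, and view each $Q_\alpha$ as a binary relation on $U\times V$. Under Assumption~\ref{ass: two}, $\dim_\mfp(Q_\alpha) = s-1$ and all $(s-1)$-coordinate projections are $\mfp$-generic, so the generic fiber $(Q_\alpha)_{\bar u}$ has $\mfp$-dimension $1$ in $V$, and by Proposition~\ref{prop:q-gen-ser} the generic fiber $F_{\bar v} := \{\bar u \in U : (\bar u,\bar v)\in Q_\alpha\}$ has $\mfp$-dimension $s-3$ in $U$. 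Define
\[
Z_\alpha := \bigl\{(\bar u,\bar u')\in U\times U \,:\, \dim_\mfp\bigl((Q_\alpha)_{\bar u}\cap (Q_\alpha)_{\bar u'}\bigr)\geq 1\bigr\},
\]
a family definable uniformly in $\alpha$ by Claim~\ref{claim:p-dim-def}, and set $\Omega_1 := \{\alpha\in\Omega : \dim_\mfp(Z_\alpha)\leq 2s-6\}$, also definable. Intuitively, $\alpha\in\Omega_1$ means that generic fibers of $Q_\alpha$ meet in $\mfp$-dimension $0$ (the ``pseudo-plane'' regime), while $\alpha\notin\Omega_1$ forces generic fibers to systematically share a $1$-dimensional component (the ``group'' regime).

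For $\alpha\in\Omega_1$, set $U^{\mathrm{ex}}_\alpha := \{\bar u : \dim_\mfp(Z_{\alpha,\bar u})\geq s-3\}$; super-additivity (Lemma~\ref{lem:sub}) forces $\dim_\mfp(U^{\mathrm{ex}}_\alpha)\leq s-3$, for else $\dim_\mfp(Z_\alpha)\geq (s-2)+(s-3)=2s-5$. Split $Q_\alpha = Q^{\mathrm{ex}}_\alpha\sqcup Q^{\circ}_\alpha$ accordingly. Then $\pi_{[s-1]}(Q^{\mathrm{ex}}_\alpha)$ has $\mfp$-dimension $\leq(s-3)+1<s-1$, so Proposition~\ref{prop:1-save-ser} delivers $1$-power saving for $\{Q^{\mathrm{ex}}_\alpha\}$. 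For $\{Q^{\circ}_\alpha\}$, two applications of Lemma~\ref{lem:general-position-2}---to the definable family $\{Z_{\alpha,\bar u} : \bar u\in U\setminus U^{\mathrm{ex}}_\alpha\}$ of $\mfp$-dim $\leq s-4$ subsets of $U$, and to the definable family $\{(Q_\alpha)_{\bar u}\cap (Q_\alpha)_{\bar u'} : (\bar u,\bar u')\notin Z_\alpha\}$ of $\mfp$-dim $0$ subsets of $V$---produce definable families $\vec\CF_U,\vec\CF_V$ such that any $n$-grid $A\times B$ in the resulting general position satisfies $|Z_{\alpha,\bar u}\cap A|\leq O_\nu(n^{s-4})$ for every $\bar u\in A\setminus U^{\mathrm{ex}}_\alpha$ and $|(Q_\alpha)_{\bar u}\cap (Q_\alpha)_{\bar u'}\cap B|\leq\nu^2$ whenever $(\bar u,\bar u')\notin Z_\alpha$. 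Consequently, for each $\bar u\in A\setminus U^{\mathrm{ex}}_\alpha$ at most $O_\nu(n^{s-4})$ elements $\bar u'\in A$ satisfy $|(Q_\alpha)_{\bar u}\cap (Q_\alpha)_{\bar u'}\cap B|\geq\nu^2+1$, which is precisely the hypothesis of the $\gamma$-ST property (Definition~\ref{def: gamma ST property}) for $Q^{\circ}_\alpha$. Applying the assumed $\gamma$-ST property of $\CQ$ yields $|Q^{\circ}_\alpha\cap (A\times B)|=O_\nu(n^{(s-1)-\gamma})$, completing $\gamma$-power saving for $\{Q_\alpha\}_{\alpha\in\Omega_1}$.

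For $\alpha\notin\Omega_1$, fix a generic $(a_1,\ldots,a_s)\in Q_\alpha$ over $\alpha$ and write $\bar u := (a_1,\ldots,a_{s-2})$, $\bar v := (a_{s-1},a_s)$. Absolute $\mfp$-irreducibility of $Q_\alpha$ together with Lemma~\ref{lem: irred implies coordwise irred} makes $\tp(\bar v/\bar u\alpha)$ a stationary complete type; set $\xi := \mathrm{Cb}(\tp(\bar v/\bar u\alpha))$, an imaginary tuple of length $\leq|\CL|$ in $\dcl(\bar u\alpha)\subseteq\acl(\bar u\alpha)$, with $\bar v\ind_\xi\bar u$ by definition of the canonical base. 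The remaining content of the conclusion is $\xi\in\acl(\bar v\alpha)$. Suppose otherwise; by stability, taking a Morley sequence $\bar u=\bar u_0,\bar u_1,\ldots$ of $\tp(\bar u/\bar v\alpha)$ yields infinitely many pairwise distinct $\bar v\alpha$-conjugates $\xi_i := \mathrm{Cb}(\tp(\bar v/\bar u_i\alpha))$, all with $\bar u_i\in F_{\bar v}$. By standard forking calculus, on a generic set $\xi(\bar u)=\xi(\bar u')$ coincides with $(\bar u,\bar u')\in Z_\alpha$, so the $\xi$-level sets within $F_{\bar v}$ are generically $Z_{\alpha,\bar u}\cap F_{\bar v}$. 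The hypothesis $\dim_\mfp(Z_\alpha)\geq 2s-5$ forces $\dim_\mfp(Z_{\alpha,\bar u})\geq s-3$ for generic $\bar u$, and combined with $\dim_\mfp(F_{\bar v})=s-3$, the generic $\xi$-level set in $F_{\bar v}$ must have full dimension $s-3$. Hence only finitely many distinct values of the canonical base occur on $F_{\bar v}$, contradicting the existence of infinitely many $\bar v\alpha$-conjugates.

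The main technical obstacle is this last step: making rigorous the identification of the ``canonical-base level sets'' on $F_{\bar v}$ with generic slices of the $Z_\alpha$-fibers, and then running the dimension count purely with $\mfp$-dimension, which (unlike Morley rank) only satisfies the super-additivity of Lemma~\ref{lem:sub} rather than full additivity. The crucial technical supports are absolute $\mfp$-irreducibility of $Q_\alpha$ (granting stationarity of the relevant types via Lemma~\ref{lem: irred implies coordwise irred}) and Proposition~\ref{prop:q-gen-ser} (forcing $\mfp$-genericity of all $(s-1)$-coordinate projections on generic tuples), which together let the standard Morley-rank forking-calculus arguments be emulated on the generic locus.
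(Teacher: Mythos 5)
Your power-saving half (for $\alpha\in\Omega_1$) matches the paper's argument in spirit: you encode the ``pseudo-plane'' regime by a dimension condition on fiber overlaps, split $Q_\alpha$ over a bad/good locus, and feed the resulting $K_{2,\nu}$-free-type hypothesis into the assumed $\gamma$-ST property. Your $\dim_\mfp(Z_\alpha)\le 2s-6$ criterion is in fact equivalent to the paper's condition $\dim_\mfp\{a : \dim_\mfp Z_\alpha(a)=s-3\}<s-2$ — one direction is Lemma~\ref{lem:sub}, the other follows by observing that any $(2s-5)$-element witnessing coordinate set $w\subseteq[2s-4]$ must have one side of the product of size exactly $s-2$ and the other of size $s-3$, which pins down a full-dimensional fiber. (Minor: to make $\dim_\mfp(Z_\alpha)$ meaningful you need to fix a $\mfp$-system on $U\times U$, using commutativity of the $\mfp_i$; you don't state this.)

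The complementary half ($\alpha\notin\Omega_1$) is where you diverge from the paper, and this is where the argument has real gaps. You take $\xi:=\mathrm{Cb}(\tp(\bar v/\bar u\alpha))$ directly; this does give $\bar u\ind_\xi\bar v$ and $\xi\in\acl(\bar u\alpha)$ (noting $\mathrm{Cb}$ lives in $\acl^{\eq}$, which suffices since $T=T^{\eq}$), but your argument for $\xi\in\acl(\bar v\alpha)$ rests on two claims that are not standard and are not proved. First, the assertion that ``on a generic set $\xi(\bar u)=\xi(\bar u')$ coincides with $(\bar u,\bar u')\in Z_\alpha$'' — i.e.\ that equality of canonical bases of the two fiber-types over $\bar u\alpha$ and $\bar u'\alpha$ is detected by $\dim_\mfp\bigl((Q_\alpha)_{\bar u}\cap (Q_\alpha)_{\bar u'}\bigr)\ge 1$ — would require a lemma connecting parallelism of types to $\mfp$-generic intersection of the corresponding definable sets, and neither direction is obvious; in particular the forward direction needs a common realization that is moreover generic in the intersection. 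Second, the concluding step ``only finitely many distinct values of the canonical base occur on $F_{\bar v}$'' amounts to a multiplicity/degree bound: a set of $\mfp$-dimension $s-3$ cannot contain infinitely many pairwise disjoint subsets of the same $\mfp$-dimension. This holds for Morley rank via Morley degree, but the paper's $\mfp$-dimension is defined via projections and carries no notion of degree, so this step fails without further justification. (Also, a Morley sequence of $\tp(\bar u/\bar v\alpha)$ need not a priori produce \emph{pairwise distinct} $\xi_i$; if $\xi\notin\acl(\bar v\alpha)$ you can instead pick infinitely many distinct $\bar v\alpha$-conjugates of $\xi$ directly and lift to conjugate $\bar u_i\in F_{\bar v}$.)

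The paper avoids these pitfalls by a different construction: it names $\acl(\alpha)$, picks $e$ generic in $Z_\alpha$, a small model $M_0\ni e$, then $\beta$ generic in $Z_\alpha(e)$ over $M_0$ and $\delta$ a $1$-dimensional point of the fiber intersection, and takes canonical bases $\xi_p=\mathrm{Cb}(\tp(\beta/M_0))$, $\xi_q=\mathrm{Cb}(\tp(\delta/M_0))$ \emph{over the model $M_0$} rather than over $\bar u\alpha$. The crucial containments $\xi_q\in\acl(\beta)$ and $\xi_p\in\acl(\delta)$ (Claim~\ref{claim:alpha-beta2}(a),(b)) are then proved by a direct fiber-algebraicity argument — infinitely many distinct conjugate types would produce infinitely many solutions of $Q_\alpha$ with all but one coordinate fixed — entirely bypassing the need for a degree bound on $\mfp$-dimension, and the inter-algebraicity $\xi_p\in\acl(\xi_q)$, $\xi_q\in\acl(\xi_p)$ comes from $\xi_p,\xi_q\in M_0$ and non-forking over the canonical base. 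The tuple $(\beta,\delta)$ is then verified to be generic in $Q_\alpha$ (Claim~\ref{claim:alpha-beta}(4)), and absolute $\mfp$-irreducibility (Remark~\ref{rem: irred iff unique gen type}) transfers the conclusion to any generic tuple. To repair your argument you would essentially have to reproduce this fiber-algebraicity argument in place of the dimension-counting step.
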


%

 \begin{rem}\label{rem: triv for s=3} Theorem \ref{thm:ser-main} is trivial for $s=3$ with $\Omega_1 = \emptyset$, as $a_1 \ind_{\xi} (a_2, a_3)$
   always holds with $\xi :=a_1 \alpha$.
 \end{rem}
 
  First we show how the above theorem, combined with the reconstruction of abelian groups from abelian $s$-gons in Theorem  \ref{thm: main ab mgon gives grp},  implies Theorem~\ref{thm: stab main ser}. Then we use theorem  Theorem~\ref{thm: stab main ser} to deduce Theorem \ref{thm: stab main ineff} for $s \geq 4$ (along  with the bound in Theorem \ref{rem: power saving bound, main stable}) in Section \ref{sec: MainThm for s geq 4}. The case $s=3$ of Theorem \ref{thm: stab main ineff} requires a separate argument reducing to the case $s=4$ of Theorem \ref{thm: stab main ineff}  given in Section \ref{sec: stable main thm ternary}.

\begin{proof}[Proof of Theorem~\ref{thm: stab main ser}] 
From the reductions described above, we assume that $\mathcal{Q}$ and $(X_i,\mfp_i)_{i \in [s]}$ satisfy Assumptions \ref{ass: stab ass 1} and \ref{ass: two}, and that for some $0 < \gamma \leq 1$, for each $i \neq j \in [s]$, $\mathcal{Q}$ viewed as a definable family of subsets of $\left( X_i \times X_j \right) \times \left( \prod_{k \in [s] \setminus \{i,j\}} X_k\right)$ satisfies the $\gamma$-ST property.

It follows that for every permutation  of $[s]$, the  family $\mathcal{Q}$ and the $\mfp$-system  obtained from $\mathcal{Q}$ and $(X_i,\mfp_i)_{i \in [s]}$ by permuting the variables accordingly still satisfy the assumption of Theorem \ref{thm:ser-main}. Applying Theorem~\ref{thm:ser-main}  
to  every permutation  of $[s]$, and taking (definable) $\Omega' \subseteq \Omega$ to be the  union of the corresponding $\Omega_1$'s, 
we have that the family $\CQ'=\{Q_\alpha \colon \alpha\in \Omega'\}$ admits $\gamma$-power saving  and for any $\alpha\in \Omega\setminus \Omega'$,  for every tuple $(a_1, \ldots, a_s)$ generic in $Q_\alpha$
over $\alpha$, after any  permutation of $[s]$ we have 
$$a_1 a_2  \ind_{\acl(a_1 a_2 \alpha)  \cap \acl(a_3 \ldots  a_s \alpha)} a_3  \ldots a_s.$$
Together with fiber-algebraicity of $Q_\alpha$ this implies that $(a_1, \ldots, a_s)$ is an abelian $s$-gon  over $\alpha$.

Applying Theorem  \ref{thm: main ab mgon gives grp},  we obtain that for any $\alpha\in \Omega\setminus \Omega'$ 
there exists a small set $A_\alpha \subseteq \CM$ and a connected abelian group $G_\alpha$ type-definable over $A_\alpha$ and such that $Q_\alpha$ is in a $\mfp$-generic correspondence with $G_\alpha$ over $A_\alpha$.
(As  stated, Theorem \ref{thm: main ab mgon gives grp} only guarantees the existence of an appropriate set of parameters $A_{\alpha}$ of size $\leq |\CL|$  and  $G_{\alpha}$ \emph{in $\MM$}, however by $|\CL|^+$-saturation of $\CM$ there  exists a set $A'_{\alpha}$ in $\CM$ with the same type as $A_{\alpha}$, hence we obtain the required group applying an automorphism of $\MM$ sending $A_{\alpha}$ to $A'_{\alpha}$.)
%
\end{proof}

In the remainder of the section we prove Theorem~\ref{thm:ser-main}.

%

\subsection{Proof of Theorem~\ref{thm:ser-main}}
\label{sec:proof-theor-ser-main}

Theorem \ref{thm:ser-main} is trivial in the case $s=3$ by Remark \ref{rem: triv for s=3}, so we will assume $s\geq 4$.

Let $U := X_1\times \dotsc  \times X_{s-2}$ and $V := X_{s-1}\times X_s$.  We view each
$Q\in \CQ$ as a binary relation $Q\subseteq U\times V$.

We fix a formula $\varphi(u;v;w) \in \CL$ such that for $\alpha\in \Omega$ the formula $\varphi(u;v;\alpha)$
defines $Q_\alpha$, with the variables $u$ corresponding to $U$ and $v$ to $V$.

We also fix $d\in \NN$ such that $\CQ$ is of degree $\leq d$.

\medskip

\begin{defn} For $\alpha\in \Omega$  and   $a\in U$,  let $Z_\alpha(a)$ be the set
\[ Z_\alpha(a):=\left\{ a'\in U  \colon  \dimp \left( \varphi(a;v; \alpha)\cap \varphi(a';v; \alpha) \right) = 1\right\}.\]
\end{defn}

\begin{claim}\label{claim:za-def} The family $\{ Z_\alpha(a)\colon \alpha\in \Omega, a\in U\}$  is a
  definable family of subsets of $U$.
\end{claim}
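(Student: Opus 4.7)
The plan is to reduce directly to the definability-of-$\mfp$-dimension result Claim~\ref{claim:p-dim-def}. First I would exhibit the ``two-fiber intersection'' as a single definable family of subsets of $V=X_{s-1}\times X_s$: let
\[
D\bigl((v_1,v_2);(a,a',\alpha)\bigr) \;:\iff\; \varphi(a;v_1,v_2;\alpha)\,\wedge\,\varphi(a';v_1,v_2;\alpha),
\]
so that $D\subseteq V\times(U\times U\times \Omega)$ is $\emptyset$-definable and the fibers
\[
D_{(a,a',\alpha)} \;=\; \varphi(a;v;\alpha)\,\cap\,\varphi(a';v;\alpha) \;\subseteq\; V
\]
form a definable family of subsets of $V$ parametrized by $(a,a',\alpha)\in U\times U\times\Omega$.

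Next I would apply Claim~\ref{claim:p-dim-def} to this family, using the $\mfp$-system $(X_{s-1},\mfp_{s-1}),(X_s,\mfp_s)$ on $V$ and taking $k=1$. That claim tells us that the set
\[
W \;:=\; \bigl\{(a,a',\alpha)\in U\times U\times\Omega \,:\, \dim_\mfp(D_{(a,a',\alpha)})=1\bigr\}
\]
is definable. By the very definition of $Z_\alpha(a)$, the fiber of $W$ over $(a,\alpha)$ is exactly $Z_\alpha(a)$, so the family $\{Z_\alpha(a):\alpha\in\Omega,\,a\in U\}$ is the definable family of $U$-fibers of $W$ (viewed as a definable subset of $U\times(U\times\Omega)$).

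There is no real obstacle here; the only thing to check is that Claim~\ref{claim:p-dim-def} applies uniformly in the parameters $(a,a',\alpha)$, but this is exactly its content (all $\mfp_i$ are definable, so the partial types $\bigotimes_{i\in u}\mfp_i$ are definable, yielding a uniform definition of the locus where a given projection is $\mfp$-generic).
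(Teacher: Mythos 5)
Your proposal is correct and is essentially the same argument as the paper's: both form the definable set $\{(a,a',\alpha): \dim_\mfp(\varphi(a;v;\alpha)\cap\varphi(a';v;\alpha))=1\}$ and invoke Claim~\ref{claim:p-dim-def} to see it is definable. You merely spell out the intermediate step of packaging the two-fiber intersections as a single definable family before applying the claim, which the paper leaves implicit.
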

\begin{proof} By Claim~\ref{claim:p-dim-def}, the set
\begin{gather*}
	D :=  \{(a,a',\alpha) \in U \times U\times \Omega : a' \in Z_\alpha(a) \} \\
	= \left\{ (a,a',\alpha) \in U \times U\times \Omega \colon  \dimp \left( \varphi(a;v;\alpha)\cap \varphi(a';v;\alpha) \right) = 1\right\}
\end{gather*}
is definable, hence the family $\{ Z_\alpha(a)\colon \alpha\in \Omega, a\in U\}$ is definable.
 \end{proof}

  \begin{claim}
 	For any $\alpha\in \Omega$ and $a \in U$, we have that
$Z_\alpha(a)\neq \emptyset$ if and only if $a\in Z_\alpha(a)$, if and only if
$\dimp(\varphi(a;v;\alpha))=1$.
 \end{claim}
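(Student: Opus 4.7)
The claim amounts to two biconditionals: (i) $a \in Z_\alpha(a) \iff \dim_\mfp(\varphi(a;v;\alpha)) = 1$, and (ii) $Z_\alpha(a) \neq \emptyset \iff a \in Z_\alpha(a)$. The first biconditional will be handled by simply unrolling the definition of $Z_\alpha(a)$ at $a' = a$: since $\varphi(a;v;\alpha) \cap \varphi(a;v;\alpha) = \varphi(a;v;\alpha)$, membership of $a$ in $Z_\alpha(a)$ is by definition the statement $\dim_\mfp(\varphi(a;v;\alpha)) = 1$.

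For the second biconditional, the forward direction $a \in Z_\alpha(a) \Rightarrow Z_\alpha(a) \neq \emptyset$ is immediate, so the content lies in the reverse implication. I would fix an arbitrary witness $a' \in Z_\alpha(a)$, so that $\dim_\mfp\bigl(\varphi(a;v;\alpha) \cap \varphi(a';v;\alpha)\bigr) = 1$. Monotonicity of $\dim_\mfp$ (with respect to the $\mfp$-system $(X_{s-1}, \mfp_{s-1})$, $(X_s, \mfp_s)$ on $V$) immediately gives the lower bound $\dim_\mfp(\varphi(a;v;\alpha)) \geq 1$. The whole problem thus reduces to proving the matching upper bound $\dim_\mfp(\varphi(a;v;\alpha)) \leq 1$.

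The main (and essentially only nontrivial) step is this upper bound, and it should follow from fiber-algebraicity of $Q_\alpha$. Suppose for contradiction that $\dim_\mfp(\varphi(a;v;\alpha)) = 2$; then by definition of $\mfp$-dimension on the product $X_{s-1} \times X_s$, the set $\varphi(a;v;\alpha)$ is $\mfp$-generic, meaning there is a realization $(b_{s-1},b_s) \models (\mfp_{s-1}\otimes \mfp_s)|_{a,\alpha}$ lying in $\varphi(a;v;\alpha)$. In particular $b_s \models \mfp_s|_{a,\alpha,b_{s-1}}$, so $b_s \notin \acl(a,\alpha,b_{s-1})$ since $\mfp_s$ is non-algebraic (per Assumption~\ref{ass: stab ass 1}). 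But $Q_\alpha$ is fiber-algebraic of degree $\leq d$, so fixing the first $s-1$ coordinates $(a, b_{s-1})$ leaves at most $d$ choices for the last coordinate, forcing $b_s \in \acl(a,\alpha, b_{s-1})$ — a contradiction.

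Chaining the two biconditionals then gives the claim. The main obstacle, such as it is, is keeping the book-keeping straight about which coordinates live in which $\mfp$-pair: one must remember that the fiber $\varphi(a;v;\alpha)$ sits inside the two-dimensional product $V = X_{s-1}\times X_s$, so the relevant notion of $\mfp$-genericity is with respect to the pair $(\mfp_{s-1}, \mfp_s)$, and the fiber-algebraicity of $Q_\alpha$ is exactly what pins the last coordinate algebraically over the rest. Everything else is a direct appeal to definitions.
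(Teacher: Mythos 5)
Your proof is correct and takes essentially the same route as the paper's: monotonicity of $\dim_\mfp$ combined with the upper bound $\dim_\mfp(\varphi(a;v;\alpha)) \leq 1$ coming from fiber-algebraicity. The only cosmetic difference is that the paper obtains the upper bound by noting that $\varphi(a;v;\alpha) \subseteq X_{s-1}\times X_s$ is itself a fiber-algebraic binary relation and citing Lemma~\ref{lem: fib alg impl dim s-1}, whereas you re-derive that lemma's content inline via the non-algebraicity of $\mfp_s$.
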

 \begin{proof}
Let $\alpha\in \Omega$ and $a\in U$. As $Q_\alpha$ is fiber-algebraic, we also have that the binary relation $\varphi(a;v;\alpha) \subseteq X_{s-1} \times X_s$ is fiber-algebraic, hence
$\dimp(\varphi(a;v;\alpha))\leq 1$ (by Lemma \ref{lem: fib alg impl dim s-1}).	The claim follows as, by definition of $\mfp$-dimension, $\dim_{\mfp}(\varphi(a;v;\alpha) \cap \varphi(a;v;\alpha)) = \dim_{\mfp}(\varphi(a;v;\alpha)) \geq \dim_{\mfp}(\varphi(a;v;\alpha) \cap \varphi(a';v;\alpha))$ for any $a' \in U$.
 \end{proof}

\begin{claim}\label{claim-za-fiberalg-ser}
For every $\alpha\in \Omega$ and $a\in U$ the set $Z_\alpha(a) \subseteq X_1\tdt X_{s-2}$ is fiber-algebraic, of degree $\leq 2d^2$.
\end{claim}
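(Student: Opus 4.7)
The plan is to first unpack the hypothesis $\dimp(R_{a',a,\alpha}) = 1$, where I set $R_{a',a,\alpha} := \varphi(a;v;\alpha)\cap \varphi(a';v;\alpha) \subseteq X_{s-1}\times X_s$. By the definition of $\mfp$-dimension (Remark \ref{rem: p-gen tuple def}), and since $\varphi(a;v;\alpha)$ is itself a fiber-algebraic binary relation and so has $\mfp$-dimension $\leq 1$ by Lemma \ref{lem: fib alg impl dim s-1}, the condition $\dimp(R_{a',a,\alpha})=1$ is equivalent to at least one of the projections $\pi_{s-1}(R_{a',a,\alpha})$ or $\pi_{s}(R_{a',a,\alpha})$ being $\mfp$-generic. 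Accordingly I will decompose $Z_\alpha(a) = T_A(a) \cup T_B(a)$ according to which side is generic, and it will suffice to bound the $i$-th coordinate fiber of each piece by $d^2$, for any $i \in [s-2]$; by the symmetry swapping $X_{s-1}$ with $X_s$, I only need to handle $T_A(a)$.

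To bound the $i$-th coordinate fiber of $T_A(a)$, I fix $i\in [s-2]$ together with $(a'_j)_{j\ne i}$, and suppose $a'_{i,(1)},\dotsc,a'_{i,(k)}$ are distinct with each $a'_{(\ell)} := (a'_1,\dotsc,a'_{i,(\ell)},\dotsc,a'_{s-2})$ in $T_A(a)$. Let $A$ be a small parameter set containing $\alpha$, $a$, and every $a'_{(\ell)}$; the essential move is to choose a \emph{single} realization $y\models \mfp_{s-1}|_A$. Because $\mfp_{s-1}$ is a complete stationary type and each formula $\pi_{s-1}(R_{a'_{(\ell)},a,\alpha})$ is $A$-definable and lies in $\mfp_{s-1}$ (by the defining property of $T_A(a)$), this single $y$ simultaneously satisfies each of them. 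Consequently, for every $\ell$ there is some $z_\ell \in X_s$ with $Q_\alpha(a,y,z_\ell) \wedge Q_\alpha(a'_{(\ell)},y,z_\ell)$.

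Two applications of fiber-algebraicity of $Q_\alpha$ (of degree $\leq d$) then finish the count. First, with $a$ and $y$ fixed, the set $\{z : Q_\alpha(a,y,z)\}$ has at most $d$ elements, so the $z_\ell$ take at most $d$ distinct values. Second, for each such value $z$, fiber-algebraicity applied to $Q_\alpha(a'_1,\dotsc,x_i,\dotsc,a'_{s-2},y,z)$ in the single free variable $x_i$ yields at most $d$ solutions. Grouping the indices $\ell$ by the value of $z_\ell$ thus gives $k \leq d\cdot d = d^2$, which establishes the required bound $\leq d^2$ for the $i$-th coordinate fiber of $T_A(a)$, and symmetrically $\leq d^2$ for $T_B(a)$, totalling $\leq 2d^2$ in $Z_\alpha(a)$. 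The main obstacle is conceptual rather than technical: one must resist picking a separate generic witness for each $a'_{(\ell)}$ and instead exploit that a single $y\models \mfp_{s-1}|_A$ works for all finitely many $a'_{(\ell)}$ simultaneously — once this is seen, fiber-algebraicity reduces the remainder to elementary counting.
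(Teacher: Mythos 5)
Your proof is correct, and it takes a genuinely different route from the paper's. The paper proves the claim by applying Lemma~\ref{lem: fin many types of full dim in Q} to the binary fiber-algebraic relation $\varphi(a;v;\alpha) \subseteq X_{s-1}\times X_s$ to conclude there are at most $2d$ types $q_1,\dotsc,q_t \in S_v(\CM)$ of $\mfp$-dimension $1$ extending it; it then writes $Z_\alpha(a)=\bigcup_{i\le t} d_\varphi(q_i)$ and shows each piece is fiber-algebraic of degree $\le d$ by choosing a realization $\beta_i\models q_i$ in the monster $\MM$ and noting $d_\varphi(q_i)\subseteq \varphi(\MM;\beta_i;\alpha)$. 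You instead observe directly from the definition of $\dimp$ (plus fiber-algebraicity of the intersection) that $\dimp(R_{a',a,\alpha})=1$ means one of the two coordinate projections lies in $\mfp_{s-1}$ or $\mfp_s$, giving a coarser two-piece cover $Z_\alpha(a)=T_A(a)\cup T_B(a)$; within $T_A(a)$, the decisive move of choosing a \emph{single} realization $y\models\mfp_{s-1}|_A$ that simultaneously witnesses genericity for all $k$ fiber members (a consequence of $\mfp_{s-1}$ being a complete type over $\CM$; stationarity is not actually needed here) lets you double-apply fiber-algebraicity of $Q_\alpha$ to get the fiber bound $d^2$. Both routes land on the same multiplicative total $2d^2$ — the paper as $(2d)\cdot d$, yours as $2\cdot d^2$ — and in fact your argument quietly re-derives the binary case of Lemma~\ref{lem: fin many types of full dim in Q} inline. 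What your approach buys is a more elementary, self-contained proof that works entirely inside $\CM$ without invoking the monster model or the general type-counting lemma; what the paper's approach buys is that the auxiliary lemma is reusable elsewhere. No gaps.
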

\begin{proof}
  We fix $\alpha\in \Omega$ and $a\in U$. Assume $Z_\alpha(a)\neq \emptyset$. Since $\varphi(a;v;\alpha)$ is fiber-algebraic of degree $\leq d$ (by fiber-algebraicity of $Q_\alpha$),
the set $S$ of types $q \in S_v(\CM)$ with
$\varphi(a;v;\alpha) \in q$  and $\dimp(q)=1$ is finite, of size $\leq 2d$ (by Lemma \ref{lem: fin many types of full dim in Q}); and for any $a' \in U$ we have $a'\in Z_\alpha(a)$ if and
only if $\varphi(a',v;\alpha)$ belongs to one of these types (by definition of $\mfp$-dimension).  Thus
\begin{gather*}
Z_\alpha(a) =\{ a' \in U \colon \varphi(a',v,\alpha) \in q
\text{ for some } q\in
  S\}.
\end{gather*}

   Let $q_1,\dotsc, q_t, t \leq 2d$ list all types in $S$.                                                                                                                                                        We then have
$Z_\alpha(a)=\bigcup_{i \in [t]} d_\varphi(q_i)$, where    $d_\varphi(q_i)=\{ a'\in U
\colon \varphi(a',v;\alpha)\in q_i \}$. It is sufficient to show
that each $d_\varphi(q_i)$ is fiber-algebraic of degree $\leq d$.  Choose a realization
$\beta_i$ of $q_i$ in $\mathbb{M}$.  Obviously $d_\varphi(q_i) \subseteq
\varphi(\MM,\beta_i;\alpha)$.  As $\CM \preceq \mathbb{M}$ and $\alpha \in \CM$, the set
$\varphi(\MM;\beta_i;\alpha) \subseteq \prod_{i \in [s-2]} X_i(\mathbb{M})$ is fiber-algebraic of degree $\leq d$, hence the set $d_\varphi(q_i)$ is
fiber-algebraic of degree $\leq d$ as well.
\end{proof}

By Claim \ref{claim-za-fiberalg-ser} and Lemma \ref{lem: fib alg impl dim s-1}, each  $Z_\alpha(a)$ is not a $\mfp$-generic subset of
$X_1\tdt X_{s-2}$, hence we have that $\dimp(Z_\alpha(a))\leq s-3$ for any $\alpha\in \Omega$ and $a \in U$.

\begin{defn}
Let $Z_\alpha \subseteq U$ be the set
\[ Z_\alpha := \{ a\in U \colon \dimp(Z_\alpha (a)) =s-3 \}. \]
\end{defn}

Note that the family $\{Z_\alpha : \alpha\in \Omega\}$ is definable by Claim \ref{claim:p-dim-def}.

\medskip
Let $\Omega_1 := \{\alpha\in \Omega \colon \dimp(Z_\alpha) <s-2\}$.  By Claim~\ref{claim:p-dim-def} the set $\Omega_1$ is definable.
  We will show that the family $\CQ_1 := \{ Q_\alpha \colon \alpha\in \Omega_1\}$  admits $\gamma$-power saving for the required $\gamma$.

  To show that the family $\{Q_\alpha\colon \alpha\in \Omega_1\}$ admits $\gamma$-power saving, it suffices to show that both families 
  $\{Q_\alpha\cap (Z_\alpha\times V)\colon \alpha\in \Omega_1\}$ and $\{Q_\alpha\cap (\bar{Z}_\alpha \times V)\colon \alpha\in \Omega_1\}$ admit $\gamma$-power saving, where $\bar{Z}_\alpha := U \setminus Z_\alpha$ is the complement of $Z_\alpha$ in $U$.

Since for any $\alpha\in \Omega_1$ the set  $Z_\alpha$ is not a $\mfp$-generic subset of $X_1\tdt X_{s-2}$,  for
the projection $\pi_{[s-1]} \colon X_1\tdt X_s \to X_1\tdt  X_{s-1}$ we have that
$\pi_{[s-1]}(Q_\alpha\cap (Z_\alpha \times V))$ is not a $\mfp$-generic subset of $X_1 \times \ldots \times X_{s-1}$. Hence, by
Proposition~\ref{prop:1-save-ser},   the family $\{ Q_\alpha \cap (Z_\alpha \times V) \colon \alpha\in \Omega_1\}$ admits $1$-power
saving.

\medskip

Next we show that the family  $\{ Q_\alpha \cap (\bar Z_\alpha\times V)\colon \alpha\in \Omega_1\}$ admits $\gamma$-power
saving.
By the definition of $Z_\alpha$,  for any $\alpha\in \Omega_1$ and $a\in \bar{Z}_{\alpha}$ we have
$\dimp(Z_\alpha(a)) \leq s-4$. 
By Lemma~\ref{lem:general-position-2}, there is a definable system of
sets
$\vec\CF_1=(\CF_1, \ldots, \CF_{s-2})$ on $X_1 \times \ldots \times X_{s-2}$ such that for any $n$-grid
$A_1\tdt A_{s-2}$ in $(\vec\CF_1,\nu)$-general position we have
  \[ |Z_\alpha(a) \cap (A_1\tdt A_{s-2})|\leq (s-2)^{s-4} \nu^{2} n^{s-4},   \]
  for any $\alpha\in \Omega_1$ and $a\in \bar Z_\alpha$.

  Applying Lemma~\ref{lem:general-position} to the definable family
  \begin{gather*}
  	\CG :=\big\{\varphi(a_1; v;\alpha )\cap \varphi(a_2; v;\alpha) \colon \alpha\in \Omega_1, a_1,a_2\in U,\\
    \dimp(\varphi(a_1; v;\alpha)\cap \varphi(a_2; v;\alpha) )=0 \big\},
  \end{gather*}
  we obtain that there is a definable system of sets $\vec{\CF}_2=(\CF_{s-1},
\CF_s)$ on $X_{s-1} \times X_s$ such that for any $n$-grid
$A_{s-1} \times  A _{s}$ in $(\vec\CF_2,\nu)$-general position and any $G\in \CG$ we have
  \[ |G \cap \left( A_{s-1} \times A_s \right)|\leq  \nu^2.\]

\medskip

Then $\vec \CF := \vec \CF_1\cup \vec\CF_2=(\CF_1,\dotsc \CF_s)$ is a definable system of sets on $X_1\tdt X_s$.

Let $A=A_1\tdt A_s$ be an $n$-grid on $X_1\tdt X_s$ in
$(\vec\CF,\nu)$-general position and $\alpha\in \Omega_1$.
We need to estimate from above $|Q_\alpha \cap (\bar Z_\alpha \times V) \cap
A|$.   
Let  $A_u := A_1\tdt A_{s-2}, A'_u := A_u \cap \bar{Z}_{\alpha}$ and $A_v := A_{s-1}\times A_s$, then
$|A'_u| \leq |A_u|\leq n^{s-2}$ and $|A_v|\leq n^2$.   Let $Q'_\alpha$ be $Q_\alpha$ viewed as a binary relation on $U \times V$,
we have 
$$|Q_\alpha\cap (\bar Z_\alpha\times V) \cap
A| = |Q'_\alpha \cap (\bar{Z}_{\alpha} \times V) \cap (A_u \times A_v)| \leq |Q'_\alpha \cap (A'_u \times A_v)|,$$
so it suffices to obtain the desired upper bound on $|Q'_\alpha \cap (A'_u \times A_v)|$.

From the $(\vec\CF,\nu)$-general position assumption and the choice of $\vec{\CF}$ we have: for any $a\in A'_u$ there are at most $(s-2)^{s-4} \nu^{2} n^{s-4}$ elements $a'\in A'_u$ such that
$|Q'_\alpha(a,v)\cap Q'_\alpha(a',v) \cap A_v|\geq  \nu^2$.

By assumption on $\CQ$ the definable family $\mathcal{Q}'_1 := \{Q'_{\alpha} : \alpha \in \Omega_1 \}$ of subsets of $U \times V$ satisfies the $\gamma$-ST property, and let $C': \mathbb{N} \to \mathbb{N}$ be as given by Definition \ref{def: gamma ST property} for $C(\nu) := (s-2)^{s-4} \nu$.
Then we have $|Q'_\alpha \cap (A'_u \times A_v)| \leq C'(\nu^2) n^{(s-1)-\gamma}$ (independently of $\alpha$), as required.

Thus the family $\CQ_1 = \{ Q_\alpha\colon \alpha\in \Omega_1\}$ admits $\gamma$-power saving.

\medskip

We now fix   $\alpha\in \Omega\setminus \Omega_1$.

By absolute irreducibility of $Q_\alpha$ and Remark \ref{rem: irred iff unique gen type} it is sufficient to show that there exists a  tuple $(a_1,\dotsc a_s)\in Q_\alpha$ generic
over $\alpha$ and  some tuple $\xi\in \acl(a_1,\dotsc,a_{s-2},\alpha) \cap
   \acl(a_{s-1},a_s,\alpha)$ of length at most $|\CL|$ such that
   $$(a_1,\dotsc,a_{s-2})\ind_\xi (a_{s-1},a_s).$$

  \emph{We  add $\acl(\alpha)$ to our language if needed and assume that $\alpha\in \dcl(\emptyset)$.}

By $|\CL|^+$-saturation of $\CM$, let $e=(e_1,\dotsc,e_{s-2})$ be a tuple in $\CM$ which is $\mfp$-generic
in $Z_\alpha$, namely $e\in Z_\alpha$ with $\dimp(e/\emptyset)=s-2$ (note that $Z_\alpha$ is $\emptyset$-definable). Let $\CM_0 = (M_0, \ldots) \preceq \CM$ be a model containing $e$ with $|\CM_0| \leq |\CL|$.

Let $\beta=(\beta_1,\dotsc,\beta_{s-2})\in U$ be a $\mfp$-generic point
in $Z_\alpha(e)$ over $M_0$, i.e.~$\beta\in Z_\alpha(e)$ and $\dimp(\beta/M_0)=s-3$.

Let $\delta=(\delta_1,\delta_2)$ be a tuple  in $\varphi(e,\CM,\alpha)\cap
\varphi(\beta,\CM,\alpha)$  with $\dimp(\delta/M_0\beta)=1$.
Without loss of generality  we assume that $\dimp(\delta_1/M_0\beta)=1$, namely
$\delta_1\models \mfp_{s-1}\restriction _{M_0\beta}$.
Note that such $\beta$ and $\delta$ can be chosen in $\CM$ by $|\CL|^+$-saturation.

We now collect  some properties of  $\beta$ and $\delta$.

\begin{claim}\label{claim:alpha-beta}
  \begin{enumerate}
  \item  $(e,\delta)$ is generic in $Q_\alpha$ over $\emptyset$.
   \item $\delta_1\ind_{M_0} \delta_2$  and $(\delta_1,\delta_2)\models
     \mfp_{s-1}\otimes \mfp_s |_{\emptyset}$.
   \item   $\beta\ind_{M_0} \delta$.
   \item $(\beta,\delta)$ is generic in $Q_\alpha$ over $\emptyset$.
  \end{enumerate}
\end{claim}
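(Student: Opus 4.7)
The plan is to establish the four parts in sequence, using the construction of $e, M_0, \beta, \delta$, together with absolute $\mfp$-irreducibility of $Q_\alpha$ (Remark \ref{rem: irred iff unique gen type}) and fiber-algebraicity of $Q_\alpha$.

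For (1), I perform a dimension count. Since $e$ is $\mfp$-generic in $U$, it realizes $\bigotimes_{i \in [s-2]}\mfp_i|_\emptyset$. The hypothesis $\delta_1 \models \mfp_{s-1}|_{M_0\beta}$ together with $e \in M_0$ gives $\delta_1 \models \mfp_{s-1}|_e$, and commutativity of the $\mfp_i$ in the $\mfp$-system then yields $(e,\delta_1) \models \bigotimes_{i \in [s-1]}\mfp_i|_\emptyset$. Hence the projection of $(e,\delta)$ onto the first $s-1$ coordinates is $\mfp$-generic and $\dimp(e,\delta) \geq s-1$. Combined with $(e,\delta) \in Q_\alpha$ and $\dimp(Q_\alpha) = s-1$ (Assumption \ref{ass: two} and Lemma \ref{lem: fib alg impl dim s-1}), equality holds, and absolute $\mfp$-irreducibility of $Q_\alpha$ pins down $(e,\delta)$ as realizing the unique $\mfp$-generic type of $Q_\alpha$ over $\emptyset$.

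For (2), the product-type assertion $(\delta_1,\delta_2) \models \mfp_{s-1} \otimes \mfp_s|_\emptyset$ follows from (1) by applying Proposition \ref{prop:q-gen-ser}: projecting out the $(s-2)$-th coordinate of the generic tuple $(e,\delta)$ gives $(e_1,\ldots,e_{s-3},\delta_1,\delta_2) \models \bigotimes_{i \in [s-3]}\mfp_i \otimes \mfp_{s-1} \otimes \mfp_s|_\emptyset$, from which $(\delta_1,\delta_2) \models \mfp_{s-1} \otimes \mfp_s|_\emptyset$ is extracted. The independence $\delta_1 \ind_{M_0} \delta_2$ is obtained by leveraging the generic choice of $\delta_1$ (which gives $\delta_1 \ind_\emptyset M_0\beta$, hence $\delta_1 \ind_e M_0$ by stationarity of $\mfp_{s-1}$), a parallel computation for $\delta_2$ using $\delta_2 \ind_e M_0$ transferred along the algebraic closure $\delta_2 \in \acl(e,\delta_1)$, and commutativity of the stationary types $\mfp_{s-1}$ and $\mfp_s$.

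For (3), $\delta_1 \ind_{M_0} \beta$ is immediate from $\delta_1 \models \mfp_{s-1}|_{M_0\beta}$. Since $Q_\alpha$ is fiber-algebraic and $e,\alpha \in M_0$, the relation $\varphi(e,\delta_1,\delta_2,\alpha)$ forces $\delta_2 \in \acl(M_0\delta_1)$; therefore $\tp(\delta_2/M_0\delta_1)$ is already algebraic and cannot fork upon adjoining $\beta$, so $\delta_2 \ind_{M_0\delta_1} \beta$. The standard transitivity rule for forking independence then yields $\delta = (\delta_1,\delta_2) \ind_{M_0} \beta$, and symmetry of forking closes (3).

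For (4), $(\beta,\delta) \in Q_\alpha$ is immediate from $\delta \in \varphi(\beta,v,\alpha)$. To show $\mfp$-genericity, I verify $\dimp(\beta,\delta) \geq s-1$ by exhibiting an $(s-1)$-coordinate projection that is $\mfp$-generic. The key intermediate step is showing that $\beta$ itself is $\mfp$-generic in $U$ over $\emptyset$: lifting the relative $\dimp(\beta/M_0) = s-3$ inside $Z_\alpha(e)$ to $\dimp(\beta/\emptyset) = s-2$ in $U$ is accomplished by an exchange argument exploiting the symmetric role of $\beta$ and $e$ in the definition of $Z_\alpha$ (since $e$ is $Z_\alpha$-generic and $\beta$ is $Z_\alpha(e)$-generic over $M_0$). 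Combined with $\delta_1 \models \mfp_{s-1}|_\beta$ inherited from $\delta_1 \models \mfp_{s-1}|_{M_0\beta}$ and commutativity, the projection $\pi_{[s-2] \cup \{s-1\}}(\beta,\delta)$ is $\mfp$-generic, forcing $\dimp(\beta,\delta) = s-1$, and absolute $\mfp$-irreducibility finishes. The main obstacle is this $\beta$-genericity upgrade in (4), which hinges on exploiting the symmetric definition of $Z_\alpha$ to transfer the generic status of $e$ in $U$ to $\beta$.
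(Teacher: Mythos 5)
Your parts (1) and (3) agree with the paper's argument, and the tensor-product half of (2) (via Proposition~\ref{prop:q-gen-ser}) is also the paper's route. The genuine problems are in the independence half of (2) and, more seriously, in (4).

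On (4): the intermediate claim you rely on --- that $\beta$ is $\mfp$-generic in $U$ over $\emptyset$, i.e.\ $\dimp(\beta/\emptyset)=s-2$ --- is not established by your sketch. You appeal to an ``exchange argument exploiting the symmetric role of $\beta$ and $e$,'' but $\dimp$ is not a geometric dimension arising from a pregeometry: it merely records how many coordinates realize the relevant $\mfp_i$-types. It satisfies super-additivity (Lemma~\ref{lem:sub}) but nothing like exchange, so symmetry of the relation $\beta\in Z_\alpha(e)\iff e\in Z_\alpha(\beta)$ does not let you trade $\dimp(\beta/e)$ for $\dimp(e/\beta)$ and back out $\dimp(\beta/\emptyset)$. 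In fact $\dimp(\beta/\emptyset)=s-2$ is most naturally seen as a \emph{consequence} of (4) together with Proposition~\ref{prop:q-gen-ser}, so using it as a stepping stone toward (4) risks circularity. The paper avoids the issue entirely: from (3), $\beta\ind_{M_0}\delta$, and since $M_0$ is a model and $\dimp(\beta/M_0)=s-3$, stationarity of types over models gives $\dimp(\beta/M_0\delta)=s-3$, hence $\dimp(\beta/\delta)\geq s-3$; pairing this with $\dimp(\delta/\emptyset)=2$ from (2) and commutativity of the $\mfp_i$ yields $\dimp(\beta,\delta/\emptyset)\geq s-1$ directly, and fiber-algebraicity of $Q_\alpha$ gives the matching upper bound. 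You should follow that route.

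On (2): the assertion $\delta_1\ind_{M_0}\delta_2$ is actually \emph{false} as stated, and no argument of the kind you outline can rescue it. Since $(e,\delta_1,\delta_2)\in Q_\alpha$ with $e\in M_0$ and $Q_\alpha$ fiber-algebraic, we have $\delta_2\in\acl(M_0\delta_1)$; on the other hand $\delta_2\notin\acl(M_0)$ (else $\delta_1\in\acl(e,\delta_2)\subseteq\acl(M_0)$, contradicting $\delta_1\models\mfp_{s-1}|_{M_0\beta}$). Thus $\tp(\delta_2/M_0\delta_1)$ is algebraic while $\tp(\delta_2/M_0)$ is not, so it forks over $M_0$, i.e.\ $\delta_1\nind_{M_0}\delta_2$. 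The paper's own proof of (2) only establishes $(\delta_1,\delta_2)\models\mfp_{s-1}\otimes\mfp_s|_\emptyset$ and never uses the dependence claim again; the displayed $\delta_1\ind_{M_0}\delta_2$ appears to be a typo (plausibly for $\beta\ind_{M_0}\delta_1$, which is exactly what feeds into (3)). Your ``parallel computation for $\delta_2$'' is too vague to evaluate, but no correct computation can yield a false conclusion.
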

\begin{proof}
  (1)  We have, by our assumption above,   that
  $\dimp(\delta_1/M_0 \beta)=1$,  hence in particular $\dimp(\delta_1/e)=1$.
Since $\dimp(e/\emptyset)=s-2$ we have
 $\dimp((e,\delta)/\emptyset) \geq  s-1$ (as $(e,\delta_1) \models \left( \bigotimes_{i \in [s-2]} \mfp_i \right) \otimes \mfp_{s-1}|_{\emptyset}$ using that the types $\mfp_i, i \in [s-1]$ commute).   Since  $Q_\alpha$ is
 fiber-algebraic and $(e,\delta) \in Q_\alpha$, we also have
  $\dimp((e,\delta)/\emptyset) \leq s-1$ by Lemma \ref{lem: fib alg impl dim s-1}.

\medskip
\noindent (2) Since  $(e,\delta)$ is generic in $Q_{\alpha}$ over $\emptyset$ by (1),
by Proposition~\ref{prop:q-gen-ser} we have $(\delta_1,\delta_2)\models
  \mfp_{s-1}\otimes \mfp_s |_{\emptyset}$.

\medskip
\noindent   (3) As  $\beta\ind_{M_0} \delta_1$ and $\delta_2\in
  \acl(e\delta_1)\subseteq \acl(M_0\delta_1)$, we have   $\beta \ind_{M_0}
  (\delta_1,\delta_2)$.

\medskip
\noindent (4)  We have $(\beta,\delta)\in Q_\alpha$.  Since $\dimp(\beta/M_0)=s-3$ and
$\beta\ind_{M_0} \delta$, we have $\dimp(\beta/M_{0}\delta)=s-3$ (as $\beta \models \bigotimes_{i \in [s-3]} \mfp_i |_{M_0 \delta}$ by stationarity of non-forking over models), hence in particular
$\dimp(\beta/\delta)\geq s-3$.  Also, since
$\dimp(\delta/\emptyset)=2$ by (2),  we have $\dimp \left((\beta,\delta) /\emptyset \right) \geq
s-1$.  Since $Q_\alpha$ is fiber-algebraic we  also have
$\dimp((\beta,\delta)/\emptyset) \leq
s-1$, hence $\dimp \left( (\beta,\delta)/\emptyset \right)=
s-1$.
\end{proof}

Let $p(u) := \tp(\beta/M_0)$ and $q(v) := \tp(\delta/M_0)$, both are definable types over $M_0$ by stability.

We choose  canonical bases $\xi_p$ and $\xi_q$ of $p$ and
$q$, respectively; i.e.~$\xi_p, \xi_q$ are tuples of length $\leq |\mathcal{L}|$ in $\mathcal{M}_0^{\eq}$,   and for any
automorphism $\sigma$ of $\CM$ we have $\sigma(p|
\CM)=p{| \CM}$ if and only if
$\sigma(\xi_p)=\xi_p$ (pointwise); and    $\sigma(q | \CM)=q | \CM$ if and only if
$\sigma(\xi_q)=\xi_q$.

Note that $p$ does not fork over $\xi_p$ and
$q$ does not fork over $\xi_q$.

\begin{claim}\label{claim:alpha-beta2}
We  have:
\begin{enumerate}[(a)]
\item  $\xi_q\in \acl(\beta)$;
\item  $\xi_p\in \acl(\delta)$;
\item $\xi_q\in \acl(\xi_p)$;
\item $\xi_p\in \acl(\xi_q)$.
\end{enumerate}
\end{claim}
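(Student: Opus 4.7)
The strategy is to establish (a) and (b) by proving the forking-independencies $\delta \ind_\beta M_0$ and $\beta \ind_\delta M_0$ respectively, and then to deduce (c) and (d) as formal consequences via a standard canonical-base argument. In each case, the essential ingredients are the $\emptyset$-definability of the $\mfp_i$'s and the fiber-algebraicity of $Q_\alpha$.

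For (a), the tuple $\delta_1$ realizes $\mfp_{s-1}|_{M_0\beta}$ by construction, and since $\mfp_{s-1}$ is $\emptyset$-definable this gives $\delta_1 \ind_\beta M_0$. Fiber-algebraicity of $Q_\alpha$ applied to $(\beta, \delta_1, \delta_2) \in Q_\alpha$ yields $\delta_2 \in \acl(\beta, \delta_1)$, hence $\delta_2 \ind_{\beta\delta_1} M_0$. Transitivity of forking gives $\delta \ind_\beta M_0$; combined with $\delta \ind_{M_0} \beta$ (from Claim~\ref{claim:alpha-beta}(3)) and stationarity of $q = \tp(\delta/M_0)$, this shows that $\tp(\delta/\beta M_0)$ is the non-forking extension of $q$, whence $\xi_q = \operatorname{Cb}(q) \in \acl(\beta)$.

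For (b), the argument is parallel but requires first isolating the $\mfp$-generic coordinates of $\beta$. Since $\dimp(\beta/M_0) = s-3$, we may choose $j \in [s-2]$ such that the subtuple $\bar\beta := (\beta_i)_{i \in [s-2] \setminus \{j\}}$ realizes $\bigotimes_{i \neq j} \mfp_i |_{M_0}$. By Claim~\ref{claim:alpha-beta}(3), $\bar\beta \ind_{M_0} \delta$; combined with the $\emptyset$-definability of each $\mfp_i$ and the uniqueness of the non-forking extension of a definable type, this implies that $\bar\beta$ also realizes $\bigotimes_{i \neq j} \mfp_i |_{M_0 \delta}$, so in particular $\bar\beta \ind_\delta M_0$. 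Fiber-algebraicity of $Q_\alpha$ applied to $(\beta, \delta) \in Q_\alpha$ with the coordinate $j$ isolated gives $\beta_j \in \acl(\bar\beta, \delta)$, and transitivity then yields $\beta \ind_\delta M_0$; exactly as in (a), this forces $\xi_p \in \acl(\delta)$.

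Finally, (c) and (d) follow from the following standard fact: in a stable theory, for any stationary type $r = \tp(b/M)$ with canonical base $\xi_r \in \dcl(M)$, one has $\acl(b) \cap \dcl(M) \subseteq \acl(\xi_r)$. (Any $M$-automorphism carrying $b$ to a conjugate $b^{(i)}$ in an infinite Morley sequence of $r$ over $M$ fixes every element of $\dcl(M)$ pointwise, so any element of the intersection lies in $\bigcap_i \acl(b^{(i)}) \subseteq \acl(\xi_r)$ by the classical characterization of canonical bases via Morley sequences.) Since $\xi_p, \xi_q \in \dcl(M_0)$ by construction, (a) gives $\xi_q \in \acl(\beta) \cap \dcl(M_0) \subseteq \acl(\xi_p)$, which is (c), and (b) symmetrically yields (d). The main non-routine step is the coordinate isolation in (b): unlike (a), where $\delta_2$ is the obvious ``algebraic'' coordinate, in (b) the non-$\mfp$-generic coordinate of $\beta$ is only known to exist and must be located before fiber-algebraicity can absorb it into $\acl(\delta)$.
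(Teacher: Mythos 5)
Your proof is correct, and for parts (a) and (b) it takes a genuinely different route from the paper's. The paper argues by contradiction: assuming $\xi_q\notin\acl(\beta)$, it produces a model $\CN\supseteq M_0\beta$ together with infinitely many $\CN$-conjugate types $q_i$, realizes each $q_i$ with a fixed generic first coordinate $\delta'_1$, and extracts infinitely many distinct second coordinates $\delta_2^i$ with $(\beta,\delta'_1,\delta_2^i)\in Q_\alpha$, contradicting fiber-algebraicity of degree $d$. You instead establish the reverse independence $\delta\ind_\beta M_0$ (resp.\ $\beta\ind_\delta M_0$) directly, by splitting the tuple into a $\mfp$-generic coordinate (independent over $\emptyset$ by $\emptyset$-definability of the $\mfp_i$, hence of the tensor product) and the remaining fiber-algebraic coordinate (algebraic over the rest, hence trivially independent), concatenating via transitivity, and then invoking the characterization ``a stationary type does not fork over $A$ iff its canonical base is contained in $\acl(A)$.'' Both routes rest on exactly the same two inputs --- $\emptyset$-definability of the types and fiber-algebraicity of $Q_\alpha$ --- but yours is a direct forking-calculus argument rather than a counting argument, and is arguably cleaner while invoking slightly more canonical-base machinery. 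For (c) and (d) your argument and the paper's are essentially identical in content: the paper directly observes $\xi_q\ind_{\xi_p}\beta$ (since $\xi_q\in M_0$ and $p$ does not fork over $\xi_p$) and combines with $\xi_q\in\acl(\beta)$; you isolate this as a stand-alone lemma $\acl(b)\cap\dcl(M)\subseteq\acl(\operatorname{Cb}(\tp(b/M)))$ and prove it via a Morley sequence. One tiny inaccuracy: the final inclusion $\bigcap_i\acl(b^{(i)})\subseteq\acl(\xi_r)$ does not literally follow from ``the classical characterization of canonical bases via Morley sequences'' (which is the statement $\operatorname{Cb}(r)\in\dcl(b^{(1)},\dots,b^{(n)})$); rather it follows from the fact that the $b^{(i)}$ are pairwise independent over $\xi_r$, so that the algebraic closures $\acl(b^{(i)}\xi_r)$ intersect in $\acl(\xi_r)$. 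This is a misattribution, not an error, since the needed fact is true.
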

\begin{proof}
 (a) Assume not, then the orbit of $\xi_q$ under the automorphisms of
  $\CM$ fixing $\beta$ would be infinite. Hence we can   choose a model
  $\CN = (N, \ldots) \preceq \CM$ containing  $M_0\beta$ with $|N| \leq |\CL|$, and distinct types  $q_i \in S_v(N), i\in
  \omega$, each conjugate to $q | N$ under an automorphism
  of $\CN$ fixing $\beta$.

Let $\delta'_1\models \mfp_{s-1} |  N$.   For each $i\in
\omega$ we choose $\delta^i_2$ such that  $(\delta'_1,
\delta^i_2)\models  q_i$.   We have that $(\beta, \delta_1',\delta_2^i)\in Q_\alpha$,
hence, by fiber-algebraicity,  $|\{ \delta_2^i \colon i\in \omega \}|
\leq d$.   But all $q_i$ are pairwise distinct types, a contradiction.

\medskip

\noindent (b)  Since $\dimp(\beta/M_0\delta)=s-3$,
permuting variables if needed, we may assume that
$(\beta_1,\dotsc,\beta_{s-3})\models \mfp_1\otimes\dotsb \otimes
\mfp_{s-3}|_{M_0 \delta}$.

Assume (b) fails. Then  we can   find  a model
  $\CN \preceq \CM, |\CN| \leq |\CL|$ containing  $M_0\delta$, and distinct types  $p_i \in S(N), i\in
  \omega$, each conjugate to $p{\restriction} N$ under an automorphism
  of $\CN $ fixing $\beta$.
 Let
  \[ (\beta'_1,\dotsc,\beta'_{s-3}) \models \mfp_1\otimes\dotsc
    \otimes \mfp_{s-3} | N \]
    in $\CM$.
  For each $i\in
\omega$ we choose $\beta^i_{s-2}$ in $\CM$ such that
$$(\beta'_1,
\dotsc,\beta_{s-3}',\beta^i_{s-2})\models  p_i,$$
 and get a
contradiction as in (a).

\medskip

\noindent (c)  Since $\xi_q\in M_0$  and $p$ does not fork over $\xi_p$, we have
$\xi_q\ind_{\xi_p} \beta$, which by part (a) implies $\xi_q\in \acl(\xi_p)$.

\medskip

\noindent(d) Similar to (c).
\end{proof}

We have  that the tuple $(\beta,\delta)$ is generic in $Q_\alpha$  by Claim~\ref{claim:alpha-beta}(4). Let $\xi := \xi_p \cup \xi_q$, then $\xi \in \acl(\beta) \cap \acl(\delta)$ by Claim \ref{claim:alpha-beta2}. Finally $\delta \ind_{M_0} \beta$ by Claim \ref{claim:alpha-beta}(3), $\beta \ind_{\xi_p} M_0$ by the choice of $\xi_p$, hence $\delta \ind_{\xi_p} \beta$, and as $\xi_q \in \acl(\beta)$ we conclude $\beta \ind_{\xi} \delta$.

This finishes the proof of Theorem~\ref{thm:ser-main}, and hence of Theorem \ref{thm: stab main ser}.

\subsection{Proof of Theorem \ref{rem: power saving bound, main stable} for $s \geq 4$}\label{sec: MainThm for s geq 4}

Let $\mathcal{Q} = \left\{Q_{\alpha} : \alpha \in \Omega \right\}$ be a definable family of subsets of $X_1\tdt X_s$  satisfying the assumption of Theorem \ref{rem: power saving bound, main stable}, and say $\CQ$ is fiber-algebraic of degree $\leq d$. In particular, there exist $m \in \mathbb{N}$ and definable families $\mathcal{Q}_i, i \in [m]$ of absolutely $\mfp$-irreducible subsets of $X_1\tdt X_s$, so that for every $Q \in \mathcal{Q}$ we have $Q = \bigcup_{i \in [m]} Q_i$ for some $Q_i \in \mathcal{Q}_i$. Note that each $\mathcal{Q}_i$ is automatically fiber-algebraic, of degree $\leq d$.
By assumption each $\mathcal{Q}_i$ satisfies the $\gamma$-ST property for some fixed $\gamma > 0$, under any partition of the variables into two groups of size $2$ and $s-2$.

For each $i \in [m]$, let the definable family $\CQ'_i$ be as given by Theorem \ref{thm: stab main ser} for $\CQ_i$. That is, for each $i \in [m]$ the family 
$\CQ'_i$ admits $\gamma$-power saving, and for each $Q_i \in \CQ_i\setminus \CQ'_i$  the relation  $Q_i$ is in a $\mfp$-generic correspondence with an abelian group $G_{Q_i}$ type-definable in $\CM^{\eq}$ over a set of parameters $A_i$ of cardinality $\leq |\CL|$.
Consider the definable family 
$$\CQ' := \left\{Q \in \CQ : Q = \bigcup_{i \in [m]} Q_i \textrm{ for some } Q_i \in \CQ'_i \right\} \subseteq \CQ.$$
By Lemma \ref{lem: union power saving}, $\CQ'$ satisfies $\gamma$-power saving. On the other hand, from Definition \ref{def: group corresp}, if $Q \in \CQ$, $Q = \bigcup_{i \in [m]} Q_i$  with $Q_i \in \CQ_i$, and at least one of the $Q_i$ is in a $\mfp$-generic correspondence with a type-definable group, then $Q$ is also in a $\mfp$-generic correspondence with the same group. Hence every element $Q \in \CQ \setminus \CQ'$ is in  a $\mfp$-generic correspondence with a  group type-definable over some $A := \bigcup_{i \in [m]}A_i, |A| \leq |\CL|$.

\subsection{Proof of Theorem  \ref{rem: power saving bound, main stable} for ternary $Q$}\label{sec: stable main thm ternary}

In this subsection we reduce the remaining case $s=3$ of Theorem \ref{rem: power saving bound, main stable} to the case $s=4$.

Let $(X_i, \mfp_i)_{i  \in [3]}$ and a definable fiber-algebraic (say, of degree $\leq d$) family $\CQ$ of subsets of $X_1 \times X_2 \times X_3$ satisfy the assumption of Theorem \ref{rem: power saving bound, main stable} with some fixe $\gamma > 0$. In particular, there exist $m \in \mathbb{N}$ and fiber-algebraic (of degree $\leq d$) definable families $\mathcal{Q}_i, i \in [m]$ of absolutely $\mfp$-irreducible subsets of $X_1\tdt X_s$, so that for every $Q \in \mathcal{Q}$ we have $Q = \bigcup_{i \in [m]} Q_i$ for some $Q_i \in \mathcal{Q}_i$. By the same reduction as in Section \ref{sec: MainThm for s geq 4}, it suffices to establish the theorem separately for each $\CQ_i$, so we may assume from now on that additionally all sets  in $\CQ$ are absolutely $\mfp$-irreducible.

Consider the definable family $\CQ^* := \left\{ Q^* : Q \in \CQ \right\}$ of subsets of $X_1 \times X_2 \times X_3 \times X_4$, where 
  		\begin{gather*}
  		Q^* := \Big\{(x_2,x'_2,x_3,x'_3) \in X_2 \times X_2 \times X_3 \times X_3 : \\
	 \exists x_1 \in X_1 \, \big((x_1,x_2,x_3) \in Q \land (x_1, x'_2, x'_3) \in Q \big) \Big\}.
  	\end{gather*}

\begin{lem}\label{lem: Q' is fiber alg}
The definable family $\CQ^*$ of subsets of $X_2 \times X_2 \times X_3 \times X_3$ is fiber algebraic, of degree $\leq d^2$.	
\end{lem}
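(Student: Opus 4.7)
The plan is a direct unpacking of the definition of $Q^*$ combined with two applications of fiber-algebraicity of $Q$. I will verify fiber-algebraicity of $Q^*$ of degree $\leq d^2$ by checking each of the four coordinates in turn; by the symmetry of the definition of $Q^*$ under swapping $(x_2,x_3) \leftrightarrow (x'_2,x'_3)$, it suffices to handle the coordinates $x'_3$ and $x'_2$.

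For the $x'_3$ coordinate: fix $x_2, x'_2, x_3 \in X_2 \times X_2 \times X_3$ and count how many $x'_3 \in X_3$ satisfy $Q^*(x_2, x'_2, x_3, x'_3)$. Any such $x'_3$ is witnessed by some $x_1 \in X_1$ with $(x_1,x_2,x_3) \in Q$ and $(x_1, x'_2, x'_3) \in Q$. Since $Q$ is fiber-algebraic of degree $\leq d$, fixing $x_2, x_3$ there are at most $d$ possibilities for such $x_1$; and for each such $x_1$, fixing $x_1, x'_2$ there are at most $d$ possibilities for $x'_3$. Hence the total number of $x'_3$ is at most $d^2$.

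For the $x'_2$ coordinate: fix $x_2, x_3, x'_3$ and count $x'_2$ with $Q^*(x_2, x'_2, x_3, x'_3)$. Again any witness $x_1$ lies in a set of size $\leq d$ (from the fiber of $Q$ above $(x_2,x_3)$ in the first coordinate), and for each such $x_1$ the equation $(x_1,x'_2,x'_3) \in Q$ has at most $d$ solutions in $x'_2$ (since $Q$ is fiber-algebraic); so again at most $d^2$ solutions.

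The remaining two coordinates $x_3$ and $x_2$ are handled identically after swapping the roles of the two witness-groups of variables in the defining formula of $Q^*$. Thus $Q^* \in \CQ^*$ is fiber-algebraic of degree $\leq d^2$, uniformly in $Q$, which gives the lemma. There is no substantive obstacle here — the only point to watch is that $x_1$ is quantified existentially and is not one of the coordinates of $Q^*$, so the two $d$-factors arise from two independent applications of the fiber-algebraicity bound, not one.
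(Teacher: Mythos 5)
Your proof is correct and follows essentially the same route as the paper's: fix three of the four coordinates, bound the witnesses $x_1$ by one application of fiber-algebraicity of $Q$ (degree $\leq d$), and for each witness bound the fourth coordinate by another application, giving $d^2$. The paper does only the first case explicitly and says the rest are similar; your slightly more careful treatment of the $x'_2$ case and the symmetry observation for the remaining two coordinates is a harmless elaboration.
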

\begin{proof}
	We consider the case of fixing the first three coordinates of $Q^* \in \CQ^*$, all other cases are similar. Let $Q \in \CQ$, $(a_2,a'_2) \in X_2 \times X_2$ and $a_3 \in X_3$ be fixed. As $Q$ is fiber algebraic of degree $\leq d$, there are at most $d$ elements $x_1 \in X_1$ such that $(x_1,a_2,a_3) \in Q$; and for each such $x_1$, there are at most $d$ elements $x_3' \in X_3$ such that $(x_1, a'_2,x'_3) \in Q$. Hence, by definition of $Q^*$, there are at most $d^2$ elements $x'_3 \in X_3$ such that $(a_2, a'_2, a_3, x'_3) \in Q^*$.
\end{proof}

\begin{rem}\label{rem: p-syst from Q to Q'}
	Note that $\left(X'_i, \mfp'_i \right)_{i \in [4]}$ with $X'_1=X'_2 :=  X_2, X'_3  = X'_4 := X_3$ and $\mfp'_1 = \mfp'_2 :=  \mfp_2, \mfp'_3 = \mfp'_4 := \mfp_3$ is a $\mfp$-system with each $\mfp$ non-algebraic.
\end{rem}

The following lemma will be used to show that power saving for $\CQ^*$ implies power saving for $\CQ$ (this is a version of \cite[Proposition 3.10]{StrMinES} for families, which in turn is essentially \cite[Lemma 2.2]{raz}). We include a proof for completeness.

\begin{lem}\label{lem: bound Q from Q'}
For any finite $A_i \subseteq X_i, i \in [3]$ and $Q \in \CQ$, taking $\tilde{Q} := Q \cap \left(A_1 \times A_2 \times A_3\right) $ and $\tilde{Q}^* :=  Q^* \cap \left(A_2 \times A_2 \times A_3 \times A_3 \right)$ we have
\begin{gather*}
	\left \lvert \tilde{Q} \right \rvert \leq d \left \lvert A_1 \right \rvert^{\frac{1}{2}} \left \lvert \tilde{Q}^* \right \rvert^{\frac{1}{2}}.
\end{gather*}
\end{lem}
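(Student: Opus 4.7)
The plan is to prove the inequality by a standard Cauchy--Schwarz argument combined with the fiber-algebraicity of $Q$. First, I would define, for each $a_1 \in A_1$, the slice count $f(a_1) := |\{(a_2,a_3) \in A_2 \times A_3 : (a_1,a_2,a_3) \in Q\}|$, so that $|\tilde{Q}| = \sum_{a_1 \in A_1} f(a_1)$. By Cauchy--Schwarz,
\[
|\tilde{Q}|^2 \;=\; \Bigl(\sum_{a_1 \in A_1} f(a_1)\Bigr)^{\!2} \;\leq\; |A_1| \cdot \sum_{a_1 \in A_1} f(a_1)^2,
\]
so it suffices to show that $\sum_{a_1 \in A_1} f(a_1)^2 \leq d \, |\tilde{Q}^*|$.

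Next, I would interpret $f(a_1)^2$ as counting ordered pairs $((a_2,a_3),(a'_2,a'_3)) \in (A_2 \times A_3)^2$ such that both $(a_1,a_2,a_3) \in Q$ and $(a_1,a'_2,a'_3) \in Q$. By the definition of $Q^*$, each such pair yields a quadruple $(a_2,a'_2,a_3,a'_3) \in \tilde{Q}^*$ (with $a_1$ serving as the existential witness). Thus the triples $(a_1,a_2,a_3,a'_2,a'_3)$ counted by $\sum_{a_1} f(a_1)^2$ map to quadruples in $\tilde{Q}^*$, and the key step is to bound the fibers of this map: given a fixed quadruple $(a_2,a'_2,a_3,a'_3) \in \tilde{Q}^*$, the number of $a_1 \in A_1$ with $(a_1,a_2,a_3) \in Q$ is at most $d$ by fiber-algebraicity of $Q$ (applied to the first coordinate with the last two fixed). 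Hence $\sum_{a_1} f(a_1)^2 \leq d\,|\tilde{Q}^*|$, and combining with Cauchy--Schwarz gives $|\tilde{Q}| \leq \sqrt{d}\,|A_1|^{1/2}|\tilde{Q}^*|^{1/2} \leq d\,|A_1|^{1/2}|\tilde{Q}^*|^{1/2}$, as required.

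There is essentially no obstacle here; the argument is a direct two-line Cauchy--Schwarz and the only ingredient needed from the setting is that $Q$ is fiber-algebraic of degree $\leq d$, which has already been fixed. The bound I sketched is in fact slightly stronger ($\sqrt{d}$ in place of $d$), so the stated inequality follows with room to spare. The real work happens in the later steps of Section~\ref{sec: stable main thm ternary}, where this lemma will be combined with power-saving bounds for the $4$-ary family $\CQ^*$ (obtained by applying the already-established $s \geq 4$ case of Theorem~\ref{rem: power saving bound, main stable} to $\CQ^*$ using the $\mfp$-system from Remark~\ref{rem: p-syst from Q to Q'}) to deduce the required $\gamma$-power saving for $\CQ$, with the exponent doubling ($2\gamma \mapsto \gamma$) coming precisely from the square-root in this lemma.
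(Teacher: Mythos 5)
Your proof is correct and follows essentially the same route as the paper's: both introduce the quintuple set (the paper's $W$, your pairs counted by $\sum_a f(a)^2$), apply Cauchy--Schwarz in the $A_1$-variable, and bound the fiber over each quadruple in $\tilde{Q}^*$ by $d$ using fiber-algebraicity. You correctly observe that this yields the slightly sharper constant $\sqrt{d}$, which the paper rounds up to $d$.
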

\begin{proof}
	Consider the (definable) set
	\begin{gather*}
		W  := \big\{(x_1, x_2, x'_2, x_3, x'_3) \in X_1 \times X^2_2 \times X_3^2 : \\
		 (x_1,x_2,x_3) \in Q \land  (x_1,x'_2,x'_3) \in Q \big \},
	\end{gather*}
	and let $\tilde{W} := W \cap \left( A_1 \times A_2^2 \times A_3^2\right)$. As usual, for arbitrary sets $S \subseteq B \times C$ and $b \in B$, we denote by $S_b$ the fiber $S_b = \{ c \in C : (b,c) \in S \}$.
	
	Note that $|\tilde{Q}| = \sum_{a_1 \in A_1} |\tilde{Q}_{a_1}|$ and $|\tilde{W}| = \sum_{a_1 \in A_1} |\tilde{Q}_{a_1}|^2$, which by the Cauchy-Schwarz inequality implies
	\begin{gather*}
		|\tilde{Q}| \leq |A_1|^{\frac{1}{2}} \left(\sum_{a_1 \in A_1}|\tilde{Q}_{a_1}|^2 \right)^{\frac{1}{2}} = |A_1|^{\frac{1}{2}}|\tilde{W}|^{\frac{1}{2}}.
	\end{gather*}
	For any tuple $\bar{a} := (a_2,a'_2,a_3,a'_3) \in \tilde{Q}^*$, the fiber $\tilde{W}_{\bar{a}} \subseteq A_1$ has size at most $d$ by fiber algebraicity of $Q$. Hence $|\tilde{W}| \leq d |\tilde{Q}^*|$, and so $|\tilde{Q}| \leq d |A_1|^{\frac{1}{2}}|\tilde{Q}^*|^{\frac{1}{2}}$.
\end{proof}

\begin{lem}\label{lem: pwr save Q' implies Q}
Assume that $\gamma' > 0$ and $\CQ^*$ admits $\gamma'$-power saving (with respect to the $\mfp$-system $(X'_i, \mfp'_i)_{i\in [4]}$ in Remark \ref{rem: p-syst from Q to Q'}).
 Then $\CQ$ admits $\gamma$-power saving  for $\gamma := \frac{\gamma'}{2}$.
	\end{lem}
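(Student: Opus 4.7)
\medskip

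\noindent\textbf{Proof proposal.} The plan is to transfer a power-saving bound for the quaternary family $\CQ^*$ to a power-saving bound for $\CQ$ by combining two facts already established: the Cauchy--Schwarz inequality of Lemma~\ref{lem: bound Q from Q'} and the fact that the $\mfp$-system underlying $\CQ^*$ just doubles the $\mfp$-pairs $(X_2,\mfp_2)$ and $(X_3,\mfp_3)$ (Remark~\ref{rem: p-syst from Q to Q'}). So I want to start from witnessing definable families $\CF^*_1,\CF^*_2,\CF^*_3,\CF^*_4$ on $X_2,X_2,X_3,X_3$ respectively that establish $\gamma'$-power saving for $\CQ^*$, and build witnessing families for $\CQ$.

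First, I would set $\CF_1 := \emptyset$ (the first coordinate will be handled only by the trivial bound $|A_1|\leq n$), $\CF_2 := \CF^*_1 \cup \CF^*_2$ on $X_2$, and $\CF_3 := \CF^*_3 \cup \CF^*_4$ on $X_3$. The key observation is that if an $n$-grid $A_1 \times A_2 \times A_3$ on $X_1 \times X_2 \times X_3$ is in $(\vec\CF,\nu)$-general position, then the ``doubled'' $n$-grid $A_2 \times A_2 \times A_3 \times A_3$ on $X'_1 \times X'_2 \times X'_3 \times X'_4$ is in $(\vec{\CF^*},\nu)$-general position, since each of its four factors coincides with one of the original $A_2, A_3$ and the families $\CF^*_i$ have been absorbed into $\CF_2$ or $\CF_3$.

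Next, by the $\gamma'$-power saving assumption applied to this doubled grid, there is a constant $C(\nu)$ such that, for every $Q \in \CQ$,
\[
|Q^* \cap (A_2 \times A_2 \times A_3 \times A_3)| \leq C(\nu) n^{3-\gamma'}.
\]
Now apply Lemma~\ref{lem: bound Q from Q'} with $\tilde Q := Q \cap (A_1 \times A_2 \times A_3)$ and $\tilde Q^* := Q^* \cap (A_2 \times A_2 \times A_3 \times A_3)$, using $|A_1| \leq n$:
\[
|\tilde Q| \leq d\,|A_1|^{1/2}\,|\tilde Q^*|^{1/2} \leq d\,n^{1/2}\bigl(C(\nu)n^{3-\gamma'}\bigr)^{1/2} = d\,C(\nu)^{1/2}\,n^{2-\gamma'/2}.
\]
Setting $\gamma := \gamma'/2$ and noting that $s-1 = 2$, this gives $|Q \cap (A_1 \times A_2 \times A_3)| = O_\nu(n^{(s-1)-\gamma})$, uniformly in $Q \in \CQ$, which is exactly $\gamma$-power saving for $\CQ$.

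There is no real obstacle here: the argument is a direct ``doubling'' of the general-position hypothesis together with the Cauchy--Schwarz inequality already packaged in Lemma~\ref{lem: bound Q from Q'}. The only thing to check carefully is that the doubled grid genuinely inherits $(\vec{\CF^*},\nu)$-general position, which is immediate from the definition since $\CF^*_1, \CF^*_2 \subseteq \CF_2$ and $\CF^*_3, \CF^*_4 \subseteq \CF_3$.
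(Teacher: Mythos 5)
Your proposal is correct and follows essentially the same route as the paper: you build the witnessing families $\CF_1 = \emptyset$, $\CF_2 = \CF^*_1 \cup \CF^*_2$, $\CF_3 = \CF^*_3 \cup \CF^*_4$, observe that general position of $A_1 \times A_2 \times A_3$ transfers to the doubled grid $A_2 \times A_2 \times A_3 \times A_3$, and then apply Lemma~\ref{lem: bound Q from Q'} together with $|A_1| \leq n$ to obtain the bound $d\,C(\nu)^{1/2}\,n^{2-\gamma'/2}$. This matches the paper's argument in every essential step.
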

\begin{proof}
	 By assumption there exist $\vec{\CF}' = (\CF'_i)_{i \in [4]}$ with $\CF'_1, \CF'_2$ definable families on $X_2$ and $\CF'_3, \CF'_4$ definable families on $X_3$, and a function $C': \mathbb{N} \to \mathbb{N}$, such that for any $Q^* \in \CQ^*, \nu,n \in \mathbb{N}$ and an $n$-grid $A' = \prod_{i \in [4]}A'_i$ on $X_2 \times X_2 \times X_3 \times X_3$ in $(\vec{\CF}',\nu)$-general position we have $|Q^* \cap A'| \leq C'(\nu) n^{3 - \gamma'}$.
	
	We take $\CF_1 := \emptyset$, $\CF_2 := \CF'_1 \cup \CF'_2$, $\CF_3 := \CF'_3 \cup \CF'_4$, $C(\nu) := d \cdot C'(\nu)^{\frac{1}{2}}$ and $\gamma := \frac{\gamma'}{2}$.

Assume we are given $Q \in \CQ, \nu,n \in \mathbb{N}$ and  $A_i \subseteq X_i, i  \in [3]$ with $|A_i| = n$  in $(\vec{\CF}, \nu)$-general  position.  By the choice of $\vec{\CF}$   it  follows  that the  grid  $A_2  \times A_2 \times  A_3 \times A_3$ is  in $\vec{\CF}'$-general  position, hence $|Q^*  \cap (A_2^2 \times  A_3^2)|  \leq C'(\nu)  n^{3  - \gamma'}$. By Lemma \ref{lem: bound Q from Q'} this implies
\begin{gather*}
	|Q \cap (A_1 \times A_2 \times A_3)| \leq d |A_1|^{\frac{1}{2}} |Q^* \cap (A_2^2 \times A_3^2)|^{\frac{1}{2}} \\
	\leq  d n^{\frac{1}{2}}  C'(\nu)^{\frac{1}{2}} n^{\frac{3}{2} - \frac{\gamma'}{2}} \leq C(\nu) n^{2 - \gamma}.
\end{gather*}
Hence $\CQ$ satisfies $\gamma$-power saving.
\end{proof}

We are ready to finish the proof of Theorem \ref{rem: power saving bound, main stable} (and hence of Theorem \ref{thm: stab main ineff}), the required bound on power saving follows from the proof.
\begin{proof}[Proof of Theorem \ref{rem: power saving bound, main stable} for $s=3$]
By the reduction explained above we may assume that $\CQ$ is a definable family of absolutely $\mfp$-irreducible sets and does not satisfy $1$-power saving.
Applying the case $s=4$ of Theorem  \ref{rem: power saving bound, main stable}  to the family $\CQ^*$ (note that $\CQ^*$ satisfies the assumption of Theorem \ref{rem: power saving bound, main stable}  by the reduction above and since $\CQ$ satisfies the $s=3$ assumption of Theorem \ref{rem: power saving bound, main stable}), we find a definable subfamily $\left(\CQ^* \right) '\subseteq \CQ^*$  such that the family 
$\left(\CQ^* \right) '$ admits $\gamma$-power saving, and for each $Q^* \in \CQ^* \setminus \left(\CQ^* \right) '$  the relation  $Q^*$ is in a $\mfp$-generic correspondence with an abelian group $G_{Q^*}$ type-definable in $\CM^{\eq}$ over a set of parameters of cardinality $\leq |\CL|$.

 Let $\CQ_0$ be the set of all $Q\in \CQ$ such that for some $u\subseteq [3]$ with $|u|=2$ for the projection $\pi_u(Q)$   of $Q$ onto $\prod_{i\in u} X_i$  we have $\dimp(\pi_u(Q)) < 2$.  
  By Claim~\ref{claim:p-dim-def}, the family $\CQ_0$ is definable and it follows from Proposition~\ref{prop:1-save-ser}
 that the family $\CQ_0$ admits $1$-power saving.

Consider the definable subfamily $\CQ' := \left\{ Q \in \CQ : Q^* \in \left(\CQ^* \right) ' \right\} \cup \CQ_0$ of $\CQ$. By Lemma \ref{lem: pwr save Q' implies Q}, as $\gamma \leq 1$, $\CQ'$ admits $\frac{\gamma}{2}$-power saving. On the other hand, if $Q \in \CQ \setminus \CQ'$, then $Q^* \in \CQ^* \setminus \left(\CQ^* \right) '$, hence there exists a small set $A \subseteq M$ and an abelian group $(G, \cdot, 1_G)$ type-definable over $A$ so that $Q^*$ is in a $\mfp$-generic correspondence with $G$.

This means that there exists a tuple $(g_2, g'_2,g_3,g'_3) \in G^4$ so  that $g_2 \cdot g'_2 \cdot g_3 \cdot g'_3 = 1_{G}$,  $g_2, g_3, g'_3$ are independent generics over $A$  and a tuple $(a_2,a'_2,a_3, a'_3) \in Q^* $ so that each of the elements $a_2, a'_2, a_3, a'_3$ is $\mfp$-generic over $A$ and each of the pairs $(g_2,a_2), (g'_2, a'_2), (g_3, a_3), (g'_3, a'_3)$ is inter-algebraic over $A$.

By definition of $Q^*$ there exists some $a_1 \in X_1$ such that $(a_1, a_2, a_3) \in Q$ and $(a_1, a'_2, a'_3) \in Q$.
We let $A' := A a'_3$ and $g_1 := g'_2 \cdot  g'_3$, and make the following observations.
\begin{enumerate}
	\item $g_1 \cdot g_2 \cdot g_3 = 1_{G}$ (using that $G$ is abelian).
	\item Each of the pairs $(a_1, g_1), (a_2,g_2), (a_3, g_3)$ is inter-algebraic over $A'$.
	
\noindent	The pairs $(a_2, g_2),  (a_3, g_3)$ are inter-algebraic over $A$ by assumption. Note that $a_1$ and $a'_2$ are inter-algebraic over $A'$ as $Q$ is fiber-algebraic,  so it suffices to show that $a'_2$ and $g_1$ are inter-algebraic over $A'$.
By definition $g_1 \in \acl(g'_2, g'_3) \subseteq \acl(a'_2, a'_3, A)  \subseteq \acl(a'_2,A')$. Conversely, as $g'_2 \in \acl(g'_2 \cdot  g'_3, g'_3) \subseteq \acl(g_1, A')$,  we have $a'_2 \in \acl(g'_2, A) \subseteq \acl(g_1, A')$.
\item $g_2$ and $g_3$ are independent generics in $G$ over $A'$.

\noindent By assumption $g_2 \ind_A g_3 g'_3$ and $a'_3$ is inter-algebraic with $g'_3$ over $A$, hence $g_2 \ind_{A'} g_3$.

\item $a_i \models \mfp_i|_{A'}$ for all $i \in [3]$.

\noindent For $i \in \{2,3\}$: as $g_i \ind_{A}  g'_3$ and $g'_3$ is inter-algebraic with $a'_3$ over $A$, we have $a_i \ind_A a'_3$, which by stationarity of $\mfp_i$ implies $a_i \models \mfp_i|_{A'}$.

\noindent For $i = 1$:
as $a_i  \models \mfp_i|_{A'}$ for $i \in \{2,3\}$ and $a_2 \ind_{A'} a_3$, it follows that $(a_2, a_3) \models (\mfp_2 \otimes \mfp_3)|_{A'}$ and the tuple $(a_1, a_2, a_3)$ is generic in $Q$ over $A'$ (as $\dimp(Q) = 2$ by the choice of $\CQ'$). But then $a_1 \models \mfp_1|_{A'}$ by the assumption on $Q$ and Proposition \ref{prop:q-gen-ser} (can be applied by absolute irreducibility of $Q$ and the choice of $\CQ'$).
\end{enumerate}
	It follows that $Q$ is in a $\mfp$-generic correspondence with $G$ over $A'$, witnessed by the tuples $(g_1, g_2, g_3)$ and $(a_1, a_2, a_3)$.
\end{proof}

\subsection{Discussion and some applications}\label{sec: stab applics}

First we observe how Theorem \ref{thm: stab main ineff}, along with some standard facts from model theory of algebraically closed fields, implies a higher arity generalization of the Elekes-Szab\'o theorem for algebraic varieties over $\CC$ similar to \cite{Bays}. Recall from  \cite{Bays} that a \emph{generically finite algebraic correspondence} between irreducible varieties $V$ and $V'$ over $\CC$ is a closed irreducible subvariety $C \subseteq V \times V'$ such that the projections $C \to V$ and $C \to V'$ are generically finite and dominant (hence necessarily $\dim(V) = \dim(V')$). And assuming that $W_i, W'_i$ and $V \subseteq \prod_{i \in [s]} W_i, V' \subseteq \prod_{i \in [s]} W'_i$ are irreducible algebraic varieties over $\CC$, we say that $V$ and $V'$ are in \emph{coordinate-wise correspondence} if there is a generically finite algebraic correspondence $C \subseteq V \times V'$ such that for each $i \in [s]$, the closure of the projection $(\pi_i \times \pi'_i)(C) \subseteq W_i \times W'_i$ is a generically finite algebraic correspondence between the closure of $\pi_i(V)$ and the closure of $\pi'_{i}(V')$.
\begin{cor}\label{cor: ES for any dim in ACF}

	Assume that $s \geq 3$, and $X_i \subseteq \mathbb{C}^{m_i}, i \in [s]$ and  $Q \subseteq \prod_{i \in [s]} X_i$ are irreducible algebraic varieties, with $\dim(X_i)=  d$. Assume also that for each $i \in [s]$, the projection $Q \to \prod_{j \in [s] \setminus \{i\}} X_i$ is dominant and generically finite. Let $m:=(m_1, ..., m_s)$, $t:= \max\{\deg(Q), \deg(X_1), ..., \deg(X_s)\}$.
	Then one of the following holds.
	\begin{enumerate}
		\item For every $\nu$ there exist $D = D(d,s,t,m)$ and $c = c(d,s,t,m,\nu)$ such that:
	 for any $n$ and finite $A_i \subseteq X_i, |A_i| = n$ such that $|A_i \cap Y_i| \leq \nu$ for every algebraic subsets $Y_i$ of $X_i$ of dimension $<d$ and degree $\leq D$, we have
	 $$|Q \cap A| \leq c n^{s-1 -\gamma}$$
	 for $\gamma =\frac{1}{16d-5}$ if $s \geq 4$, and $\gamma =  \frac{1}{2(16d-5)}$ if $s=3$.
		\item There exists a connected abelian complex algebraic group $(G, \cdot)$ with $\dim(G) = d$ such that $Q$ is in a coordinate-wise correspondence with
		$$Q' := \left\{(x_1, \ldots, x_s) \in G^s : x_1 \cdot \ldots \cdot x_s = 1_G \right\}.$$
	\end{enumerate}
\end{cor}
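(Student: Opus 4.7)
The plan is to specialize Theorem~\ref{thm: stab main ineff} to the structure $\CM = (\mathbb{C}, +, \times)$ and then translate both clauses of its conclusion back into algebro-geometric language. The theory $\ACF_0$ is stable and admits a distal expansion (via the $o$-minimal structure $\mathbb{R}_{\textrm{an},\exp}$, or more directly by viewing constructible sets as semialgebraic and invoking Fact~\ref{fac: def in dis impl distal cell decomp}), so all hypotheses of Theorem~\ref{thm: stab main ineff} are available.

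First I would set up the $\mfp$-system. Since each $X_i$ is an irreducible variety of dimension $d$, its generic type $\mfp_i$ (the unique type in $X_i$ of Morley rank $d$) is stationary and definable over a small set $\CM_0 \preceq \CM$ containing the parameters defining the $X_i$ and $Q$; after naming $\CM_0$, the $\mfp_i$ are $\emptyset$-definable and pairwise commute by stability. By standard facts in $\ACF_0$, for a constructible $Y \subseteq \prod X_i$ we have $\dim_\mfp(Y) \geq k$ iff the Zariski closure of $\pi_u(Y)$ has full dimension $kd$ for some $u$ with $|u|=k$. Since the projection $Q \to \prod_{j \neq i} X_j$ is dominant and generically finite, $Q$ is fiber-algebraic (of degree $\leq D_0$ depending on $t$ and $m$) and $\dim_\mfp(Q) = s-1$. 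Irreducibility of $Q$ as a variety makes it absolutely $\mfp$-irreducible in the sense of Section~\ref{sec: pirreducible}.

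Next I would apply Theorem~\ref{thm: stab main ineff} (and, to get the explicit $\gamma$, Theorem~\ref{rem: power saving bound, main stable}) to the singleton family $\CQ = \{Q\}$. For the $\gamma$-ST hypothesis, when $s \geq 4$ we view $Q$ as a binary relation on $(X_{t_1}\times X_{t_2}) \times \prod_{k\neq t_1,t_2} X_k$; this is a constructible relation with first coordinate of complex dimension $2d$, hence of real dimension $4d$, so Corollary~\ref{fac: algebraic ST}(2) applied with $d_1 = 2d$ supplies the $\gamma$-ST property with $\gamma = \frac{1}{8(2d)-5} = \frac{1}{16d-5}$. For $s=3$, the reduction in Section~\ref{sec: stable main thm ternary} passes through $Q^*$ on four coordinates, producing the additional factor of $2$ and yielding $\gamma = \frac{1}{2(16d-5)}$. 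In the power-saving clause of Theorem~\ref{thm: stab main ineff}, the general-position hypothesis is stated with respect to some definable families $\CF_i$ on $X_i$; by quantifier elimination in $\ACF_0$ these families are constructible, and by compactness the degrees of their fibers are bounded by some $D = D(d,s,t,m)$. Every constructible subset of $X_i$ of $\mfp$-dimension $<d$ is contained in a proper algebraic subvariety of $X_i$ of degree bounded in terms of $D$, so the general-position condition stated in the Corollary with bound $D$ implies the general-position condition of Theorem~\ref{thm: stab main ineff}. This gives clause~(1).

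Finally, if Theorem~\ref{thm: stab main ineff} puts $Q$ in the second clause, we obtain a connected type-definable abelian group $(G,\cdot)$ in $\CM^{\eq}$ with $\dim_\mfp(G)=d$ in generic correspondence with $Q$. Here the main work is translation: in $\ACF_0$, every connected type-definable group in $\CM^{\eq}$ is definably (hence algebraically) isomorphic to the group of $\mathbb{C}$-points of a connected commutative complex algebraic group, by the theorem of Hrushovski--van~den~Dries--Weil on group chunks in $\omega$-stable theories (see the discussion in \cite{bays2017model}); since $\dim_\mfp(G) = d$, this algebraic group has complex dimension $d$. The generic correspondence in Definition~\ref{def: gen corresp intro} gives, for each $i$, inter-algebraic generic points $a_i \in X_i$ and $g_i \in G$ with $g_1 \cdots g_s = 1_G$; inter-algebraicity of generics between two irreducible varieties is exactly the statement that their graph-closure is a generically finite correspondence, and performing this coordinate-wise produces the coordinate-wise correspondence between $Q$ and $Q' = \{\bar{x} \in G^s : x_1\cdots x_s = 1_G\}$ required in clause~(2). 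The main obstacle I anticipate is not the power-saving case but the careful bookkeeping in this last step: matching the model-theoretic notion of a type-definable connected group with an explicit algebraic group of the correct dimension, and verifying that the coordinate-wise inter-algebraicity of generics upgrades to a genuine coordinate-wise algebraic correspondence in the sense stated before the Corollary.
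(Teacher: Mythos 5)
Your proposal follows essentially the same route as the paper's proof, and the translation into model-theoretic language, the calculation of $\gamma$ via Corollary~\ref{fac: algebraic ST}(2), the reduction for $s=3$, and the algebraization of the type-definable group are all in line with what the paper does. There are, however, a few points where the argument as written has genuine gaps.

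The most substantial gap is the uniformity of the constants. You apply Theorem~\ref{rem: power saving bound, main stable} to the singleton family $\CQ=\{Q\}$; the definable families $\CF_i$ and the constant in the power-saving estimate that this produces a priori depend on $Q$ itself. The statement requires $D = D(d,s,t,m)$ and $c = c(d,s,t,m,\nu)$, i.e.~depending only on the ambient dimensions, arity, and degree bound, not on the particular varieties. Invoking ``compactness'' does not establish this: for a fixed $Q$ one certainly gets \emph{some} finite degree bound on the fibers of the $\CF_i$, but nothing in the argument ties this bound to $(d,s,t,m)$ rather than to $Q$. The paper obtains the uniformity by stating and proving a family version (Corollary~\ref{cor: ES for any dim in ACF new}), in which $\CQ$ is taken to be \emph{all} constructible subsets of $\prod X_i$ of Morley degree one, bounded degree, and the prescribed projection behavior. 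This is a definable family in $\ACF_0$, using uniform definability of Morley rank, Morley degree, and irreducible components (cited as \cite[Theorem~A.7]{freitag2017differential}); applying Theorem~\ref{rem: power saving bound, main stable} to this family is what makes the $\CF_i$, and hence $D$ and $c$, depend only on $(d,s,t,m)$.

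Two smaller points. First, ``the projection is dominant and generically finite'' does not directly give that $Q$ is fiber-algebraic --- there may be positive-dimensional fibers over a smaller-dimensional base --- so one must first remove a subset of smaller Morley rank, and in the family setting this removal must be uniform (again via uniform definability of irreducible components). Second, in the computation of $\gamma$: Corollary~\ref{fac: algebraic ST}(2) is stated for constructible subsets of $\CC^{d_1}\times \CC^{d_2}$ with $\gamma = \tfrac{1}{8d_1-5}$, where $d_1$ is the dimension of the \emph{ambient} affine space, not of the variety. Since $X_{t_1}\times X_{t_2}$ sits inside $\CC^{m_{t_1}+m_{t_2}}$, one must first replace each $X_i$ by a generically finite-to-one projection to $\CC^d$ (a coordinate-wise correspondence, as the paper does) before taking $d_1 = 2d$; your phrase ``first coordinate of complex dimension $2d$'' conflates the dimension of the variety with the dimension of the ambient space.
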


The above Corollary \ref{cor: ES for any dim in ACF} immediately follows from the following slightly more general
statement:

\begin{cor}\label{cor: ES for any dim in ACF new}

	Assume that $s \geq 3$, and $X_i \subseteq \mathbb{C}^{m_i}, i
        \in [s]$ are irreducible algebraic varieties with $\dim(X_i)=
        d$, and  let $\CQ$ be a definable family of subsets of $\prod_{i \in [s]} X_i$, each of
        Morley degree $1$. 
        Assume also that for each $Q \in \CQ$, $i \in [s]$, the projection $Q \to
        \prod_{j \in [s] \setminus \{i\}} X_i$ is Zariski dense  and
        is generically finite to one. Then there is a definable family $\CQ' \subseteq \CQ$ such that:
	\begin{enumerate} 
        \item $\CQ'$ admits $\gamma$-power saving 
          for $\gamma = \frac{1}{16d-5}$ if $s \geq 4$, and $\gamma =  \frac{1}{2(16d-5)}$ if $s=3$.
                \item For every $Q \in \CQ \setminus \CQ'$ there exists a connected abelian complex
                  algebraic group $(G, \cdot)$ with $\dim(G) = d$ such
                  that for some independent generics  
                  $g_1,\dotsc,g_{s-1}\in G$ and generic
                  $(q_1,\dotsc,q_s)\in Q$ we have that $g_i$ is
                  inter-algebraic with  $q_i$ for $i<s$ and $q_s$
                  inter-algebraic with $(g_1 \cdot g_2 \cdot  \dotsc  \cdot g_{s-1})^{-1}$. 
	\end{enumerate}
      \end{cor}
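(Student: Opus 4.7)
The plan is to derive the corollary from Theorem~\ref{rem: power saving bound, main stable} applied to the $\mathfrak{p}$-system on the generic types of the $X_i$, and then to upgrade the resulting type-definable abelian group to a complex algebraic group via classical model theory of ACF.

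For the setup, I would expand $(\mathbb{C},+,\times)$ by constants naming all parameters defining $\CQ$ and the $X_i$, and pass to an $|\mathcal{L}|^+$-saturated elementary extension $\CM\models\mathrm{ACF}_0$. Each $X_i$ is irreducible of Morley rank $d$ and Morley degree $1$, so it carries a unique stationary generic type $\mathfrak{p}_i$ of Morley rank $d$; ACF is stable, so these types automatically commute and $(X_i,\mathfrak{p}_i)_{i\in[s]}$ is a $\mathfrak{p}$-system (Definition~\ref{def: p-pairs}), with $d\cdot\dim_{\mathfrak{p}}=\mathrm{MR}$ on definable sets of Morley degree one. Uniform boundedness in constructible families gives $D$ and a definable $\mathfrak{p}$-generic $Q^\circ\subseteq Q$ on which $Q$ is fiber-algebraic of degree $\le D$, and by Proposition~\ref{prop:1-save-ser} the lower-dimensional complement contributes $1$-power saving. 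Morley degree one forces a unique $\mathfrak{p}$-generic type over every parameter set, so each $Q\in\CQ$ is absolutely $\mathfrak{p}$-irreducible, and $\CQ$ itself provides the $m=1$ absolutely $\mathfrak{p}$-irreducible decomposition required by Theorem~\ref{rem: power saving bound, main stable}.

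Next I would verify the $\gamma$-ST hypothesis. Fix $t_1\ne t_2$: the product $X_{t_1}\times X_{t_2}$ sits inside $\mathbb{C}^{m_{t_1}+m_{t_2}}$ but has complex dimension only $2d$, and Noether normalization (uniformly over $\CQ$, after deleting a lower-dimensional locus absorbed by Proposition~\ref{prop:1-save-ser}) yields a constructible finite-to-one surjection $\pi\colon X_{t_1}\times X_{t_2}\to W\subseteq\mathbb{C}^{2d}$. Pushing each binary relation $Q\subseteq(X_{t_1}\times X_{t_2})\times\prod_{k\in[s]\setminus\{t_1,t_2\}}X_k$ through $\pi\times\mathrm{id}$ gives a definable family of constructible relations on $\mathbb{C}^{2d}\times\prod_{k\in[s]\setminus\{t_1,t_2\}}X_k$ of bounded description complexity. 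An easy extension of Lemma~\ref{lem: gamma-ST prop props} to finite-to-one surjections (partition $X_{t_1}\times X_{t_2}$ into at most $\deg\pi$ sheets and inflate the $\gamma$-ST threshold by a factor of $(\deg\pi)^2$ via pigeonhole) transfers the $\gamma$-ST property from the pushed-forward family to $\CQ$; Corollary~\ref{fac: algebraic ST}(2) with $d_1=2d$ then supplies $\gamma=\frac{1}{16d-5}$. For $s=3$, the same analysis applied to the quaternary family $\CQ^*$, with each $Q^*$ decomposed into absolutely $\mathfrak{p}$-irreducible pieces via Corollary~\ref{cor: decomp into abs irred sets}, yields $2\gamma$-ST with $2\gamma=\frac{1}{16d-5}$, and Lemma~\ref{lem: pwr save Q' implies Q} then halves the exponent to $\gamma=\frac{1}{2(16d-5)}$.

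Finally, applying Theorem~\ref{rem: power saving bound, main stable} produces a definable $\CQ'\subseteq\CQ$ with the claimed $\gamma$-power saving, and for each $Q\in\CQ\setminus\CQ'$ a connected abelian group $G_Q$ type-definable in $\CM^{\eq}$ over a small parameter set together with a $\mathfrak{p}$-generic correspondence between $Q$ and $G_Q$. To upgrade $G_Q$, I would invoke the classical Hrushovski--van~den~Dries theorem that a connected group of finite Morley rank type-definable in $\mathrm{ACF}_0$ is definably isomorphic to the generic part of a connected complex algebraic group; tracking Morley ranks through Definition~\ref{def: group corresp} shows this algebraic group is abelian of dimension $d$. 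Unfolding the correspondence through the isomorphism then produces independent generics $g_1,\ldots,g_{s-1}\in G_Q$ and a generic $(q_1,\ldots,q_s)\in Q$ with $g_i$ inter-algebraic with $q_i$ for $i<s$ and $q_s$ inter-algebraic with $(g_1\cdots g_{s-1})^{-1}$, establishing clause~(2). The hard part will be the passage through Noether normalization: one must verify, with uniformity over the family $\CQ$, that the $\gamma$-ST property transfers through a finite-to-one rather than bijective reduction of the ambient complex dimension from $m_{t_1}+m_{t_2}$ down to $2d$, since the stated exponent depends only on $d$.
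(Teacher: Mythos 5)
Your proposal follows essentially the same route as the paper: convert Morley degree one into a single-piece absolutely $\mathfrak{p}$-irreducible family for Theorem~\ref{rem: power saving bound, main stable}, reduce each $X_i$ generically finite-to-one to a constructible subset of $\mathbb{C}^d$ so that Corollary~\ref{fac: algebraic ST}(2) applies with $d_1=2d$, run the main stable theorem (with the $s=3$ case via $\CQ^*$ and Lemma~\ref{lem: pwr save Q' implies Q}), and upgrade the resulting type-definable group through $\omega$-stability and the classification of groups interpretable in algebraically closed fields. One caveat on the step you correctly flag as delicate: partitioning $X_{t_1}\times X_{t_2}$ into $\deg\pi$ definable ``sheets'' is not available in $\mathrm{ACF}_0$ (no definable Skolem functions, e.g.\ no definable square root), so the finite-to-one transfer should instead be carried out on the whole statement---grids, general-position families, and the generic group correspondence all pass through a finite-to-finite coordinate-wise correspondence with a bounded loss of constants---which is exactly what the paper does, if equally tersely.
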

It is not hard to see that Corollary \ref{cor: ES for any dim in ACF new} implies \ref{cor: ES for any dim in ACF}.   Indeed, if  $Q$ is  an irreducible variety then  it has Morley degree
one. Let $\CQ$ be the family of \emph{all} irreducible algebraic varieties contained in $\prod_{i \in [s]}X_i$ of degree $\deg{Q}$, Morley rank $\MR(Q)$ and 
with all projections Zariski dense  and
        generically finite to one. It is a definable family in $\CM$ by definability of Morley rank and irreducibility (see e.g.~\cite[Theorem A.7]{freitag2017differential}), defined by a formula depending only on $m,t,s,d$; and $Q \in \CQ$. Applying Corollary \ref{cor: ES for any dim in ACF new} we can conclude depending on whether $Q \in \CQ'$ or $Q \in \CQ \setminus \CQ'$.

\begin{proof}[Proof of Corollary~\ref{cor: ES for any dim in ACF new}]

Let $\CM :=  (\CC, +, \times, 0,1) \models \ACF$, then $|\CL|= \aleph_0$ and $\CM$  is  an $|\CL|^+$-saturated structure. We recall that $\CM$ is a strongly minimal structure, in particular it is $\omega$-stable and has additive Morley rank $\MR$ coinciding with the Zariski dimension (see e.g.~\cite{MR1678602}).
	
For each $i$, as $X_i$ is irreducible, i.e.~has Morley degree $1$, let
$\mfp_i \in S_{x_i}(\CM)$ be the unique type in $X_i$ with
$\MR(\mfp_i) = \MR(X_i) = d$. By stability,  types are definable,
commute and are stationary after naming a countable elementary
submodel of $\CM$ so that all of the $X_i$'s are defined over
it.

Hence $(X_i, \mfp_i)_{i \in [s]}$ is a $\mfp$-system; and by
the additivity of Morley rank we see that $\MR(Y)  \geq d \dimp(Y)$ for any definable $Y \subseteq \prod_{i \in [s]}X_i$.

For any $Q \in \CQ$, since the projection of $Q$ onto $\prod_{i=1}^{s-1}X_i$ is Zariski
dense  and
generically finite, we have $\MR(Q)=d(s-1)$. 

Let
$Q'\subseteq Q$ be a definable set with $\RM(Q')=d(s-1)$.  Since $Q$
and $Q'$ have the same generic points, the item (2) is equivalent for
$Q$ and $Q'$.
Obviously $\gamma$-power saving for $Q$ implies $\gamma$-power saving
for $Q'$, and we observe that $\gamma$-power saving for $Q'$ with $0<\gamma<1$ implies $\gamma$-power saving for $Q$.   Let $Q'' := Q\setminus Q'$. Then, as $Q$ has Morley degree $1$,
$\MR(Q'')<d(s-1)$, hence $\dimp(Q'')\leq s-2$.  Applying
Lemma~\ref{lem:general-position-2} to $\CG := \{ Y'' \}$ we
obtain that $Y''$ has  $1$-power saving. Since $\gamma<1$, it follows that $Y=Y'\cup Y''$ also has $\gamma$-power saving.

As by assumption every $Q \in \CQ$ has generically finite projections, after removing a
subset of smaller Morley rank we may assume that $Q$ is fiber-algebraic. This can be done uniformly for the family by \cite[Theorem A.7]{freitag2017differential} (however, on this step we have to pass from a family of algebraic sets to a family of constructible sets, that is why we can only use bounds from Corollary \ref{fac: algebraic ST}(2) but not from Fact \ref{fac: stronger bounds for algebraic} in the following), hence we may assume that the family $\CQ$ consists of fiber algebraic sets of fixed degree.

As $\dim(X_i) = d$, it follows that $X_i$ has a  generically
finite-to-one projection onto $\CC^{d}$, hence, after possibly a coordinate-wise correspondence, we may assume that $Q \subseteq \prod_{i \in [s]} \mathbb{C}^d$ --- again, uniformly for the whole family $\CQ$. By Corollary \ref{fac: algebraic ST}(2), every definable family of sets $Y \subseteq \CC^{2d}  \times \CC^{(s-2)d}$ satisfies the $\left(\frac{1}{8d-1} \right)$-ST property. Applying Theorem \ref{rem: power saving bound, main stable} (we are using once more that irreducible components are uniformly definable in families in $\ACF$, see \cite[Theorem A.7]{freitag2017differential}) we find a definable subfamily $\CQ'$ with  $\gamma$-power saving for the stated $\gamma$.

Every  type-definable group in $\CM^{\eq}$ is actually definable (by $\omega$-stability, see e.g.~\cite[Theorem 7.5.3]{MR1924282}),
and every group interpretable in an algebraically closed field is
definably isomorphic to an algebraic group (see e.g.~\cite[Proposition
4.12  + Corollary 1.8]{MR1678602}).
Thus, for $Q \in \CQ \setminus \CQ'$, there exists an abelian connected complex algebraic group $(G, \cdot)$, independent generic elements $g_1, \ldots, g_{s-1} \in G$ and $g_s \in G$ such that $g_1 \cdot \ldots \cdot g_s = 1$ and generic $a_i \in X_i$ inter-algebraic with $g_i$, such that $(a_1, \ldots, a_s) \in Q$. In particular, $\dim(G) = \dim(X_i) = d$. And, by irreducibility of $Q$, hence uniqueness of the generic type, such $a_i$'s exist \emph{for any} independent generics $g_1, \ldots, g_{s-1}$.
As the model-theoretic algebraic closure coincides with the field-theoretic algebraic closure, by saturation of $\CM$ this gives the desired coordinate-wise correspondence.
\end{proof}

 \begin{rem}
 Failure of power saving on arbitrary grids, not necessarily in a general position, does not guarantee coordinate-wise correspondence with an \emph{abelian} group in Corollary \ref{cor: ES for any dim in ACF}.
 For example, let $(H,\cdot)$ be the Heisenberg group of $3\times 3$ matrices over $\CC$, viewed as a definable group in $\CM := (\CC,+,\times)$. For $n \in \NN$, consider the subset of $H$ given by
 \begin{gather*}
 	A_n := \left\{ \begin{pmatrix}
1 & n_1 &n_3\\
0 & 1 & n_2\\
0 & 0 & 1
\end{pmatrix} \colon n_1, n_2, n_3 \in \NN, n_1,n_2 < n, n_3 < n^2 \right\}.
 \end{gather*}
It is not hard to see that $|A_n| = n^4$. For the definable fiber-algebraic relation $Q(x_1, x_2, x_3, x_4)$ on $H^4$ given by $x_1 \cdot x_2 = x_3 \cdot x_4$ we have $|Q \cap A_n^4| \geq \frac{1}{16}(n^4)^3 = \Omega(|A_n|^3)$.

However, $Q$ is not in a generic correspondence with an \emph{abelian} group. Indeed, the sets $A_n \subseteq H, n \in \NN$ are not in an $(\CF,\nu)$-general position for any $\nu$, with respect to the definable family $\CF = \{u_1 - u_2 = c : c \in \CC \}$ of subsets of $H$.
 \end{rem}

However, restricting further to the case $\dim(X_i) = 1$ for all $i \in [s]$, the general position requirement is satisfied automatically: for any definable set $Y \subseteq X_i$, $\dim(Y) < 1$ if and only if $Y$ is finite; and for every definably family $\CF_i$ of subsets of $X_i$ there exists some $\nu_0$ such that for any $Y \in \CF_i$, if $Y$ has cardinality greater than $\nu_0$ then it is infinite.
Hence (using the classification of one-dimensional connected complex algebraic groups) we obtain the following simplified statement.
\begin{cor}\label{cor: ES for 1 dim ACF}
	Assume $s \geq 3$, and let $Q \subseteq \mathbb{C}^{s}$  be an irreducible algebraic variety so that for each $i \in [s]$, the projection $Q \to \prod_{j \in [s] \setminus \{i\}} \mathbb{C}^{s}$ is generically finite.
	Then exactly one of the following holds.
	\begin{enumerate}
		\item There exist $c$ depending only on $s, \deg(Q)$ such that:
	 for any $n$ and $A_i \subseteq \CC_i, |A_i| = n$ we have
	 $$|Q \cap A| \leq c n^{s-1 -\gamma}$$
	 for $\gamma = \frac{1}{11}$ if $s \geq 4$, and $\gamma =  \frac{1}{22}$ if $s=3$.
		\item For $G$ one of $(\mathbb{C}, +)$, $(\mathbb{C}, \times)$ or an elliptic curve, $Q$ is in a coordinate-wise correspondence with
		$$Q' := \left\{(x_1, \ldots, x_s) \in G^s : x_1 \cdot \ldots \cdot x_s = 1_G \right\}.$$
	\end{enumerate}
	
\end{cor}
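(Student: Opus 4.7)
\medskip

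\noindent\textbf{Proof proposal.} The plan is to deduce Corollary~\ref{cor: ES for 1 dim ACF} as the $d=1$ specialization of Corollary~\ref{cor: ES for any dim in ACF}, taking advantage of two simplifications peculiar to the one-dimensional setting: the automatic nature of general position, and the classification of one-dimensional connected abelian complex algebraic groups.

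First I would apply Corollary~\ref{cor: ES for any dim in ACF} with $X_1 = \ldots = X_s = \mathbb{C}$ (so $d=1$) to the given irreducible $Q$, which is fiber-algebraic by the generic finiteness assumption. Substituting $d=1$ into the exponent bounds $\frac{1}{16d-5}$ (for $s \geq 4$) and $\frac{1}{2(16d-5)}$ (for $s=3$) yields exactly $\frac{1}{11}$ and $\frac{1}{22}$ respectively, matching the stated numbers. The output is the announced dichotomy between power saving on grids in appropriate general position and a coordinate-wise correspondence with an abelian connected complex algebraic group $G$ with $\dim(G) = 1$.

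The next step is to eliminate the general position hypothesis. In Corollary~\ref{cor: ES for any dim in ACF}(1), the constant $c$ depends on $\nu$ bounding $|A_i \cap Y_i|$ for subvarieties $Y_i \subsetneq \mathbb{C}$ of dimension $<1$ and degree $\leq D$. But any such $Y_i$ is finite, and since $D = D(1,s,t,m)$ is fixed, there is a uniform bound $\nu_0 = \nu_0(D)$ on the cardinality of any such subvariety (any degree-$D$ algebraic subset of $\mathbb{C}$ has at most $D$ points). Thus every pair of $n$-sets $A_i \subseteq \mathbb{C}$ is automatically in $(\mathcal{F}_i, \nu_0)$-general position, and the $\nu$-dependence collapses into the single absolute constant $c = c(s, \deg(Q))$ as stated.

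For the group case, I would invoke the classical classification: a connected one-dimensional complex algebraic group is isomorphic to exactly one of the additive group $(\mathbb{C},+)$, the multiplicative group $(\mathbb{C}^\times,\times)$, or an elliptic curve. Since such groups are abelian automatically, the abelianness conclusion of Corollary~\ref{cor: ES for any dim in ACF}(2) is harmless here. Finally, the dichotomy is exclusive because the counting lower bound $\Omega(n^{s-1})$ on a suitable arithmetic progression inside the group case rules out the power-saving bound of Clause (1). The whole argument is essentially a packaging step; the main work was already done in Corollary~\ref{cor: ES for any dim in ACF}, and I do not anticipate any genuine obstacle beyond correctly tracking the constants and citing the classification of one-dimensional algebraic groups.
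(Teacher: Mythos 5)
Your proposal matches the paper's proof essentially step for step: substitute $d=1$ into Corollary~\ref{cor: ES for any dim in ACF}, observe that general position is vacuous in dimension one because proper subvarieties of $\CC$ are finite of uniformly bounded size, and invoke the classification of connected one-dimensional complex algebraic groups. The one place where you are a bit cavalier is the mutual exclusivity: ``take an arithmetic progression in the group'' does not immediately work because the correspondence in Clause (2) is only coordinate-wise finite-to-finite and only generic, so constructing grids with $\Omega(n^{s-1})$ incidences from the group requires some care; the paper handles this by citing \cite[Proposition 1.7]{StrMinES} (Remark~\ref{rem: mut excl}), which proves it for $s=3$ and notes the argument generalizes, rather than by a one-line pullback. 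Apart from that small gap in rigor, the argument and the route through Corollary~\ref{cor: ES for any dim in ACF} are exactly the paper's.
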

%
%
%

 \begin{rem}\label{rem: mut excl}
 	We expect that the two cases in Corollary \ref{cor: ES for any dim in ACF} are not mutually exclusive (a potential example is suggested in \cite[Remark 7.14]{breuillard2021model}), however they are mutually exclusive in the $1$-dimensional case in Corollary \ref{cor: ES for 1 dim ACF}. The proof of this for $s=3$ is given in \cite[Proposition 1.7]{StrMinES}, and the argument generalizes in a straightforward manner to an arbitrary $s$.
 \end{rem}

%

We remark that the case of complex algebraic varieties corresponds to a rather special case of our general Theorem \ref{thm: stab main ineff} which also applies e.g.~to the theories of differentially closed fields or compact complex manifolds (see Facts \ref{fac: DCF0 dist exp} and \ref{fac: CCM dist exp}). For example:
\begin{rem}
	 Given definable strongly minimal sets $X_i, i \in [s]$ and  a fiber-algebraic $Q \subseteq \prod_{i \in [s]} X_i$ in a differentially closed field  $\CM$ of characteristic $0$, we conclude that either $Q$ has power saving (however, we do not have an explicit exponent here, see Problem \ref{prob: cell decomp DCF0}), or that $Q$ is in correspondence with one of the following strongly minimal differential-algebraic groups:  the additive, multiplicative or elliptic curve groups over the field of constants  $\mathcal{C}_{\CM}$ of $\CM$, or a \emph{Manin kernel} of a simple abelian variety $A$ that does not descend to $\mathcal{C}_{\CM}$  (i.e.~the Kolchin closure of the torsion subgroup of $A$; we rely here on the Hrushovski-Sokolovic trichotomy theorem, see e.g.~\cite[Section 2.1]{MR3641651}).
\end{rem}

\section{Main theorem in the $o$-minimal case}\label{sec: main thm omin}

\subsection{Main theorem and some reductions}\label{sec: o-min main thm}

In this section we will assume that $\CM = (M, \ldots)$ is an o-minimal, $\aleph_0$-saturated $\CL$-structure expanding a group (or just with definable Skolem functions).
We shall use several times the following basic property of o-minimal structures:
\begin{fact}\label{generic neighborhoods}\cite[Fact 2.1]{peterzil2019minimalist}
Assume that $a\in M^n$ and $A \subseteq B \subseteq M$ are small sets. For every definable open neighborhood $U$ of $a$ (defined over arbitrary parameters), there exists $C \supseteq A$, $\acl$-independent from $aB$ over $A$, and a $C$-definable open $W \subseteq U$ containing $a$. In particular, $\dim(a/A) = \dim(a/C)$ and $\dim(aB/C) = \dim(aB/A)$.
\end{fact}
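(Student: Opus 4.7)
The plan is to construct $W$ as a generic open box inside $U$ still containing $a$, parameterized by its center and radius, using the group structure on $M$ to define such boxes uniformly and o-minimal dimension theory to arrange the required $\acl$-independence.

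First I would let $D$ denote the (small) set of parameters over which $U$ is defined, enlarged so that $A \subseteq D$ (the definition of $U$ is unaffected). Using the group operation, set $B(x, r) := \prod_{i=1}^n (x_i - r, x_i + r)$ for $x \in M^n$ and $r \in M_{>0}$; this is a $\emptyset$-definable family of open boxes. Since $U$ is open and contains $a$, some $r_0 > 0$ satisfies $B(a, r_0) \subseteq U$. Then I would consider the $Da$-definable set
\[
S := \{(x, r) \in M^n \times M_{>0} : B(x, r) \subseteq U \text{ and } a \in B(x, r)\}.
\]
The point $(a, r_0/2)$ lies in $S$, and because $B(a, r_0) \subseteq U$ leaves room for small perturbations of both $x$ and $r$, every $(x, r)$ in a suitable open neighborhood of $(a, r_0/2)$ also lies in $S$; hence $\dim(S) = n+1$.

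Next, assuming $\CM$ is sufficiently saturated (e.g.\ $|\CL|^+$-saturated; otherwise pass to such an extension and use elementarity), I would pick $(c, s) \in S$ with $\dim((c, s)/aBD) = n+1$, and set $W := B(c, s)$, $C := A \cup \{c, s\}$. By definition of $S$, the set $W$ is $C$-definable with $a \in W \subseteq U$. To verify $C \ind_A aB$, note that $A \subseteq D$ gives $AaB \subseteq aBD$, so
\[
n+1 = \dim((c,s)/aBD) \leq \dim((c,s)/AaB) \leq \dim((c,s)/A) \leq n+1,
\]
and hence $\dim((c,s)/AaB) = \dim((c,s)/A)$. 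This is the desired $\acl$-independence of $\{c, s\}$ (equivalently, of $C$) from $aB$ over $A$, using the dimension-theoretic characterization of independence in an o-minimal expansion of a group (where $\acl = \dcl$). The ``in particular'' equalities $\dim(a/A) = \dim(a/C)$ and $\dim(aB/C) = \dim(aB/A)$ then follow by symmetry and additivity of o-minimal dimension.

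The main delicate point is verifying $\dim(S) = n+1$, which hinges on openness of $U$ and continuity of the group operation to find an open neighborhood of $(a, r_0/2)$ inside $S$. The only other subtlety is making sure $\CM$ is saturated enough to realize a generic point of $S$ over $aBD$; this is the reason one typically assumes $|\CL|^+$-saturation, rather than just $\aleph_0$-saturation, or instead works in a sufficiently saturated elementary extension throughout.
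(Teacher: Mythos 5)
Your construction — parameterizing open boxes $B(x,r)$ by center and radius, showing the set $S$ of admissible pairs $(x,r)$ is $(n+1)$-dimensional (open around $(a, r_0/2)$), and taking $W := B(c,s)$ for a generic $(c,s) \in S$ — is the right argument, and your dimension computation verifying $\{c,s\} \ind_A aB$ is correct. The ``in particular'' equalities also follow exactly as you say, by symmetry and monotonicity of $\acl$-independence. One genuine subtlety that your parenthetical ``otherwise pass to such an extension and use elementarity'' brushes past: the condition $\dim((c,s)/aBD)=n+1$ is not a first-order property of $(c,s)$ over $aBD$ when $B$ is infinite, so you cannot simply find a witness in a saturated extension $\CN \succ \CM$ and transfer it to $\CM$ by elementarity over the finite parameter set of $S$. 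This is harmless in practice — in every application in the paper the set $B$ is actually a finite tuple, in which case $aBD$ is finite and $\aleph_0$-saturation of $\CM$ already gives you a generic $(c,s) \in S(\CM)$ directly — but as written for general small $B$ the transfer step needs more care (or a different argument realizing the generic cuts coordinate-by-coordinate). You might also note explicitly that the hypothesis that $\CM$ expands a group is what supplies the uniformly definable family of boxes via translation; otherwise the argument is complete and matches the standard proof of this fact.
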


For the rest of the section we assume that $s \geq 3$ and for $i=1,\ldots, s$, we have $\emptyset$-definable sets $X_i$ with $\dim X_i=m$ for all $i \in [s]$ (throughout the section, $\dim$ refers to the standard notion of dimension in $o$-minimal structures). We also have an $\emptyset$-definable set $Q\sub  \overline{X} := X_1\times \cdots \times X_s$,  with $\dim Q=(s-1)m$, and such  that $Q$ is fiber algebraic of degree $d$, for some $d \in \mathbb N$ (see Definition \ref{def: fiber alg}).

The following is the equivalent of Definitions \ref{def: p-gen pos} and \ref{def: power saving} in the $o$-minimal setting.
\begin{defn}
 For $\gamma \in \mathbb{R}_{>0}$, we say that  $Q\sub \overline{X}$ satisfies \emph{$\gamma$-power saving} if there are definable families $\overrightarrow{\CF}=(\CF_1,\ldots,\CF_s)$, where each $\CF_i$ consists of subsets of $X_i$ of dimension smaller than $m$, such that for every $\nu \in \mathbb N$  there exists a constant $C=C(\nu)$ such that: for every $n \in \mathbb{N}$ and every $n$-grid $\overline{A}:=A_1\times\cdots\times A_k\sub \overline{X}, |A_i| =n$
in $(\overrightarrow{\CF},\nu)$-general position (i.e.~for every $i \in [s]$ and $S \in \CF_i$ we have $|A_i \cap S| \leq \nu$) we have
$$|Q\cap \overline{A}|\leq C n^{(s-1)-\gamma}.$$
\end{defn}
It is easy to verify that if $Q_1,Q_2\sub \overline{X}$ satisfy $\gamma$-power saving  then so does $Q_1\cup Q_2$. Before stating our main theorem in the $o$-minimal case, we define:
\begin{defn}\label{def: inf neighb} Given a finite tuple $a$ in an o-minimal structure $\CM$, we let $\mu_{\CM}(a)$ be the \emph{infinitesimal neighborhood of $a$},  namely the intersection of all $\CM$-definable open neighborhoods of $a$. It can be viewed as a partial type
over $\CM$, or we can identify it with  the set of its realizations in an elementary extension of $\CM$.
\end{defn}

\begin{thm}\label{thm: main thm o-min1} Under the above assumptions, one  of the
following  holds.
\begin{enumerate}
\item The set $Q$ has $\gamma$-power saving, for $\gamma = \frac{1}{8m-5}$ if $s \geq 4$, and $\gamma = \frac{1}{16m-10}$ if $s=3$.

\item There exist (i) a tuple $\bar a=(a_1,\ldots, a_s)$ in $\CM$ generic in $Q$, (ii) a substructure $\CM_0 := \dcl(\bar a)$ of $\CM$ of cardinality $\leq |\CL|$ (iii) a
type-definable abelian group $(G, +)$ of dimension $m$, defined over $M_0$ and (iv) $M_0$-definable bijections
$\pi_i:\mu_{\CM_0}(a_i)\cap X_i\to G, i \in [s]$, sending $a_i$ to $0=0_G$, such that
$$\pi_1(x_1)+\cdots+\pi_s(x_s)=0\Leftrightarrow Q(x_1,\ldots,x_s)$$
for all $x_i \in \mu_{\CM_0}(a_i)\cap X_i, i \in [s]$.
\end{enumerate}
\end{thm}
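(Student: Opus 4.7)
\medskip

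\noindent\textbf{Proof plan for Theorem \ref{thm: main thm o-min1}.}

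My plan is to follow the general strategy of the stable case (Section \ref{sec: main thm stable}), but with the topological dimension and with definable Skolem functions replacing canonical bases. The main reduction is to a dichotomy theorem, analogous to Theorem \ref{thm:ser-main}: either $Q$, viewed as a binary relation on $U \times V$ with $U = X_1 \times X_2$ and $V = X_3 \times \cdots \times X_s$, is a ``pseudo-plane'' (generic fibers intersect in a set of dimension strictly less than expected), or $Q$ contains a full-dimensional subset $Q^*$ such that near every point of $Q^*$, the quaternary condition (P2) from Theorem (E) holds for any partition of variables into two pairs. I would first reduce the case $s = 3$ to the case $s = 4$ by the same $Q \mapsto Q^*$ construction used in Section \ref{sec: stable main thm ternary} (Lemma \ref{lem: Q' is fiber alg} and Lemma \ref{lem: pwr save Q' implies Q} work verbatim with $\dimp$ replaced by topological dimension), which halves the power-saving exponent and explains the factor of two discrepancy between the two bounds in the theorem.

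For the power-saving alternative, assuming $s \geq 4$, the key input is Fact \ref{o-min cutting}(2), which says that any definable binary relation $E \subseteq M^{d_1} \times M^{d_2 + d_3}$ in an $o$-minimal expansion of a group admits a distal cell decomposition of exponent $t = 2d_1 - 2$. Applying this with $d_1 = 2m$ (the dimension of $X_1 \times X_2$), Proposition \ref{prop: ind ES}(2) gives that the family of binary relations on $(X_1 \times X_2) \times (X_3 \times \cdots \times X_s)$ obtained from $Q$ and its various permutations satisfies the $\gamma$-ST property for $\gamma = \frac{1}{2(2m-2)\cdot 2 -1} = \frac{1}{8m-5}$. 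Using the pseudo-plane condition, one then obtains power saving with exponent $\gamma$ by the same induction-on-arity argument as in Section \ref{sec:proof-theor-ser-main}: defining $Z(a) := \{a' \in U : \dim(Q_a \cap Q_{a'}) \geq 1\}$, showing fiberwise that $\dim Z(a) \leq s - 4$ in the pseudo-plane case, and invoking the $\gamma$-ST property on a grid in general position.

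For the group alternative, pick a tuple $\bar a = (a_1, \ldots, a_s)$ generic in $Q$ over $\emptyset$ and set $\CM_0 := \dcl(\bar a)$. By fiber-algebraicity and the failure of the pseudo-plane condition, for each pair of coordinates $(i,j)$ and fixing the other $s - 2$ coordinates near $\bar a$, the relation $Q$ cuts out the graph of a finite-to-finite correspondence which, using definable Skolem functions and dimension arguments (Fact \ref{generic neighborhoods}), refines to an honest definable bijection between $\mu_{\CM_0}(a_i) \cap X_i$ and $\mu_{\CM_0}(a_j) \cap X_j$. After shrinking neighborhoods finitely many times, one checks that (P1) holds on $\prod_i \mu_{\CM_0}(a_i) \cap X_i$ --- this is the content of Section \ref{sec: obt nice Q rel omin} in the plan --- and that (P2) for every partition of variables into two pairs follows from the abelian-configuration alternative of the dichotomy, via forking calculus on $\dim$ (which behaves well because $\acl = \dcl$ after naming $\CM_0$). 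Then Theorem \ref{thm: main group config bijections} applied to $Q \cap \prod_i (\mu_{\CM_0}(a_i) \cap X_i)$ yields a type-definable abelian group $G$ over $\CM_0$ and $\CM_0$-definable bijections $\pi_i$ with the required properties, after absorbing the constants so that $\pi_i(a_i) = 0_G$.

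The main obstacle I expect will be establishing (P2) in the neighborhood of $\bar a$ from the dichotomy, and in particular making the neighborhoods uniform: one needs to shrink them finitely many times (once per partition of $[s]$ into two pairs) while preserving (P1), and to verify that the shrinkings remain type-definable over $\CM_0$ rather than over additional parameters. This is where the assumption of definable Skolem functions in $\CM$, as opposed to just definable choice for a distinguished family, is essential --- it is what lets us convert the generic finite-to-finite correspondences into honest bijections uniformly across all coordinate permutations. The rest, including the passage from a definable local group structure to the type-definable group $G$ of dimension $m$, follows from the purely combinatorial group reconstruction of Section \ref{sec: group config omin} once the hypotheses of Theorem (E) are in place.
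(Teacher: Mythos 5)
Your plan hits all the right ingredients and reaches the correct bound ($\gamma = \frac{1}{8m-5}$ from Fact \ref{o-min cutting}(2) with $d_1 = 2m$, i.e.\ $t = 2d_1-2 = 4m-2$ and $\gamma = \frac{1}{2t-1}$; your intermediate expression should read $2(4m-2)-1$). Where your route drifts from the paper's is in the machinery for the non-power-saving alternative: you propose to port the stable apparatus (``abelian-configuration alternative,'' ``forking calculus on $\dim$''), but the paper's $o$-minimal proof is more elementary and sidesteps all of this. Rather than verifying an abstract abelian $s$-gon condition, it \emph{defines} $Q^*$ as the set of $\alpha \in Q$ near which the local $(P2)$ property holds (condition \eqref{eq1}), shows that $\dim Q^* = (s-1)m$ unless $Q$ has $\gamma$-power saving (Theorem \ref{main} via Proposition \ref{main-prop}), and then Lemma \ref{lem: inf neighb with comp omin} transfers $(P2)$ to the infinitesimal types $\mu_{\CM_0}(a_i)$ by compactness and Fact \ref{generic neighborhoods}, with no independence calculus at all. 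Likewise $(P1)$ comes not from abstractly refining a finite-to-finite correspondence via Skolem functions, but from cell decomposition: after replacing $Q$ by $Q^*$ and then passing to a full-dimensional piece that is the graph of a \emph{continuous} partial function in each variable (which preserves the failure of power saving), $(P1)$ on $\prod_i \mu_{\CM_0}(a_i)$ is immediate from continuity (Lemma \ref{niceQ}) --- no additional shrinking is needed since $\mu_{\CM_0}(a_i)$ is already the smallest $\CM_0$-neighborhood. Your approach could likely be made to work, but it would require an $o$-minimal substitute for forking (presumably $\dim$-independence) and careful tracking of $\acl$-versus-$\dcl$; the paper's direct topological route avoids those issues. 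One more small bookkeeping slip: in the pseudo-plane branch the relevant bound is $\dim X_w < (s-3)m$ in topological dimension (Claim \ref{claim.5}), not ``$\leq s-4$.''
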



We begin working towards a proof of Theorem \ref{thm: main thm o-min1}.

\medskip
\noindent {\bf Notation}
\begin{enumerate}
\item For $i,j\in [s]$, we write $\overline{X}_{i,j}$ for the set $\prod_{\ell\neq i,j}X_\ell$.

\item
For $z\in X_1\times X_2$ and $V\sub \overline{X}_{1,2}$ we write $$Q(z,V) := \{w\in V:(z,w)\in
Q\}.$$ We similarly write $Q(U,w)$, for $U\sub X_1\times X_2$ and $w\in \overline{X}_{1,2}$.

\end{enumerate}

\begin{lem} The following are easy to verify:
\begin{enumerate}
\item For every $z\in X_1\times X_2$, $\dim Q(z,\overline{X}_{1,2})\leq (s-3)m$.

\item If $\alpha =(z,w)\in (X_1\times X_2)\times \overline{X}_{1,2}$ is generic in $Q$ then for every neighborhood $U\times V$ of $\alpha$,
$\dim Q(z,V)=(s-3)m$ and $\dim Q(U,w)=m$.

\end{enumerate}
\end{lem}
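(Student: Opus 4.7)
The plan is to verify both parts as straightforward consequences of the additivity of $o$-minimal dimension combined with the fiber-algebraicity assumption on $Q$, using Fact \ref{generic neighborhoods} to handle arbitrary neighborhoods.

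For part (1), I would fix $z = (x_1,x_2) \in X_1 \times X_2$ and observe that $Q(z,\overline{X}_{1,2}) \subseteq X_3 \times \cdots \times X_s$. Fiber-algebraicity of $Q$ of degree $d$ applied to the last coordinate $x_s$ says that, after fixing $x_1,x_2,x_3,\ldots,x_{s-1}$, there are at most $d$ possible values of $x_s$ making the whole tuple lie in $Q$. Thus the projection $\pi: Q(z,\overline{X}_{1,2}) \to X_3 \times \cdots \times X_{s-1}$ is finite-to-one (of bounded fiber size), and by additivity of dimension under definable maps with finite fibers,
\[
\dim Q(z,\overline{X}_{1,2}) \leq \dim(X_3 \times \cdots \times X_{s-1}) = (s-3)m .
\]

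For part (2), fix a generic $\alpha = (z,w) = (a_1,a_2,a_3,\ldots,a_s) \in Q$, so $\dim \alpha = (s-1)m$. Applying the same fiber-algebraic bound to the coordinate projections, the fibers of the projections $Q \to \overline{X}_{1,2}$ and $Q \to X_1 \times X_2$ have dimension at most $m$ and at most $(s-2)m$ respectively; combined with $\dim Q = (s-1)m$ and additivity $\dim \alpha = \dim z + \dim(w/z) = \dim w + \dim(z/w)$, this forces $\dim z = 2m$, $\dim w = (s-2)m$, $\dim(w/z) = (s-3)m$, and $\dim(z/w) = m$. Now for an arbitrary open neighborhood $U \times V \ni \alpha$, use Fact \ref{generic neighborhoods} to choose a parameter set $C$ that is $\acl$-independent from $\alpha$ together with $C$-definable open sets $U' \subseteq U$, $V' \subseteq V$ with $\alpha \in U' \times V'$; independence yields $\dim(\alpha/C) = \dim \alpha$, and hence $\dim(w/Cz) = (s-3)m$ and $\dim(z/Cw) = m$.

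The set $Q(z,V') \subseteq Q(z,V)$ is $Cz$-definable and contains $w$, so
\[
\dim Q(z,V) \geq \dim Q(z,V') \geq \dim(w/Cz) = (s-3)m ,
\]
while $\dim Q(z,V) \leq \dim Q(z,\overline{X}_{1,2}) \leq (s-3)m$ by part (1); equality follows. A symmetric argument applied to the $Cw$-definable set $Q(U',w) \ni z$ gives $\dim Q(U,w) \geq \dim(z/Cw) = m$, matched by the upper bound $m$ coming from the fiber-algebraic projection $Q \to \overline{X}_{1,2}$.

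There is no real obstacle here; the only subtlety is that $U$ and $V$ may be defined over parameters entangled with $\alpha$, which is precisely what Fact \ref{generic neighborhoods} resolves by letting us shrink to neighborhoods defined over independent parameters without losing the generic dimension of $\alpha$.
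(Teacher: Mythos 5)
The paper states this lemma without proof (``The following are easy to verify''), so there is no paper argument to compare against; your verification is essentially the standard one and is correct. Part (1) is the fiber-algebraic bound via a finite-to-one projection to $X_3 \times \cdots \times X_{s-1}$, and part (2) pins down $\dim(z/C) = 2m$, $\dim(w/Cz) = (s-3)m$, $\dim(w/C) = (s-2)m$, $\dim(z/Cw) = m$ by additivity and then reads off the fiber dimensions from the realized generic. Two small remarks. First, the phrase ``the fibers of the projections $Q\to\overline{X}_{1,2}$ and $Q\to X_1\times X_2$ have dimension at most $m$ and at most $(s-2)m$ respectively'' should read $(s-3)m$ for the second fiber, which is exactly part (1); your subsequent conclusions already use the correct value, so this is only a slip in the sentence. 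Second, Fact \ref{generic neighborhoods} hands you a single $C$-definable open $W\subseteq U\times V$ rather than a $C$-definable product $U'\times V'$; the clean fix is to apply it once to $z$ inside $U$ (getting $C_1\ind\alpha$ and $C_1$-definable $U'\subseteq U$) and once more to $w$ inside $V$ over $C_1$ (getting $C_2\supseteq C_1$ with $C_2\ind\alpha$ and $C_2$-definable $V'\subseteq V$), and then run your additivity computation over $C_2$. With that adjustment the proof is complete.
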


We will need to consider a certain \emph{local} variant of the property (P2) from Section \ref{sec: bij Q any arity}.

\begin{defn}
Assume that  $\alpha=(z,w)\in Q\cap (X_1\times X_2)\times \overline{X}_{1,2}$.
\begin{itemize}
	\item We say that $Q$
\emph{has the $(P2)_{1,2}$ property near $\alpha$}
 if  for all $U'\sub X_1\times X_2 $ and $V'\sub \overline{X}_{1,2}$ neighborhoods  of
$z,w$ respectively,
\begin{equation} \label{eq2} \dim  Q(U',w)=m \mbox{ and } \dim Q(z,V')=(s-3)m,\end{equation}  and there are open neighborhoods
$U\times V\ni (z,w)$ in $(X_1\times X_2)\times \overline{X}_{i,j}$ such that
\begin{equation}\label{eq1} Q(U,w)\times Q(z,V) \sub Q,\end{equation}
(namely, for every $z_1\in U$ and $w_1\in V$, if $(z_1,w),(z,w_1)\in Q$ then
$(z_1,w_1)\in Q$).

\item We say that {\em $Q$ satisfies  the $(P2)_{i,j}$-property near $\alpha$}, for $1\leq i< j\leq s$, if the above holds under
the coordinate permutation of $1, 2$ and $i,j$, respectively.

\item We say that {\em $Q$ satisfies  the $(P2)$-property near $\alpha$} if it has the $(P2)_{i,j}$-property
 for all $1\leq i<j\leq s$.
\end{itemize}

\end{defn}

\begin{rem}
	Note that if $U,V$ satisfy (\ref{eq1}), then also every $U_1\sub U$ and $V_1\sub V$ satisfy it.
 Note also that under the above assumptions,
we have $\dim(Q(U,w)\times Q(z,V))=(s-2)m$.
\end{rem}

\begin{defn}
\begin{itemize}
	\item Let  $Q_{i,j}^*$ be the set of all $\alpha \in Q$ such that $Q$ satisfies  $(P2)_{i,j}$
near $\alpha$.
\item Let $Q^*=\bigcap_{i\neq j} Q_{i,j}^*$ be the set of all  $\alpha \in Q$ near which $Q$ satisfies $(P2)$.
\end{itemize}
Clearly, $Q^{*}_{i,j}$ and $Q^*$ are $\0$-definable sets.	
\end{defn}

The main ingredient  towards the proof of Theorem \ref{thm: main thm o-min1} is the following:
\begin{thm}\label{main} Assume that $Q$ does not satisfy $\gamma$-power saving  for $\gamma$ as in Theorem \ref{thm: main thm o-min1}(1).
 Then  $\dim Q^*=\dim Q=(s-1)m$.\end{thm}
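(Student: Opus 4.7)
To prove Theorem~\ref{main} I will argue the contrapositive: assuming $\dim Q^* < (s-1)m$, I exhibit $\gamma$-power saving for $Q$ with $\gamma$ as in Theorem~\ref{thm: main thm o-min1}(1). Since $Q^* = \bigcap_{i<j} Q^*_{i,j}$, we can decompose $Q = Q^* \cup \bigcup_{i<j}(Q \setminus Q^*_{i,j})$. The summand $Q^*$ has dimension strictly below $(s-1)m$, and every definable subset of $\overline{X}$ of dimension $<(s-1)m$ admits $\gamma$-power saving trivially via an o-minimal topological-dimension analog of Lemma~\ref{lem:general-position-2}. Because $\gamma$-power saving is preserved under finite unions and under permutations of coordinates, it suffices to prove $\gamma$-power saving for $Q' := Q \setminus Q^*_{1,2}$. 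I split $Q' = Q'_{\mathrm{drop}} \cup Q''$, where $Q'_{\mathrm{drop}}$ is the definable set of $\alpha=(z,w) \in Q'$ for which some small neighborhood $U' \times V'$ violates the dimension equations~(\ref{eq2}). A generic point of $Q$ satisfies~(\ref{eq2}) in every neighborhood, so $\dim Q'_{\mathrm{drop}} < (s-1)m$ and $Q'_{\mathrm{drop}}$ again has trivial $\gamma$-power saving; the task reduces to $Q''$, where fibers have the expected dimensions but~(\ref{eq1}) fails arbitrarily close to every point.

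The central step is to extract a pseudo-plane condition from the failure of~(\ref{eq1}) on $Q''$ and feed it into the Szemer\'edi--Trotter machinery of Section~\ref{sec: distal Zarank}. For generic $\alpha=(z,w) \in Q''$, consider the definable set
\[
W(z,w) := \bigl\{ z_1 \in Q(U,w) : \dim\bigl(Q(z,V) \cap Q(z_1,V)\bigr) = (s-3)m \text{ for all small } V \ni w \bigr\}.
\]
I claim $\dim W(z,w) < m$ inside the $m$-dimensional set $Q(U,w)$. If not, $W(z,w)$ contains an open piece of $Q(U,w)$, and o-minimal cell decomposition together with definable Skolem functions (hence the group-expansion hypothesis) and $\aleph_0$-saturation of $\mathcal{M}$ would let me definably select, for each $z_1$ in this open piece, a common open $V_0 \subseteq V$ on which $Q(z,\cdot)|_{V_0} = Q(z_1,\cdot)|_{V_0}$; shrinking $U \times V$ to a suitable $U_0 \times V_0$ then yields $Q(U_0,w) \times Q(z,V_0) \subseteq Q$, i.e.\ equation~(\ref{eq1}), contradicting $\alpha \in Q''$. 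With this codimension drop in hand I view $Q$ as a binary relation on $(X_1 \times X_2) \times \overline{X}_{1,2}$: Fact~\ref{o-min cutting}(2) with $d_1 = 2m$ supplies a distal cell decomposition of $Q$ of exponent $t = 4m-2$, and Proposition~\ref{prop: ind ES}(2) then yields the $\gamma$-ST property with $\gamma = \frac{1}{8m-5}$. Combined with the general-position hypothesis on the grid (choosing the definable families $\vec{\mathcal{F}}$ to include both the family $\{W(z,w)\}$ and the zero-dimensional fiber intersections $Q(z,V) \cap Q(z_1,V)$), this validates the combinatorial hypothesis of Definition~\ref{def: gamma ST property}.

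Applying the $\gamma$-ST property with internal parameter $s'=4$ handles the case $s=4$ directly, giving $|Q \cap (A_1 \times \cdots \times A_4)| \leq O_\nu(n^{3-\gamma})$. For $s \geq 5$ I partition the $Y$-side of the grid, $A_3 \times \cdots \times A_s$ of size $n^{s-2}$, into its $n^{s-4}$ chunks of the form $A_3 \times A_4 \times \{(a_5,\ldots,a_s)\}$ (each of size $n^2$), apply the $s'=4$ ST bound to each chunk (the fiber-intersection hypothesis survives on each chunk provided $\vec{\mathcal{F}}$ is chosen uniformly), and sum: $n^{s-4} \cdot O_\nu(n^{3-\gamma}) = O_\nu(n^{(s-1)-\gamma})$. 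The case $s=3$ is reduced to $s=4$ via the auxiliary $4$-ary relation $\tilde Q := \{(x_2,x_2',x_3,x_3') : \exists x_1\,(Q(x_1,x_2,x_3) \wedge Q(x_1,x_2',x_3'))\}$ and a Cauchy--Schwarz argument as in Lemma~\ref{lem: bound Q from Q'}, losing a factor of two in the exponent and yielding $\gamma = \frac{1}{16m-10}$. The main obstacle in this scheme is the pseudo-plane extraction of the second paragraph: turning the purely existential failure of~(\ref{eq1}) into a uniform, definable codimension drop for the family $W(z,w)$. This is precisely where the o-minimal toolkit — cell decomposition, definable Skolem functions, and the infinitesimal neighborhoods $\mu_{\mathcal{M}_0}(a_i)$ — must replace the forking calculus and canonical bases used in the stable counterpart of Section~\ref{sec:proof-theor-ser-main}.
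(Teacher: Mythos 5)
Your proposal reproduces the overall architecture of the paper's argument — decompose $Q$ using the definable sets $Q^*_{i,j}$, discard lower-dimensional pieces via a Lemma~\ref{small dim}-type bound, extract a pseudo-plane condition from the failure of the local property (P2)$_{1,2}$, and feed it into the $\gamma$-ST machinery via Fact~\ref{o-min cutting}(2). The contrapositive setup and the one-shot decomposition $Q = Q^* \cup \bigcup_{i<j} (Q \setminus Q^*_{i,j})$ are a clean variant of what the paper does (it instead iterates: apply Proposition~\ref{main-prop} to peel off $Q\setminus Q^*_{1,2}$, replace $Q$ by $Q^*_{1,2}$, repeat), and the pseudo-plane extraction is in the right spirit — your $W(z,w)$ is essentially the paper's $X^u$ (Claim~\ref{claim.7}), and the codimension-drop argument via coincidence of fiber-graphs on an open set matches the paper's Claim~\ref{claim.5}.

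However, there is a genuine gap in how you apply the $\gamma$-ST property when $s \geq 5$. You set up the binary relation with $X = X_1 \times X_2$ as the ``$A$-side'' (size $n^2$) and $Y = \overline{X}_{1,2}$ as the ``$B$-side'' (size $n^{s-2}$), invoke the ST property with internal parameter $s'=4$, and partition $Y$ into $n^{s-4}$ chunks $B_i = A_3 \times A_4 \times \{(a_5,\dots,a_s)\}$ of size $n^2$. The ST hypothesis then requires that, on \emph{each chunk}, for every $u \in A_1\times A_2$ at most $C(\nu)$ elements $u'$ satisfy $|Q(u,V)\cap Q(u',V)\cap B_i|\geq\nu$. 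But the codimension drop you establish gives, for $u'\notin X^u$, only $\dim\bigl(Q(u,V)\cap Q(u',V)\bigr) < (s-3)m$. This is still positive-dimensional for $s\geq 5$; a Lemma~\ref{small dim}-type general-position bound controls the intersection with the \emph{entire} grid $A_3\times\cdots\times A_s$ by $O_\nu(n^{s-4})$, but that count may concentrate on a single $2m$-dimensional slice $B_i$, so the per-chunk bound $<\nu$ does not follow. In short, the pseudo-plane condition you isolate does not survive restriction to chunks. The paper avoids this entirely by assigning sides the other way: it takes $A \subseteq \overline{X}_{1,2}$ (size $n^{s-2}$, the natural ``$A$-side'') and $B \subseteq X_1\times X_2$ (size $n^2$), applies the ST property \emph{with $s'=s$} and no chunking, and uses Claim~\ref{claim.5} — that $X_w = \{w' : \dim(Q(U,w)\cap Q(U,w')) = m\}$ has dimension $<(s-3)m$ and is fiber-algebraic — together with Lemma~\ref{small dim} to get exactly the hypothesis ``at most $C(\nu)n^{s-4}$ elements $w'\in A$ with $|E_w\cap E_{w'}\cap B|\geq\nu$''. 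To repair your argument you would need the dual pseudo-plane fact (Claim~\ref{claim.5} rather than Claim~\ref{claim.7}) and should apply the ST property directly at arity $s$ rather than chunking at $s'=4$.

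One smaller point: to get a single common $V_0$ on which $Q(z,\cdot)$ and $Q(z_1,\cdot)$ agree \emph{uniformly over an open piece of $z_1$'s}, invoking ``definable Skolem functions'' is not quite the right tool — the neighborhood a priori depends on $z_1$. The paper sidesteps this by choosing $w'$ and $u'$ generic and using Fact~\ref{generic neighborhoods} to move the parameters defining $U_0,V_0$ into generic position, which is the clean mechanism; you should do the same.
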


\subsection{The proof of  Theorem \ref{main}}\label{sec: power saving case proof omin}

The following is an analog of Lemma \ref{lem:general-position-2} in the $o$-minimal setting.
\begin{lem} \label{small dim} Let $\{Z_t:t\in T\}$ be a definable family of subsets of $\overline{X}$, each fiber-algebraic of degree $\leq d$ with
$\dim(Z_t)<(s-1)m$. Then there exist definable families $\CF_i$, $i \in [s]$, each consisting of subsets of $X_i$
of dimension smaller than
$m$, such that for every  $\nu\in \mathbb N$, if $\bar A\sub \overline{X}$ is
 an $n$-grid in $(\overrightarrow{\CF},\nu)$-general position then for every $t\in T$,
$$|\bar A\cap Z_t|\leq s d (\nu-1)n^{s-2}.$$

In particular, each $Z_t, t \in T$ satisfies $1$-power saving.

\end{lem}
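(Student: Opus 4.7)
The plan is to prove this by induction on $s \geq 2$, mirroring the strategy of Lemma \ref{lem:general-position-2} but using the $o$-minimal dimension in place of $\mfp$-dimension, and invoking the standard fact that dimension is uniformly definable in definable families in $o$-minimal structures. The ``in particular'' clause is immediate from the bound, since $sd(\nu-1)n^{s-2} = O_\nu(n^{(s-1)-1})$ gives exactly $1$-power saving.

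For the base case $s=2$, fiber-algebraicity of degree $d$ forces $\dim \pi_1(Z_t) = \dim Z_t < m$; I would set $\CF_1 := \{\pi_1(Z_t) : t \in T\}$ (a uniformly definable family) and $\CF_2 := \emptyset$. For any $n$-grid $A_1 \times A_2$ in $(\overrightarrow{\CF},\nu)$-general position one then has $|A_1 \cap \pi_1(Z_t)| \leq \nu$, and each such $a_1$ contributes at most $d$ points to $Z_t \cap \bar A$, giving $|Z_t \cap \bar A| \leq d\nu$.

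For the inductive step, given $\{Z_t : t\in T\}$ of dimension $< (s-1)m$, I would single out the ``bad'' fibers over $X_1$ by setting
$$B_t := \{a_1 \in X_1 : \dim (Z_t)_{a_1} \geq (s-2)m\}.$$
The family $\{B_t\}_{t\in T}$ is definable by uniform definability of dimension, and the $o$-minimal fiber-dimension inequality forces $\dim B_t < m$ (otherwise we would obtain $\dim Z_t \geq m + (s-2)m = (s-1)m$, contradicting the hypothesis). I would then add $\{B_t\}_{t\in T}$ to $\CF_1$. For $a_1 \in X_1 \setminus B_t$ the fiber $(Z_t)_{a_1} \subseteq X_2 \times \cdots \times X_s$ has dimension $< (s-2)m$ and remains fiber-algebraic of degree $d$; applying the inductive hypothesis to the uniformly definable family $\{(Z_t)_{a_1} : t \in T,\ a_1 \in X_1 \setminus B_t\}$ produces definable families $\CF'_2,\ldots,\CF'_s$ of subsets of $X_2,\ldots,X_s$ of dimension less than $m$, which I then take as the remaining $\CF_i$.

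Given an $n$-grid $\bar A$ in $(\overrightarrow{\CF},\nu)$-general position, I would write $|Z_t \cap \bar A| = \sum_{a_1 \in A_1} |(Z_t)_{a_1} \cap (A_2 \times \cdots \times A_s)|$ and split the sum by $B_t$. General position gives $|A_1 \cap B_t|\leq \nu$, and for those $a_1$ the trivial fiber-algebraic bound yields at most $dn^{s-2}$ each; for the remaining at most $n$ values of $a_1$, the inductive bound gives $(s-1)d(\nu-1)n^{s-3}$ each. Summing produces the required $sd(\nu-1)n^{s-2}$ (absorbing the small-$\nu$ constants into the leading factor). The main technical point to be careful about is uniform definability of the fiber families parametrized by $T \times (X_1 \setminus B_t)$ and the uniform bound on fiber dimension across the family, which is standard in $o$-minimal structures; the arithmetic of the constant is the only delicate bookkeeping, and the slack in $s d(\nu-1)$ vs.\ the $\nu d + (s-1)d(\nu-1)$ produced by the induction is enough to carry the bound through.
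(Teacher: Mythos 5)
Your induction on $s$ is the same argument the paper gives, only presented as a recursion rather than unrolled: the paper builds the tower of bad sets $Y^1_t,\, Y^2_{ta_1},\,\ldots,\, Y^{s-1}_{ta_1\cdots a_{s-2}}$ all at once and sums level by level, which is exactly what one gets by unwinding your inductive step, and your use of uniform definability of fiber dimension to make $\{B_t\}$ and the fiber family definable is the same device the paper relies on (via Claim~\ref{claim:p-dim-def}-type definability in the $o$-minimal setting). The one place your bookkeeping does not actually close as written is the final sum: $\nu d\, n^{s-2} + (s-1)d(\nu-1)n^{s-2} = \bigl(sd(\nu-1)+d\bigr)n^{s-2}$, which overshoots the stated bound by $dn^{s-2}$, so there is no ``slack'' in the direction you assert. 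The discrepancy is not yours, though: the paper's own proof silently reads general position as $|A_i\cap F| < \nu$ (``at most $\nu-1$ elements''), which conflicts with the $\leq\nu$ in Definition~\ref{defn: gen pos intro}; under that stricter reading your recursion gives exactly $(\nu-1)d + (s-1)d(\nu-1) = sd(\nu-1)$ per $n^{s-2}$, and in any case the ``in particular'' clause ($1$-power saving, i.e.\ $O_\nu(n^{s-2})$) is unaffected by this constant.
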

\begin{proof}
 For $t\in T$ and $a_1\in X_1$ we let $$Z_{ta_1} := \{(a_2,\ldots,a_s)\in X_2\times\cdots \times X_s:(a_1,a_2,\ldots,a_s)\in Z_t\}.$$ For $i \in [s-1]$, we similarly define $Z_{ta_1\cdots a_i}\sub X_{i+1}\times\cdots\times X_s.$

 \noindent (1)  For $t\in T$, we let
 $$Y_t^1 := \left\{a_1\in X_1:\dim(Z_{t{a_1}})=(s-2)m \right\}.$$ By
our assumption on $\dim Z_t$, $\dim Y_t^1<m$. Let $\CF_1 := \{Y_t^1:t\in T\}$.

\medskip
\noindent (2) For $t\in T$ and $a_1\notin Y_t^1$, let $$Y^2_{ta_1} := \{a_2\in X_2:\dim (Z_{ta_1a_2})=(s-3)m\}.$$ Then define
$\CF_2 := \left\{Y^2_{ta_1}: t \in T, a_1\notin Y_t^1 \right\}.$ Note that whenever $a_1\notin Y^1_t$, $\dim (Z_{ta_1})<(s-2)m$ and
therefore the set $Y^2_{ta_1}$ has dimension smaller than $m$.

For $i=1,\ldots, s-2$, we continue in this way to define a family $\CF_i=\{Y^i_{ta_1\cdots a_{i-1}}\}$ of subsets of $X_i$ as follows: for $a_1\notin Y_t^1$, $a_2\notin Y^2_{ta_1}$, $a_3\notin Y^3_{ta_1a_2},\ldots, a_{i-1}\notin Y^{i-1}_{ta_1\cdots a_{i-2}}$, we let
$$Y^i_{ta_1\cdots a_{i-1}} := \{a_i\in X_i:\dim (Z_{ta_1\cdots a_i})=(s-(i+1))m\},$$ and let
$$\CF_i := \left\{Y^i_{ta_1\cdots a_{i-1}}:t\in T, a_1\notin Y^1_t, a_2\notin Y^2_{ta_1},\ldots,a_{i-1}\notin Y^{i-1}_{ta_1\cdots a_{i-2}}\right\}.$$

Finally, for $a_1,\ldots, a_{s-2}$ such that $a_i\notin Y^i_{ta_1\cdots a_{i-1}}$ for $i=1,\ldots, s-2$, we let
$$Y^{s-1}_{ta_1\cdots a_{s-2}} := \pi_{s-1}(Z_{ta_1\ldots  a_{s-2}})\subseteq X_{s-1},$$ and let
$$\CF_{s-1} := \left\{Y^{s-1}_{ta_1\cdots a_{s-2}} : t\in T, a_1\notin Y_t^1 ,\ldots, a_{s-2}\notin Y^{s-2}_{ta_1\cdots a_{s-3}} \right\} .$$
We provide some details on why the families $\vec{\CF} := (\CF_i : i \in [s])$ satisfy the requirement.

Assume that $n, \nu \in \mathbb{N}$ and  $\bar A\sub\overline{X}$ is an $n$-grid which is in
$(\overrightarrow{\CF},\nu)$-general position, and fix $t\in T$.

Because $|A_1\cap Y_t^1|<\nu$ there are
at most $\nu-1$ elements $a_1\in \pi_1(Z_t \cap \bar A)\cap Y_t^1$, and
for each such $a_1$ there are at most $d n^{s-2}$ elements in $Z\cap \bar A$ which project to it. Indeed, this is true because $Z_{ta_1}$ is fiber-algebraic of degree $\leq d$, so for every tuple $(a_2,\ldots,a_{s-1})\in A_2\times\cdots A_{s-1}$ (and there are at most $n^{s-2}$ such tuples) there are $\leq d$ elements  $a_s\in A_s$ such that $(a_2,\ldots, a_{s-1},a_s)\in \left( A_2\times\cdots \times A_s \right) \cap Z_{ta_1}$.

So, altogether there are at most $d (\nu-1)n^{s-2}$ elements  $(a_1,\ldots, a_s)\in \bar A\cap Z_t$ for which $a_1\in Y_1^t$.
On the other hand, there are at most $n-\nu\leq n$ elements $a_1\notin Y^1_t$. We now compute for how many  $\bar a\in \bar A\cap Z_t$  we have $a_1\notin Y^1_t$.

By definition, $\dim(Z_{ta_1})<(s-2)m$, so now we consider two cases, $a_2\in Y^2_{ta_1}$ and $a_2\notin Y^2_{ta_1}$. In the first case, there are at most $\nu-1$ such $a_2$, by general position, and as above, for each such $a_2$ there are at most $d n^{s-3}$ tuples $(a_3,\ldots,a_s)\in A_3\times\cdots \times A_s$ such that $(a_2,a_3,\ldots,a_s)\in Z_{ta_1}$. Thus all together there are $n(\nu-1)d n^{s-3}= d(\nu-1)n^{s-2}$ elements $\bar a\in \bar A\cap Z_t$ such that $a_1\notin Y_t^1$ and $a_2\in Y_t^2$. There are at most $(n-\nu)\leq n$ elements $a_2\in A_2$ which are not in $Y^2_{ta_1}$. Of course, there are at most $n^2$ pairs $(a_1,a_2)$ such that $a_1\notin Y^1_{t}$ and $a_2\notin Y^2_{ta_1}$, and we now want to compute how many  $\bar a\in \bar A\cap Z_t$ project onto such $(a_1,a_2)$. Repeating the same process along the other coordinates, we see that there are at most $(s-2) d (\nu-1)n^{s-4}$ elements which project into each such $(a_1,a_2)$, so all together there are at most $(s-2) d (\nu-1)n^{s-2}$ tuples $\bar{a} \in \bar A\cap Z_t$ for which $a_1\notin Y_t^1$ and $a_2\notin Y^2_{ta_1}$. If we add it all we get at most $s d (\nu-1)n^{s-2}$ elements in $\bar A\cap Z_t$, which concludes the proof of the lemma.
\end{proof}

\begin{cor} \label{cor-alpha}Assume that  $Q\sub \overline{X}$ does not satisfy $1$-power saving and that  $Z\sub Q$
is a definable set with $\dim Z<(s-1)m$. Then  $Q':=Q\setminus Z$ also does not satisfy $1$-power saving.
\end{cor}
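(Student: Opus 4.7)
The plan is straightforward once we observe that the set $Z$ itself falls under the hypotheses of Lemma \ref{small dim}, so $Z$ admits $1$-power saving, and then we exploit that $1$-power saving is preserved under finite unions.

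First I would note that since $Z\subseteq Q$ and $Q$ is fiber-algebraic of degree $d$, the set $Z$ is automatically fiber-algebraic of degree $\leq d$ (its fibers are subsets of those of $Q$). Together with the assumption $\dim Z<(s-1)m$, this means the singleton family $\{Z\}$ satisfies the hypothesis of Lemma \ref{small dim}. Applying that lemma yields definable families $\vec{\CF}^Z=(\CF^Z_1,\ldots,\CF^Z_s)$, with each $\CF^Z_i$ consisting of subsets of $X_i$ of dimension less than $m$, such that for every $\nu\in\mathbb{N}$ there is a constant $C_Z(\nu)$ with
\[
|Z\cap \bar A|\leq C_Z(\nu)\,n^{s-2}
\]
for every $n$-grid $\bar A$ in $(\vec{\CF}^Z,\nu)$-general position.

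Next I would argue by contradiction. Suppose $Q'=Q\setminus Z$ satisfies $1$-power saving, witnessed by definable families $\vec{\CF}^{Q'}=(\CF^{Q'}_1,\ldots,\CF^{Q'}_s)$ and a function $\nu\mapsto C_{Q'}(\nu)$. Form the combined families $\CF_i:=\CF^Z_i\cup\CF^{Q'}_i$ (still definable, still consisting of subsets of dimension $<m$), and set $\vec{\CF}=(\CF_1,\ldots,\CF_s)$. Any $n$-grid $\bar A$ in $(\vec{\CF},\nu)$-general position is simultaneously in $(\vec{\CF}^Z,\nu)$- and $(\vec{\CF}^{Q'},\nu)$-general position, since a set meeting each member of $\CF_i$ in at most $\nu$ points automatically meets each member of the sub-families in at most $\nu$ points.

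Finally, for such a grid, using $Q=Z\cup Q'$, we obtain
\[
|Q\cap\bar A|\;\leq\;|Z\cap\bar A|+|Q'\cap\bar A|\;\leq\;\bigl(C_Z(\nu)+C_{Q'}(\nu)\bigr)\,n^{s-2}.
\]
With $C(\nu):=C_Z(\nu)+C_{Q'}(\nu)$, this shows $Q$ satisfies $1$-power saving with respect to $\vec{\CF}$, contradicting the hypothesis. Hence $Q'$ cannot satisfy $1$-power saving. I do not anticipate any real obstacle here: the only points to verify carefully are the inheritance of fiber-algebraicity from $Q$ to $Z$ (immediate) and the elementary observation that general position with respect to a union of families implies general position with respect to each family, which is the key technical trick making finite unions well-behaved for power-saving bounds.
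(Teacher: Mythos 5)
Your proof is correct and follows exactly the paper's approach: apply Lemma \ref{small dim} to $\{Z\}$ to get $1$-power saving for $Z$, then use preservation of power saving under finite unions (stated just before Definition \ref{def: inf neighb} and spelled out explicitly in your combined-families argument).
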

\begin{proof}
	Indeed, Lemma \ref{small dim} (applied to the constant family) implies that $Z$ itself satisfies $1$-power saving,
and since $\gamma$-power saving is preserved under union then it fails for $Q'$.
\end{proof}

 In order to prove Theorem \ref{main}, it is sufficient to
prove the following:
\begin{prop} \label{main-prop}Let $Q'\sub Q$ be a definable set and
assume that  there exist $i\neq j \in [s]$ such that $\dim(Q'\cap
Q^*_{i,j})<(s-1)m$. Then $Q'$ satisfies $\gamma$-power saving  for $\gamma$ as in Theorem \ref{thm: main thm o-min1}(1).
\end{prop}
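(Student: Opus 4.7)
The plan is to follow the pattern of the stable structural dichotomy Theorem \ref{thm:ser-main} of Section \ref{sec:proof-theor-ser-main}, replacing $\mfp$-dimension arguments with their o-minimal counterparts and invoking the Szemer\'edi--Trotter-type bound of Fact \ref{o-min cutting}(2). Without loss of generality I take $(i,j)=(s-1,s)$, set $U := X_{s-1}\times X_s$ (of dimension $2m$) and $V := \overline{X}_{s-1,s}$ (of dimension $(s-2)m$), and regard $Q\subseteq U\times V$ as a binary relation, with $\dim Q(U,v)\leq m$ by fiber algebraicity.

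First I would reduce to the case in which $(P2)_{s-1,s}$ fails at every point of $Q'$. By hypothesis, $Z := Q'\cap Q^*_{s-1,s}$ has dimension $<(s-1)m$, so Lemma \ref{small dim} applied to the singleton family $\{Z\}$ (which is automatically fiber-algebraic as a subset of $Q$) shows that $Z$ admits $1$-power saving; since $\gamma\leq 1$ and $\gamma$-power saving is preserved under finite unions, it suffices to prove $\gamma$-power saving for $Q'':=Q'\setminus Q^*_{s-1,s}$, every point of which witnesses the failure of $(P2)_{s-1,s}$.

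The core step is to establish a \emph{pseudo-plane} bound for $Q''$: the $\emptyset$-definable set
\[
W := \bigl\{(v,v')\in V\times V : \dim\bigl(Q(U,v)\cap Q(U,v')\bigr) = m\bigr\}
\]
should have fibers $W_v$ of dimension $\leq (s-4)m$ for all $v$ outside an exceptional subset of $V$ of dimension $<(s-2)m$. I expect this to be the main technical obstacle. The argument I have in mind is by contradiction: if $\dim W_{v_0}>(s-4)m$ for some $v_0\in V$ generic over the parameters of $Q$, then using Fact \ref{generic neighborhoods} and o-minimal cell decomposition one chooses a generic $u_0\in Q(U,v_0)$ and straightens $Q$ to a graph near $(u_0,v_0)$; the large-dimensional set of $v'\in W_{v_0}$ whose fibers share an $m$-dimensional piece with $Q(U,v_0)$ through $u_0$ can then be used to produce neighborhoods $U_0\times V_0\ni(u_0,v_0)$ satisfying $Q(U_0,v_0)\times Q(u_0,V_0)\subseteq Q$, contradicting $(u_0,v_0)\in Q''$. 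The main subtlety is to extract uniformity in $v$ from the merely local failure of $(P2)$ at each point; this requires $\aleph_0$-saturation, definable Skolem functions, and an iterative slicing argument parallel to Claims \ref{claim:alpha-beta}--\ref{claim:alpha-beta2} of the stable proof.

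With the pseudo-plane bound established, I would build the required general-position system $\vec{\CF}=(\CF_1,\ldots,\CF_s)$ as follows. An o-minimal analogue of the iteration in the proof of Lemma \ref{small dim}, applied in codimension $2m$ within $V=\prod_{k\leq s-2}X_k$, furnishes $\CF_1,\ldots,\CF_{s-2}$ such that every $W_v$ meets an $n$-grid in $V$ in $(\vec{\CF},\nu)$-general position in at most $O_\nu(n^{s-4})$ points. The families $\CF_{s-1},\CF_s$ come from Lemma \ref{lem:general-position} applied to the zero-dimensional intersections $\{Q(U,v)\cap Q(U,v') : (v,v')\notin W\}$, bounding $|Q(U,v)\cap Q(U,v')\cap(A_{s-1}\times A_s)|\leq \nu^{2m}$ on any such grid. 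On a grid in $(\vec{\CF},\nu)$-general position the $V$-slice then has size $\leq n^{s-2}$, the $U$-slice size $\leq n^2$, and for each $v$ in the $V$-slice at most $O_\nu(n^{s-4})$ other $v'$ satisfy $|Q(U,v)\cap Q(U,v')\cap(A_{s-1}\times A_s)|\geq \nu^{2m}+1$ --- precisely the hypothesis of the $\gamma$-ST property of Definition \ref{def: gamma ST property}. Fact \ref{o-min cutting}(2) with $d_1=2m$ then concludes the $s\geq 4$ case with $\gamma=\tfrac{1}{8m-5}$. For $s=3$ the pseudo-plane step is vacuous (cf.\ Remark \ref{rem: triv for s=3}); instead I would reduce to the $s=4$ case via the auxiliary quaternary relation $Q^{**}\subseteq X_{s-1}^2\times X_s^2$ as in Section \ref{sec: stable main thm ternary} and Lemma \ref{lem: pwr save Q' implies Q}, paying a factor of two in the exponent to land at $\gamma=\tfrac{1}{16m-10}$.
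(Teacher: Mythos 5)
Your high-level plan coincides with the paper's: reduce to the case where $(P2)_{i,j}$ fails everywhere on the relevant piece of $Q$, establish a pseudo-plane (few fibers with large pairwise intersection) bound for $Q$ viewed as a bipartite relation, and feed it into the $\gamma$-ST property via Fact \ref{o-min cutting}(2) with $d_1=2m$; the $s=3$ case is indeed handled by the $Q^*$-trick from Section \ref{sec: stable main thm ternary}. However, two steps as you have written them do not quite go through and are handled differently in the paper.

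First, your reduction passes directly to $Q'':=Q'\setminus Q^*_{i,j}$ and then constructs, from a generic $v_0$ and a generic $u_0\in Q(U,v_0)$, a point where $Q$ \emph{does} satisfy $(P2)_{i,j}$ and declares this to contradict $(u_0,v_0)\in Q''$. But nothing in your setup ensures that the constructed pair lies in $Q''$, or even in $Q'$: $Q'$ is an arbitrary definable subset of $Q$, and the fibers $Q(U,v_0)$ in your argument are fibers of the \emph{ambient} $Q$, not of $Q'$. The paper closes this gap with a preliminary reduction that you skip: since one may assume $\dim Q'=(s-1)m$, one may (discarding a lower-dimensional piece) take $Q'$ to be \emph{open} in $Q$, which gives $(Q')^*_{i,j}=Q^*_{i,j}\cap Q'$, and then replace $Q$ by $Q'$. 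After that the generic point constructed in the pseudo-plane argument automatically lands in the relevant complement, and the contradiction is legitimate.

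Second, the pseudo-plane bound you state is not the right one. You claim $\dim W_v\leq (s-4)m$ for $v$ outside a small exceptional set, but what the paper proves (Claim \ref{claim.5}) and what the subsequent counting actually uses is weaker and applies to \emph{all} $w$: the set $X_w=\{w':\dim(Q(U,w)\cap Q(U,w'))=m\}$ satisfies $\dim X_w<(s-3)m$ \emph{and} $X_w$ is fiber-algebraic in $\prod_{k\geq 3}X_k$. The contradiction argument only rules out the top dimension $(s-3)m$, so it does not yield your $(s-4)m$; for $m\geq 2$ your bound is strictly stronger and there is no reason it should hold. Your own intended application betrays this: you propose to run ``an o-minimal analogue of the iteration in the proof of Lemma \ref{small dim}'' on $W_v$, but Lemma \ref{small dim}, read with $s-2$ coordinates, requires precisely $\dim<(s-3)m$ \emph{together with fiber-algebraicity} to give $O_\nu(n^{s-4})$; your claimed $(s-4)m$ alone, without fiber-algebraicity, does not feed into that lemma, and there is no analogue of the stable Lemma \ref{lem:general-position-2} for arbitrary definable sets in the o-minimal section. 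The exceptional set of $v$'s you allow would also require an extra general-position argument that the paper avoids by proving the bound for every $w$.

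These are the two places where your outline needs repair; both are resolved by following the paper's preliminary open-subset reduction and by stating the pseudo-plane claim in the form $\dim X_w<(s-3)m$ plus fiber-algebraicity rather than a stronger dimension drop.
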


 Let us first see that indeed Proposition \ref{main-prop} quickly implies Theorem \ref{main}.
Let $\gamma$ be as in Theorem \ref{thm: main thm o-min1}(1). Assuming that $Q$ does not have $\gamma$-power saving, Proposition \ref{main-prop} with $Q' := Q$ implies that
 $\dim(Q^*_{1,2})=(s-1)m$.  Also, if we take $Q'' := Q\setminus Q_{1,2}^*$ then clearly $Q''\cap Q_{1,2}^*=\emptyset$ and therefore by the same proposition $Q''$ satisfies $\gamma$-power saving, and therefore $Q_{1,2}^*$ does not satisfy $\gamma$-power saving. We can thus replace $Q$ by $Q_1:=Q^*_{1,2}$ and retain  the original properties of $Q$ together with the fact that $Q_1$ has $(P2)_{1,2}$ at every $\alpha \in Q_1$. Next we repeat the process with respect to every $(i,j)\neq (1,2)$ and eventually obtain a definable set $Q'\sub Q$ of dimension $(s-1)m$ such that $Q'$ satisfies $(P2)$ at every point --- establishing Theorem \ref{main}.
\medskip

\noindent{\bf Proof of Proposition \ref{main-prop}.}

\medskip

Let $Q'\sub Q$ and $\gamma$ be as in Proposition \ref{main-prop}. It is sufficient to prove the proposition for $Q_{1,2}^*$ (the case of arbitrary $i \neq j \in [s]$ follows by permuting the coordinates accordingly).
  If $\dim Q'<(s-1)m$ then by Lemma \ref{small dim} $Q'$ satisfies $1$-power saving, hence $\gamma$-power saving. Thus we may assume that $\dim Q'=(s-1)m$, and hence, by throwing away a set of smaller dimension,  we may assume that $Q'$ is open in $Q$. It is
then easy to verify that $(Q')^*_{1,2}=Q_{1,2}^*\cap Q'$. Hence, without loss of generality,
$Q=Q'$. We now assume that $\dim Q_{1,2}^*<(s-1)m$ and therefore, by Lemma \ref{small dim}, $Q_{1,2}^*$ has $\gamma$-power saving. Thus, in order to show that $Q$ has $\gamma$-power saving, it is sufficient to prove that  $Q\setminus Q_{1,2}^*$ has $\gamma$-power saving, so we assume from now on that $Q_{1,2}^*=\0$.

We let $U := X_1\times X_2$ and $V := \overline{X}_{1,2}$.

\begin{claim} \label{claim.5}For every $w\in V$, the set
$$X_{w} := \left\{w'\in V:\dim (Q(U,w)\cap Q(U,w'))=m\right\}$$
 has
dimension strictly smaller than $(s-3)m$. Moreover, the set $X_w$ is fiber algebraic in $X_3\times \cdots \times X_s$.\end{claim}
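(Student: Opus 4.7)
\emph{Plan.} I will prove the two clauses in reverse order, since fiber-algebraicity already gives the non-strict bound $\dim X_w \leq (s-3)m$. For fiber-algebraicity, fix $w \in V$, an index $i \in \{3,\ldots,s\}$ and a tuple $(w'_j)_{j \neq i}$, and write $w'(c)$ for the element of $V$ whose $i$-th coordinate is $c$ and whose other coordinates are the fixed ones. Suppose for contradiction that $S := \{c \in X_i : w'(c) \in X_w\}$ is infinite, hence $\dim S \geq 1$ by $o$-minimality. The definable set
\[
Z := \{(z,c) \in U \times S : z \in Q(U,w) \cap Q(U,w'(c))\}
\]
has each fiber $Z_c$ of dimension $m$ by the definition of $X_w$, so $\dim Z \geq m+1$. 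But the projection of $Z$ to $U$ lies in $Q(U,w)$, whose dimension is at most $m$, so some fiber of this projection is infinite; this yields a $z \in U$ with infinitely many $c$ such that $(z,w'(c)) \in Q$, contradicting fiber-algebraicity of $Q$ in the $i$-th coordinate.

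For the strict bound, assume $\dim X_{w_0} = (s-3)m$ for some $w_0$, chosen generic in the definable locus $\{w \in V : \dim X_w = (s-3)m\}$. The idea is to produce a point in $Q^*_{1,2}$, contradicting the standing assumption $Q^*_{1,2} = \emptyset$. Consider
\[
R := \{(z,w') \in U \times V : w' \in X_{w_0},\ z \in Q(U,w_0) \cap Q(U,w')\}.
\]
Projecting $R$ to $V$, the image is $X_{w_0}$ of dimension $(s-3)m$ and each fiber has dimension $m$, so $\dim R = (s-2)m$. Projecting $R$ to $U$, the image lies in $Q(U,w_0)$ of dimension at most $m$ and fibers over $z$ are contained in $Q(z,V)$ of dimension at most $(s-3)m$. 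Equality in the dimension count forces $\dim Q(U,w_0) = m$ and, for generic $z \in Q(U,w_0)$ over $w_0$, $\dim (X_{w_0} \cap Q(z,V)) = (s-3)m = \dim Q(z,V)$.

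Pick such a generic $z$, using Fact~\ref{generic neighborhoods} to arrange independence over any prescribed parameters. The dimension hypotheses of $(P2)_{1,2}$ at $(z,w_0)$ then hold by genericity: $\dim Q(U',w_0)=m$ for every neighborhood $U' \ni z$ and $\dim Q(z,V')=(s-3)m$ for every neighborhood $V' \ni w_0$. For the containment, let $B := \{(z',w') \in Q(U,w_0) \times Q(z,V) : (z',w') \notin Q\}$. For every $w' \in X_{w_0} \cap Q(z,V)$, the sets $Q(U,w_0)$ and $Q(U,w')$ share an $m$-dimensional intersection containing $z$; by o-minimal cell decomposition, two $m$-dimensional definable subsets of the $2m$-dimensional ambient $U$ whose intersection has dimension $m$ at a generic point coincide on a neighborhood of that point, so the fiber $B_{w'}$ is locally empty near $z$ on the dense full-dimensional locus $X_{w_0} \cap Q(z,V) \subseteq Q(z,V)$. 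Invoking definable Skolem functions (available since $\CM$ expands a group) one uniformizes these $w'$-dependent neighborhoods to produce a product neighborhood $\tilde U \times \tilde V$ of $(z,w_0)$ disjoint from $B$, giving $Q(\tilde U, w_0) \times Q(z, \tilde V) \subseteq Q$ and hence $(z,w_0) \in Q^*_{1,2}$ --- the desired contradiction. The main obstacle is precisely this last uniformization: converting a $w'$-by-$w'$ local agreement of $Q(U,w_0)$ and $Q(U,w')$ on the dense subset $X_{w_0} \cap Q(z,V) \subseteq Q(z,V)$ into a genuine product neighborhood avoiding $B$, which requires placing $w_0$ in the $Q(z,V)$-interior of the ``good'' locus and controlling the boundary behavior via definable choice.
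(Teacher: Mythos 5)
The fiber-algebraicity clause is handled correctly, via a dimension count on the definable set $Z$ that is a clean alternative to the paper's argument (which instead picks $a_s$ generic, then $(a_1,a_2)$ generic, and uses additivity of dimension); both are valid.

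The strict dimension bound, however, has a genuine gap which you flag but do not resolve, and which is not a mere matter of Skolem-function bookkeeping. You fix one $z$ generic in $Q(U,w_0)$ \emph{over $w_0$ only}, and then need, for every $w'$ in the full-dimensional locus $X_{w_0}\cap Q(z,V)$, that $Q(U,w_0)$ and $Q(U,w')$ coincide on a neighborhood of $z$. The local-coincidence fact you invoke needs $z$ to be a point at which the intersection $Q(U,w_0)\cap Q(U,w')$ is locally $m$-dimensional, i.e.\ $z$ generic in it \emph{over $w_0,w'$}. Since $z$ is a fixed tuple and $w'$ ranges over an $(s-3)m$-dimensional family, this genericity must fail for many $w'$; there is no reason $z$ lies in the $m$-dimensional part of $Q(U,w_0)\cap Q(U,w')$ for all $w'$ in a $Q(z,V)$-neighborhood of $w_0$. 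Consequently the fibers $B_{w'}$ need not be locally empty at $z$, and uniformizing the $w'$-dependent neighborhoods cannot help when those neighborhoods may not exist at all.

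The paper resolves this by reversing the order of choices and introducing an auxiliary set. It picks $w'$ generic in $X_w$ over $w$ and then $u'$ generic in $Q(U,w)\cap Q(U,w')$ over $w,w'$, so the local coincidence $Q(U_0,w)=Q(U_0,w')$ is obtained at $u'$ for that single generic $w'$ (with $U_0$ arranged over independent parameters via Fact~\ref{generic neighborhoods}). The $V$-side is then handled not by looking at many $w'$ simultaneously but by the definable set $W_1:=\{v\in V: Q(U_0,w)\subseteq Q(U,v)\}$: since $w'\in W_1\cap Q(u',V)$ is generic over $w,u'$ and $\dim(W_1\cap Q(u',V))=\dim Q(u',V)=(s-3)m$, one gets a neighborhood $V_0$ of $w'$ with $Q(u',V_0)\subseteq W_1$; the inclusion $Q(U_0,w)\times W_1\subseteq Q$ then gives $Q(U_0,w')\times Q(u',V_0)\subseteq Q$, i.e.\ $(u',w')\in Q^*_{1,2}$. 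This $W_1$ device is exactly the uniformization your argument is missing: to repair your proof, aim for $(u',w')\in Q^*_{1,2}$ rather than $(z,w_0)\in Q^*_{1,2}$, and encode the required containment in a single definable set rather than a pointwise family of neighborhoods.
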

\begin{proof}
We assume that relevant sets thus far (i.e.~$X_i, Q, U,V, Q^*_{i,j}$) are defined over $\0$.  Now, if $\dim(X_w)=(s-3)m$ (it is not hard to see that it cannot be bigger), then by $\aleph_0$-saturation of $\CM$ we may take $w'$ generic in $X_w$ over $w$, and then
$u'$ generic in $Q(U,w)\cap Q(U,w')$ over $w,w'$. Note that the fiber-algebraicity
of $Q$ implies that $\dim(Q(u',V))\leq (s-3)m$, and since $\dim (w'/wu')= \dim(w'/w) =(s-3)m$ it follows that $w'$ is generic in both $X_{w}$ and
$Q(u',V)$ over $wu'$, so in particular, $\dim X_w=\dim Q(u',V)=(s-3)m$.

We claim that $(u',w')\in Q_{1,2}^*$. Indeed, by our assumption, $$\dim(u'/ww')=\dim (Q(U,w)\cap Q(U,w'))=\dim Q(U,w)=m.$$ 
Thus, there exists an open $U_0\sub U$ containing $u'$, such that $U_0\cap Q(U,w)=U_0\cap Q(U,w')$, or, said differently, 
$Q(U_0,w)=Q(U_0,w')$. By Fact \ref{generic neighborhoods}, we may assume that the tuple $(w,w',u')$ is independent from the parameters defining $U_0$ over $\0$.
Thus, without loss of generality, $U_0$ is definable over $\0$.
The set $W_1 := \{v\in V: Q(U_0,w)\sub Q(U,v)\}$ is defined over $w$ and  the set $Q(u',V)$ is defined over $u'$, and both contain $w'$. Since $\dim(w'/w,u')=(s-3)m$ then 
 $\dim(W_1\cap Q(u',V))=(s-3)m$. We can therefore find an open $V_0\sub V$ such that $Q(u',V_0)\sub W_1$.
 Now, by the definition of $W_1$, we have $Q(U_0,w)\times W_1\sub Q$, and hence $Q(U_0,w)\times Q(u',V_0)\sub Q$ and therefore (since $Q(U_0,w)=Q(U_0,w')$), 
  $Q(U_0,w')\times Q(u,V_0)\sub Q$. This shows that $(u',w')\in Q^*_{1,2}$, contradicting our assumption that $Q_{1,2}^*=\0$.

To see that $X_w$ is fiber algebraic, assume towards contradiction that there exists a tuple
$(a_3,\ldots,a_{s-1})\in X_3\times\cdots\times X_{s-1}$ for which there are infinitely many $a_s \in X_s$ such that $(a_3,\dots,a_s)\in X_w$
(the other coordinates  are treated similarly). We can now pick such $a_s$ generic over
$w,a_3,\ldots,a_{s-1}$ and then pick $(a_1,a_2)\in Q(U,w)\cap Q(U,a_3,\ldots, a_s)$ generic over $w,a_3,\ldots,a_s$. 
Because $\dim(a_1,a_2/w)= \dim(a_1,a_2/w, a_3, \ldots, a_s)$ it follows by the additivity of dimension that for any subtuple $a'$ of $a_3, \ldots, a_s$ we have  $\dim(a'/w, a_1,a_2)=\dim(a'/w)$.
It follows that 
$$0<\dim(a_s/w,a_3, \ldots,a_{s-1}) = \dim(a_s/w,a_1,a_2,a_3,\ldots,a_{s-1}).$$ 
Since $Q(a_1,a_2,a_3, \ldots, a_s)$ holds, it follows that $Q(a_1,a_2, a_3, \ldots, a_{s-1}, X_n)$ is infinite --- contradicting the fiber-algebraicity of $Q$.
\end{proof}
We similarly have:
\begin{claim} \label{claim.7} For every $u\in U$, the set
$$X^u := \left\{u'\in U:\dim (Q(u,V)\cap Q(u',V))=(s-3)m \right\}$$
 has
dimension strictly smaller than $m$. Moreover, the set $X^u$ is fiber-algebraic in $X_1\times X_2$.\end{claim}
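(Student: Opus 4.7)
The plan is to mirror the proof of Claim 9.6 almost verbatim, exchanging the roles of $U = X_1 \times X_2$ and $V = \overline{X}_{1,2}$ and replacing ``dimension $m$'' by ``dimension $(s-3)m$'' in the appropriate places. Suppose toward contradiction that $\dim X^u \geq m$; by fiber-algebraicity of $Q$ one has $\dim Q(u',V)\leq (s-3)m$ for every $u'\in U$, so in particular $\dim X^u = m$. Using $\aleph_0$-saturation I will pick $u'$ generic in $X^u$ over $u$, so $\dim(u'/u)=m$, and then $w'$ generic in $Q(u,V)\cap Q(u',V)$ over $u,u'$, so $\dim(w'/u,u')=(s-3)m$. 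A short additivity-of-dimension computation, analogous to the one at the top of the proof of Claim 9.6, together with the fiber-algebraic bounds $\dim Q(u,V),\dim Q(u',V)\leq (s-3)m$ and $\dim Q(U,w')\leq m$, will then yield $\dim(w'/u)=(s-3)m$ (i.e.\ $w'\ind_u u'$), $\dim Q(u,V)=\dim Q(u',V)=(s-3)m$, and $\dim(u'/u,w')=m$.

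Next I will show $(u',w')\in Q_{1,2}^{*}$, contradicting the running assumption $Q_{1,2}^{*}=\emptyset$. Since $w'$ is generic in both $Q(u,V)$ and $Q(u',V)$ and in their intersection, all three of the same dimension $(s-3)m$, a standard $o$-minimal argument (together with Fact~9.1 to absorb the parameters of the chosen neighbourhood and keep the dimension computation intact) produces an open $V_0\ni w'$, which I may take $\emptyset$-definable, such that $Q(u,V_0)=Q(u',V_0)$. Setting $U_1:=\{z\in U : Q(u,V_0)\subseteq Q(z,V)\}$, this set is definable over $u$, contains both $u$ and $u'$, and by construction $U_1\times Q(u,V_0)\subseteq Q$. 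Now $u'\in U_1\cap Q(U,w')$ with $U_1$ over $u$ and $Q(U,w')$ over $w'$, and the computation $\dim(u'/u,w')=m=\dim Q(U,w')$ shows that locally at $u'$ the set $U_1\cap Q(U,w')$ has the same dimension as $Q(U,w')$; another application of $o$-minimal genericity produces an open $U_0\ni u'$ with $Q(U_0,w')\subseteq U_1$, so $Q(U_0,w')\times Q(u',V_0)=Q(U_0,w')\times Q(u,V_0)\subseteq U_1\times Q(u,V_0)\subseteq Q$, witnessing $(u',w')\in Q_{1,2}^{*}$.

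For the ``moreover'' part, I proceed exactly as in Claim 9.6: if for some $a_1\in X_1$ there were infinitely many $a_2$ with $(a_1,a_2)\in X^u$, I pick one such $a_2$ generic over $u,a_1$ and then $w'=(a_3,\ldots,a_s)$ generic in $Q(u,V)\cap Q((a_1,a_2),V)$ over $u,a_1,a_2$. Additivity of dimension, together with $\dim(w'/u,a_1)\leq\dim Q(u,V)=(s-3)m$, gives $\dim(a_2/u,a_1,w')\geq\dim(a_2/u,a_1)>0$; since the formula $Q(a_1,x_2,a_3,\ldots,a_s)$ does not mention $u$, this forces infinitely many $a_2\in X_2$ satisfying $Q(a_1,a_2,a_3,\ldots,a_s)$, contradicting fiber-algebraicity of $Q$. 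The symmetric case (infinitely many $a_1$ for fixed $a_2$) is identical. The main obstacle will be the two local ``agreement on a neighbourhood'' steps, which require a careful appeal to $o$-minimal cell decomposition at a generic point together with Fact~9.1; once these are set up, the rest is bookkeeping exactly parallel to Claim~9.6.
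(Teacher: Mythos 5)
Your proof is correct and takes exactly the approach the paper intends: the paper gives no proof of Claim \ref{claim.7}, noting only that it holds ``similarly'' to Claim \ref{claim.5}, and your argument is the faithful dualization, swapping the roles of $U$ and $V$ (and the dimensions $m$ and $(s-3)m$) throughout, with the same appeals to Fact \ref{generic neighborhoods} and to $o$-minimal local agreement at a generic point. The only small imprecision is the sentence deducing $\dim X^u = m$ directly from $\dim Q(u',V)\leq (s-3)m$ --- that bound alone does not give $\dim X^u\leq m$; rather this falls out of the same additivity-of-dimension computation you invoke a sentence later (if $\dim(u'/u)>m$ then $\dim(u'/u,w')=\dim(u'/u)>m$, contradicting $u'\in Q(U,w')$ and the fiber-algebraic bound $\dim Q(U,w')\leq m$), so the claim is true but the stated justification should be folded into that computation.
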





\begin{lem}\label{claim2} There exist $s$ definable families
$\vec{\CF}=(\CF_1,\ldots,\CF_s)$ of subsets of $X_1,\ldots,X_s$, respectively,
each containing only sets of dimension strictly smaller than $m$, such that for every $\nu\in
\mathbb N$ and every $n$-grid $\bar A\sub \overline{X}$ in
$(\vec {\CF},\nu)$-general position, we have the following.
\begin{enumerate}
\item For all $w,w'\in A_3\times \cdots \times A_s$, if $|Q(A_1\times A_2,w)\cap Q(A_1\times A_2,w' )|\geq d \nu$ then $w'\in X_w$.

\item For all $w\in A_3\times \cdots \times A_s$, there are at most
$C(\nu)n^{s-4}$ elements $w'\in A_3\times \cdots \times A_s$ such that $|Q(A_1\times A_2,w)\cap
Q(A_1\times A_2,w')|\geq d \nu$.

\item $|\bar A\cap Q|\leq C'(\nu)n^{(s-1)-\gamma}$.
\end{enumerate}
\end{lem}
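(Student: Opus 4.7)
The plan is to engineer $\CF_1,\CF_2$ to secure (1), to apply Lemma \ref{small dim} in its $(s-2)$-fold product form to the family $\{X_w:w\in V\}$ to produce $\CF_3,\dots,\CF_s$ securing (2), and to deduce (3) via the distal incidence bound from Fact \ref{o-min cutting}(2) combined with a greedy coloring argument in the style of Lemma \ref{prop: Holder iteration}.

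For (1), I would take
\[
\CF_1 := \left\{\pi_1\bigl(Q(U,w)\cap Q(U,w')\bigr) : w,w'\in V,\ \dim\bigl(Q(U,w)\cap Q(U,w')\bigr)<m\right\}
\]
and $\CF_2 := \emptyset$. Since $Q(U,w)\cap Q(U,w')$ is fiber-algebraic over $X_1$ of degree $\leq d$ (inherited from $Q$), its dimension coincides with that of its $\pi_1$-projection, so $\CF_1$ consists of subsets of $X_1$ of dimension strictly less than $m$. For any grid in $(\vec{\CF},\nu)$-general position, $|A_1\cap\pi_1(S)|\leq \nu$ together with fiber-algebraicity forces $|S\cap(A_1\times A_2)|\leq d\nu$, and the contrapositive (with the harmless off-by-one in the threshold absorbed into the constants) gives (1), since $w'\in X_w$ is by definition equivalent to $\dim(Q(U,w)\cap Q(U,w'))=m$. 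For (2), I apply Lemma \ref{small dim} to the family $\{X_w:w\in V\}$, which by Claim \ref{claim.5} is a definable family of fiber-algebraic subsets of $X_3\times\cdots\times X_s$ of dimension strictly less than $(s-3)m$; the lemma yields definable families $\CF_3,\dots,\CF_s$ of subsets of dimension $<m$ and a bound $|X_w\cap(A_3\times\cdots\times A_s)|\leq C_0(\nu)n^{s-4}$, which combined with (1) immediately gives (2).

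For (3), I view $Q$ as a binary relation on $U\times V$ and invoke Fact \ref{o-min cutting}(2) with the cells placed on $U$ (of dimension $d_1=2m$), producing a distal cell decomposition of exponent $t=4m-2$. By (2), the graph on $A_3\times\cdots\times A_s$ joining pairs $(w,w')$ for which $|Q(U,w)\cap Q(U,w')\cap(A_1\times A_2)|\geq d\nu$ has maximum degree at most $C_0(\nu)n^{s-4}$, so greedy coloring partitions the set into at most $C_0(\nu)n^{s-4}+1$ pieces $V_j$ such that $Q\cap\bigl((A_1\times A_2)\times V_j\bigr)$ is $K_{2,d\nu}$-free. Applying Theorem \ref{thm: distal incidence bound} with $d=2$ to each piece with $|A|=n^2$ and $|B|=|V_j|$, and then summing via H\"older's inequality exactly as in the proof of Lemma \ref{prop: Holder iteration}, yields $|Q\cap\bar A|\leq C'(\nu)n^{(s-1)-\gamma}$ with $\gamma=3-2(\gamma_1+\gamma_2)=\frac{1}{8m-5}$, where $\gamma_1=\frac{2(t-1)}{2t-1},\ \gamma_2=\frac{t}{2t-1}$.

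The main delicacy, and the reason the exponent is $s$-independent rather than the weaker $\frac{1}{4(s-2)m-5}$ that one would obtain by cell-decomposing with cells in $V$, is the asymmetry: the cells must be placed on the ``small'' factor $U$, and the H\"older arithmetic with $|A|=n^2$ on the cell side and $|B|=n^{s-2}$ on the fiber side must then be carried out in this asymmetric regime. It meets the target $\frac{1}{8m-5}$ exactly when $s=4$ and remains sufficient for all $s\geq 4$ by a direct numerical check of the exponents, which is routine but must be done explicitly since it is not the symmetric case treated in Lemma \ref{prop: Holder iteration}.
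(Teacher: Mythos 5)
Your treatment of (1) and (2) matches the paper exactly: the family $\CF_1$ of $\pi_1$-projections of the low-dimensional intersections $Q(U,w)\cap Q(U,w')$, the trivial $\CF_2$, and the application of Lemma~\ref{small dim} (in its $(s-2)$-fold form) to the family $\{X_w : w\in \overline{X}_{1,2}\}$ are precisely what the paper does, and the off-by-one in the threshold is indeed harmless.

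The step (3) has a genuine gap in the direction of the bipartite forbidden configuration, and this is where the argument collapses. After colouring $A_3\times\cdots\times A_s$ by the proximity graph from (2), each class $V_j$ has the property that any \emph{two points of $V_j$} share at most $d\nu-1$ common neighbours in $A_1\times A_2$. Writing $A_1\times A_2$ on the left, this makes $Q\cap\bigl((A_1\times A_2)\times V_j\bigr)$ \emph{$K_{d\nu,2}$-free}, not $K_{2,d\nu}$-free as you claim. Theorem~\ref{thm: distal incidence bound} requires the ``$d$'' in $K_{d,\nu}$-free to sit on the side $X$ carrying the cells (this is used in its proof when it finds $A'\subseteq C$ of size $\geq d$ and argues $K_{d,\nu}$ would appear). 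You placed the cells on $U=X_1\times X_2$ precisely to get $t=4m-2$ and hence $\gamma=\frac{1}{8m-5}$, but then the theorem with $d=2$ asks for the ``$2$'' to be on the $U$ side, which your colouring does not provide. If you instead take $d:=d\nu$, $\nu':=2$ to match what you actually have (still with cells on $U$, $t'=4m-2$), the exponents become $\nu$-dependent: $\gamma_1+\gamma_2=\frac{(2t'-1)d\nu-t'}{t'd\nu-1}$, which exceeds $\frac32$ once $d\nu>\frac{2t'-3}{t'-2}$ (a small fixed constant for $m\geq2$), so $3-2(\gamma_1+\gamma_2)$ goes negative and no uniform power saving follows. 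The only way to keep $d=2$ is to put the cells on $\overline{X}_{1,2}$, the side where the colouring lives and the ``$2$'' actually sits, but that gives $t=2(s-2)m-2$ and hence the $s$-dependent exponent $\gamma=\frac{1}{4(s-2)m-5}$. Your closing paragraph correctly senses that an asymmetric H\"older calculation is lurking, but the ``routine numerical check'' is exactly where the claimed $\frac{1}{8m-5}$ fails for $s>4$: the H\"older step (as in Lemma~\ref{prop: Holder iteration}) produces the same exponent $s-4+2(\gamma_1+\gamma_2)$ regardless of which factor carries the $\gamma_1$, so the asymmetry cannot be exploited there; the bottleneck is the direction of the $K$-freeness, which forces the cells onto the big side.

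A separate observation you may wish to pursue: the paper's own argument here is a one-line appeal to the $\gamma$-ST property, and Definition~\ref{def: gamma ST property} places the almost-disjointness condition on the side $X$ with $|A|\leq n^{s-2}$, which in this application is $\overline{X}_{1,2}$ of dimension $(s-2)m$, not $X_1\times X_2$; Fact~\ref{o-min cutting}(2) ties $\gamma$ to the dimension of that side. So it is not obvious that the stated $s$-independent $\frac{1}{8m-5}$ follows from the machinery as written for $s>4$ (at $s=4$ the two exponents coincide). In any case, the specific argument you propose does not establish it.
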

\proof  We choose the definable families in $\vec{\CF}$ as follows. Let
\begin{gather*}
	\CF_1 := \big\{\pi_1(Q(U,w)\cap Q(U,w')): \\
	w,w'\in V\,\&\, \dim \left(
Q(U,w)\cap Q(U,w') \right) < m\big\},
\end{gather*}
and $\CF_2 := \{\0\}$. Clearly, each set in $\CF_1$ has dimension smaller than $m$.
Because $Q$ is fiber algebraic of degree $\leq d$, it is easy to verify that (1) holds independently of the other families.

 For the other families,  we first recall that by Claim \ref{claim.5}, for each $w\in \overline{X}_{1,2}$, the set $X_w\sub \overline{X}_{1,2}$ has dimension smaller than $(s-3)m$.

 We now apply Lemma \ref{small dim} to the family $\{X_w:w\in \overline{X}_{1,2}\}$ (note that $s$ from Lemma \ref{small dim} is replaced here by $s-2$), and obtain definable families $\vec{\CF}'=(\CF_3,,\ldots, \CF_s)$, each $\CF_i$ consisting of subsets of $X_i$ of dimension smaller than $m$, such that
for every $\nu$ and every $n$-grid $A_3\times \cdots \times A_s\sub \overline{X}_{1,2}$ in $(\vec{\CF}',\nu)$-general position  and every $w\in \overline{X}_{1,2}$ we have
$$| \left(A_3\times \cdots \times A_s \right) \cap X_w|\leq C(\nu) n^{s-4}.$$

Let $\vec{\CF} := (\CF_1,\CF_2,\vec{\CF}')$ and assume that $\bar A$ is in $(\vec{\CF},\nu)$-general position.
It follows that for every $w\in A_3\times\cdots\times A_s$ there are at most $C(\nu) n^{s-4}$
elements $w'\in A_3\times \cdots \times A_s$ such that $|Q(A_1\times A_2,w)\cap
Q(A_1\times A_2,w')|\geq d \nu$. This proves (2).

We claim that the relation $Q$, viewed as a binary relation on $(X_1\times X_2)\times \overline{X}_{1,2}$, satisfies the $\gamma$-ST property. Indeed, for $i \in [s]$, 
let $X_i = \bigsqcup_{\ell \in [k_i]} X_{i,\ell} $ be an $o$-minimal cell decomposition of $X_i$, for some $k_i \in \mathbb{N}$, we have $m = \dim(X_i) = \max \left\{ \dim(X_{i, \ell}) : \ell \in [k_i] \right\}$. Then each (definable) cell $X_{i,\ell}$ is in a definable bijection with a definable subset of $M^{\dim \left(X_{i, \ell} \right)}$ (namely, the projection on the appropriate coordinates is a homeomorphism), hence in a definable bijection with a definable subset of $M^{m}$.
For $\bar{\ell} = (\ell_1, \ldots, \ell_s) \in [k_1] \times \ldots \times [k_s]$, let $Q_{\bar{\ell}} := Q \cap \prod_{i \in [s]} X_{i, \ell_i}$. 
Applying these definable bijections coordinate-wise, by Lemma \ref{lem: gamma-ST prop props}(1) we may assume $Q_{\bar{\ell}} \subseteq \prod_{i \in [s]} M^m$ and apply Fact \ref{o-min cutting} to conclude that for each $\bar{\ell}$, $Q_{\bar{\ell}}$ satisfies the $\gamma$-ST property. But then, by Lemma \ref{lem: gamma-ST prop props}(2), $Q$ also satisfies the $\gamma$-ST property. Finally, given an $n$-grid $\bar A\sub (X_1\times X_2)\times \overline{X}_{1,2}$ in $(\vec{\CF},\nu)$-general position, we thus have by the $\gamma$-ST property that (2) implies (3).\qed

This shows that $Q$ has $\gamma$-power saving, in contradiction to our assumption, thus completing the proof of  Proposition \ref{main-prop}, and with it Theorem \ref{main}.
\subsection{Obtaining a nice $Q$-relation}\label{sec: obt nice Q rel omin}
 By Theorem \ref{main} we may assume that $\dim Q=\dim Q^*$. Thus, in order to prove Theorem \ref{thm: main thm o-min1}, we may replace $Q$ by $Q^*$, and assume from now on that $Q=Q^*$.

Using $o$-minimal cell decomposition, we may partition $Q$ into finitely many definable sets such that each is \emph{fiber-definable}, namely for each tuple  $(a_1,\ldots, a_{s-1})\in A_1\times\cdots\times A_{s-1}$, there exists at most
one 
$$a_s=f(a_1,\ldots, a_{s-1})\in X_s$$
 such that $(a_1,\ldots,a_{s-1},a_s)\in Q$, and furthermore $f$ is a continuous function
on its domain. We can do the same for all permutations of the variables. Since $Q$
does not satisfy $\gamma$-power saving by assumption, one of these finitely many sets, of dimension
$(s-1)m$, also does not satisfy $\gamma$-power saving.

Hence from now on we assume that $Q$ is the graph of a
continuous partial function from any of its $s-1$ variables to the remaining one.


By further partitioning $Q$ and changing the sets up to definable bijections, we may
assume that each $X_i$ is an  open subset of $M^m$. Fix $\bar e=(e_1,\ldots,e_s)$ in $\CM$ generic
in $Q$, and let $\CM_0 := \dcl(\bar e)$. Note that for each $(a_3,\ldots, a_s)$ in a neighborhood of $(e_3,\ldots, e_s)$, the set $Q(x_1,x_2,a_3,\ldots, a_s)$ is the graph of a homeomorphism between neighborhoods of $e_1$ and $e_2$.
We let
$\mu_i:=\mu_{\CM_0}(e_i)$ (see Definition \ref{def: inf neighb}) and
identify these partial types  over $\CM_0$ with their sets of realizations in $\CM$.

\begin{lem}\label{lem: inf neighb with comp omin} There exist $\CM_0$-definable relatively open sets $U\sub X_1\times X_2$ and $V\sub \bar X_{1,2}$, containing $(e_1,e_2)$ and $(e_3,\ldots, e_s)$, respectively,
and a relatively open $W\subseteq Q$, containing $\bar e$, such that for every $(u,v)\in W$,
$Q(u,V)\times Q(U,v)\sub Q$.

In particular, for any $u,u'\in \mu_{\CM_0}(e_1,e_2)\cap \left( X_1\times X_2 \right)$ and any $v,v'\in \mu_{\CM_0}(e_3,\ldots,e_s)\cap \bar X_{1,2}$ we have
$$(u,v),(u,v'),(u',v)\in Q\Rightarrow (u',v')\in Q.$$
\end{lem}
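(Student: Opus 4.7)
My plan is to deduce the lemma by combining three ingredients: $Q=Q^*_{1,2}$ (so that $(P2)_{1,2}$ holds at $\bar e$), genericity of $\bar e$ in $Q$, and the elementarity $\CM_0\preceq\CM$ (which holds since $\CM$ expands a group, hence admits definable Skolem functions). First, I would apply $(P2)_{1,2}$ at $\bar e$ to obtain open neighborhoods $U_0\ni z:=(e_1,e_2)$ and $V_0\ni w:=(e_3,\dotsc,e_s)$ with $Q(U_0,w)\times Q(z,V_0)\subseteq Q$. Then, invoking Fact~\ref{generic neighborhoods}, I would shrink these to $C$-definable open $U'\subseteq U_0$ and $V'\subseteq V_0$ still containing $z$ and $w$, where $C$ is a small parameter set with $C\ind_\emptyset\bar e$; the rectangle inclusion is preserved under shrinking.

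Next, I would consider the $C$-definable set
$$T':=\{(u,v)\in Q\cap(U'\times V'):Q(u,V')\times Q(U',v)\subseteq Q\},$$
which contains $\bar e$. The crucial observation is that, because $C\ind_\emptyset\bar e$ and $\bar e$ is generic in $Q$ over $\emptyset$, $\bar e$ is also generic in $Q$ over $C$; hence $\dim(\bar e/C)=\dim Q=(s-1)m$. Since the relative boundary of the $C$-definable subset $T'$ of $Q$ has dimension strictly less than $(s-1)m$ by $o$-minimality, $\bar e$ cannot lie on this boundary and must therefore lie in the relative interior of $T'$; that is, $T'$ contains a $C$-definable relatively open neighborhood $W'$ of $\bar e$ in $Q$.

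Third, I would transfer these $C$-definable witnesses to $\CM_0$-definable ones by elementarity. Via $o$-minimal cell decomposition, definable open sets are uniformly parameterized by finite tuples, so the existence of definable open $U\ni z$ and $V\ni w$ and a definable relatively open $W\ni\bar e$ in $Q$ satisfying the uniform rectangle inclusion is a first-order existential statement about $\bar e$. This statement is witnessed in $\CM$ by $(U',V',W')$, hence holds in $\CM_0$, yielding the required $\CM_0$-definable $U,V,W$. The \emph{in particular} clause then follows immediately: $u,u'\in\mu_{\CM_0}(e_1,e_2)$ belong to every $\CM_0$-definable open neighborhood of $z$, hence to $U$ (similarly $v,v'\in V$); by cell decomposition $\mu_{\CM_0}(\bar e)=\mu_{\CM_0}(e_1,e_2)\times\mu_{\CM_0}(e_3,\dotsc,e_s)$, so $(u,v)\in W$, and the uniform rectangle inclusion at $(u,v)$ gives $(u',v')\in Q$.

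The hard part will be the genericity step in the second stage: justifying, via $o$-minimal dimension theory together with the $\acl$-independence $C\ind_\emptyset\bar e$ supplied by Fact~\ref{generic neighborhoods}, that the generic point $\bar e$ of $Q$ cannot lie on the relative boundary of the $C$-definable set $T'$. This dimension-theoretic argument is the mechanism that promotes the single-point $(P2)_{1,2}$ property into a uniform rectangle property on a full relative neighborhood of $\bar e$ in $Q$; without the independence of $C$ from $\bar e$, one has no leverage to rule out $\bar e$ lying in the frontier, and the rest of the argument is straightforward bookkeeping.
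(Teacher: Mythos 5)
Your proposal is correct and follows essentially the same route as the paper: apply $(P2)_{1,2}$ at $\bar e$ to get initial neighborhoods, use Fact~\ref{generic neighborhoods} to place their parameters in a set $C$ over which $\bar e$ remains generic, argue via $o$-minimal frontier-dimension considerations that the $C$-definable set of ``good'' points of $Q$ contains a relative neighborhood of $\bar e$ in $Q$, and then transfer to $\CM_0=\dcl(\bar e)\preceq\CM$ by elementarity. Your explicit set $T'$ and the accompanying frontier-dimension argument usefully make precise what the paper compresses into a terse ``It follows that there exists a relatively open $W\subseteq Q\ldots$''.
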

\begin{proof} Because the properties of $U,V$ and $W$ are first-order expressible over $\bar e$, it is sufficient to prove the existence of $U,V,W$ in any elementary extension of $\CM_0$.

Because $\bar e\in Q=Q^*$, there are definable, relatively open neighborhoods $U\sub X_1\times X_2$ and $V\sub \bar X_{1,2}$ of $(e_1,e_2)$ and $(e_3,\ldots, e_s)$, respectively, such that
$$Q(U,e_3,\ldots,e_s)\times Q(e_1,e_2,V)\sub Q.$$

By Fact \ref{generic neighborhoods}, we may assume that $U,V$ are definable over $A\sub M$ such that $\bar e$ is still generic in $Q$ over $A$.
It follows that there exists a relatively open $W\sub Q$, containing $\bar e$, such that for every $(u,v)\in W$ (so,  $u\in X_1\times X_2$ and $v\in \bar X_{1,2}$), we have
$Q(U,v)\times Q(u,V)\sub Q$. As already noted, we now can find such $U,V$ and $W$ defined over $\mathcal{M}_0$.

 Note that $\mu_{\CM_0}(e_1,e_2)\cap (X_1\times X_2)\sub U$ and $\mu_{\CM_0}(e_3,\ldots, e_n)\cap \overline{X}_{1,2}\sub V$, and $\mu_{\CM_0}(\bar e)\sub W$.
 Let us see how the last clause follows: assume that  $u,u'\in \mu_{\CM_0}(e_1,e_2)\cap (X_1\times X_2)$, $v,v'\in \mu_{\CM_0}(e_3,\ldots,e_n)\cap \overline{X}_{1,2}$, and
$(u,v), (u,v'), (u',v)\in Q$. 
 We have $u,u'\in U$, $v,v'\in V$ and 
 $$(u,v),(u,v'), (u',v)\in W.$$ 
 By the choice of $U,V,W$, we thus have $(u',v')\in Q$.
\end{proof}

\begin{lem} \label{niceQ} The definable relation $Q$ satisfies properties (P1) and (P2) from Section \ref{sec: bij Q any arity} with respect to the $\CM_0$-type-definable sets $\mu_i \cap X_i, i \in [s]$, namely:

(P1) For any $(a_1,\ldots,a_{s-1})\in \mu_1\times \cdots\times\mu_{s-1}$, there exists exactly one $a_s\in\mu_s$ with $(a_1,\ldots,a_{s-1},a_s)\in Q$, and this remains true under any coordinate permutation.

(P2) Let $\tilde{U} := \mu_1\times \mu_2 \cap X_1 \times X_2$ and $\tilde{V} := \mu_3 \times \ldots \times \mu_s \cap \bar{X}_{1,2}$. Then for every $u,u'\in \tilde{U}$ and $w,w'\in \tilde{V}$,
$$(u;w),(u';w),(u;w')\in
Q \Rightarrow (u';w')\in Q.$$
The same is true when $(1,2)$ is replaced by any $(i,j)$ with $i\neq j \in [s]$.
\end{lem}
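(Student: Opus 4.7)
The plan is to derive (P1) from the fact that, after the preceding reductions, $Q$ is (locally) the graph of a continuous $\CM_0$-definable function in each coordinate, combined with the standard o-minimal observation that continuous $\CM_0$-definable functions send $\CM_0$-infinitesimals to $\CM_0$-infinitesimals; and to derive (P2) by applying Lemma \ref{lem: inf neighb with comp omin} to each pair $(i,j)$, which is available because $Q = Q^*$.

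First I will prove (P1). By the reductions preceding the lemma, $Q$ is the graph of a continuous partial function after any permutation of its variables; so fix such a permutation, and write $f$ for the resulting continuous partial function from $s-1$ of the $X_i$'s to the remaining one. Since $Q$ is $\emptyset$-definable and so is its decomposition into graphs of continuous functions, $f$ is $\emptyset$-definable, and in particular $\CM_0$-definable, with $f(e_1,\dotsc,e_{s-1}) = e_s$ (relabelling coordinates for notational convenience). Uniqueness in (P1) is then automatic: any $a_s \in \mu_s$ with $(a_1,\dotsc,a_{s-1},a_s)\in Q$ must equal $f(a_1,\dotsc,a_{s-1})$, because $Q$ is the graph of the function $f$. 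For existence, I will use the standard fact that $f$ sends $\mu_1 \times \dotsb \times \mu_{s-1}$ into $\mu_s$: given any $\CM_0$-definable open neighborhood $W\ni e_s$, continuity of $f$ at $(e_1,\dotsc,e_{s-1})$ produces an $\CM_0$-definable open neighborhood $W'$ of $(e_1,\dotsc,e_{s-1})$ with $f(W') \subseteq W$; then $\mu_1 \times \dotsb \times \mu_{s-1} \subseteq W'$, so $f(a_1,\dotsc,a_{s-1})\in W$ for every $(a_1,\dotsc,a_{s-1})\in \mu_1 \times \dotsb \times \mu_{s-1}$. Intersecting over all such $W$ gives $f(a_1,\dotsc,a_{s-1})\in \mu_s$. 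One also needs to check that $(a_1,\dotsc,a_{s-1})$ lies in the domain of $f$; this follows from the same argument applied to an $\CM_0$-definable open neighborhood of $(e_1,\dotsc,e_{s-1})$ on which $f$ is defined.

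For (P2), the statement with $(i,j)=(1,2)$ is the ``in particular'' clause of Lemma \ref{lem: inf neighb with comp omin}: having that lemma's $\CM_0$-definable sets $U,V,W$, and observing that $\mu_{\CM_0}(e_1,e_2)\cap (X_1\times X_2)\subseteq U$, $\mu_{\CM_0}(e_3,\dotsc,e_s)\cap \overline{X}_{1,2}\subseteq V$, and $\mu_{\CM_0}(\bar e)\subseteq W$, the implication $(u,v),(u,v'),(u',v)\in Q \Rightarrow (u',v')\in Q$ holds for all $u,u'\in \tilde{U}$ and $v,v'\in \tilde{V}$. For an arbitrary pair $(i,j)$, the exact same argument as in Lemma \ref{lem: inf neighb with comp omin} goes through, using that $\bar e \in Q = Q^*\subseteq Q^*_{i,j}$ guarantees the existence of $\CM_0$-definable open neighborhoods witnessing property $(P2)_{i,j}$ near $\bar e$.

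I do not anticipate any real obstacle here; the only point deserving care is keeping track of parameters. In particular, Fact \ref{generic neighborhoods} must be invoked (as in the proof of Lemma \ref{lem: inf neighb with comp omin}) to ensure that the open neighborhoods witnessing $(P2)_{i,j}$ near $\bar e$ can be chosen $\CM_0$-definable, so that they contain the corresponding $\CM_0$-infinitesimal neighborhoods. Once this is done, both (P1) and (P2) follow in a few lines.
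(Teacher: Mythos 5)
Your proposal is correct and follows essentially the same route as the paper's own (very terse) proof: (P1) is obtained from the fact that $Q$, after the reductions, is the graph of a continuous $\CM_0$-definable partial function and such functions carry $\CM_0$-infinitesimal neighborhoods into $\CM_0$-infinitesimal neighborhoods (with uniqueness immediate from $Q$ being a graph); and (P2) for each pair $(i,j)$ is read off from Lemma \ref{lem: inf neighb with comp omin} applied with the coordinates permuted, using $\bar e\in Q=Q^*\subseteq Q^*_{i,j}$ and the parameter bookkeeping via Fact \ref{generic neighborhoods}. You spell out the infinitesimal-preservation argument and the domain check more explicitly than the paper does, but there is no substantive difference in method.
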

\begin{proof} By continuity of the function given by $Q$, for every tuple
$$(a_1,\ldots,a_{s-1}) \in \mu_1\times\cdots\times \mu_{s-1}$$
 there exists a unique $a_s \in \mu_s$ such that
$(a_1,\ldots,a_s)\in Q$. The same is true for any permutation of the variables. This shows (P1).

Property (P2) holds by Lemma \ref{lem: inf neighb with comp omin} for the $(1,2)$-coordinates. The same proof works for the other pairs $(i,j)$.
%
%
%
%
\end{proof}

%

Let us see how Theorem \ref{thm: main thm o-min1} follows. Assume first that  $s\geq 4$, and that $Q$ does not have $\gamma$-power saving for $\gamma=\frac{1}{16s-10}$. By Theorem \ref{main} and the resulting Lemma \ref{niceQ} (see also the choice of the parameters before Lemma \ref{lem: inf neighb with comp omin}),  there is $\bar e=(e_1,\ldots, e_s)$ generic in $Q$ and a substructure $\CM_0=\dcl(\bar e)$ of cardinality $|\mathcal{L}|$ such that $Q\cap \prod_{i \in [s]}\left( \mu_{\CM_0}(e_i) \cap X_i \right)$ satisfies $(P1)$ and $(P2)$ of Theorem \ref{thm: main group config bijections}. Note that $\mu_{\mathcal{M}_0}(e_i)$ is a partial type over $\mathcal{M}_0$ for $i \in [s]$, $\bar{e}$ satisfies the relation, and $\bar{e}$ is contained in $\mathcal{M}_0$.
 Thus, by the ``moreover'' clause of Theorem \ref{thm: main group config bijections}, there exists a type definable abelian group $G$ over $\CM_0$
 and $\CM_0$-definable bijections $\pi'_i:\mu_{\CM_0}(e_i)\cap X_i\to G$ each sending $e_i$ to $0_G$ and satisfying:
 $$Q(a_1,\ldots, a_n) \Leftrightarrow \pi'_1(a_1)+\cdots +\pi'_m(a_m)=0$$
 for all $a_i \in \mu_{\CM_0}(e_i)\cap X_i$.
 This is exactly the second clause of Theorem \ref{thm: main thm o-min1}.
 
 Finally, the case $s=3$ of Theorem \ref{thm: main thm o-min1} reduces to the case $s=4$ as in the stable case, Section \ref{sec: stable main thm ternary}, with the obvious modifications.

\subsection{Discussion and some applications}\label{sec: apps in o-min}

We discuss some variants and corollaries of the main theorem. In particular, we will deduce a variant that holds in an arbitrary $o$-minimal structure, i.e.~without the saturation assumption on $\CM$ used in Theorem \ref{thm: main thm o-min1}.

\begin{defn}(see \cite[Definition 2.1]{goldbring2010hilbert}) A {\em local group} is a tuple $(\Gamma, 1,\iota,p)$, where $\Gamma$ is a Hausdorff topological space,  $\iota:\Lambda\to \Gamma$ (the inversion map) and $p:\Omega\to \Gamma$ (the product map) are continuous functions, with $\Lambda \subseteq \Gamma$ and $\Omega \subseteq \Gamma^2$ open subsets, such that
$1\in \Lambda$, $\{1\}\times \Gamma, \Gamma\times \{1\}\sub \Omega$ and for all $x,y,z\in \Gamma$:
\begin{enumerate}\item $p(x,1)=p(1,x)=x$;
\item if $x\in \Lambda$ then $(x,\iota(x)), (\iota(x),x)\in \Omega$ and $p(x,\iota(x))=p(\iota(x),x)=1$;
\item if $(x,y),(y,z)\in \Omega$ and $(p(x,y),z),(x,p(y,z))\in \Omega$, then
$$p((p(x,y),z)=p(x,p(y,z)).$$
\end{enumerate}

\end{defn}

  Our goal is to show that in Theorem \ref{thm: main thm o-min1} we can replace the type-definable group with {\em a definable} local group. Namely,
\begin{cor}\label{thm:local o-min1} Let $\CM$ be an $\aleph_0$-saturated $o$-minimal expansion of a group, $s \geq 3$,
$Q \subseteq  X_1\times \cdots \times X_s$ are $\emptyset$-definable with $\dim(X_i) = m$, and $Q$ is fiber algebraic. Then one  of the
following  holds.
\begin{enumerate}
\item The set $Q$ has $\gamma$-power saving, for $\gamma = \frac{1}{8m-5}$ if $s \geq 4$, and $\gamma = \frac{1}{16m-10}$ if $s=3$.

\item There exist (i) a finite set $A\sub M$ and  a structure $\CM_0=\dcl(A)$ (ii)   a
definable local  abelian group $\Gamma$ with $\dim(\Gamma) = m$, defined over $M_0$, (iii) definable relatively open $U_i\sub X_i$, a definable open neighborhood $V\sub \Gamma$ of $0=0_\Gamma$, and (iv) definable homeomorphisms
$\pi_i:U_i\to V$, $i \in [s]$, such that for all $x_i \in U_i$,
$$\pi_1(x_1)+\cdots+\pi_s(x_s)=0\Leftrightarrow Q(x_1,\ldots,x_s).$$

\end{enumerate}
\end{cor}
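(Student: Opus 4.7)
The plan is to apply Theorem~\ref{thm: main thm o-min1} and then, by compactness in the $\aleph_0$-saturated $\CM$, extract finite-parameter definable data yielding a definable local group. If clause (1) of Theorem~\ref{thm: main thm o-min1} holds, then clause (1) of the corollary is immediate; otherwise, the theorem provides a tuple $\bar a=(a_1,\ldots,a_s)$ generic in $Q$, the substructure $\CM_0 := \dcl(\bar a)$, a type-definable abelian group $(G,+)$ over $\CM_0$ of dimension $m$, and $\CM_0$-definable bijections $\pi_i^0 : \mu_{\CM_0}(a_i)\cap X_i \to G$ sending $a_i$ to $0_G$, satisfying $\pi_1^0(x_1)+\cdots+\pi_s^0(x_s)=0_G\Leftrightarrow Q(x_1,\ldots,x_s)$ on the infinitesimal product.

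To avoid working in $\CM^{\eq}$, I would transport the operation back to $X_1$. Set $\rho_i := (\pi_1^0)^{-1}\circ\pi_i^0$, a relatively $\CM_0$-definable homeomorphism $\mu_{\CM_0}(a_i)\cap X_i \to \mu_{\CM_0}(a_1)\cap X_1$ sending $a_i$ to $a_1$ (with $\rho_1 = \mathrm{id}$), and define $u\boxplus v := (\pi_1^0)^{-1}(\pi_1^0(u)+\pi_1^0(v))$, whose graph is a $\CM_0$-definable ternary relation on $X_1^3$. Then $\boxplus$ is a continuous partial abelian group operation on $\mu_{\CM_0}(a_1)\cap X_1$ with identity $a_1$, and the equivalence becomes $\rho_1(x_1)\boxplus\cdots\boxplus\rho_s(x_s)=a_1 \Leftrightarrow Q(x_1,\ldots,x_s)$.

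Next I invoke compactness in the $\aleph_0$-saturated $\CM$. Each of the following holds on the type-definable setup and is a first-order statement in its elements: (a) each $\rho_i$ is a homeomorphism; (b) $\boxplus$ is continuous, associative, commutative, has identity $a_1$, and admits a continuous inverse on a neighborhood of $a_1$ (i.e.~the local-group axioms); (c) the equivalence with $Q$ holds. Each such property is witnessed by finitely many formulas from the types $\mu_{\CM_0}(a_i)$, so compactness yields a finite set $A\subseteq \CM_0$, $A$-definable open neighborhoods $U_i\subseteq X_i$ of $a_i$, and an $A$-definable open $V\subseteq X_1$ containing $a_1$ (after shrinking each $U_i$ to $\rho_i^{-1}(V)$ so that $\rho_i|_{U_i}$ is a definable homeomorphism onto $V$), on which $\boxplus|_{V\times V}$ defines a definable local abelian group structure on $\Gamma := V$ with $0_\Gamma := a_1$, and the equivalence with $Q$ holds on $U_1\times\cdots\times U_s$. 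Setting $\pi_i := \rho_i|_{U_i}$ then gives clause (2).

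The main obstacle is the careful compactness step: one must enumerate all local-group axioms (including continuity of the operation and of inversion) as first-order statements witnessed by finite fragments of the types $\mu_{\CM_0}(a_i)$, and simultaneously satisfy them all on a common finite parameter set $A$ and common open neighborhoods. A subsidiary point is verifying continuity of the transported operation $\boxplus$ on the type-definable set --- which uses that definable bijections between sets of equal dimension in o-minimal structures are continuous on a dense open subset, so after a preliminary shrinking the $\pi_i^0$ may be taken continuous on open neighborhoods of $a_i$.
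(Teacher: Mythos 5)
Your overall strategy matches the paper's: apply Theorem~\ref{thm: main thm o-min1}, transport the group structure from $G$ back to $\mu_{\CM_0}(a_1)\cap X_1$ via $\pi_1$, verify the transported operations are continuous, and then use $\aleph_0$-saturation (compactness) to replace the type-definable data by definable local-group data on definable neighborhoods. The transport-and-compactness skeleton is exactly what the paper does.

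There is, however, a genuine gap in your continuity step. You justify continuity of $\boxplus$ (equivalently, of the maps $\rho_i$) by citing that a definable bijection between $m$-dimensional sets is continuous on a dense open subset, ``so after a preliminary shrinking the $\pi_i^0$ may be taken continuous on open neighborhoods of $a_i$.'' Two problems. First, it is not $\pi_i^0$ (whose codomain is the abstract $G\subseteq\CM^{\eq}$, where no topology is given) but $\rho_i$ and $\boxplus$ whose continuity is meaningful and needed. Second, and more seriously, generic continuity of a definable bijection gives continuity at points \emph{generic over the parameters of the map}. But $\rho_i$ and $\boxplus$ are $\CM_0$-definable for $\CM_0=\dcl(\bar a)$, and $a_i\in\CM_0$, so $a_i$ is certainly not generic over those parameters. ``Shrinking'' the domain only helps if $a_i$ already lies in the dense open set of continuity points, which your cited fact does not supply. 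This is not a cosmetic issue: the whole point of the subsequent compactness argument is to produce \emph{definable open neighborhoods of the $a_i$} on which the operations are continuous, and for that you must know continuity \emph{at} $a_i$.

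The paper closes this gap by an explicit computation rather than a generic-continuity appeal. By the reductions in Section~\ref{sec: obt nice Q rel omin}, after cell decomposition one may assume $Q$ is the graph of a continuous partial function from any $s-1$ of its coordinates to the remaining one, \emph{near the generic $\bar a$}; the paper then exhibits $x'\oplus x''=x'''$ as an explicit composition of such $Q$-fiber maps (four instances of $Q$ with $x_2,x_3,x_3'$ existentially quantified over the infinitesimal types), so that continuity of $\oplus$ follows from continuity of the $Q$-fibers, which in turn holds at $\bar a$ because $\bar a$ was chosen generic in the already-decomposed $Q$. To repair your argument, you would need to unwind the construction of $\pi_i$ in Theorem~\ref{thm: main group config bijections} and show $\rho_i$, $\boxplus$, and the local inverse are built by composing these continuous $Q$-fiber maps, rather than invoking the off-the-shelf generic-continuity fact.
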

\begin{proof} We assume that (1) fails and apply  Theorem \ref{thm: main thm o-min1} to obtain $\bar a$ generic in $Q$, $\CM_0= \dcl(\bar a)$,  a type-definable abelian group $G$ over $\CM_0$, and bijections $\pi_i:\mu_{\CM_0}(a_i)\to G$ sending $a_i$ to $0$, such that for all $i\in [s]$, and $x_i\in \mu_{\CM_0}(a_i)$,
$$\pi_1(x_1)+\cdots+\pi_s(x_s)=0\Leftrightarrow Q(x_1,\ldots,x_s).$$

By pulling back the group operations via, say, $\pi_1$, we may assume that the domain of $G$ is $\mu_{\CM_0}(a_1)$.
We denote this pull-back of the addition and the inverse operations by $x\oplus y$ and $\ominus y$, respectively. Let us see that $\oplus$ and $\ominus$ are continuous with respect to the induced topology on $\mu_{\CM_0}(a_1)\sub X_1$.
Because $\bar a$ is generic in $Q$, and $Q$ is fiber algebraic, it follows from $o$-minimality that the set $Q(x_1,x_2,x_3, a_4,\ldots,a_s)$ defines a continuous function from any two of the coordinates $x_1,x_2,x_3$ to the third one, on the corresponding infinitesimal types $\mu_{\CM_0}(a_i)\times \mu_{\CM_0}(a_j)$.

The following is easy to verify: for $x',x'',x''' \in \mu_{\CM_0}(a_1)$,
$x'\oplus x''=x'''$ if and only if there exist $x_2\in \mu_{\CM_0}(a_2)$ and $x_3,x_3'\in \mu_{\CM_0}(a_3)$ such that
\[\begin{array}{lll}

Q \left(x',x_2,x_3,a_4,\ldots, a_s \right), & Q \left(x''',a_2,x_3,a_4,\ldots,a_s \right) &\mbox{ and } \\ 
 Q(x'',a_2,x_3',a_4,\ldots, a_s), & Q(a_1,x_2,x_3',a_4,\ldots,a_s).  & 

\end{array} \]

By the above comments, $\oplus$ can thus be obtained as a composition of continuous maps, thus it is continuous. We similarly show that $\ominus$ is continuous.

Applying logical compactness, we may now replace the type-definable $G$ with an  $\CM_0$-definable $\Gamma \supseteq G=\mu_{\CM_0}(a_1)$, with partial continuous group operations, which make $\Gamma$ into a local group (we note that in general, any type-definable group is contained in a definable local group by logical compactness, except for the topological conditions). Similarly, we find $U_i\supseteq \mu_{\CM_0}(a_i)$, $V\sub \Gamma$ and $\pi_i:U_i\to V$ as needed.
\end{proof}

Note that if $\mathbb R_{\omin}$ is an o-minimal expansion of the field of reals  and the $X_i$'s and $Q$ are definable in $\mathbb R_{\omin}$, with $Q$ not satisfying Clause (1) of Corollary \ref{thm:local o-min1}, then taking a sufficiently saturated elementary extension $\CM \succeq \mathbb R_{\omin}$, $Q(\CM)$ still does not satisfy Clause (1) in $\CM$. Hence we may deduce that Clause (2) of Corollary \ref{thm:local o-min1} holds for $Q$ in $\CM$, possibly over additional parameters from $\CM$. However, the definition of a local group is first-order in the parameters defining $\Gamma$, $\iota$ and $p$. Thus, by elementarity, we obtain that Clause (2) of Corollary \ref{thm:local o-min1}  holds for $Q(\mathbb R)$, with $\Gamma$ and the functions $\pi_i$ definable in the original structure $\mathbb R_{\omin}$.

By Goldbring's solution \cite{goldbring2010hilbert} to the Hilbert's 5th problem for local groups, if $\Gamma$ is a \emph{locally Euclidean} local group (i.e.~there is an open neighborhood of $1$ homeomorphic to an open subset of $\mathbb{R}^n$, for some $n$), then there is a neighborhood $U$ of $1$ such that $U$ is isomorphic, as a local group, to an open subset of an actual Lie group $G$. Clearly, if the local group is abelian then the connected component of $G$ is also abelian. Combining these observations with Corollary \ref{thm:local o-min1} we conclude:

\begin{cor}\label{thm:real o-min1} Let $\mathbb R_{\omin}$ be an o-minimal expansion of the field of reals.  Assume $s \geq 3$,
$Q \subseteq  X_1\times \cdots \times X_s$ are $\emptyset$-definable with $\dim(X_i) = m$, and $Q$ is fiber-algebraic. Then one  of the
following  holds.
\begin{enumerate}
\item The set $Q$ satisfies $\gamma$-power saving, for $\gamma = \frac{1}{8m-5}$ if $s \geq 4$, and $\gamma = \frac{1}{16m-10}$ if $s=3$.

\item There exist definable relatively open sets $U_i\sub X_i$, $i\in [s]$,   an abelian Lie group $(G,+)$ of dimension $m$
and an open neighborhood $V\sub G$ of $0$, and definable homeomorphisms
$\pi_i:U_i\to V$, $i \in [s]$, such that for all $x_i \in U_i, i \in [s]$
$$\pi_1(x_1)+\cdots+\pi_s(x_s)=0\Leftrightarrow Q(x_1,\ldots,x_s).$$

\end{enumerate}
\end{cor}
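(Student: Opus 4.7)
The plan is to derive Corollary \ref{thm:real o-min1} from Corollary \ref{thm:local o-min1} in two steps: a descent via elementary extension to obtain an $\mathbb{R}_{\omin}$-definable abelian local group, and then Goldbring's solution of Hilbert's fifth problem for abelian local groups \cite[Theorem 8.5]{goldbring2010hilbert} to upgrade it to an honest abelian Lie group.

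Fix a sufficiently saturated elementary extension $\CM \succeq \mathbb{R}_{\omin}$ and assume Clause (1) fails for $Q$ in $\mathbb{R}_{\omin}$; it then also fails for $Q(\CM)$ in $\CM$, since any definable system $\vec{\CF}$ in $\mathbb{R}_{\omin}$ extends to $\CM$ and any finite grid in $\mathbb{R}_{\omin}$ witnessing a violation of the $\gamma$-power-saving inequality remains a violating grid in $\CM$. Applying Corollary \ref{thm:local o-min1} in $\CM$ we obtain a finite parameter set $A$, a definable abelian local group $\Gamma$ of dimension $m$ over $\dcl(A)$, definable relatively open $U_i \subseteq X_i(\CM)$, an open neighborhood $V$ of $0$ in $\Gamma$, and definable homeomorphisms $\pi_i : U_i \to V$ with the required equivalence. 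The conjunction of the axioms of an abelian local group (with its specified domains of multiplication and inverse), the homeomorphism conditions on each $\pi_i$, and the equivalence $\pi_1(x_1)+\cdots+\pi_s(x_s)=0 \iff Q(\bar x)$ on $\prod_i U_i$ is expressible by a first-order formula in the language of $\mathbb{R}_{\omin}$ with the parameters $A$ free. By elementarity of $\mathbb{R}_{\omin}\preceq\CM$, these parameters can be realized in $\mathbb{R}$, so that $\Gamma$, $V$, the $U_i$'s, and the $\pi_i$'s become definable already in $\mathbb{R}_{\omin}$.

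After the reductions already carried out at the beginning of the proof of Corollary \ref{thm:local o-min1}, we may take each $X_i$ to be open in $\mathbb{R}^m$; then $U_i$ is open in $\mathbb{R}^m$ and $\pi_i$ is a homeomorphism between $U_i$ and a neighborhood of $0_\Gamma$, so $\Gamma$ is locally Euclidean at its identity. Goldbring's theorem then produces an $m$-dimensional abelian Lie group $(G,+)$ together with a local-group isomorphism $\psi$ from an open neighborhood $V'\subseteq V$ of $0_\Gamma$ onto an open neighborhood of $0_G$ in $G$. Shrinking each $U_i$ to $\pi_i^{-1}(V')$ and replacing $\pi_i$ by $\psi\circ\pi_i$ yields the data of Clause (2). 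The only non-routine input is Goldbring's theorem; the remainder is bookkeeping via elementarity combined with standard o-minimal dimension theory to ensure that the local group is locally Euclidean.
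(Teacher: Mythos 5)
Your proof follows exactly the same route as the paper: pass to a sufficiently saturated elementary extension, apply Corollary~\ref{thm:local o-min1} there, descend the local group data back to $\mathbb{R}_{\omin}$ by observing that the local-group axioms and the coordinate-wise equivalence with $Q$ are first-order in the parameters, note that the local group is locally Euclidean (via the o-minimal manifold structure near the identity), and then invoke Goldbring's solution to Hilbert's fifth problem for local groups to upgrade to an abelian Lie group. This is essentially the paper's own argument, with the same level of detail on the step showing that failure of Clause~(1) transfers from $\mathbb{R}_{\omin}$ to $\CM$ (which both treatments handle somewhat briskly; the point is that any $\CM$-definable system can be absorbed into an $\emptyset$-definable family by ranging over all parameters of small-dimensional fibers, after which general position of a finite real grid is a first-order condition over $\mathbb{R}$).
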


Finally, this takes a particularly explicit form when $\dim(X_i) = 1$ for all $i \in [s]$.

\begin{cor}\label{cor: 1-dim o-min}
Let $\mathbb R_{\omin}$ be an o-minimal expansion of the field of reals. Assume $s \geq 3$ and $Q \subseteq  \mathbb{R}^s$ is definable and fiber-algebraic. Then exactly one  of the
following  holds.
\begin{enumerate}
\item There exists a constant $c$, depending only on the formula defining $Q$ (and not on its parameters), such that: for any finite $A_i \subseteq \mathbb{R}$ with $|A_i| = n$ for $i \in [s]$ we have
$$|Q \cap (A_1 \times \ldots \times A_s)| \leq c n^{s - 1 - \gamma},$$
where $\gamma = \frac{1}{3}$ if $s \geq 4$, and $\gamma = \frac{1}{6}$ if $s=3$.

\item There exist definable open sets $U_i \subseteq \mathbb{R}, i \in [s]$, an open set $V \subseteq \mathbb{R}$ containing $0$, and homeomorphisms $\pi_i: U_i \to V$ such that
$$\pi_1(x_1)+\cdots+\pi_s(x_s)=0\Leftrightarrow Q(x_1,\ldots,x_s)$$
for all $x_i \in U_i, i \in [s]$.
\end{enumerate}
\end{cor}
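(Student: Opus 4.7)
The plan is to deduce this corollary directly from Corollary~\ref{thm:real o-min1} specialized to the $m=1$ case, since $\frac{1}{8m-5} = \frac{1}{3}$ when $s \geq 4$ and $\frac{1}{16m-10} = \frac{1}{6}$ when $s=3$, matching the stated exponents. Two adjustments are needed to bridge the two statements: removing the $(\vec{\mathcal{F}}, \nu)$-general position hypothesis in the power-saving clause, and turning the abstract $1$-dimensional abelian Lie group into $(\mathbb{R}, +)$.

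For the general position issue, I would use that since $\dim(X_i) = 1$, every definable $F \subseteq \mathbb{R}$ with $\dim(F) < 1$ is finite, and by $o$-minimality the sizes of members of any definable family $\mathcal{F}_i$ of such sets are uniformly bounded by some $\nu_0 \in \mathbb{N}$ depending only on the formula defining $\mathcal{F}_i$ (which, tracing through the proof of Corollary~\ref{thm:real o-min1}, depends only on the formula defining $Q$). Thus every finite $A_i \subseteq \mathbb{R}$ is automatically in $(\mathcal{F}_i, \nu_0)$-general position, and Corollary~\ref{thm:real o-min1}(1) applied with $\nu := \nu_0$ gives the desired uniform bound with $c := C(\nu_0)$ depending only on the formula defining $Q$.

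For the Lie group issue, Corollary~\ref{thm:real o-min1}(2) produces a $1$-dimensional abelian Lie group $(G,+)$, open sets $U_i \subseteq \mathbb{R}$, an open neighborhood $V \subseteq G$ of $0_G$, and definable homeomorphisms $\pi_i \colon U_i \to V$ with $Q(x_1,\ldots,x_s) \Leftrightarrow \pi_1(x_1) + \cdots + \pi_s(x_s) = 0_G$. Any connected $1$-dimensional abelian Lie group is Lie-isomorphic to $(\mathbb{R}, +)$ or $(\mathbb{R}/\mathbb{Z}, +)$, so after shrinking each $U_i$ (and correspondingly $V$) and composing with a local chart around $0_G$ into $(\mathbb{R}, +)$ (the identity in the first case, a branch of the logarithm in the second), I obtain homeomorphisms into $\mathbb{R}$ for which the addition constraint now takes place in $\mathbb{R}$. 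Mutual exclusivity of the two clauses follows because if (2) holds, then pulling back arithmetic progressions of length $n$ through the $\pi_i$ produces $n$-grids with $\Omega(n^{s-1})$ points in $Q$, violating any $\gamma$-power saving for $\gamma > 0$. The only real subtlety is the uniform constant in (1), handled by the $o$-minimal uniform bound on zero-dimensional definable families described above.
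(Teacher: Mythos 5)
Your proposal is correct and follows essentially the same route as the paper's proof: apply Corollary~\ref{thm:real o-min1} with $m=1$, observe that the exponents $\frac{1}{8m-5}$ and $\frac{1}{16m-10}$ specialize to $\frac{1}{3}$ and $\frac{1}{6}$, discharge the general-position hypothesis via uniform finiteness of zero-dimensional definable families in $o$-minimal structures, and convert the abstract connected $1$-dimensional abelian Lie group to $(\mathbb{R},+)$ via the classification (either $\mathbb{R}$ or $S^1$, and in the latter case restrict and compose with a local chart). Your handling of the uniform constant $c$ — noting that $\nu_0$ and the families $\mathcal{F}_i$ are definable from the formula defining $Q$, so everything is uniform — is the same justification the paper gives implicitly.

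Two cosmetic differences from the paper's write-up. First, the paper rederives the power-saving exponent by citing Fact~\ref{o-min cutting}(1) directly rather than quoting Corollary~\ref{thm:real o-min1}(1), but for $m=1$ (so $d_1 = 2$) Facts~\ref{o-min cutting}(1) and (2) both yield $\gamma = \frac{1}{3}$, so your shortcut of just specializing $m=1$ is equivalent. Second, the paper handles mutual exclusivity by appealing to Remark~\ref{rem: mut excl} (which points to an external argument), whereas you give a self-contained lower-bound construction: pulling back a symmetric arithmetic progression of length $n$ through the $\pi_i$ produces $n$-grids with $\Omega(n^{s-1})$ solutions to $\pi_1(x_1) + \cdots + \pi_s(x_s) = 0$; this is correct and arguably cleaner (one only needs to take the progression short enough that all $(s-1)$-fold sums stay inside $V$). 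Both differences are minor; the substance of the argument is the same.
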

\begin{proof}
Corollary \ref{thm:real o-min1} can be applied to $Q$.

Assume we are in Clause (1). As the proof of Theorem \ref{thm: main thm o-min1} demonstrates, we can take any $\gamma$ such that $Q$ satisfies the $\gamma$-ST property (as a binary relation, under any partition of its variables into two and the rest) if $s \geq 4$; and such that $Q'$ (as defined in Section \ref{sec: stable main thm ternary}) satisfies the $\gamma$-ST property  if $s = 3$. Applying the stronger bound for definable subsets of $\mathbb R^2 \times \mathbb{R}^{d_2}$ from Fact \ref{o-min cutting}(1), we get the desired $\gamma$-power saving.
Note that in the $1$-dimensional case, the general position requirement is satisfied automatically: for any definable set $Y \subseteq \mathbb{R}$, $\dim(Y) < 1$ if and only if $Y$ is finite; and for every definable family $\CF_i$ of subsets of $\mathbb{R}$, by $o$-minimality there exists some $\nu_0$ such that for any $Y \in \CF_i$, if $Y$ has cardinality greater than $\nu_0$ then it is infinite.

	In Clause (2), we use that every connected $1$-dimensional Lie group $G$ is isomorphic to either $(\mathbb{R}, +)$ or $S^1$, and in the latter case we can restrict to a neighborhood of $0$ and compose the $\pi_i$'s with a local isomorphism from $S^1$ to $(\mathbb{R}, +)$.
	
	Finally, the two clauses are mutually exclusive as in Remark \ref{rem: mut excl}.
\end{proof}

\begin{rem}\label{rem: getting analytic}In the case that definable sets in $\mathbb R_{\omin}$ admit \emph{analytic cell decomposition} (e.g.~in the $o$-minimal structure $\RR_{\operatorname{an, exp}}$, see \cite[Section 8]{van1994real}) then one can strengthen Clause (2) in Corollaries \ref{thm:real o-min1} and \ref{cor: 1-dim o-min}, so that the $U_i$'s are analytic submanifolds and the maps $\pi_i$  are analytic bijections with analytic inverses.\end{rem}
\begin{rem}
If $Q$ is semialgebraic (which corresponds to the case $\RR_{\omin} = \RR$ of Corollary \ref{cor: 1-dim o-min}),  of description complexity $D$ (i.e.~defined by at most $D$ polynomial (in-)equalities, with all polynomials of degree at most $D$), then in Clause (1) the constant $c$ depends only on $s$ and $D$ (as all $Q$'s are defined by the instances of a single formula depending only on $s$ and $D$).
\end{rem}

\begin{rem}\label{rem: optimal power save semilin}
If $Q$ is semilinear, then by Fact \ref{fac: semilin} it satisfies $(1-\varepsilon)$-ST property, for any $\varepsilon > 0$. In this case, in Clause (1) of Corollary \ref{cor: 1-dim o-min} for $s \geq 4$ we get $(1-\varepsilon)$-power saving --- which is essentially the best possible bound. See \cite{makhul2020constructions} concerning the lower bounds on power saving.
\end{rem}

  \bibliography{refs}

\end{document}